\def\bbC{{\mathbb C}}
\def\bbF{{\mathbb F}}
\def\bbZ{{\mathbb Z}}
\def\bbM{{\mathbb M}}
\def\bbN{{\mathbb N}}
\def\bbR{{\mathbb R}}
\def\bbT{{\mathbb T}}
\def\bfg{{\boldsymbol G}}
\def\b{{\cal B}}
\def\p{{\cal P}}
\newtheorem{theory}{Theorem}[section]
\newtheorem{thm}{Theorem}[section]
\newtheorem{lem}[thm]{Lemma}
\newtheorem{dfn}[thm]{Definition}
\newtheorem{cor}[thm]{Corollary}
\newtheorem{pro}[thm]{Proposition}
\newtheorem{claim}[thm]{Claim}
\newtheorem{prop}[theory]{Proposition}
\newtheorem{define}[theory]{Definition}
\newtheorem{lemma}[theory]{Lemma}
\newtheorem{problem}[theory]{Problem}
\newtheorem{remark}[thm]{Remark}
\newtheorem{prob}[thm]{Problem}
\newcommand{\dee}{\mathrm{d}}
\begin{document}
\title{\bf Weak containment of measure preserving group actions}
\author{Peter J. Burton  and Alexander S. Kechris}
\maketitle

\section*{Introduction}
This paper is a contribution to the study of the global structure of measure preserving actions of countable (discrete) groups on non-atomic standard probability spaces. For such a group $\Gamma$ and space $(X,\mu)$, we let $A(\Gamma, X, \mu)$ be the space of measure preserving actions of $\Gamma$ on $(X, \mu)$. In the book \cite{K} a hierarchical notion of complexity of such actions, called {\bf weak containment}, was introduced, motivated by analogous notions of weak containment of unitary representations. Roughly speaking an action $a\in A(\Gamma, X, \mu)$ is weakly contained  in an action $b\in A(\Gamma, X, \mu)$, in symbols $a\preceq b$,  if the action of $a$ on finitely many group elements and finitely many Borel sets in $X$ can be simulated with arbitrarily small error by the action of $b$. We also denote by $a\simeq b \iff a\preceq b \ \& \ b\preceq a$ the corresponding notion of {\bf weak equivalence}. 

This notion of equivalence is much coarser than the notion of isomorphism (conjugacy) $a\cong b$. It is well understood now that, in general, isomorphism is a very complex notion, a fact which manifests itself, for example, in the lack of any reasonable structure in the space $A(\Gamma, X, \mu)/\!\cong $ of actions modulo isomorphism. On the other hand, weak equivalence is a smooth equivalence relation and the space of weak equivalence classes $A(\Gamma, X, \mu)/\!\simeq$ is quite well behaved. 

Another interesting fact that relates to the study of weak containment is that many important parameters associated with actions, such as the type, cost, combinatorial parameters, etc., turn out to be invariants of weak equivalence and in fact exhibit desirable monotonicity properties with respect to the pre-order of weak containment, a fact which can be useful in certain applications.

There has been quite a lot of activity in this area in the last few years and our goal in this paper is to provide a survey of this work. We include detailed references to the literature, where the reader can find proofs of theorems that are discussed here. We do include a few proofs either of results that have not appeared before or when we thought that a more detailed or sometimes a simplified presentation is desirable.


The paper is organized as follows. \cref{1} reviews concepts of weak containment for unitary representations. In \cref{2}, we define weak containment for actions and provide several equivalent reformulations. In \cref{S3}, we start the study of the pre-order of weak containment and discuss its relationship with concepts such as freeness, ergodicity, strong ergodicity,  and co-induction for group actions. In \cref{limits}, we discuss the connection of weak containment of actions with that of their Koopman representations. \cref{space} continues the study of the pre-order of weak containment, concentrating on the existence and description of a maximum action. In \cref{S6}, we discuss the relationship of weak containment to the classical concept of factoring, which is a strong form of weak containment, including some recently established rigidity phenomena concerning these notions. \cref{6} surveys the invariance and monotonicity properties of various parameters associated with actions with respect to weak equivalence and weak containment. In \cref{7}, we relate weak containment  and weak equivalence to the concept of invariant random subgroup. In \cref{8}, we discuss a variant of weak containment, called {\bf stable weak containment}, due to Tucker-Drob. \cref{9} introduces the compact, metrizable topology on the space of weak equivalence classes, defined by Ab\'{e}rt and Elek, and studies its properties. \cref{10} concerns some relations of weak containment with soficity and entropy. \cref{add1} refers to extensions of the study of weak containment in the case of Polish locally compact groups and also in the context of stationary actions. The Appendices contain proofs of selected results.

\medskip
{\it Acknowledgments.} The authors were partially supported by NSF Grant DMS-1464475.

We would like to thank Anton Bernshteyn, Lewis Bowen, Ronnie Chen, Clinton Conley, Adrian Ioana, Martino Lupini, Robin Tucker-Drob and an anonymous referee for many useful comments and suggestions and Alessandro Carderi for pointing out some errors in an earlier version of this paper.

\newpage

\tableofcontents

\newpage
\section{Weak containment of unitary representations}\label{1}
The concept of weak containment for actions was motivated by the classical concept of weak containment for unitary representations that we quickly review here. See 
\cite[Part II, F]{BdlHV} and \cite[Appendix H]{K} for a more detailed treatment.

Let $\Gamma$ be a countable (discrete group) and $H$ a separable complex Hilbert space. We denote by $U(H)$\index{$U(H)$} the unitary group of $H$ with the strong (equivalently weak) topology, which makes it a Polish group. Let ${\rm Rep}(\Gamma, H)$\index{${\rm Rep}(\Gamma, H)$} be the space of unitary representations of $\Gamma$ on $H$, i.e., homomorphisms of $\Gamma$ into $U(H)$,  with the Polish topology it inherits as a closed subspace of the Polish product space $U(H)^\Gamma$. If $\pi\in {\rm Rep}(\Gamma, H)$, we usually write $H_\pi = H$.\index{$
H_\pi$}
 \begin{dfn}
Let $\pi\in {\rm Rep}(\Gamma, H_\pi)$, $\rho\in {\rm Rep}(\Gamma, H_\rho)$ be two unitary representations. We say that $\pi$ is {\bf weakly contained}\index{weakly contained for representations} in $\rho$, in symbols
\[
\pi\preceq \rho,\index{$\preceq$}
\]
if for any $v\in H_\pi, \epsilon >0, F\subseteq \Gamma$ finite, there are $v_1, \dots , v_k \in H_\rho$ such that $|\langle \pi (\gamma)(v), v\rangle - \sum_{i=1}^k
\langle \rho (\gamma)(v_i), v_i\rangle|<\epsilon, \forall \gamma \in F$.

\end{dfn}
Equivalently this states that every positive-definite function realized in $\pi$ (i.e., a function of the form $\gamma \mapsto \langle\pi(\gamma)(v), v\rangle$, for some $v\in H_\pi$) is the pointwise limit of a sequence of finite sums of positive-definite functions realized in $\rho$.

\begin{remark}
{\rm The symbol $\prec$ is traditionally used for weak containment but $\preceq$ seems more appropriate as it does not give the impression of a strict relation.}
\end{remark}
It is easy to see that $\preceq$ is a {\bf pre-order}\index{pre-order} (i.e., a transitive, reflexive relation). We put 
\[
\pi\simeq \rho \iff \pi\preceq \rho \  \&  \ \rho\preceq \pi\index{$\simeq$},
\]
for the associated relation of {\bf weak equivalence}\index{weak equivalence for representations}.

We also have the following variant of weak containment due to Zimmer.

\begin{dfn}
Let $\pi\in {\rm Rep}(\Gamma, H_\pi)$, $\rho\in {\rm Rep}(\Gamma, H_\rho)$ be two unitary representations. We say that $\pi$ is {\bf weakly contained in the sense of Zimmer}\index{weakly contained in the sense of Zimmer} in $\rho$, in symbols
\[
\pi\preceq_Z \rho\index{$\preceq_Z$},
\]
if for any $v_1, \dots , v_n\in H_\pi, \epsilon >0, F\subseteq \Gamma$ finite, there are $w_1, \dots , w_n \in H_\rho$ such that $|\langle \pi (\gamma)(v_i), v_j\rangle - 
\langle \rho (\gamma)(w_i), w_j\rangle|<\epsilon, \forall \gamma \in F, i, j \leq n$.

\end{dfn}
Let also 
\[
\pi\simeq_Z \rho\index{$\simeq_Z$}\iff \pi\preceq_Z\rho \ \& \ \rho\preceq_Z \pi,
\]
be the associated notion of {\bf weak equivalence in the sense of Zimmer}\index{weak equivalence in the sense of Zimmer}.

We have the following connection between these two notions:

\[
\pi\preceq_Z\rho\implies\pi\preceq \rho \iff \pi\preceq_Z \infty\cdot \rho,
\]
where for $n = 1,2, \dots , \infty$, $n\cdot \pi$ is the direct sum of $n$ copies of $\pi$ (and $\infty$ means $\aleph_0$ here). When $H_\pi$, $H_\rho$ are infinite-dimensional, then it turns out that 
\[
\pi\preceq_Z\rho \iff \pi \in \overline{\{\sigma\in {\rm Rep}(\Gamma, H_\pi)\colon \sigma\cong \rho\}},
\]
where $\cong$ denotes isomorphism (unitary equivalence) between representations (see \cite[Proposition 11.2]{K}).

We write $\pi\leq \rho$\index{$\pi\leq \rho$} if $\pi$ is a {\bf subrepresentation}\index{subrepresentation} of $\rho$, i.e., $\pi$ is isomorphic to the restriction of $\rho$ to an invariant subspace of $H_\rho$. Clearly $\pi\leq \rho\implies \pi\preceq_Z\rho$.

\begin{remark}
{\rm The notions of weak equivalence and weak equivalence in the sense of Zimmer are distinct, even for the group $\bbZ$ and infinite-dimensional representations.
Let for example $\pi$ be the one-dimensional representation of $\bbZ$ given by multiplication by some fixed $\alpha$ in the unit circle and $\rho$
the one-dimensional representation of $\bbZ$ given by multiplication by $-\alpha$. Let $\sigma = \pi \oplus\rho\oplus\rho\oplus\rho\cdots$. Then $\sigma\simeq \sigma\oplus\sigma$ but $\sigma\not\simeq_Z \sigma\oplus \sigma$, since it is easy to see that $\pi\oplus \pi\npreceq_Z\sigma$.

We do not know examples of weak mixing unitary representations for which weak containment and weak containment in the sense of Zimmer differ. Recall that a weak mixing representation is one that has no non-0 finite-dimensional subrepresentations.}
\end{remark}

\newpage
\section{Weak containment of measure preserving group actions}\label{2} 
\subsection{The main definition} Let now $(X,\mu )$ be a standard probability space (i.e., $X$ is a standard Borel space and $\mu$ a probability Borel measure on $X$) and let ${\rm MALG}_\mu$\index{${\rm MALG}_\mu$} be the measure algebra of  $(X,\mu )$. We denote by ${\rm Aut}(X,\mu)$\index{${\rm Aut}(X,\mu)$} the Polish group of all Borel automorphisms of $X$ which preserve the measure $\mu$ (and in which we identify two such automorphisms if they agree $\mu$-a.e.), with the weak topology. See \cite[Section 1]{K} for more details about this group.

 {\bf As it is common practice, we usually neglect null sets in the sequel, unless there is a danger of confusion.} 


For a countable group $\Gamma$, we denote by $A(\Gamma, X, \mu)$ the space of measure preserving actions of $\Gamma$ on $(X,\mu)$, i.e., homomorphisms of $\Gamma$ into ${\rm Aut}(X,\mu)$, with the weak topology, i.e., the Polish topology it inherits as a closed subspace of the product space ${\rm Aut}(X,\mu)^\Gamma$. For $a\in A(\Gamma, X, \mu), \gamma \in \Gamma$, let $\gamma^a = a(\gamma) \in {\rm Aut}(X,\mu)$\index{$\gamma^a$}.

\begin{dfn}[{\cite[Section 10]{K}}]
Let $a\in A(\Gamma, X, \mu)$,  $b\in A(\Gamma, Y, \nu)$ be two actions. We say that $a$ is {\bf weakly contained}\index{weakly contained for actions} in $b$, in symbols
\[
a\preceq b\index{$\preceq$},
\]
 if for any $A_1, \dots , A_n \in {\rm MALG}_\mu, {\rm finite} \ F\subseteq \Gamma, \epsilon >0$, there are $B_1, \dots , B_n \in {\rm MALG}_\nu$ such that $|\mu (\gamma^a(A_i)\cap A_j)-\nu (\gamma^b(B_i)\cap B_j)|<\epsilon, \forall\gamma\in F, i,j\leq n$.
\end{dfn}
Again $\preceq$ is a pre-order and we let also 
\[
a\simeq b\index{$\simeq$}\iff a\preceq b \ \& \ b\preceq a
\]
be the associated notion of {\bf weak equivalence}\index{weak equivalence for actions}.

One can check that in the definition of weak containment we may take the sets $A_1, \dots , A_n$ to belong to any countable dense subalgebra of ${\rm MALG}_\mu$ and form a partition of $X$ and then also require that the sets $B_1, \dots , B_n$ form also a partition of $Y$.

\begin{remark}
{\rm If $a\in A(\Gamma, X, \mu)$,  $b\in A(\Gamma, Y, \nu)$ are weakly equivalent, then the measure spaces $(X,\mu), (Y,\nu)$ are isomorphic (and if $a$ is weakly contained in $b$,  then $(X,\mu)$ is a factor of $(Y,\nu)$); see \cite[Proposition A.4]{T-D1}.}
\end{remark}

The following is also a characterization of weak containment in the case of non-atomic standard probability spaces.

\begin{thm}[{\cite[Proposition 10.1]{K}}]\label{t1}
Let $a\in A(\Gamma, X, \mu)$,  $b\in A(\Gamma, Y, \nu)$ with $(X,\mu), (Y, \nu)$ non-atomic. Then

\[
a\preceq b \iff a \in \overline{\{c\in A(\Gamma, X, \mu)\colon c\cong b\}},
\]
where $\cong$\index{$\cong$} denotes isomorphism\index{isomorphism (conjugacy)} (conjugacy) between actions.
\end{thm}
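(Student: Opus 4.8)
The plan rests on the classification of non-atomic standard probability spaces: any two are measure isomorphic, and more precisely, given finite partitions $X = \bigsqcup_i A_i$ and $Y = \bigsqcup_i C_i$ with $\mu(A_i) = \nu(C_i)$ for all $i$, there is a measure isomorphism $S \colon (Y,\nu) \to (X,\mu)$ carrying each $C_i$ onto $A_i$. In particular $(Y,\nu) \cong (X,\mu)$, so the set $\{c \in A(\Gamma,X,\mu) : c \cong b\}$ on the right is the (non-empty) isomorphism class of $b$ transported into $A(\Gamma,X,\mu)$, and its closure is taken in the weak topology. I will use that a neighborhood basis of $a$ in this topology is given by the sets $\{c : |\mu(\gamma^c(A_i)\cap A_j) - \mu(\gamma^a(A_i)\cap A_j)| < \epsilon\}$ for $\gamma$ ranging over a finite $F \subseteq \Gamma$ and $i,j \le n$, where (by the remark following the definition of weak containment) the $A_i$ may be taken to form a partition of $X$.

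For the direction ($\Leftarrow$), assume $a$ lies in the closure and fix sets $A_1, \dots, A_n$, a finite $F$, and $\epsilon > 0$ from the definition of weak containment. I would pick $c \cong b$ inside the corresponding basic neighborhood of $a$, say via an isomorphism $T\colon (X,\mu)\to(Y,\nu)$ with $\gamma^b = T\gamma^c T^{-1}$, and set $B_i = T(A_i)$. A one-line measure-preserving computation gives $\nu(\gamma^b(B_i)\cap B_j) = \mu(\gamma^c(A_i)\cap A_j)$, whence $|\mu(\gamma^a(A_i)\cap A_j) - \nu(\gamma^b(B_i)\cap B_j)| < \epsilon$; this is exactly the assertion $a \preceq b$. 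This direction is routine.

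The content is in ($\Rightarrow$). Assuming $a \preceq b$, I fix a basic neighborhood of $a$ given by a partition $A_1, \dots, A_n$ of $X$, a finite $F$, and $\epsilon > 0$, and I want to place a conjugate of $b$ inside it. The key reformulation is that any conjugate $c$ of $b$ on $(X,\mu)$ is of the form $\gamma^c = S\gamma^b S^{-1}$ for a measure isomorphism $S \colon (Y,\nu)\to(X,\mu)$; writing $C_i = S^{-1}(A_i)$, the family $\{C_i\}$ is a partition of $Y$ with $\nu(C_i) = \mu(A_i)$, and $\mu(\gamma^c(A_i)\cap A_j) = \nu(\gamma^b(C_i)\cap C_j)$. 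So the whole problem reduces to exhibiting a partition $\{C_i\}$ of $Y$ with $\nu(C_i) = \mu(A_i)$ \emph{exactly} and with $|\nu(\gamma^b(C_i)\cap C_j) - \mu(\gamma^a(A_i)\cap A_j)| < \epsilon$; the classification quoted above then hands me the matching $S$, and hence the desired $c \cong b$.

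To produce $\{C_i\}$, I apply $a \preceq b$ with a small tolerance $\epsilon' = \epsilon/3$, obtaining a partition $\{B_i\}$ of $Y$ with $|\nu(\gamma^b(B_i)\cap B_j) - \mu(\gamma^a(A_i)\cap A_j)| < \epsilon'$ for all $\gamma \in F$ and all $i,j$ (enlarging $F$ to contain the identity of $\Gamma$); taking $\gamma$ to be the identity and $i = j$ yields $|\nu(B_i) - \mu(A_i)| < \epsilon'$. \textbf{The main obstacle is exactly this gap}: weak containment only gives a partition whose pieces have approximately the right measures, whereas an honest conjugate of $b$ requires them to match on the nose. I overcome it by redistributing measure: since $\sum_i \nu(B_i) = \sum_i \mu(A_i) = 1$, I can trim the oversized pieces and graft the excess onto the undersized ones to obtain a partition $\{C_i\}$ with $\nu(C_i) = \mu(A_i)$ and $\nu(C_i \triangle B_i) < \epsilon'$ for every $i$ (here non-atomicity of $(Y,\nu)$ lets me carve off subsets of any prescribed small measure). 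Since each $\gamma^b$ is measure preserving, $\nu(\gamma^b(C_i)\triangle \gamma^b(B_i)) = \nu(C_i\triangle B_i) < \epsilon'$, so each intersection number changes by at most $\nu(C_i\triangle B_i) + \nu(C_j \triangle B_j) < 2\epsilon'$; combined with the $\epsilon'$ estimate this keeps the total error below $3\epsilon' = \epsilon$, as required. This completes the reduction, and hence the proof.
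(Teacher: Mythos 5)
Note first that the paper does not actually prove this theorem: it is quoted from \cite[Proposition 10.1]{K}, and only selected results are proved in the appendices. So the comparison is with that standard argument, and your proposal is correct and essentially reproduces it: the backward direction by transporting the given sets along the conjugating isomorphism (intersection numbers are preserved exactly), and the forward direction by taking the approximating partition $\{B_i\}$ of $Y$ furnished by $a\preceq b$ (the paper's remark after the definition of $\preceq$ does allow both families to be partitions), trimming and grafting using non-atomicity to get $\nu(C_i)=\mu(A_i)$ exactly with $\nu(C_i\triangle B_i)<\epsilon'$, and then invoking the isomorphism theorem for non-atomic standard probability spaces to get $S$ with $S(C_i)=A_i$, so that $c$ defined by $\gamma^c=S\gamma^b S^{-1}$ is a conjugate of $b$ with the prescribed intersection numbers. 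Your $3\epsilon'$ error bookkeeping is correct, since $|\nu(\gamma^b(C_i)\cap C_j)-\nu(\gamma^b(B_i)\cap B_j)|\leq \nu(C_i\triangle B_i)+\nu(C_j\triangle B_j)$.

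The one point you assert rather than prove --- and attribute to the wrong source --- is that the sets $\{c : |\mu(\gamma^c(A_i)\cap A_j)-\mu(\gamma^a(A_i)\cap A_j)|<\epsilon,\ \gamma\in F,\ i,j\leq n\}$, as $\{A_i\}$ ranges over finite Borel partitions, form a neighborhood basis of $a$. The paper's remark concerns the definition of weak containment, not the topology of $A(\Gamma,X,\mu)$, which is defined via the maps $c\mapsto \gamma^c(A)\in {\rm MALG}_\mu$, i.e., by the quantities $\mu(\gamma^c(A)\triangle\gamma^a(A))$. Your claim is true, and your forward direction genuinely needs it (the backward direction only needs that these sets are open, which follows from $|\mu(\gamma^c(A_i)\cap A_j)-\mu(\gamma^a(A_i)\cap A_j)|\leq \mu(\gamma^c(A_i)\triangle\gamma^a(A_i))$), but it requires a short computation: given a subbasic neighborhood $\{c: \mu(\gamma^c(A)\triangle\gamma^a(A))<\epsilon,\ \gamma\in F\}$, let $\mathcal{P}$ be the finite partition generated by $A$ and the sets $\gamma^a(A)$, $\gamma\in F$. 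Since $\mu(\gamma^c(A)\triangle\gamma^a(A))=2\mu(A)-2\mu(\gamma^c(A)\cap\gamma^a(A))$, and $\mu(\gamma^c(A)\cap\gamma^a(A))=\sum_{P\subseteq A}\sum_{Q\subseteq\gamma^a(A)}\mu(\gamma^c(P)\cap Q)$ is a sum of at most $|\mathcal{P}|^2$ intersection numbers whose value at $c=a$ is $\mu(A)$, closeness of all intersection numbers over $\mathcal{P}$ within $\delta=\epsilon/(2|\mathcal{P}|^2)$ forces $\mu(\gamma^c(A)\triangle\gamma^a(A))<\epsilon$. With this lemma supplied, your proof is complete.
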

It is thus clear that weak containment of actions is an analog of weak containment of unitary representations in the sense of Zimmer. We will later discuss in \cref{8} a variant of weak containment of actions, called stable weak containment, that corresponds to weak containment of unitary representations.

The group ${\rm Aut}(X,\mu)$ acts continuously on $A(\Gamma, X, \mu)$ by conjugation, i.e., $T\cdot a = b$ , where $\gamma^b = T\gamma^a T^{-1}$. Thus from \cref{t1}, if $(X,\mu)$ is non-atomic and $a, b \in A(\Gamma, X, \mu)$, then
\[
 a\preceq b\iff a\in \overline{{\rm Aut}(X,\mu)\cdot b}
 \]
 and 
 \[
 a\simeq b\iff  \overline{{\rm Aut}(X,\mu)\cdot a} = \overline{{\rm Aut}(X,\mu)\cdot b}.
 \]
 It follows that the pre-order $\preceq$ is $G_\delta$ in the space $A(\Gamma, X, \mu)^2$, so that in particular every weak equivalence class is a $G_\delta$ subset of $A(\Gamma, X, \mu)$ and thus a Polish space in its relative topology. Moreover its initial segments $\preceq^b =\{a\colon a\preceq b\}$ are closed. Therefore the equivalence relation $\simeq$ is smooth and thus the quotient space $A(\Gamma, X, \mu)/\simeq$ is well-behaved. We will see later in \cref{9} that it actually carries a nice compact, metrizable topology. This should be contrasted with the fact that for infinite $\Gamma$ the isomorphism equivalence relation $\cong$ is very complicated, in particular not smooth (see, e.g., \cite[Theorem 13.7]{K}), so the quotient space $A(\Gamma, X, \mu)/\cong$ is not well-behaved.

A special case of weak containment comes from factoring. Given two actions $a\in A(\Gamma, X, \mu)$,  $b\in A(\Gamma, Y, \nu)$, a {\bf homomorphism}\index{homomorphism} of $a$ to $b$ is a Borel map $f\colon X\to Y$ such that $f_* \mu = \nu$ and $f(\gamma^a (x)) = \gamma^b (f(x))$, $\mu$-a.e. $ \forall \gamma\in \Gamma$. If such a homomorphism exists, we say that $b$ is a {\bf factor}\index{factor} of $a$ or that $a$ is an {\bf extension}\index{extension} of $b$, in symbols 
\[
b\sqsubseteq a\index{$\sqsubseteq$}.
\]
It is then easy to see that 
\[
b\sqsubseteq a \implies b\preceq a
\]
but the converse in general fails, see \cref{S6}.

\begin{remark}
{\rm Although we are primarily interested in standard probability spaces, we note that the definitions of weak containment, homomorphism and factors make perfectly good sense for measure preserving actions  of countable groups on {\it arbitrary} probability spaces and we will occasionally make use of these more general notions. We keep the same notation as before in this more general context.

We note that in \cite[Theorem A]{C} it is shown that every measure preserving action on a non-atomic probability space is weakly equivalent to an action on a standard non-atomic probability space.}

\end{remark}

\subsection{Alternative descriptions}\label{22} We proceed next to see some alternative ways of describing weak containment.

\medskip
(1) Let $\Gamma = \{ \gamma_0, \gamma_1, \dots \}$ be an enumeration of $\Gamma$. Let $a\in A(\Gamma, X, \mu)$ and let $\bar{A} = \{A_0, A_1, \dots , A_{k-1}\}$ be a partition of $X$ into $k>1$ Borel pieces. For each $n >1$, let $M_{n,k}^{\bar{A}} (a) \in [0,1]^{n\times k\times k}$ be the point whose value at $(l,i,j)$, where $l <  n, i,j <k$, is equal to $\mu(\gamma_l^a (A_i) \cap A_j)$. Then let $C_{n,k}(a) $\index{$C_{n,k}(a) $} be the closure of the set $\{ M_{n,k}^{\bar{A}}(a)\colon \bar{A} \ \textrm{is a Borel partition of}  \  X \}$. Then we have 
\[
a\preceq b \iff \forall n, k (C_{n,k}(a) \subseteq C_{n,k}(b))
\]
and 
\[
a\simeq b \iff \forall n, k (C_{n,k}(a) = C_{n,k}(b)). 
\]
This description will be useful in defining later the topology of $A(\Gamma, X, \mu)/\simeq$; see \cref{101}.

\medskip
(2) (\cite{AW}; see also \cite[Section 3]{T-D1}) Let $K$ be a compact, metrizable space. We consider the product space $K^\Gamma$ and the shift action $s = s_{K,\Gamma}$\index{$s = s_{K,\Gamma}$} of $\Gamma$ on $K^\Gamma$. We denote by $M_s(K^\Gamma)$\index{$M_s(K^\Gamma)$} the compact, metrizable, convex set of shift-invariant probability Borel measures on $K^\Gamma$ with the usual weak* topology.

Let now $a\in A(\Gamma, X, \mu)$. For each Borel function $f\colon X\to K$, define $\Phi^{f,a}\colon X\to K^\Gamma$ by  $\Phi^{f,a}(x)(\gamma) = f((\gamma^a)^{-1}(x))$. Then $\Phi^{f,a}$ is $\Gamma$-equivariant and so if $\nu = (\Phi^{f,a})_* \mu$, then $\nu$ is shift-invariant, i.e., $s\in A(\Gamma, K^\Gamma , \nu)$, and $\Phi^{f,a}$ is a homomorphism of $a$ to $s$, so $s\in A(\Gamma, K^\Gamma , \nu)$ is a factor of $a$. Conversely if $s\in A(\Gamma, K^\Gamma , \nu)$, for some shift-invariant probability Borel measure $\nu$ on $K^\Gamma$, is a factor of $a$, via the homomorphism $\Phi$, then for $f(x) = \Phi(x)(e_\Gamma)$ we have
$\Phi^{f,a} = \Phi$ and $\nu = (\Phi^{f,a})_* \mu$ (here $e_\Gamma$ is the identity of the group $\Gamma$).\index{$e_\Gamma$}

Let now 
\[
E(a, K) = \{ (\Phi^{f,a})_* \mu\colon f\colon X\to K, f \ \textrm{Borel}\}\index{$E(a, K)$}
\]
Thus $E(a,K)$ is the set of all shift-invariant probability Borel measures $\nu$ on $K^\Gamma$ such that $s\in A(\Gamma, K^\Gamma , \nu)$ is a factor of $a$.

We now have:
\begin{thm}[{\cite[Lemma 8]{AW}; see also \cite[Proposition 3.6]{T-D1}}]\label{25}
 The following are equivalent for any two actions $a\in A(\Gamma, X, \mu),b\in A(\Gamma, Y, \nu)$:
\begin{enumerate}[\upshape (i)]
\item $a \preceq b$.
\item For each compact, metrizable space $K$, $E(a,K) \subseteq \overline{E(b,K)}$.
\item For $K= 2^\bbN$, $E(a,K) \subseteq \overline{E(b,K)}$.
\item For each finite space $K$, $E(a,K) \subseteq \overline{E(b,K)}$.
\end{enumerate}

\end{thm}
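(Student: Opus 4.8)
The plan is to prove the cycle of implications (i) $\Rightarrow$ (ii) $\Rightarrow$ (iii) $\Rightarrow$ (iv) $\Rightarrow$ (i), exploiting the correspondence, already established in the text, between Borel functions $f\colon X\to K$ and the factor maps $\Phi^{f,a}$ onto shift systems $s\in A(\Gamma, K^\Gamma,\nu)$. The conceptual content is that $E(a,K)$ records exactly which shift-invariant measures arise as factors of $a$, and weak containment should manifest as the statement that every such measure for $a$ can be approximated in the weak* topology by one for $b$. Throughout I would use the reformulation from the text allowing the test sets $A_1,\dots,A_n$ to be taken from a partition of $X$; this meshes perfectly with finite alphabets $K$.

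\medskip
For \textbf{(i) $\Rightarrow$ (ii)}, fix a compact metrizable $K$ and a measure $\nu_0=(\Phi^{f,a})_*\mu\in E(a,K)$. A weak* basic neighborhood of $\nu_0$ in $M_s(K^\Gamma)$ is determined by finitely many continuous test functions on $K^\Gamma$ and an error $\epsilon$; such functions depend (up to small error) on finitely many coordinates $\gamma\in F$. First I would reduce matters to approximating the joint distribution of the finite family of coordinate maps $x\mapsto f((\gamma^a)^{-1}x)$, $\gamma\in F$. By partitioning $K$ into finitely many small pieces, this joint distribution is encoded in quantities of the form $\mu(\gamma^a(A_i)\cap A_j)$, which is precisely what $a\preceq b$ lets me match by choosing sets $B_i\in{\rm MALG}_\nu$. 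The slightly delicate point is to manufacture from the matching sets $B_i$ an actual Borel function $g\colon Y\to K$ whose induced measure $(\Phi^{g,b})_*\mu$ lands in the prescribed neighborhood; here one lets $g$ take the value of a chosen point in the $i$-th piece of $K$ on $B_i$, and checks that the resulting correlations are within $\epsilon$.

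\medskip
The implications \textbf{(ii) $\Rightarrow$ (iii) $\Rightarrow$ (iv)} are essentially trivial specializations: (iii) is the instance $K=2^\bbN$, and (iv) follows because any finite $K$ embeds as a clopen subset of $2^\bbN$, inducing a continuous equivariant map $K^\Gamma\to (2^\bbN)^\Gamma$ that carries $E(a,K)$ and $\overline{E(b,K)}$ compatibly into the corresponding sets for $2^\bbN$; I would spell out only that this embedding is weak*-continuous so that inclusion of closures is preserved. \textbf{(iv) $\Rightarrow$ (i)} closes the loop: given test data $A_1,\dots,A_n,F,\epsilon$, I take $A_1,\dots,A_n$ to be a partition (invoking the reformulation in the text), let $K=\{1,\dots,n\}$, and define $f\colon X\to K$ by $f=i$ on $A_i$. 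Then $(\Phi^{f,a})_*\mu\in E(a,K)$, and the coordinates of this measure at $\gamma\in F$ are exactly the numbers $\mu(\gamma^a(A_i)\cap A_j)$. By (iv) there is $\nu'\in E(b,K)$ within $\epsilon$ in the relevant finitely many coordinates, and unravelling $\nu'=(\Phi^{g,b})_*\mu$ yields sets $B_i=g^{-1}(i)$ that witness weak containment.

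\medskip
\textbf{The main obstacle} I expect is the direction (i) $\Rightarrow$ (ii) for general compact metrizable $K$, specifically the passage from finitely many approximated correlations back to a genuine Borel function $g$ on $Y$ realizing a nearby invariant measure. The subtlety is that matching pairwise overlaps $\mu(\gamma^a(A_i)\cap A_j)$ controls only a two-dimensional marginal of the shift-invariant measure, whereas a point of $K^\Gamma$ carries a full $|F|$-fold joint distribution; one must verify that controlling the relevant two-coordinate overlaps across all of $F$ suffices to approximate the finitely many continuous test functions defining the neighborhood, and that no measurability pathology arises when refining the partition of $K$. Once the finite-alphabet case is isolated, however, the general case reduces to it by the standard density of finite-range functions, so I would organize the argument to concentrate all real work in the finite-$K$ setting and treat (iv) $\Rightarrow$ (i) as the essential equivalence.
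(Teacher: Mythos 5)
A preliminary remark: the paper does not actually prove this theorem — it is quoted from [AW, Lemma 8] and [T-D1, Proposition 3.6] — so your proposal has to stand on its own merits. Its architecture (the cycle (i) $\Rightarrow$ (ii) $\Rightarrow$ (iii) $\Rightarrow$ (iv) $\Rightarrow$ (i)) is sensible, and the easy legs are fine: (ii) $\Rightarrow$ (iii) is a specialization, and (iv) $\Rightarrow$ (i) works exactly as you say, since for finite $K$ the two-coordinate cylinder masses of $(\Phi^{f,a})_*\mu$ (coordinates $e_\Gamma$ and $\gamma$) are precisely the numbers $\mu(\gamma^a(A_i)\cap A_j)$, and cylinders are clopen, so weak* proximity recovers the definition of $\preceq$.

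The genuine gap is in (i) $\Rightarrow$ (ii)/(iv), and it is exactly the point you flag as "the main obstacle" and then leave unresolved. Your claim that the joint distribution of $(f((\gamma^a)^{-1}x))_{\gamma\in F}$ "is encoded in quantities of the form $\mu(\gamma^a(A_i)\cap A_j)$" is false: a weak* neighborhood of $(\Phi^{f,a})_*\mu$ constrains the $|F|$-fold cylinder masses $\mu\bigl(\bigcap_{\gamma\in F}\gamma^a(A_{\tau(\gamma)})\bigr)$, $\tau\in\{1,\dots,n\}^F$, and two-coordinate marginals do not determine these (pairwise independence does not imply joint independence), so matching the pairwise overlaps of $A_1,\dots,A_n$ by sets $B_i$ proves nothing about the measures in $E(b,K)$. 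The missing idea is a refinement trick: apply the definition of $a\preceq b$ not to $\{A_i\}$ but to the finer partition $C_\sigma=\bigcap_{\gamma\in F}\gamma^a(A_{\sigma(\gamma)})$, $\sigma\in\{1,\dots,n\}^F$, in the strengthened partition-to-partition form stated after the definition in \cref{2}, with group elements from $F$ (containing $e_\Gamma$) and a small $\delta$; if $\{D_\sigma\}$ is the resulting partition of $Y$, set $B_i=\bigcup\{D_\sigma\colon \sigma(e_\Gamma)=i\}$. The key combinatorial fact is that $\mu(\gamma^a(C_\sigma)\cap C_{\sigma'})=0$ unless $\sigma(\gamma^{-1}\delta)=\sigma'(\delta)$ for all $\delta\in F\cap\gamma F$, so all the "incompatible" quantities $\nu(\gamma^b(D_\sigma)\cap D_{\sigma'})$ are below $\delta$, and a counting argument then shows $\nu\bigl(\bigcap_{\gamma\in F}\gamma^b(B_{\tau(\gamma)})\bigr)$ agrees with $\nu(D_\tau)$, hence with $\mu(C_\tau)$, up to $N\delta$ for a constant $N$ depending only on $n$ and $|F|$. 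Without some such argument the central direction of the theorem is simply not proved. (A smaller error: a finite set is never clopen in $2^\bbN$; for (iii) $\Rightarrow$ (iv) the embedding $K\hookrightarrow 2^\bbN$ alone does not suffice, since the approximating measures in $E(b,2^\bbN)$ need not be supported on $\iota(K)^\Gamma$ — you also need a continuous retraction $r\colon 2^\bbN\to K$, obtained from a clopen partition separating the image points, so that $(r^\Gamma)_*$ pushes them back into $E(b,K)$.)
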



\medskip
(3) (\cite[Section 2]{IT-D}) Let $a\in A(\Gamma, X, \mu)$,  $b\in A(\Gamma, Y, \nu)$. We say that $b$ is an {\bf approximate factor}\index{approximate factor} of $a$, in symbols
\[
b\sqsubseteq_{ap} a\index{$\sqsubseteq_{ap}$}
\]
if there are measure preserving Borel maps $f_n\colon X \to Y$ such that for any Borel $A\subseteq Y$ and $\gamma \in \Gamma$, we have that 
\[
\mu(\gamma^a (f_n^{-1}(A))\Delta f_n^{-1}(\gamma^b(A)))\to 0.
\]
Clearly $b\sqsubseteq a \implies b\sqsubseteq_{ap} a$. We now have, using, e.g.,  \cref{t1}.

\begin{pro}[\cite{IT-D}, Lemma 2.2] Let $a\in A(\Gamma, X, \mu)$,  $b\in A(\Gamma, Y, \nu)$ with $(X, \mu) , (Y, \nu)$ non-atomic. Then 
\[
b\preceq a \iff b\sqsubseteq_{ap} a.
\]
\end{pro}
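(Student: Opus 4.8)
The two directions have quite different character: the implication $b\sqsubseteq_{ap}a\implies b\preceq a$ is elementary and in fact holds for arbitrary probability spaces, whereas the reverse implication is where the non-atomicity hypothesis, through \cref{t1}, does the real work.

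For the easy direction I would unwind the definition of weak containment directly. Given $B_1,\dots,B_m\in{\rm MALG}_\nu$, a finite $F\subseteq\Gamma$ and $\epsilon>0$, take a measure preserving $f_n\colon X\to Y$ witnessing $b\sqsubseteq_{ap}a$ and set $A_i=f_n^{-1}(B_i)$. Since $f_n$ is measure preserving, $\nu(\gamma^b(B_i)\cap B_j)=\mu\big(f_n^{-1}(\gamma^b(B_i))\cap f_n^{-1}(B_j)\big)$, and the elementary estimate $|\mu(P\cap Q)-\mu(P'\cap Q)|\le\mu(P\,\Delta\,P')$ together with the defining property $\mu\big(\gamma^a(f_n^{-1}(B_i))\,\Delta\,f_n^{-1}(\gamma^b(B_i))\big)\to0$ shows that $\mu(\gamma^a(A_i)\cap A_j)\to\nu(\gamma^b(B_i)\cap B_j)$ for each fixed $\gamma,i,j$. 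As $F$ and the index set are finite, choosing $n$ large makes all the discrepancies below $\epsilon$, which is exactly the defining condition for $b\preceq a$. This step needs no appeal to \cref{t1}.

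For the converse I would pass through the closure-of-conjugacy-class reformulation. Since $(X,\mu)$ and $(Y,\nu)$ are non-atomic standard, fix a measure isomorphism $\theta\colon(Y,\nu)\to(X,\mu)$ and transport $a$ to the isomorphic action $a'\in A(\Gamma,Y,\nu)$ given by $\gamma^{a'}=\theta^{-1}\gamma^a\theta$. Weak containment is an isomorphism invariant, so $b\preceq a\iff b\preceq a'$, and by \cref{t1} (in the form $b\preceq a'\iff b\in\overline{{\rm Aut}(Y,\nu)\cdot a'}$) there are $S_n\in{\rm Aut}(Y,\nu)$ with $S_n\cdot a'\to b$ in $A(\Gamma,Y,\nu)$; that is, $\nu\big(S_n\gamma^{a'}S_n^{-1}(A)\,\Delta\,\gamma^b(A)\big)\to0$ for every Borel $A\subseteq Y$ and $\gamma\in\Gamma$. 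I would then define the measure preserving maps $f_n\colon X\to Y$ by $f_n=S_n\circ\theta^{-1}$, so that $f_n^{-1}(A)=\theta(S_n^{-1}(A))$. Using the intertwining identity $\gamma^a\circ\theta=\theta\circ\gamma^{a'}$ and the fact that $\theta$ and $S_n$ preserve measure, a short computation rewrites $\mu\big(\gamma^a(f_n^{-1}(A))\,\Delta\,f_n^{-1}(\gamma^b(A))\big)$ as exactly $\nu\big(S_n\gamma^{a'}S_n^{-1}(A)\,\Delta\,\gamma^b(A)\big)$, which tends to $0$. Hence $b\sqsubseteq_{ap}a$.

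The only real obstacle is organizational rather than conceptual: getting the composition $f_n=S_n\circ\theta^{-1}$ and the direction of the inverses right so that the symmetric difference collapses precisely onto the quantity controlled by $S_n\cdot a'\to b$. I would verify the preimage identity $f_n^{-1}(A)=\theta(S_n^{-1}(A))$ and the equivariance $\gamma^a\theta=\theta\gamma^{a'}$ before substituting; once these are in hand the rest is a one-line cancellation. The essential mathematical content is already packaged in \cref{t1}, so this proposition is in effect a translation of that characterization into the language of approximate factors.
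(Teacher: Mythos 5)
Your proof is correct and follows exactly the route the paper intends: the paper derives this proposition by citing \cref{t1}, and your argument is a careful implementation of that reduction (transporting $a$ to an isomorphic action on $(Y,\nu)$, extracting conjugators $S_n$ with $S_n\cdot a'\to b$, and converting them into the approximating maps $f_n=S_n\circ\theta^{-1}$), together with the routine verification of the easy direction. Both the easy direction's estimate $|\mu(P\cap Q)-\mu(P'\cap Q)|\le\mu(P\,\Delta\,P')$ and the symmetric-difference cancellation in the converse check out, so there is nothing to add.
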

(4) (\cite[Section 5]{CKT-D}) Our next description involves the concept of ultrapower of actions, see, e.g., (\cite[Section 4]{CKT-D}). For an action $a$ and a non-principal ultrafilter $\mathcal{U}$ on $\bbN$, we denote by $a_\mathcal{U}$\index{$a_\mathcal{U}$} the {\bf ultrapower}\index{ultrapower} of $a$ by $\mathcal{U}$.

\begin{thm}[{\cite[Corollary 5.4]{CKT-D}}]\label{2.6} Let $a\in A(\Gamma, X, \mu)$,  $b\in A(\Gamma, Y, \nu)$ with $(X, \mu) , (Y, \nu)$ non-atomic, and let $\mathcal{U}$
be a non-principal ultrafilter on $\bbN$. Then
\[
b\preceq a \iff b\sqsubseteq a_\mathcal{U}.
\]
\end{thm}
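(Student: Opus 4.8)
The plan is to prove $b \preceq a \iff b \sqsubseteq a_{\mathcal{U}}$ by exploiting \cref{t1} together with the two fundamental properties of the ultrapower $a_{\mathcal{U}}$: that it is an action on a non-atomic standard probability space, and that it ``absorbs'' the approximations witnessing weak containment into genuine equality via the averaging/diagonal properties of the ultrafilter $\mathcal{U}$. The forward direction is the substantial one, so I would handle the easy direction first.

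For the direction $b \sqsubseteq a_{\mathcal{U}} \implies b \preceq a$, I would argue as follows. A factor map gives $b \preceq a_{\mathcal{U}}$, since factoring implies weak containment even in the general (non-standard) setting noted in the Remark. Thus it suffices to show $a_{\mathcal{U}} \preceq a$. This should follow directly from the construction of the ultrapower: any finitely many sets $B_1,\dots,B_n$ in the measure algebra of $a_{\mathcal{U}}$ are represented by sequences $(B_i^{(m)})_m$ of sets in the measure algebra of $a$, and the inner products $\mu_{\mathcal{U}}(\gamma^{a_{\mathcal{U}}}(B_i) \cap B_j) = \lim_{m \to \mathcal{U}} \mu(\gamma^a(B_i^{(m)}) \cap B_j^{(m)})$ are $\mathcal{U}$-limits of the corresponding quantities at finite stages. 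Hence for any $\epsilon > 0$ and finite $F$, the index set of $m$ for which all the relevant quantities agree within $\epsilon$ is in $\mathcal{U}$ and in particular nonempty, so picking such an $m$ gives the required sets in the measure algebra of $a$. This yields $a_{\mathcal{U}} \preceq a$, and combined with $b \preceq a_{\mathcal{U}}$ and transitivity of $\preceq$ we get $b \preceq a$.

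For the harder direction $b \preceq a \implies b \sqsubseteq a_{\mathcal{U}}$, the idea is to construct an honest factor map from $a_{\mathcal{U}}$ onto $b$ by assembling, from the approximate homomorphisms guaranteed by weak containment, a single exact homomorphism in the ultrapower. Concretely, since $(X,\mu)$ and $(Y,\nu)$ are non-atomic, \cref{t1} (applied after identifying $(Y,\nu)$ with $(X,\mu)$) gives a sequence of conjugates of $a$ converging to $b$; equivalently there are measure preserving maps realizing better and better approximate factorizations of $b$ by $a$. I would choose, for each $m$, a finite partition refining the first $m$ basic sets and a map $f_m \colon X \to Y$ whose induced joining with $b$ agrees with a genuine factor structure up to error $1/m$ on the first $m$ group elements. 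Taking the $\mathcal{U}$-limit $f_{\mathcal{U}} = [(f_m)_m]$ yields a map from the ultrapower domain to $Y$; the point is that the approximation errors, which tend to $0$, vanish in the $\mathcal{U}$-limit, so $f_{\mathcal{U}}$ becomes an exact measure preserving $\Gamma$-equivariant map, i.e.\ an honest factor map witnessing $b \sqsubseteq a_{\mathcal{U}}$.

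The main obstacle I anticipate is the bookkeeping required to make the diagonal map $f_{\mathcal{U}}$ well-defined on the ultrapower and to verify that it is genuinely measure preserving and equivariant rather than merely approximately so. One must check that the $\mathcal{U}$-limit of the approximate maps descends to a well-defined Borel map on the ultrapower space (modulo null sets), that $\nu$ is recovered exactly as the pushforward, and that equivariance $f_{\mathcal{U}}(\gamma^{a_{\mathcal{U}}}(\cdot)) = \gamma^b(f_{\mathcal{U}}(\cdot))$ holds $\mu_{\mathcal{U}}$-a.e.\ for every $\gamma$. These verifications rest on the standard machinery of ultrapowers of measure preserving actions developed in \cite[Section 4]{CKT-D}, and the key technical input is that a sequence of quantities converging to $0$ has $\mathcal{U}$-limit exactly $0$, which is what upgrades ``approximate'' to ``exact'' in the ultrapower. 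I would invoke that machinery rather than reconstruct it.
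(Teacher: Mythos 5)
You should first note that the paper contains no proof of \cref{2.6}: it is quoted from \cite[Corollary 5.4]{CKT-D}, so there is nothing internal to compare against. Your argument is, in outline, exactly the one in that source (and the one implicit in the paper's surrounding discussion of \cref{t1}, the approximate-factor characterization, and \cref{28a}): the easy direction via $b\preceq a_{\mathcal{U}}\preceq a$ and transitivity, and the hard direction by assembling approximate factor maps into an exact factor of the ultrapower. Both halves are essentially right. One small point in the easy direction: only the \emph{internal} sets $[(B_i^{(m)})]_{\mathcal{U}}$ are literally represented by sequences; the Loeb measure algebra is strictly larger. This costs nothing because the internal sets form a dense subalgebra of ${\rm MALG}_{\mu_{\mathcal{U}}}$, and weak containment only needs to be checked on a dense subalgebra, but that reduction should be stated.

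The one step where your write-up, taken literally, would break is the claim that the pointwise ultralimit $f_{\mathcal{U}}=[(f_m)_m]$ is automatically $\Gamma$-equivariant because ``errors tending to $0$ have $\mathcal{U}$-limit $0$.'' The errors you actually control are set-level: $\mu(\gamma^a(f_m^{-1}(A))\,\Delta\, f_m^{-1}(\gamma^b(A)))\to 0$. To get the pointwise statement $\lim_{\mathcal{U}} f_m(\gamma^a x_m)=\gamma^b\bigl(\lim_{\mathcal{U}} f_m(x_m)\bigr)$ you must pass an ultralimit through $\gamma^b$, and $\gamma^b$ is only a Borel automorphism of $Y$, not a continuous map, so this does not follow from the key technical input you cite. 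There are two standard repairs. Either realize $(Y,\nu,b)$ as a shift action on $K^\Gamma$ with $K$ compact zero-dimensional (as in \cref{22}, (2)), where the action is by homeomorphisms and the clopen sets form a countable generating algebra containing arbitrarily fine partitions; then the set-level errors do yield closeness in measure of $f_m\circ\gamma^a$ and $\gamma^b\circ f_m$, and the pointwise ultralimit argument goes through. Or avoid points altogether: define the Boolean embedding $\Phi(A)=[(f_m^{-1}(A))]_{\mathcal{U}}$ of ${\rm MALG}_\nu$ into the Loeb algebra, note that it is measure preserving and that its equivariance is now \emph{exactly} the statement that a sequence tending to $0$ has $\mathcal{U}$-limit $0$, and then invoke point realization of measure-algebra embeddings into a standard target $(Y,\nu)$ to produce the factor map. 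With either repair your proof is complete, and it coincides with the argument of \cite{CKT-D}.
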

The following consequence of \cref{2.6} gives another connection between weak containment and factoring.
\begin{pro}[{\cite[Proposition 5.7]{CKT-D}, \cite[Corollary 3.1]{AE1}}]\label{28a} Let $a\in A(\Gamma, X, \mu)$,  $b\in A(\Gamma, Y, \nu)$ with $(X, \mu) , (Y, \nu)$ non-atomic. Then
\[
b\preceq a \iff \exists c\in A(\Gamma, X, \mu)( c\simeq a \ \&  \ b\sqsubseteq c).
\]
\end{pro}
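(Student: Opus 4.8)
\emph{Proof proposal.} The plan is to handle the two directions separately, with the forward direction carried by the ultrapower characterization \cref{2.6} together with a separability (L\"owenheim--Skolem type) argument. The reverse implication is immediate from transitivity of $\preceq$: if $c \simeq a$ and $b \sqsubseteq c$, then $b \preceq c$ (factoring implies weak containment) and $c \preceq a$, whence $b \preceq a$.

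For the forward implication, assume $b \preceq a$. By \cref{2.6} we have $b \sqsubseteq a_\mathcal{U}$ for a non-principal ultrafilter $\mathcal{U}$ on $\bbN$; fix a homomorphism $\pi$ realizing $b$ as a factor of $a_\mathcal{U}$. The idea is to carve out of the (non-separable) ultrapower a countably generated $a_\mathcal{U}$-invariant factor $c'$ lying between $b$ and $a_\mathcal{U}$ that is weakly equivalent to $a$, and then transport it to $(X,\mu)$. First I record two facts about the ultrapower. Applying \cref{2.6} with $b:=a$ gives $a \sqsubseteq a_\mathcal{U}$, hence $a \preceq a_\mathcal{U}$. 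Conversely, writing a class in ${\rm MALG}_{\mu_\mathcal{U}}$ as a $\mathcal{U}$-class of sequences $(A^{(n)})_n$ from ${\rm MALG}_\mu$ and using that $\mu_\mathcal{U}(\gamma^{a_\mathcal{U}}(A_i)\cap A_j) = \lim_\mathcal{U}\mu(\gamma^a(A_i^{(n)})\cap A_j^{(n)})$, every finite configuration in $a_\mathcal{U}$ is realized up to any $\epsilon$ at a $\mathcal{U}$-large set of coordinates, so $a_\mathcal{U} \preceq a$. Thus $a \simeq a_\mathcal{U}$; this is where I expect the bookkeeping, though it is routine from the definition (which applies to arbitrary probability spaces).

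Now I build the subalgebra. Fix a countable dense subalgebra of ${\rm MALG}_\mu$ and enumerate all requirements consisting of a finite partition $\bar{P}$ drawn from it, a finite $F \subseteq \Gamma$, and a rational $\epsilon > 0$; since $a \preceq a_\mathcal{U}$, for each requirement choose a finite partition of the ultrapower space witnessing the corresponding approximation. Let $\mathcal{C}_0 \subseteq {\rm MALG}_{\mu_\mathcal{U}}$ be the countable family of all these witnessing sets together with the pullbacks $\pi^{-1}(B)$ of a countable dense subalgebra of ${\rm MALG}_\nu$, and let $\mathcal{B}$ be the smallest $a_\mathcal{U}$-invariant $\sigma$-subalgebra of ${\rm MALG}_{\mu_\mathcal{U}}$ containing $\mathcal{C}_0$. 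Then $\mathcal{B}$ is countably generated, so the restriction $c'$ of $a_\mathcal{U}$ to $\mathcal{B}$ is an action on a standard probability space. By construction $\pi^{-1}({\rm MALG}_\nu) \subseteq \mathcal{B}$, so $\pi$ exhibits $b$ as a factor of $c'$, i.e. $b \sqsubseteq c'$; the witnessing partitions, which are dense, give $a \preceq c'$; and $c' \preceq a_\mathcal{U} \preceq a$ since $c'$ is a factor of $a_\mathcal{U}$. Hence $c' \simeq a$. As $c' \simeq a$ and $a$ lives on the non-atomic standard space $(X,\mu)$, the space of $c'$ is isomorphic to $(X,\mu)$ (see \cite[Proposition A.4]{T-D1}), so transporting $c'$ through such an isomorphism yields $c \in A(\Gamma, X, \mu)$ with $c \cong c' \simeq a$ and $b \sqsubseteq c$, as required.

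The main obstacle I anticipate is the separability step: ensuring that the single countably generated invariant subalgebra $\mathcal{B}$ simultaneously retains $b$ as a factor and still weakly contains $a$, while being genuinely closed under both the $\Gamma$-action and the Boolean operations. This closure must be performed with some care, but it becomes routine once the two ultrapower facts $a \preceq a_\mathcal{U}$ and $a_\mathcal{U} \preceq a$ are in place.
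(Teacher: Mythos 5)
Your proof is correct and takes exactly the route the paper intends: the reverse direction by transitivity, and the forward direction by combining the ultrapower characterization (\cref{2.6}) with the two facts $a\preceq a_\mathcal{U}$ and $a_\mathcal{U}\preceq a$, then cutting down to a countably generated invariant $\sigma$-subalgebra containing both $\pi^{-1}$ of a dense subalgebra of ${\rm MALG}_\nu$ and witnesses for $a\preceq a_\mathcal{U}$, so that the corresponding standard factor $c'$ satisfies $b\sqsubseteq c'\simeq a$. This is the same separable-factor technique the paper itself deploys in Appendix A (\cref{app}), so there is nothing to flag beyond the routine closure details you already acknowledge.
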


\medskip
(5) A final description of weak containment, due to Martino Lupini, uses the concepts of the model theory of metric structures for which we refer to \cite{BYBHU}. Each action $a\in A(\Gamma, X, \mu)$ can be identified with the metric structure $\bbM_a= \langle {\rm MALG}_\mu, d_\mu, \mu, \emptyset,X, \cap,\cup, -, \{\gamma^a\}_{\gamma\in\Gamma}\rangle$,\index{$\bbM_a$} where we let $d_\mu (A,B) = \mu(A\Delta B)$\index{$d_\mu$}, $\mu$ is a unary predicate, $\emptyset, X$ are constants, $ \cap,\cup, -$ are the Boolean operations in  ${\rm MALG}_\mu$, and for each $\gamma\in \Gamma$, $\gamma^a$ is viewed as a unary function on  ${\rm MALG}_\mu$.

An {\bf infimum formula}\index{infimum formula} in the language of this structure is one of the form $\inf_{x_1}\inf_{x_2}\cdots\inf_{x_n}\varphi$, where $\varphi$ is a quantifier-free formula. It is an {\bf infimum sentence\index{infimum sentence}} if in addition it has no free variables. Finally, for each $a\in A(\Gamma, X, \mu)$ and infimum sentence $\varphi$, let $\varphi^a \in [0,1]$ be the interpretation of $\varphi$ in the structure $\bbM_a$. Then we have:

\begin{thm}\label{contlog}
Let  Let $a\in A(\Gamma, X, \mu)$,  $b\in A(\Gamma, Y, \nu)$ with $(X, \mu) , (Y, \nu)$ non-atomic. Then the following are equivalent:
\begin{enumerate}[\upshape (i)]
\item $a \preceq b$.
\item For every infimum sentence $\varphi$, $\varphi^b\leq\varphi^a$.

\end{enumerate}

\end{thm}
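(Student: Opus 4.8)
The plan is to prove the equivalence of weak containment with the preservation of all infimum sentences by working directly with the combinatorial definition of $\preceq$ and translating it into the continuous logic framework. The key observation is that the metric structure $\bbM_a$ encodes precisely the data $\mu(\gamma^a(A_i) \cap A_j)$ that appears in the definition of weak containment, so the two notions should match once we understand how infimum sentences quantify over tuples of elements of the measure algebra.

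For the direction (ii) $\implies$ (i), I would start from the definition of $a \preceq b$. Given $A_1, \dots, A_n \in {\rm MALG}_\mu$, a finite set $F \subseteq \Gamma$, and $\epsilon > 0$, I want to produce matching sets $B_1, \dots, B_n \in {\rm MALG}_\nu$. The strategy is to design, for each such configuration, an appropriate \emph{infimum sentence} $\varphi$ that measures the best possible approximation in $b$. Concretely, consider the quantifier-free formula
\[
\psi(x_1, \dots, x_n) = \max_{\gamma \in F,\ i,j \leq n} \bigl| \mu(\gamma(x_i) \cap x_j) - r_{\gamma,i,j} \bigr|,
\]
where $r_{\gamma,i,j} = \mu(\gamma^a(A_i) \cap A_j)$ are the target constants, and form the infimum sentence $\varphi = \inf_{x_1} \cdots \inf_{x_n} \psi$. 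Here I interpret $\mu(\gamma(x_i) \cap x_j)$ using the predicate $\mu$, the function symbol $\gamma^a$ (resp.\ $\gamma^b$), and the intersection operation, all of which are part of the language. Since the witnesses $A_1, \dots, A_n$ make $\psi$ vanish in $\bbM_a$, we have $\varphi^a = 0$. By (ii), $\varphi^b \leq \varphi^a = 0$, so $\varphi^b = 0$, which means there exist $B_1, \dots, B_n$ in ${\rm MALG}_\nu$ making $\psi$ arbitrarily small; this is exactly the required approximation, giving $a \preceq b$.

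For the direction (i) $\implies$ (ii), I would argue by induction on the structure of the quantifier-free part, or more efficiently invoke the uniform continuity of terms and the density of a countable subalgebra. The essential point is that any infimum sentence $\varphi = \inf_{\bar{x}} \psi$ has the property that $\varphi^a = \inf_{\bar{A}} \psi^a(\bar{A})$, where $\bar{A}$ ranges over tuples from ${\rm MALG}_\mu$, and $\psi$ is uniformly continuous in its arguments with respect to $d_\mu$. Given any tuple $\bar{A}$ nearly achieving the infimum in $\bbM_a$, weak containment lets me find $\bar{B}$ in ${\rm MALG}_\nu$ whose finitely many intersection-measures $\nu(\gamma^b(B_i) \cap B_j)$ approximate those of $\bar{A}$; since $\psi$ is built from these quantities by continuous operations, $\psi^b(\bar{B})$ approximates $\psi^a(\bar{A})$, and hence $\varphi^b \leq \psi^b(\bar{B}) \leq \psi^a(\bar{A}) + \text{(small)}$. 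Taking infima and letting the error go to zero yields $\varphi^b \leq \varphi^a$.

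\textbf{The main obstacle} I anticipate is the bookkeeping in the (i) $\implies$ (ii) direction: I must verify that \emph{every} quantifier-free formula $\psi$ in this language is uniformly continuous with respect to $d_\mu$ on tuples, and that its value depends only on the measures of Boolean combinations $\gamma^a(A_i) \cap \cdots$ that weak containment controls. The Boolean operations are $d_\mu$-Lipschitz and the predicate $\mu$ is $1$-Lipschitz, so a careful induction on term and formula complexity should establish the needed modulus of continuity; the subtlety is that connectives in continuous logic (such as truncated subtraction and multiplication by constants, and the continuous functions applied to atomic formulas) must all be handled, but each is uniformly continuous, so the composition remains uniformly continuous. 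Once this uniform continuity is in hand, the approximation of finitely many intersection-measures afforded by $a \preceq b$ propagates through $\psi$, and the argument closes. The non-atomicity hypothesis ensures, via \cref{t1}, that we are comparing genuinely isomorphic measure algebras and that the infima are taken over the same space of configurations.
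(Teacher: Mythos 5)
Your direction (ii)$\implies$(i) is correct, and it is exactly the intended argument: the sentence you construct is the $\varphi_{n,k,r}$ of \cref{211} with $r$ equal to the pattern of $a$, so that $\varphi^a=0$, and $\varphi^b\leq\varphi^a$ forces $\varphi^b=0$, which unwinds precisely to the definition of $a\preceq b$. (Note that this half uses no non-atomicity at all.)

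The direction (i)$\implies$(ii) has a genuine gap. You claim that the value of a quantifier-free formula $\psi(\bar{x})$ "is built from these quantities by continuous operations," where "these quantities" are the numbers $\mu(\gamma(x_i)\cap x_j)$ that the definition of weak containment controls. That is false: an atomic formula is the measure (or $d_\mu$-distance) of arbitrary terms, and such values are not functions of the pairwise data. Take $\psi(x_1,x_2,x_3)=\mu(x_1\cap x_2\cap x_3)$, let $A_1,A_2$ be independent sets of measure $\frac{1}{2}$, and compare $A_3=X\setminus(A_1\Delta A_2)$ with a set $A_3'$ independent of $A_1,A_2$ of measure $\frac{1}{2}$: both triples have identical single and pairwise intersection measures (all pairwise intersections have measure $\frac{1}{4}$), yet $\mu(A_1\cap A_2\cap A_3)=\frac{1}{4}$ while $\mu(A_1\cap A_2\cap A_3')=\frac{1}{8}$. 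So a tuple $\bar{B}$ supplied by the definition of $\preceq$ need not make $\psi^{\bbM_b}(\bar{B})$ close to $\psi^{\bbM_a}(\bar{A})$, and the propagation step fails for any formula involving three or more sets or translates. The repair is to use \cref{t1} in earnest --- this, not the identification of measure algebras, is where non-atomicity enters. Realize both actions on $(X,\mu)$ and write $a=\lim_n c_n$ in the weak topology with $c_n\cong b$. Isomorphic actions have isomorphic metric structures, so $\varphi^{c_n}=\varphi^b$ for every infimum sentence $\varphi=\inf_{\bar{x}}\psi$. Now fix a tuple $\bar{A}$: every term is equivalent, in all the structures $\bbM_c$, to a Boolean combination of single translates $w(x_i)$ with $w\in\Gamma$ (automorphisms commute with the Boolean operations and $\gamma^{c}\delta^{c}=(\gamma\delta)^{c}$); weak convergence gives $d_\mu(w^{c_n}(A_i),w^{a}(A_i))\to 0$ for each fixed $A_i$ and $w$, Boolean operations are $d_\mu$-Lipschitz, and $\mu$, $d_\mu$ and the connectives are continuous, so $\psi^{c_n}(\bar{A})\to\psi^{a}(\bar{A})$. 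Hence $\varphi^b=\varphi^{c_n}\leq\psi^{c_n}(\bar{A})\to\psi^{a}(\bar{A})$, and taking the infimum over $\bar{A}$ yields $\varphi^b\leq\varphi^a$. (Equivalently, one could first prove the strengthened form of weak containment in which all cell measures $\mu\bigl(\bigcap_k\gamma_k^a(A_{i_k})\bigr)$ are matched, but that strengthening is itself most easily derived from \cref{t1}, so one may as well invoke it directly.)
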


\begin{remark}\label{211}
{\rm Let $\Gamma = \{\gamma_0, \gamma_1, \dots \}$. For each $n, k >0, r \in [0,1]^{n\times k\times k}$, consider the infimum sentence
\[
\varphi_{n,k,r} = \inf_{x_0}\inf_{x_1}\cdots\inf_{x_{k-1}}\max_{i,j<k, l< n} |\mu (\gamma_l (x_i)\cap x_j) - r(l,i,j)|.
\]
Then in \cref{contlog} (ii) it is enough to use the sentences $\varphi_{n,k,r}$ instead of arbitrary $\varphi$.
}
\end{remark}

\begin{remark} 
{\rm Another reformulation of weak containment can be also found in \cite[Definition 3.1]{AP}, using the action of $\Gamma$ on $L^\infty (X,\mu)$ associated to any $a\in A(\Gamma, X, \mu)$.}
\end{remark}


\newpage
\section{The weak containment order}\label{S3} 
{\bf For the rest of the paper, unless it is otherwise explicitly stated or is clear from the context, we assume that groups are countably infinite and standard probability spaces are non-atomic. Of course it does not matter which space we use, since all non-atomic standard probability spaces are isomorphic.}

\medskip
We will discuss in this section some basic properties of the pre-order $\preceq$ on the space $A(\Gamma, X, \mu)$.
\subsection{General properties}\label{3.1} We start with the following result.
\begin{thm}[Glasner-Thouvenot-Weiss \cite{GTW}, Hjorth; see {\cite[Theorem 10.7]{K}}] \label{max}There is a maximum element in the pre-order $\preceq$ of $A(\Gamma, X, \mu)$, denoted by $a_{\infty, \Gamma}$\index{$a_{\infty, \Gamma}$}.
\end{thm}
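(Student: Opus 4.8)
The plan is to produce a single action that weakly contains every action in $A(\Gamma, X, \mu)$ by taking a suitable ``universal'' combination of a countable dense family of actions. First I would observe, using \cref{t1}, that weak containment is governed by a separable structure: the space $A(\Gamma, X, \mu)$ is Polish, so I can fix a countable dense sequence $(a_n)_{n\in\bbN}$ in it. The key point is that weak containment is preserved under taking limits in an appropriate sense and that a countable ``product'' or ``disjoint sum'' of the $a_n$ should dominate each $a_n$, hence (by density and the closure characterization of $\preceq$) should dominate every action. So the candidate for $a_{\infty,\Gamma}$ is an action built to contain all the $a_n$ simultaneously.

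The natural construction is a product action. Given actions $a_n \in A(\Gamma, X_n, \mu_n)$, form the product space $(\prod_n X_n, \prod_n \mu_n)$ with the diagonal product action $\prod_n a_n$, where $\gamma$ acts coordinatewise. I would then verify that each factor $a_n$ is a genuine factor of the product via the projection $\proj[n]$, so that $a_n \sqsubseteq \prod_m a_m$ and hence $a_n \preceq \prod_m a_m$ for every $n$. The second step is to upgrade ``$\preceq a_n$ for a dense set of $n$'' to ``$\preceq$ everything.'' For this I would use the description in \cref{22}(1): for arbitrary $b$ and fixed $n,k$, the set $C_{n,k}(b)$ is determined by finitely many of the matrix values $\mu(\gamma_l^b(A_i)\cap A_j)$, and these depend continuously on $b$ in the weak topology. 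Since $b \mapsto C_{n,k}(b)$ behaves well under limits and $(a_n)$ is dense, any $b$ is approximated arbitrarily well by some $a_n$, and the product dominates that $a_n$; a short closure/limiting argument then gives $b \preceq \prod_m a_m$.

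To make the density step precise I would argue directly from the definition of $\preceq$: given $b \in A(\Gamma, Y, \nu)$, Borel sets $A_1, \dots, A_n$, a finite $F \subseteq \Gamma$, and $\epsilon > 0$, I choose $a_m$ close enough to $b$ in the weak topology that the relevant quantities $\mu(\gamma^{a_m}(A_i) \cap A_j)$ are within $\epsilon/2$ of $\nu(\gamma^b(A_i)\cap A_j)$ for all $\gamma \in F$ and $i,j \le n$; then since $a_m \preceq \prod a_k$, I find sets in the product algebra approximating the $a_m$-values to within $\epsilon/2$, and the triangle inequality closes the argument. Here I use that weak containment for a dense family transfers to the whole space, which is essentially the statement that $\preceq$ is closed under the operation of approximating $b$ by nearby actions whose matrices $M_{n,k}^{\bar A}$ converge.

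The main obstacle I anticipate is purely bookkeeping rather than conceptual: the product space $\prod_n X_n$ must be identified with a non-atomic standard probability space so that $a_{\infty,\Gamma}$ genuinely lives in $A(\Gamma, X, \mu)$, and one must check that a countable product of the standard spaces $(X_n, \mu_n)$ is again standard and non-atomic (this holds provided at least one factor is non-atomic, which we may arrange). A second delicate point is ensuring the dense sequence $(a_n)$ really captures all actions up to $\preceq$: one must confirm that weak density in the Polish topology on $A(\Gamma, X, \mu)$ implies density with respect to the matrix invariants $C_{n,k}$, which follows because each coordinate $\mu(\gamma_l^a(A_i)\cap A_j)$ is weakly continuous in $a$. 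Once these two technical verifications are in place, transitivity and reflexivity of $\preceq$ (already noted in the excerpt) finish the proof that $\prod_n a_n$ is a maximum element.
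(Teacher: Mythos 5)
Your proposal is correct and is exactly the construction the paper indicates: take a countable dense sequence $(a_n)$ in the Polish space $A(\Gamma, X, \mu)$ and form the product action $\prod_n a_n$, which dominates each $a_n$ as a factor (via the coordinate projections) and hence, by the density argument you give using weak continuity of $a \mapsto \mu(\gamma^a(A_i)\cap A_j)$ and the triangle inequality, dominates every action. In fact your second step can be slightly simplified, since taking $B_i = \proj[m]^{-1}(A_i)$ gives the $a_m$-matrix values exactly rather than within $\epsilon/2$.
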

Of course $a_{\infty , \Gamma}$ is unique up to weak equivalence and is characterized by the property that its conjugacy class is dense in $A(\Gamma, X, \mu)$. One way to obtain such an $a_{\infty , \Gamma}$ is to take the product of a dense sequence of actions in $A(\Gamma, X, \mu)$. If $a_n\in A(\Gamma, X_n, \mu_n)$, then the product $a=\prod_n a_n$ is the action on $\prod_n (X_n, \mu_n)$ given by $\gamma^a((x_n)) = (\gamma^{a_n}(x_n))$.

\begin{remark}
{\rm Bowen \cite{Bo2} has shown that for any free group $\Gamma$ and any free action $a\in A(\Gamma, X, \mu)$, the orbit equivalence class of $a$ is dense in $A(\Gamma, X, \mu)$. Thus, in this case, $a_{\infty , \Gamma}$ can be realized as  the product of a sequence of actions orbit equivalent to $a$. Recall that $a,b\in A(\Gamma, X, \mu)$ are {\bf orbit equivalent}\index{orbit equivalent} if there is an automorphism of $(X,\mu)$ that takes the $a$-orbits to the $b$-orbits.
}
\end{remark}

We also note that the preorder $\preceq$ is large:
\begin{thm}[{\cite[Corollary 4.2]{Bu}}]
For any group $\Gamma$, there are continuum many weak equivalence classes.
\end{thm}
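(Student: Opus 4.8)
The plan is to exhibit a single numerical invariant of weak equivalence that already takes continuum many values on an explicit one-parameter family of actions. Since $A(\Gamma,X,\mu)$ is Polish and hence has cardinality $2^{\aleph_0}$, there are at most $2^{\aleph_0}$ weak equivalence classes, so producing that many suffices. Fix once and for all a non-identity element $\gamma^\ast\in\Gamma$ (available since $\Gamma$ is infinite) and define, for $c\in A(\Gamma,Z,\rho)$,
\[
\psi(c)=\sup\{\rho((\gamma^\ast)^{c}(A)\cap B)\colon A,B\in {\rm MALG}_\rho,\ A\cap B=\emptyset\}.
\]
Writing $\gamma^\ast=\gamma_{l^\ast}$ in the fixed enumeration of $\Gamma$ and using the first description in \cref{22}, one checks that $\psi(c)$ equals the supremum of the coordinate functional $M\mapsto M(l^\ast,0,1)$ over $C_{l^\ast+1,2}(c)$ (a disjoint pair $A,B$ can always be completed to the partition $(A,A^c)$, which does at least as well). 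Consequently $c\preceq c'$ implies $C_{l^\ast+1,2}(c)\subseteq C_{l^\ast+1,2}(c')$ and hence $\psi(c)\le\psi(c')$; in particular $\psi$ is constant on weak equivalence classes.

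Next I would build the family by convex combination. Given actions $a$ on $(X,\mu)$ and $b$ on $(Y,\nu)$ and $t\in[0,1]$, let $a\oplus_t b$ be the action of $\Gamma$ on the disjoint union $X\sqcup Y$ equipped with the probability measure $t\mu\oplus(1-t)\nu$, acting as $a$ on $X$ and as $b$ on $Y$. Because $\Gamma$ preserves the two parts separately, every two-piece partition $(P_0,P_1)$ of $X\sqcup Y$ restricts to two-piece partitions of $X$ and of $Y$ whose contributions add, and conversely any such pair assembles back; optimizing the two parts independently then yields the affine identity
\[
\psi(a\oplus_t b)=t\,\psi(a)+(1-t)\,\psi(b).
\]
The remaining point is to find two actions on which $\psi$ differs. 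For the identity action $\iota$ (with $\gamma^\iota=\mathrm{id}$ for all $\gamma$) one has $\psi(\iota)=\sup\{\mu(A\cap B)\colon A\cap B=\emptyset\}=0$, while for the Bernoulli shift $s$ of $\Gamma$ (the shift $s_{\{0,1\},\Gamma}$ with the uniform product measure), taking $A=\{x\colon x(e_\Gamma)=0\}$ and $B=\{x\colon x(e_\Gamma)=1\}$ and using independence of the distinct coordinates $e_\Gamma$ and $\gamma^\ast$ gives $\psi(s)\ge\tfrac14>0$.

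Setting $c_t=\iota\oplus_t s$ for $t\in[0,1]$, the two displayed facts combine to give $\psi(c_t)=(1-t)\,\psi(s)$, which is strictly decreasing in $t$ and hence injective. Since $\psi$ is a weak equivalence invariant, the actions $c_t$ are pairwise non-weakly equivalent, producing $2^{\aleph_0}$ distinct classes and completing the argument. The step requiring the most care is the affine identity for $\psi$ under $\oplus_t$: it rests on the fact that the disjoint-union action performs no mixing between the two parts, so that the supremum defining $\psi(a\oplus_t b)$ genuinely decouples into independent optimizations over $X$ and over $Y$. Granting this, the monotonicity of $\psi$ under $\preceq$ and its separation of $\iota$ from $s$ are routine.
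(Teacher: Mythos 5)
Your proof is correct. Note that the survey itself does not prove this theorem; it cites \cite[Corollary 4.2]{Bu}, where the argument is topological: one first shows that the convex-combination operation is continuous on $\undertilde{A}(\Gamma,X,\mu)$, deduces that this space is path-connected (this is \cref{1012}, i.e.\ \cite[Corollary 4.1]{Bu}), observes that it has at least two points (e.g.\ $\undertilde{i}_\Gamma\neq\undertilde{s}_\Gamma$, since freeness is a weak equivalence invariant, \cref{fre}), and concludes from the general fact that a connected metrizable space with more than one point has cardinality $2^{\aleph_0}$. You travel along essentially the same one-parameter family $t\mapsto t\,i_\Gamma+(1-t)s_\Gamma$, but you replace the topological machinery with an explicit numerical invariant $\psi$: monotonicity under $\preceq$, affinity under convex combination (where the key point, as you say, is that the disjoint-union action does no mixing between the two parts, so the supremum decouples), and the separation $\psi(i_\Gamma)=0<\tfrac14\leq\psi(s_\Gamma)$. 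What your route buys is self-containedness: you need neither the compact metrizable topology on $\undertilde{A}(\Gamma,X,\mu)$ (\cref{AE1}) nor the continuity of convex combination, both of which require genuine work. What the cited route buys is generality: path-connectedness automatically produces continuum many classes between \emph{any} two inequivalent actions, with no need to design an invariant adapted to the pair. Two cosmetic points: $\gamma^\ast$ exists because $\Gamma$ is nontrivial (infiniteness is not needed), and since the paper defines $C_{n,k}(a)$ only for $n,k>1$, if $\gamma^\ast=\gamma_0$ in the enumeration you should take the supremum of the coordinate $(l^\ast,0,1)$ over $C_{n,2}$ for some $n>\max(l^\ast,1)$, or argue directly from the definition of $\preceq$ (include $e_\Gamma$ in $F$ and disjointify the witnessing sets $B_1,B_2$), which costs only an extra $\epsilon$.
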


\subsection{Freeness}\label{3.2} Recall next that an action $a\in A(\Gamma, X, \mu)$ is {\bf free}\index{free} if $\forall \gamma\not= e_\Gamma (\gamma^a(x) \not= x, \mu \textrm{-a.e.})$ The set $\textrm{FR}(\Gamma, X, \mu)$\index{$\textrm{FR}(\Gamma, X, \mu)$} of free actions is a dense $G_\delta$ subset of $A(\Gamma, X, \mu)$ (Glasner-King \cite{GK}; see also \cite[Theorem 10.8]{K}). We now have the following result:

\begin{thm}\label{fre}
The set  ${\rm FR}(\Gamma, X, \mu)$ of free actions is upwards closed in $\preceq$, i.e., 
\[
a\preceq b, a\in {\rm FR}(\Gamma, X, \mu)\implies b\in {\rm FR}(\Gamma, X, \mu).
\]
In particular, freeness is a weak equivalence invariant.
\end{thm}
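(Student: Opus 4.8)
The plan is to show the contrapositive: if $b$ is not free, then no free action is weakly contained in $b$. So suppose $b \in A(\Gamma, Y, \nu)$ is not free, witnessed by some $\gamma \neq e_\Gamma$ for which the set $\mathrm{Fix}(\gamma^b) = \{y : \gamma^b(y) = y\}$ has positive measure, say $\nu(\mathrm{Fix}(\gamma^b)) = \delta > 0$. I want to extract from this a quantitative obstruction that survives the approximation in the definition of weak containment. The natural quantity to track is, for a fine partition $\{B_1, \dots, B_n\}$ of $Y$, the sum $\sum_i \nu(\gamma^b(B_i) \cap B_i)$: since $\gamma^b$ fixes a set of measure $\delta$, any such sum is bounded below by roughly $\delta$ (each point of $\mathrm{Fix}(\gamma^b)$ contributes to the piece it lives in), no matter how fine the partition. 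More carefully, $\sum_i \nu(\gamma^b(B_i)\cap B_i) \geq \sum_i \nu(B_i \cap \mathrm{Fix}(\gamma^b)) = \delta$.

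Second, I would establish the matching upper bound on the free side. If $a \in \mathrm{FR}(\Gamma, X, \mu)$ is free, then $\gamma^a$ has no fixed points, so $\mu(\mathrm{Fix}(\gamma^a)) = 0$. The key point is that for a free transformation, a sufficiently fine partition $\{A_1, \dots, A_n\}$ can be chosen so that $\sum_i \mu(\gamma^a(A_i) \cap A_i)$ is arbitrarily small. This is a standard Rokhlin-type fact: since $\gamma^a(x) \neq x$ a.e., one can find a partition into small pieces each of which is almost disjoint from its image under $\gamma^a$; concretely, approximate the graph of $\gamma^a$ by a finite union of boxes and refine the partition below the resulting scale so that the "diagonal overlap" $\sum_i \mu(\gamma^a(A_i) \cap A_i)$ falls below any prescribed $\epsilon$. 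The main obstacle is precisely making this step clean: I need that \emph{every} free $a$ admits such a partition, uniformly enough that the overlap can be pushed below $\delta/2$, and I should double-check that the argument does not secretly require ergodicity or aperiodicity beyond mere freeness for the single element $\gamma$.

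Finally I would combine the two bounds. Suppose toward a contradiction that $a \preceq b$ with $a$ free. Using the freeness of $a$, fix a partition $\{A_1, \dots, A_n\}$ of $X$ with $\sum_i \mu(\gamma^a(A_i) \cap A_i) < \delta/2$. Apply the definition of weak containment to these sets, the singleton $F = \{\gamma\}$, and a small $\epsilon$ (say $\epsilon = \delta/(4n)$), noting by the reformulation after the main definition that we may take the $B_i$ to form a partition of $Y$. This yields $B_1, \dots, B_n$ with $|\mu(\gamma^a(A_i)\cap A_j) - \nu(\gamma^b(B_i)\cap B_j)| < \epsilon$ for all $i,j$; summing the diagonal terms gives $\sum_i \nu(\gamma^b(B_i)\cap B_i) < \delta/2 + n\epsilon \leq 3\delta/4$, contradicting the lower bound $\sum_i \nu(\gamma^b(B_i)\cap B_i) \geq \delta$ established above. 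This contradiction shows $b$ must be free. The ``in particular'' clause is then immediate, since $a \simeq b$ gives $a \preceq b$ and $b \preceq a$, so $a$ is free iff $b$ is.
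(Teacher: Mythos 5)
Your proof is correct, and its skeleton is the same as the paper's Rokhlin-based argument for \cref{fre}: a contradiction obtained by transferring through weak containment (with the $B_i$ taken to form a partition, as the paper's remark after the definition allows) a small-diagonal-overlap partition for the free action $a$, against the lower bound that the fixed-point set of $\gamma^b$ forces on \emph{any} partition of $Y$. Where you genuinely differ is in how that upper bound is produced and exploited. The paper invokes Rokhlin's Lemma, splitting into cases according to whether $\gamma$ has finite or infinite order, to get $\gamma^a(A_i)\cap A_i=\emptyset$ exactly for $i\leq n$ plus a small remainder piece, and then uses a pigeonhole argument to locate a single $B_i$ with $\nu(\gamma^b(B_i)\cap B_i)$ large; you instead settle for the weaker bound $\sum_i\mu(\gamma^a(A_i)\cap A_i)<\delta/2$ and sum over the whole partition, which avoids both the case split and the pigeonhole. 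Your sketched justification of that bound is sound, and the worry you flag dissolves: realize $X$ as a compact metric space, use freeness to get $d(x,\gamma^a(x))>0$ a.e., choose $\eta>0$ so that $\mu(\{x\colon d(x,\gamma^a(x))\leq\eta\})<\delta/2$, and take any finite Borel partition into pieces of diameter less than $\eta$; if $y\in\gamma^a(A_i)\cap A_i$, then $y=\gamma^a(x)$ with both $x,\gamma^a(x)\in A_i$, so $x$ lies in the exceptional set, whence $\mu(\gamma^a(A_i)\cap A_i)\leq\mu(A_i\cap\{x\colon d(x,\gamma^a(x))\leq\eta\})$ and the sum is below $\delta/2$. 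This uses only fixed-point-freeness of the single automorphism $\gamma^a$ --- no ergodicity or aperiodicity --- and is in fact the same $D_\eta$ device the paper employs in its proof of \cref{min} in \cref{appA}. The trade-off is minor: Rokhlin buys exact disjointness, while your metric argument is more elementary, self-contained, and uniform in $\gamma$.
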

This can be easily seen, for example, using \cref{2.6}. It also follows by an application of Rokhlin's Lemma (see, e.g., \cite[Theorem 7.5]{KM}). Indeed assume $a\preceq b$, $a$ is free but $b$ is not free, towards a contradiction. Then there is some $\gamma\not= e_\Gamma$ such that $B= \{x: \gamma^b (x) = x\}$ has positive measure $\epsilon$. Now by Rokhlin's Lemma, when $\gamma$ has infinite order, and trivially when $\gamma$ has finite order, we can find $n\geq 1$ and a finite Borel partition $A_1, \dots , A_n , A_{n+1}$ of $X$ such that $\gamma^a(A_i) \cap A_i =\emptyset$, for $1\leq i\leq n$, and $\mu(A_{n+1}) <\frac{\epsilon}{n+1}$. Then there is a Borel partition $B_1, \dots, B_n, B_{n+1}$ such that $\mu (\gamma^b(B_i) \cap B_i) < \frac{\epsilon}{n+1}$, for $1\leq i\leq n$, and $\mu (B_{n+1}) <\frac{\epsilon}{n+1}$. It follows that for some $1\leq i\leq n$, $\mu (B\cap B_i)\geq \frac{\epsilon}{n+1}$ and then $\mu (\gamma^b(B_i) \cap B_i))\geq \mu (\gamma^b(B\cap B_i) \cap B_i) =  \mu (B\cap B_i)\geq \frac{\epsilon}{n+1}$, a contradiction.

 Also note that \cref{fre} implies that $a_{\infty, \Gamma}$ is free. Below by a {\bf free weak equivalence class}\index{free weak equivalence class} we mean one which contains free actions.

Below let $s_\Gamma = s_{[0,1], \Gamma}$\index{$s_\Gamma = s_{[0,1], \Gamma}$} be the shift \index{shift}(Bernoulli) action of $\Gamma$ on $[0,1]^\Gamma$ with the usual product measure. Then we have the following result of Ab\'{e}rt-Weiss:

\begin{thm}[\cite{AW}]\label{min}
The action $s_\Gamma$ is minimum in the pre-order $\preceq$ on ${\rm FR}(\Gamma, X, \mu)$.\end{thm}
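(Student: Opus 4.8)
The plan is to show that the shift action $s_\Gamma$ is weakly contained in every free action $b \in \mathrm{FR}(\Gamma, X, \mu)$. The cleanest route uses the factor/measure reformulation of \cref{25}: by item (iv) of that theorem, it suffices to show that for every finite space $K$, we have $E(s_\Gamma, K) \subseteq \overline{E(b,K)}$. Recall that $E(a,K)$ is the set of shift-invariant measures on $K^\Gamma$ arising as pushforwards $(\Phi^{f,a})_*\mu$ for Borel $f\colon X \to K$, equivalently the factor measures via equivariant maps into $K^\Gamma$. So the problem reduces to a concrete statement: every shift-invariant measure on $K^\Gamma$ that is a factor of $s_\Gamma$ can be approximated in the weak$^*$ topology by factor measures of $b$.

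The heart of the matter is therefore an approximation lemma: given a free action $b$ on $(X,\mu)$, a finite alphabet $K$, a finite window $F \subseteq \Gamma$, and a target shift-invariant measure $\nu$ on $K^\Gamma$ that factors through $s_\Gamma$, I would produce a Borel labeling $g\colon X \to K$ so that the empirical distribution of $F$-patterns under $b$ matches $\nu$ to within $\epsilon$. The key tool is a Rokhlin-type tiling lemma for free actions of arbitrary countable groups: because $b$ is free, one can partition $X$ (up to a set of small measure) into translates of the form $\gamma^b(T)$ over $\gamma$ ranging in a large finite set, so that a labeling of a single ``tile'' propagates across the orbit almost everywhere. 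Concretely, one fixes a large Følner-like or just combinatorially large finite set $S \supseteq F$, uses freeness to find an approximate $S$-tiling of $X$, and then defines $g$ on each tile by copying a labeling of a sufficiently long shift-sequence drawn from $\nu$. The empirical statistics of the resulting labeling then reproduce the $F$-marginals of $\nu$ up to boundary effects controlled by the small exceptional set. This is essentially the original Ab\'{e}rt--Weiss argument.

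The main obstacle, and the step requiring the most care, is the tiling/propagation argument for a general (possibly non-amenable) countable group $\Gamma$. For $\Gamma = \bbZ$ one simply invokes the classical Rokhlin Lemma as in the proof of \cref{fre}, but for arbitrary $\Gamma$ there is no single Følner set, so one must instead exploit the pointwise freeness of $b$ directly: the map $x \mapsto (\gamma^b(x))_{\gamma \in S}$ is, for generic $x$, injective on $S$, which lets one treat a neighborhood of the orbit as a faithful copy of the abstract combinatorial pattern in $K^S$. One selects the pattern on tiles from a measure $\nu$ realized as a factor of $s_\Gamma$, so that the correct $F$-statistics are built in by construction, and the freeness guarantees that these local patterns are laid down consistently across essentially all of $X$. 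Handling the overlaps and the exceptional set so that the total error in each coordinate $\nu(\gamma^s(\cdot) \cap \cdot)$ stays below $\epsilon$ is the delicate bookkeeping; once that is done, letting $\epsilon \to 0$ and $F$ exhaust $\Gamma$ yields $\nu \in \overline{E(b,K)}$, and hence $s_\Gamma \preceq b$.

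I would also note that the reverse containment is not claimed: the theorem only asserts that $s_\Gamma$ is a \emph{minimum}, i.e. $s_\Gamma \preceq b$ for all free $b$, so no matching lower bound on $E(b,K)$ is needed. An alternative, perhaps more streamlined, packaging of the same idea uses \cref{t1}: one directly constructs, for each $\epsilon$ and finite $F$, a conjugate $T \cdot b$ of $b$ whose $F$-by-partition matrix approximates that of $s_\Gamma$, again via the freeness-driven tiling. Either way the combinatorial core is the Rokhlin tiling, and I expect that to be where all the real work lies.
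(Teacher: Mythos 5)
Your reduction via \cref{25} is fine (it parallels the paper's own reduction of an arbitrary Borel partition of the shift space to cylinder sets), but the core of your argument --- the ``Rokhlin-type tiling lemma for free actions of arbitrary countable groups'' --- is a genuine gap, not a technicality. First, for non-amenable $\Gamma$ it is not clear such a tiling structure (disjoint translates $\gamma^b(T)$, $\gamma\in S$, covering all but $\epsilon$ of $X$) exists at all; quasi-tiling machinery of this kind is an amenability phenomenon. Second, and more decisively, even granting the tiling, your error analysis requires the ``boundary effects'' to be small: a point $x\in\gamma^b(T)$ sees correct $F$-statistics only if $F\gamma\subseteq S$, since windows straddling two tiles see patterns that were chosen independently in different tiles and hence carry the wrong correlations (your target $\nu$ is a general factor of $s_\Gamma$, so it has correlations). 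Making the straddling fraction small means $|\{\gamma\in S: F\gamma\subseteq S\}|\geq(1-\epsilon)|S|$, i.e.\ $S$ must be a F{\o}lner set --- and non-amenable groups have none. Since for amenable $\Gamma$ the theorem is easy (all free actions are weakly equivalent to $s_\Gamma$), your proof collapses exactly in the case of interest. Your fallback remark about exploiting injectivity of $x\mapsto(\gamma^b(x))_{\gamma\in S}$ to lay patterns down ``consistently across essentially all of $X$'' does not evade this: any deterministic copying scheme must reconcile the patterns assigned to overlapping windows along an orbit, and without almost-invariance of the window there is no way to control those conflicts.

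The actual Ab\'{e}rt--Weiss argument (and the paper's proof in Appendix B, \cref{appA}) is probabilistic and uses no tiling, so your closing claim that your sketch ``is essentially the original Ab\'{e}rt--Weiss argument'' is not accurate. After the cylinder-set reduction, one only needs a two-piece partition $\mathcal{Q}$ of $X$ whose window statistics are approximately i.i.d., i.e.\ $\mu(Q^a_\theta)\approx 2^{-|G|}$ for every $\theta\colon G\to 2$ (\cref{cla1}); arbitrary factors $\nu$ of $s_\Gamma$ are then handled automatically, since their finite-window statistics are functions of the Bernoulli statistics on a larger finite window. The partition is produced by cutting $X$ into many cells of small diameter and coloring the cells independently and uniformly at random: freeness guarantees that for a.e.\ $x$ the points $\gamma^a(x)$, $\gamma\in G$, lie in distinct cells (the sets $D_\eta$, $E_\eta$ in the paper), so the colors seen along a window are genuinely independent; a first- and second-moment computation plus Chebyshev then shows a single random coloring works for all $\theta$ simultaneously with probability at least $\frac{1}{2}$. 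In short, the independence you need is manufactured by randomness rather than by copying sample patterns onto tiles, and this is precisely what makes the argument amenability-free.
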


The same result is true for the shift action of $\Gamma$ on any product space $(X^\Gamma, \mu^\Gamma)$, where $(X,\mu)$ is a standard probability space and $\mu$ does not concentrate on a single point. In \cite{H} it is shown, using also \cref{min}, that certain actions of $\Gamma$ by automorphisms on a compact metrizable abelian group (equipped with the Haar measure) are weakly equivalent to $s_\Gamma$.  Moreover in \cite{H1} the author studies further properties of weak containment in the context of actions by automorphisms on compact metrizable groups. In particular it is shown in \cite{H1} that for every  such action there is a unique maximal, invariant closed subgroup for which the action (with the associated Haar measure) is weakly contained in $s_\Gamma$. Moreover this subgroup is also characterized by an appropriate minimality property.

In a recent preprint, Bernshteyn proves a pointwise strengthening of \cref{min} as well as a Borel version for finitely generated groups of subexponential growth. For the precise statement and proofs of these results, see \cite{Be1}

We give the detailed proof of \cref{min} for the shift action on the product space $(2^\Gamma,\mu_0^\Gamma)$ (where $\mu_0$ is the measure on the two point space that gives measure $\frac{1}{2}$ to each point) in Appendix B, \cref{appA}. The case of $s_\Gamma$  can be proved with minor modifications.



Thus among the free actions there is a minimum, $s_\Gamma$, and a maximum $a_{\infty, \Gamma}$, in the sense of weak containment. We will see an appropriate  generalization of this for non-free actions in  \cref{73}, \cref{74}.

The following strengthening of \cref{min} was proved by Tucker-Drob:

\begin{thm}[{\cite[Corollary 1.6]{T-D1}} ]\label{td}
Let $a\in {\rm FR}(\Gamma, X, \mu)$. Then $s_\Gamma\times a\simeq a$.
\end{thm}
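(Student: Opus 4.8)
The plan is to prove the two containments separately. The direction $a\preceq s_\Gamma\times a$ is immediate: the projection $[0,1]^\Gamma\times X\to X$ is a $\Gamma$-equivariant measure preserving map, so $a$ is a factor of $s_\Gamma\times a$, i.e. $a\sqsubseteq s_\Gamma\times a$, and hence $a\preceq s_\Gamma\times a$. All the work is in the reverse direction $s_\Gamma\times a\preceq a$.

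For that direction I would first reduce to a statement about finite colorings of $X$. Using the remark following the definition of weak containment, that it may be tested on partitions drawn from a countable dense subalgebra, I take in the product space the subalgebra generated by the finitely-many-coordinate cylinders of $[0,1]^\Gamma$ together with a dense subalgebra of ${\rm MALG}_\mu$. Translating a cylinder by $\gamma\in F$ only changes which Bernoulli coordinates it reads, so after enlarging $F$ it suffices to match, for every finite $F\subseteq\Gamma$, every $\epsilon>0$, every finite partition $\bar A$ of $X$, and a single ``Bernoulli generator'' $\beta$ with prescribed marginal $p$, the joint $F$-correlations of the pair $(\beta,\bar A)$ in $s_\Gamma\times a$. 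Because the shift factor is independent of $a$ and the coordinates of $\beta$ are i.i.d., this target splits: the $\bar A$-part is exactly the correlation matrix of $\bar A$ in $a$, while the $\beta$-part is an independent i.i.d.\ pattern. Hence it is enough to produce a finite coloring $g\colon X\to\{0,\dots,r-1\}$ with marginal $p$ such that, along $F$, the process $(g\circ(\gamma^a)^{-1})_\gamma$ is within $\epsilon$ of i.i.d.\ and within $\epsilon$ of being independent of the $\bar A$-process; then the common refinement $\bar A\vee g^{-1}$ is the partition of $X$ witnessing the required approximation.

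The existence of such a $g$ is a relative form of the Ab\'{e}rt--Weiss theorem (\cref{min}), which already supplies colorings that are approximately i.i.d.\ along the orbits of any free $a$; the new point is the additional approximate independence from the fixed observable $\bar A$, obtained by injecting the Ab\'{e}rt--Weiss labels in a manner uncorrelated with $\bar A$. Equivalently, and perhaps more cleanly, I would argue in the ultrapower: by \cref{2.6} it suffices to show $s_\Gamma\times a\sqsubseteq a_{\mathcal{U}}$. The canonical diagonal factor gives $a\sqsubseteq a_{\mathcal{U}}$, and \cref{min} combined with \cref{2.6} gives a Bernoulli factor $s_\Gamma\sqsubseteq a_{\mathcal{U}}$; joining the two factor maps realizes $s_\Gamma\times a$ as a factor of $a_{\mathcal{U}}$ provided the Bernoulli factor can be chosen independent of the distinguished copy of $a$.

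The main obstacle is precisely this independence. It cannot be arranged naively inside $X$: if $\bar A$ generates the $\Gamma$-invariant $\sigma$-algebra of $a$, there is no room in ${\rm MALG}_\mu$ for a coloring genuinely independent of $\bar A$. The resolution is that only approximate independence over the finite window $F$, with error $\epsilon$, is needed, and freeness provides this asymptotically: along an infinite orbit the far-apart coordinates decorrelate, so the Ab\'{e}rt--Weiss labels can be spread over a long Rokhlin/tiling pattern and made nearly independent of the fixed finite observable. In the ultrapower formulation the same phenomenon appears as genuine room, since the space of $a_{\mathcal{U}}$ properly and independently extends that of $a$ when $a$ is free, so an honestly independent Bernoulli factor exists there. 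Establishing the quantitative decorrelation from $\bar A$, on top of the usual Ab\'{e}rt--Weiss estimate, is where the real effort lies.
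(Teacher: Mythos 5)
Your decomposition is sound: $a\sqsubseteq s_\Gamma\times a$ disposes of one direction, and the hard direction $s_\Gamma\times a\preceq a$ does reduce, exactly as you say, to a relative form of \cref{min}: for every finite $F\subseteq\Gamma$, $\epsilon>0$, finite partition $\bar A$ of $X$ and prescribed marginal $p$, there is a coloring $g$ of $X$ whose $F$-process is $\epsilon$-close to i.i.d.$(p)$ \emph{and} $\epsilon$-independent of the $\bar A$-process. (For reference, the paper itself does not prove this theorem — it cites \cite[Corollary 1.6]{T-D1} — so the relevant in-paper benchmark is the proof of \cref{min} in \cref{appA}.) The gap is that you never prove this lemma, and the lemma \emph{is} the theorem: everything preceding it is bookkeeping. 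Neither of the justifications you sketch can be made to work as stated. The claim that ``along an infinite orbit the far-apart coordinates decorrelate'' is not a consequence of freeness: a fixed observable $\bar A$ can be strongly correlated with all of its translates (free profinite actions, odometers), and the Rokhlin/tiling machinery you invoke is unavailable for arbitrary countable groups, whereas the theorem holds for every $\Gamma$, amenable or not. The ultrapower variant is circular rather than cleaner: to exhibit a Bernoulli factor of $a_{\mathcal{U}}$ independent of the diagonal copy of $a$, one takes an ultraproduct of finitary colorings that are approximately i.i.d.\ and approximately independent of an exhausting sequence of finite observables — i.e., one needs precisely the same relative lemma; the mere fact that the ultrapower measure algebra is much larger than the diagonal copy of ${\rm MALG}_\mu$ does not by itself produce independence.

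The missing lemma is, however, provable by conditioning the random-coloring argument the paper uses for \cref{min} (the proof of \cref{cla1} in \cref{appA}), and this is essentially what \cite{T-D1} does. Keep the setup there ($G$, $D_\eta$, $E_\eta$, the fine partition $\{Y_1,\dots,Y_m\}$, the random coloring $Z(\omega)$), and in addition fix the finitely many cells $C$ of the algebra generated by $\{\gamma^a(A_i)\colon \gamma\in G,\ i\leq n\}$. The same computation gives $\mathbb{E}[\mu(Z(\omega)^a_\tau\cap C)]\approx 2^{-|G|}\mu(C)$: for $x\in D_\eta$ the events $\omega(Y((\gamma^{-1})^a(x)))=\tau(\gamma)$, $\gamma\in G$, are independent coin flips, and for fixed $x$ the event $x\in C$ is deterministic, so conditioning on it costs nothing; likewise the $E_\eta$ estimate bounds the variance of $\mu(Z^a_\tau\cap C)$ exactly as in \cref{appA}. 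Chebyshev plus a union bound over the finitely many pairs $(\tau,C)$ then yields a single coloring that is simultaneously nearly i.i.d.\ and nearly independent of $\bar A$ over the window $F$, which is what your reduction requires. The point to internalize is that independence from $\bar A$ comes from the fresh randomness of the coloring — the expectation over $\omega$ factorizes on every fixed cell — and not from any decorrelation property of the action itself.
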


Again an appropriate generalization of this result for non-free actions is given in \cref{73}.

 When the group $\Gamma$ is amenable, there is only one free weak equivalence class, i.e., all free actions are weakly equivalent to $s_\Gamma$, and moreover $a\preceq s_\Gamma$ for any action $a\in A(\Gamma, X, \mu)$, see \cite[page 91]{K}. Also if we denote by $i_\Gamma$\index{$i_\Gamma$} the {\bf trivial action}\index{trivial action} in $ A(\Gamma, X, \mu)$, i.e., $\gamma^{i_\Gamma}(x) = x$, then $i_\Gamma$ is the minimum in $\preceq$ on $A(\Gamma, X, \mu)$, i.e.,  $i_\Gamma\preceq a$, for any $a\in A(\Gamma, X, \mu)$ (this follows from {\cref{3.5} below, the fact that no action of an amenable group is strongly ergodic and the ergodic decomposition).

When $\Gamma$ is not amenable, then there are continuum many weakly inequivalent free actions, in fact there is a continuum size $\preceq$-antichain in $\textrm{FR}(\Gamma, X, \mu)$ (see \cite[Section 4, {\bf (C)}]{CK} and \cite[Remark 4.3]{T-D1}). Moreover $i_\Gamma$ is $\preceq$-incomparable with $s_\Gamma$  (see \cref{3.5} below and note that $s_\Gamma$ is strongly ergodic and $a\preceq i_\Gamma\implies a = i_\Gamma$). 

Combining these facts we also have the following characterization of amenability:

\begin{thm}
A group is amenable iff the pre-order of weak containment has a minimum element.

\end{thm}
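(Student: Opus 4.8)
The plan is to prove both implications by exhibiting, in the amenable case, the trivial action $i_\Gamma$ as the minimum of $\preceq$, and by ruling out the existence of any minimum in the non-amenable case. The two substantive inputs I would rely on are that $s_\Gamma$ is strongly ergodic precisely when $\Gamma$ is non-amenable (classical), together with the monotonicity/ergodicity facts summarized around \cref{3.5}.

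For the direction that amenability yields a minimum, I would prove that $i_\Gamma\preceq a$ for every $a\in A(\Gamma,X,\mu)$; by definition this makes $i_\Gamma$ a minimum. Unwinding weak containment against $i_\Gamma$, and using that $\gamma^{i_\Gamma}$ is the identity on ${\rm MALG}_\mu$, one sees that $i_\Gamma\preceq a$ is equivalent to the statement that $a$ admits almost invariant partitions with arbitrarily prescribed measures: for every probability vector $p_0,\dots,p_{k-1}$ with $\sum_i p_i=1$, every finite $F\subseteq\Gamma$, and every $\epsilon>0$, there is a Borel partition $B_0,\dots,B_{k-1}$ of the underlying space with $|\nu(B_i)-p_i|<\epsilon$ and $\nu(\gamma^a(B_i)\,\Delta\,B_i)<\epsilon$ for all $\gamma\in F$. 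First I would reduce to ergodic $a$ via the ergodic decomposition; for ergodic $a$ of an amenable group, $a$ is not strongly ergodic, which yields nontrivial almost invariant sequences, and a F\o lner/quasitiling argument refines these into almost invariant partitions realizing any prescribed measure vector. This is exactly the content of the discussion preceding the theorem.

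For the converse, suppose $\Gamma$ is not amenable and, toward a contradiction, that $m$ is a minimum in $\preceq$. Then $m\preceq i_\Gamma$ and $m\preceq s_\Gamma$. First I would observe, by an elementary argument not requiring \cref{3.5}, that $m\preceq i_\Gamma$ forces $m=i_\Gamma$: since $\gamma^{i_\Gamma}=\mathrm{id}$, weak containment in $i_\Gamma$ makes the approximating quantity $\mu(B_i\cap B_j)$ independent of the group element, and comparing the estimates at $\gamma$ and at $e_\Gamma$ gives $\mu(\gamma^m(A)\cap B)=\mu(A\cap B)$ for all measurable $A,B$ and all $\gamma$; taking $B=A$ yields $\gamma^m(A)=A$ a.e., so $m=i_\Gamma$. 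Hence $i_\Gamma=m\preceq s_\Gamma$, i.e. $i_\Gamma\preceq s_\Gamma$. But applying this weak containment to the invariant partition of $X$ into two halves produces, for each $F$ and $\epsilon$, a set $B\subseteq[0,1]^\Gamma$ with $\nu(B)\approx\tfrac{1}{2}$ and $\nu(\gamma^{s_\Gamma}(B)\cap B)\approx\tfrac{1}{2}$ for $\gamma\in F$, hence $\nu(\gamma^{s_\Gamma}(B)\,\Delta\,B)\to 0$; this is a nontrivial almost invariant sequence, contradicting strong ergodicity of $s_\Gamma$ for non-amenable $\Gamma$. Thus no minimum exists.

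The main obstacle is the amenable direction: converting non-strong-ergodicity into almost invariant partitions with \emph{arbitrarily prescribed} measures $p_0,\dots,p_{k-1}$, uniformly over finite $F$. A single almost invariant set of measure $\tfrac{1}{2}$ does not suffice; one must split and recombine F\o lner-type sets to hit an arbitrary measure vector while preserving approximate invariance, and the ergodic decomposition must be managed so that the prescribed measures are simultaneously realizable across the ergodic components. By contrast, the non-amenable direction is essentially formal once one has strong ergodicity of $s_\Gamma$ and the elementary fact that $a\preceq i_\Gamma$ implies $a=i_\Gamma$.
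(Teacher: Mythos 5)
Your proposal is correct and takes essentially the same route as the paper: the amenable direction is obtained by showing $i_\Gamma$ is the minimum, using \cref{3.5}, the fact that amenable groups have no strongly ergodic actions, and the ergodic decomposition, while the non-amenable direction combines the observation that $a\preceq i_\Gamma\implies a=i_\Gamma$ with the strong ergodicity of $s_\Gamma$ to rule out any minimum. The only difference is presentational: you write out elementary verifications (that $m\preceq i_\Gamma$ forces $m=i_\Gamma$, and that $i_\Gamma\preceq s_\Gamma$ yields a non-trivial almost invariant sequence) for facts the paper asserts or cites without proof.
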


\begin{prob}\label{prob39}
Is there a continuum size $\preceq$-antichain in $A(\Gamma, X, \mu)$, for any amenable group $\Gamma$?
\end{prob}

Bowen mentions that from the results in the paper \cite{BGK} a positive answer to \cref{prob39} can be obtained for the lamplighter groups.

Tucker-Drob in \cite{T-D} defines a group $\Gamma$ to be {\bf shift-minimal} if  $\forall a \in A(\Gamma, X, \mu) ( a\preceq s_\Gamma \implies a\simeq s_\Gamma)$. Thus the shift-minimal groups are exactly those for which every non-free action in $A(\Gamma, X, \mu)$ is $\preceq$-incomparable with $s_\Gamma$. The structure of shift-minimal groups is studied in detail in \cite{T-D} and later in \cite{BDL}, where it is shown that a group is shift-minimal iff it has no non-trivial normal amenable subgroups.

\subsection{The richness of free weak equivalence classes} Each free equivalence classes is quite rich in the sense that it contains many isomorphism classes. In fact we have the following result:

\begin{thm}[{\cite[Theorem 1.7 and Remark 6.5]{T-D1}}] \label{37}
The isomorphism (i.e., conjugacy) relation in each free weak equivalence class of any group is not classifiable by countable structures. The same holds for weak isomorphism and unitary equivalence. Moreover $E_0$ can be Borel reduced to these equivalence relations.
\end{thm}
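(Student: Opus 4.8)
The plan is to fix a free action $a$ in the given class and to produce, \emph{inside} its weak equivalence class $C=\{c : c\simeq a\}$, a Borel family of pairwise non-conjugate actions whose conjugacy relation Borel-reduces a relation already known to be non-smooth and non-classifiable. By \cref{fre} every member of $C$ is free, so $a$ itself is free and \cref{td} applies. The engine for staying inside $C$ is a standard monotonicity of weak containment under products with a fixed action (which one reads off the approximate-factor/ultrapower description of \cref{2.6}): for any free $c$ with $c\simeq s_\Gamma$ we get $a\times c\preceq a\times s_\Gamma\simeq a$, while the projection $a\times c\to a$ gives $a\preceq a\times c$, so $a\times c\simeq a$. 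Consequently $c\mapsto a\times c$ is a Borel map sending the minimum free class $[s_\Gamma]$ into $C$, and it suffices to (A) build a conjugacy-complicated family inside $[s_\Gamma]$ and (B) check that multiplying by the fixed factor $a$ preserves its complexity.

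First I would dispose of the amenable case: there $[s_\Gamma]$ is the class of \emph{all} free actions (as recalled before \cref{min}), so the statement is the classical theorem that conjugacy of free measure preserving actions of an amenable group is turbulent, hence non-classifiable by countable structures, with $E_0$ reducible in (Foreman--Weiss for $\bbZ$ and its extensions). So assume $\Gamma$ non-amenable; then $[s_\Gamma]$ is a \emph{proper} subclass and the real work is to find a rich family of free actions weakly contained in $s_\Gamma$. For (A) I would use Gaussian actions: to an orthogonal representation $\pi$ of $\Gamma$ with no invariant vectors attach its Gaussian action $a_\pi$, which is free, has Koopman representation on $L^2_0$ equal to the Fock sum $\bigoplus_{n\ge1}\mathrm{Sym}^n(\pi_\bbC)$, and whose first Wiener chaos recovers $\pi$ up to unitary equivalence; thus $\pi\mapsto a_\pi$ is functorial and turns unitary equivalence of representations into conjugacy of actions. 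If $\pi$ is \emph{tempered} ($\pi\preceq\lambda_\Gamma$) then every chaos is a multiple of $\lambda_\Gamma$ by Fell absorption, and one shows (a lemma to be established) that $a_\pi\preceq s_\Gamma$, whence, being free, $a_\pi\simeq s_\Gamma$ by \cref{min}. Feeding a turbulent, $E_0$-rich Borel family $\{\pi_t\}$ of tempered representations through this functor then yields the desired family in $[s_\Gamma]$, with $a_{\pi_t}$ and $a_{\pi_{t'}}$ conjugate iff $\pi_t,\pi_{t'}$ are unitarily equivalent; the complexity is imported from the spectral side in the spirit of Kechris--Sofronidis.

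The main obstacle is exactly producing, for an \emph{arbitrary} non-amenable $\Gamma$, a turbulent (and $E_0$-above) family of \emph{tempered} representations up to unitary equivalence. For ICC $\Gamma$ the honest \emph{subrepresentations} of $\infty\cdot\lambda_\Gamma$ are classified up to isomorphism by a single von Neumann dimension in $[0,\infty]$, and so give only a smooth relation; hence one must genuinely exploit the looseness of weak containment (tempered but non-contained families) rather than submodules. The second difficulty is step (B), disentangling the fixed factor $a$: from $L^2_0(X\times Y)=L^2_0(X)\oplus L^2_0(Y)\oplus\big(L^2_0(X)\otimes L^2_0(Y)\big)$ together with Fell absorption, the Koopman representation of $a\times a_{\pi_t}$ differs from that of $a$ only by the first chaos $\pi_{t,\bbC}$ plus multiples of $\lambda_\Gamma$, so choosing the $\pi_t$ spectrally disjoint from $\kappa_a$ should let one read $\pi_t$ back off $a\times a_{\pi_t}$ and conclude that $a\times a_{\pi_t}\cong a\times a_{\pi_{t'}}$ forces $\pi_t,\pi_{t'}$ unitarily equivalent; making this recovery uniform is the crux.

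Granting these, $c\mapsto a\times c$ is a Borel reduction of a non-classifiable, $E_0$-above relation into the restriction of $\cong$ to $C$, which yields non-classifiability by countable structures and the reduction of $E_0$ simultaneously. Finally, since the separating invariant (the first chaos) is already an invariant of the Koopman representation, and since conjugacy implies weak isomorphism implies unitary equivalence of Koopman representations while all three relations coincide on our family, the very same family proves the statement at once for conjugacy, weak isomorphism, and unitary equivalence.
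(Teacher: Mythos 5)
Your reduction engine is correct, and it is in fact the same starting point as the cited proof: by \cref{td}, for the given free $a$ and any $c\preceq s_\Gamma$ one has $a\times c\preceq a\times s_\Gamma\simeq a\preceq a\times c$, so $a\times c\simeq a$, and the problem becomes producing a conjugacy-complicated Borel family inside $\{c\colon c\preceq s_\Gamma\}$ whose complexity survives multiplication by $a$. (This survey gives no proof of \cref{37}; it cites \cite{T-D1}, where precisely this consequence of Theorem 1.5 of that paper --- \cref{td} here --- is combined with the non-classifiability machinery behind \cite[Theorem 13.7]{K}.) Your disposal of the amenable case is also acceptable, since there ${\rm FR}(\Gamma,X,\mu)$ is a single weak equivalence class and one can quote \cite[Theorem 13.7]{K}.

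However, everything past the engine is an acknowledged gap in your write-up, and those gaps are the theorem, not routine verifications. (i) Your key lemma ``$\pi$ tempered $\implies a_\pi\preceq s_\Gamma$'' is not only unproved but cannot be obtained the way you suggest: Fell absorption gives $\kappa_0^{a_\pi}\preceq\infty\cdot\lambda_\Gamma$ (your statement that each chaos \emph{is a multiple of} $\lambda_\Gamma$ confuses weak containment with containment), and weak containment of Koopman representations does not imply weak containment of actions --- indeed Bowen's example described at the end of \cref{limits} is a free, ergodic action with $\kappa_0^a\preceq\lambda_\Gamma$ but $s_\Gamma\prec a$, hence $a\not\preceq s_\Gamma$. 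So temperedness alone can never yield your lemma; any proof must use Gaussianity essentially, and its truth is unclear. The one case where it is cheap ($\pi\leq\infty\cdot\lambda_\Gamma$, whence $a_\pi\sqsubseteq a_{\infty\cdot\lambda_\Gamma}\cong s_\Gamma$) is exactly the case you correctly discard as spectrally smooth. (ii) You do not construct, for an arbitrary non-amenable $\Gamma$, a Borel family of tempered representations on which unitary equivalence is non-classifiable and above $E_0$; no such construction is standard. (iii) The injectivity direction of your reduction is missing: you need $a\times a_{\pi}\cong a\times a_{\pi'}\implies\pi\simeq_u\pi'$, but conjugacy only gives unitary equivalence of the full Koopman representations, and extracting the first chaos from $\bigoplus_n{\rm Sym}^n(\pi_\bbC)$ is the classical obstruction --- even $a_\pi\cong a_{\pi'}\implies\pi\simeq_u\pi'$ fails or is unknown without Foias--Stratila/Kronecker-type spectral hypotheses, which exist for abelian groups but for which you offer no substitute for general non-amenable $\Gamma$. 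Moreover ``choosing the $\pi_t$ spectrally disjoint from $\kappa_a$'' is not a defined operation for non-abelian groups, and the given $a$ is beyond your control: its Koopman representation may contain infinitely many copies of any candidate invariant, so absorption must be ruled out by a mechanism you do not supply. As it stands, the proposal is a plausible outline with its two central steps missing, one of which (step (i)) is contradicted in its soft form by an example quoted in this very paper.
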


For the concept of classification by countable structures, see, e.g., \cite[page 35]{K}. Two actions $a,b$ are {\bf weakly isomorphic}\index{weakly isomorphic}, in symbols $a\cong^w b $\index{$\cong^w$}, if $a\sqsubseteq b \ \& \ b\sqsubseteq a$. Also $E_0$ is the the eventual equality equivalence relation on $2^\bbN$.

Ab\'{e}rt and Elek raised in \cite[Question 6.1]{AE1} the question of whether there are (nontrivial) {\bf weakly rigid actions}\index{weakly rigid actions}, i.e., actions $a$ for which the weak equivalence class of $a$ coincides with its isomorphism class. \cref{37} shows that no free weakly rigid actions exist.

\subsection{Ergodicity and strong ergodicity} Recall that an action $a \in A(\Gamma, X, \mu)$ is {\bf ergodic}\index{ergodic} if it has no non-trivial invariant Borel sets, i.e., there is no invariant Borel set $A\subseteq X$ such that $0<\mu(A)<1$. It is called {\bf strongly ergodic}\index{strongly ergodic} if it has no {\bf non-trivial almost invariant Borel sets}\index{non-trivial almost invariant Borel sets}, i.e., there is no sequence $A_n$ of Borel sets such that $\mu(\gamma^a (A_n) \Delta A_n)\to 0, \forall \gamma\in \Gamma$, but $\mu(A_n)(1-\mu(A_n))\not\to 0$. This condition is equivalent to so-called $\mathbf{E_0}$-{\bf ergodicity}\index{$E_0$-ergodicity}, which asserts that any Borel homomorphism of the equivalence relation $E_a$\index{$E_a$} induced by $a$ into a hyperfinite Borel equivalence relation $E$ trivializes, i.e., maps $\mu$-a.e. to a single $E$-class. This is a result of Jones-Schmidt, see \cite[Theorem A2.2]{HK} for a proof (but note that the
ergodicity assumption in the statement of that theorem is unnecessary). Moreover the shift action $s_\Gamma$ is strongly ergodic for any non-amenable $\Gamma$ (Losert-Rindler \cite{LR}, Jones-Schmidt \cite{JS}). More details about these notions can be also found in \cite[Appendix A]{HK}. Denote by $\textrm{ERG}(\Gamma, X, \mu)$\index{$\textrm{ERG}(\Gamma, X, \mu)$} the set of ergodic and by $\textrm{SERG}(\Gamma, X, \mu)$\index{$\textrm{SERG}(\Gamma, X, \mu)$} the set of strongly ergodic actions in $A(\Gamma, X, \mu)$. We now have:

\begin{thm}[{\cite[Proposition 10.6]{K}}]\label{3.5}
If $a \in A(\Gamma, X, \mu)$ and $i_\Gamma\preceq a$, then $a$ admits non-trivial almost invariant sets. If $a$ is also ergodic, then
\[
i_\Gamma\preceq a \iff a \notin {\rm SERG}(\Gamma, X, \mu).
\]
\end{thm}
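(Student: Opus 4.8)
The first assertion---equivalently the implication $i_\Gamma \preceq a \Rightarrow a \notin {\rm SERG}(\Gamma, X, \mu)$---requires no ergodicity, and I would get it straight from the definition of weak containment. Fix an enumeration $\Gamma = \{\gamma_0 = e_\Gamma, \gamma_1, \dots\}$ and a single set $A$ with $\mu(A) = \frac12$, viewed as the two-piece partition $\{A, X\setminus A\}$. Since $\gamma^{i_\Gamma}$ is the identity, the target quantities $\mu(\gamma^{i_\Gamma}(A_i)\cap A_j)$ equal $\frac12$ when $i=j$ and $0$ otherwise, independently of $\gamma$. Applying $i_\Gamma \preceq a$ with $F_n = \{\gamma_0,\dots,\gamma_n\}$ and $\epsilon_n = 2^{-n}$ produces partitions $\{B_n, X\setminus B_n\}$ of $X$ with $|\mu(\gamma^a(B_n)\cap B_n) - \tfrac12| < \epsilon_n$ for all $\gamma\in F_n$. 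Taking $\gamma = e_\Gamma$ gives $\mu(B_n)\to\frac12$, and as $a$ is measure preserving, for fixed $\gamma = \gamma_k$ and $n\geq k$ one has $\mu(\gamma^a(B_n)\Delta B_n) = 2\mu(B_n) - 2\mu(\gamma^a(B_n)\cap B_n) < 4\epsilon_n$. Hence $(B_n)$ is a non-trivial almost invariant sequence and $a\notin {\rm SERG}(\Gamma, X, \mu)$, which is exactly the forward implication of the displayed equivalence.

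For the converse I would assume $a$ ergodic and not strongly ergodic and verify $i_\Gamma \preceq a$. The clean packaging is through the ultrapower characterization \cref{2.6}, by which $i_\Gamma \preceq a$ is equivalent to $i_\Gamma \sqsubseteq a_{\mathcal{U}}$. Writing $\mathcal{I}$ for the $\sigma$-algebra of $a_{\mathcal{U}}$-invariant sets, I would first record the standard dictionary from ultrapower theory (\cite{CKT-D}): elements of $\mathcal{I}$ correspond to asymptotically invariant sequences in $(X,\mu)$ read along $\mathcal{U}$, so $\mathcal{I}$ is non-trivial precisely because $a$ is not strongly ergodic. Since $a_{\mathcal{U}}$ acts as the identity on $\mathcal{I}$, a factor map onto the trivial action on a non-atomic space exists if and only if $\mathcal{I}$ is non-atomic. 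Thus the converse reduces to the claim that, for ergodic $a$, a non-trivial $\mathcal{I}$ is automatically non-atomic.

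Equivalently, and more concretely, I would reduce $i_\Gamma \preceq a$ to the following realization statement: for every $k$, every probability vector $(p_1,\dots,p_k)$, every finite $F\subseteq\Gamma$ and every $\epsilon>0$, there is a Borel partition $\{B_1,\dots,B_k\}$ of $X$ with $\mu(B_i)$ close to $p_i$ and $\mu(\gamma^a(B_i)\Delta B_i) < \epsilon$ for all $i$ and all $\gamma\in F$. Indeed, for such an almost invariant partition $\mu(\gamma^a(B_i)\cap B_j)$ is within $O(\epsilon)$ of $\mu(B_i\cap B_j) = \delta_{ij}p_i$, which is exactly the data of the trivial action; feeding this into the definition yields $i_\Gamma \preceq a$.

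The substance of the argument, and the step I expect to be the main obstacle, is manufacturing these almost invariant partitions of arbitrary type out of the single non-trivial almost invariant sequence supplied by non-strong-ergodicity. This is precisely where ergodicity is indispensable: it is what lets one ``cut'' a given non-trivial almost invariant set into almost invariant pieces of prescribed smaller measures, and hence show that the asymptotic range $\{\lim_n \mu(A_n) : (A_n)\ \text{asymptotically invariant}\}$ is all of $[0,1]$ (equivalently, that $\mathcal{I}$ is non-atomic). I would invoke the Jones--Schmidt analysis of asymptotically invariant sequences for this (\cite{JS}; see also \cite{HK}), which is the genuinely nontrivial input. The naive attempt to build independent copies of an almost invariant sequence by conjugating with group elements fails, since conjugation by varying elements destroys the uniform almost invariance, and it is exactly this difficulty that the ergodic-theoretic construction must circumvent.
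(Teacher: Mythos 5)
First, a caveat on the comparison: the paper does not actually prove this theorem; it quotes it from \cite[Proposition 10.6]{K}, so your proposal can only be measured against that standard argument, which (like your sketch) rests on the Jones--Schmidt circle of ideas that the paper invokes immediately before the statement. Your forward direction is complete and correct, and is the expected one: approximate the partition $\{A, X\setminus A\}$ with $\mu(A)=\tfrac12$ along $F_n$ and $\epsilon_n\to 0$, use $\gamma=e_\Gamma$ to pin $\mu(B_n)\to\tfrac12$, and use measure preservation to convert $|\mu(\gamma^a(B_n)\cap B_n)-\tfrac12|<\epsilon_n$ into $\mu(\gamma^a(B_n)\Delta B_n)<4\epsilon_n$; you are also right that this half needs no ergodicity.

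The converse is where the substance lies, and your write-up does not prove it. Your two reductions (to non-atomicity of the invariant algebra $\mathcal{I}$ of $a_{\mathcal{U}}$, equivalently to the existence of almost invariant partitions of arbitrary prescribed distribution) are correct, but the crux is then delegated to ``the Jones--Schmidt analysis.'' The Jones--Schmidt theorem in the form the literature supplies --- and the form this paper quotes, namely \cite[Theorem A2.2]{HK}: strong ergodicity is equivalent to $E_0$-ergodicity --- does not state either of the two statements you need, and a genuine argument is required to bridge them. Concretely: from $a\notin{\rm SERG}(\Gamma,X,\mu)$ one gets a Borel homomorphism $\pi$ of $E_a$ into $E_0$ not mapping a.e.\ into one class; ergodicity of $a$ makes $\nu=\pi_*\mu$ an $E_0$-ergodic measure, hence non-atomic (an atom would force $\nu$ to concentrate on a single countable $E_0$-class, contradicting the choice of $\pi$); then, since $\pi(x)$ and $\pi(\gamma^a(x))$ agree from some coordinate on for a.e.\ $x$, the sets determined by coordinates $\geq n$ pull back, for $n$ large, to sets whose $\gamma^a$-translates differ from them by measure less than $\epsilon$ for all $\gamma$ in a given finite $F$; finally, non-atomicity of the pushforward of $\nu$ onto the coordinates $\geq n$ lets one choose among these a partition with exactly the prescribed measures. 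That pullback argument (or the equivalent cocycle argument through an ergodic hyperfinite quotient) is the actual proof of the converse; citing Jones--Schmidt without it leaves the key step as a black box. Separately, your parenthetical equating ``the asymptotic range $\{\lim_n\mu(A_n)\}$ is all of $[0,1]$'' with ``$\mathcal{I}$ is non-atomic'' is false as stated --- a measure algebra consisting of one atom of mass $\tfrac12$ together with a non-atomic part of mass $\tfrac12$ realizes every value in $[0,1]$ --- and the weaker statement would not suffice anyway: single almost invariant sets of prescribed measures cannot in general be assembled into almost invariant \emph{partitions} of prescribed distribution (Boolean combinations of almost invariant sequences are almost invariant, but their measures are uncontrolled), and it is the partition statement that feeding the definition of $i_\Gamma\preceq a$ requires.
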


We also have the following connection with ergodicity.

\begin{thm}[{\cite[Theorem 5.6]{CKT-D}}]
Let $a \in A(\Gamma, X, \mu)$. Then the following are equivalent:
\begin{enumerate}[\upshape (i)]
\item $a\in  {\rm SERG}(\Gamma, X, \mu)$.
\item $\forall b\preceq a(b\in {\rm ERG}(\Gamma, X, \mu))$.
\item $\forall b\simeq a( b\in {\rm ERG}(\Gamma, X, \mu))$.
\end{enumerate}
\end{thm}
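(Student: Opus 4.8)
The plan is to prove (i) $\Rightarrow$ (ii), observe that (ii) $\Rightarrow$ (iii) is immediate (since $b\simeq a$ entails $b\preceq a$), and then obtain (iii) $\Rightarrow$ (i) by contraposition. Throughout I will use the elementary fact that strong ergodicity implies ergodicity: a non-trivial invariant set, read as a constant sequence, is a non-trivial almost invariant sequence. The real content sits in the two non-trivial implications, and these pull in opposite directions: (i) $\Rightarrow$ (ii) is a downward monotonicity statement, while (iii) $\Rightarrow$ (i) requires manufacturing an honest invariant set out of merely approximate ones.

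For (i) $\Rightarrow$ (ii), suppose $a$ is strongly ergodic and $b\in A(\Gamma,Y,\nu)$ satisfies $b\preceq a$, and assume toward a contradiction that $b$ is not ergodic. Fix a $b$-invariant set $A\subseteq Y$ with $t:=\nu(A)\in(0,1)$, so that $\nu(\gamma^b(A)\cap A)=t$ and $\nu(\gamma^b(A)\cap A^c)=0$ for every $\gamma$. Applying the definition of $b\preceq a$ to the partition $\{A,A^c\}$, with $\epsilon=1/m$ and a finite set $F_m\ni e_\Gamma$ increasing to $\Gamma$, I obtain partitions $\{B_m,B_m^c\}$ of $X$ with $\mu(\gamma^a(B_m)\cap B_m^c)<1/m$ for all $\gamma\in F_m$. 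Since $a$ is measure preserving, this forces $\mu(\gamma^a(B_m)\,\Delta\, B_m)<2/m$ for $\gamma\in F_m$, while the choice $\gamma=e_\Gamma$ gives $|\mu(B_m)-t|<1/m$. Hence $(B_m)$ is a non-trivial almost invariant sequence for $a$ (its measures tend to $t\in(0,1)$), contradicting strong ergodicity. Therefore $b$ is ergodic.

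For (iii) $\Rightarrow$ (i) I argue the contrapositive: assuming $a$ is not strongly ergodic, I produce some $b\simeq a$ that fails to be ergodic. If $a$ itself is not ergodic, then $b=a$ already works. Otherwise $a$ is ergodic but not strongly ergodic, so by \cref{3.5} we have $i_\Gamma\preceq a$. By the factoring characterization \cref{28a} there is then $c\in A(\Gamma,X,\mu)$ with $c\simeq a$ and $i_\Gamma\sqsubseteq c$; let $f\colon X\to X$ be a homomorphism witnessing this factoring. Because $i_\Gamma$ is the trivial action, equivariance of $f$ reads $f(\gamma^c(x))=f(x)$, so $f$ is $c$-invariant. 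Choosing $E\subseteq X$ with $\mu(E)=\tfrac12$ (every set is $i_\Gamma$-invariant, and the space is non-atomic), the set $f^{-1}(E)$ is $c$-invariant of measure $\tfrac12$. Thus $c$ is not ergodic, and $b=c$ completes the contrapositive.

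The implication (i) $\Rightarrow$ (ii) is the routine half: it is ultimately the observation that the presence of non-trivial almost invariant sets is detected by finitary data — precisely, by membership of the relevant near-diagonal points in the closed sets $C_{n,2}$ of \cref{22} — and hence passes downward along $\preceq$. The crux is (iii) $\Rightarrow$ (i), where the failure of strong ergodicity supplies only approximate invariant sets and one must upgrade these to a genuine invariant set somewhere in the weak equivalence class of $a$. The key that unlocks this is the interplay of \cref{3.5}, which converts ``ergodic but not strongly ergodic'' into the clean containment $i_\Gamma\preceq a$, with the factoring form of weak containment \cref{28a}, which promotes this containment to an actual factor map onto the trivial action within the class; and a factor onto a non-atomic trivial action is exactly a reservoir of non-trivial invariant sets.
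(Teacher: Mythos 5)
Your proof is correct: both non-trivial implications are sound, and the case split in the contrapositive of (iii) $\Rightarrow$ (i) properly respects the ergodicity hypothesis needed to invoke \cref{3.5}.

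One thing to be aware of: this survey contains no proof of the statement — it is quoted from \cite[Theorem 5.6]{CKT-D} — so the real comparison is with that source, where the equivalences are run through ultrapowers: a non-trivial almost invariant sequence for $a$ corresponds exactly to a non-trivial invariant set of the ultrapower $a_{\mathcal{U}}$; an invariant set of any $b\preceq a$ pulls back through the factoring $b\sqsubseteq a_{\mathcal{U}}$ given by \cref{2.6}; and a separable non-ergodic factor of $a_{\mathcal{U}}$ weakly equivalent to $a$ witnesses the failure of (iii). Your argument repackages this machinery rather than avoiding it: in (iii) $\Rightarrow$ (i) the pair \cref{3.5} and \cref{28a} does the work, and \cref{28a} is itself derived from the ultrapower characterization \cref{2.6}, so the ultraproduct content is still present, just black-boxed inside quoted results. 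What your route buys is twofold. First, the whole derivation stays within statements already displayed in the survey. Second, your (i) $\Rightarrow$ (ii) is genuinely elementary — straight from the definition of $\preceq$, using the refinement that the approximating sets may be taken to form a partition, together with measure-preservation to bound both halves of $\mu(\gamma^a(B_m)\,\Delta\,B_m)$ — whereas the ultrapower proof of that direction pulls an invariant set of $b$ back through $b\sqsubseteq a_{\mathcal{U}}$ and then converts it into an almost invariant sequence for $a$.
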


\begin{cor}\label{cor3.6}
The set ${\rm SERG}(\Gamma, X, \mu)$ of strongly ergodic actions is downwards closed under $\preceq$, i.e,.
\[
a\preceq b,  b\in  {\rm SERG}(\Gamma, X, \mu) \implies a\in  {\rm SERG}(\Gamma, X, \mu).
\]
In particular, strong ergodicity is a weak equivalence invariant. 
\end{cor}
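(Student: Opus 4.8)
The plan is to derive this directly from the preceding characterization theorem, whose condition (ii) expresses strong ergodicity of an action purely in terms of the ergodicity of every action weakly contained in it. The only additional ingredient is the transitivity of the pre-order $\preceq$, already recorded when $\preceq$ was introduced; no new estimate is required.

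First I would assume $a\preceq b$ with $b\in \mathrm{SERG}(\Gamma, X, \mu)$ and aim to verify condition (ii) of that theorem for the action $a$, namely that every $c\preceq a$ is ergodic. So let $c\in A(\Gamma, X, \mu)$ satisfy $c\preceq a$. Applying transitivity to $c\preceq a$ and $a\preceq b$ yields $c\preceq b$. Since $b\in \mathrm{SERG}(\Gamma, X, \mu)$, the implication (i)$\Rightarrow$(ii) of the preceding theorem, applied to $b$, guarantees that every action weakly contained in $b$ is ergodic; in particular $c\in \mathrm{ERG}(\Gamma, X, \mu)$. As $c$ was an arbitrary action with $c\preceq a$, condition (ii) holds for $a$, and the implication (ii)$\Rightarrow$(i) of the same theorem, applied now to $a$, gives $a\in \mathrm{SERG}(\Gamma, X, \mu)$. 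This establishes downward closure.

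For the final ``in particular'' clause I would observe that weak equivalence invariance is immediate from downward closure: if $a\simeq b$ then both $a\preceq b$ and $b\preceq a$ hold, so $b\in \mathrm{SERG}(\Gamma, X, \mu)$ forces $a\in \mathrm{SERG}(\Gamma, X, \mu)$ and symmetrically, whence $a\in \mathrm{SERG}(\Gamma, X, \mu)\iff b\in \mathrm{SERG}(\Gamma, X, \mu)$.

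The substantive content is entirely carried by the preceding theorem, so there is no genuine analytic obstacle here. The only point to get right is the bookkeeping of the universally quantified clause: one must invoke the characterization twice, once ``downward'' at $b$ to harvest ergodicity of an arbitrary $c\preceq a$ via the chain $c\preceq a\preceq b$, and once ``upward'' at $a$ to conclude that $a$ itself is strongly ergodic. Correctly threading the quantifier $\forall c\preceq a$ through transitivity is the entirety of the work.
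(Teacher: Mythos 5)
Your proof is correct and is exactly the intended derivation: the paper states this as an immediate corollary of the preceding theorem (the equivalence of (i) and (ii)) together with transitivity of $\preceq$, which is precisely the argument you give, including the two-sided use of the characterization at $b$ and then at $a$. The ``in particular'' clause is likewise handled as the paper intends.
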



If the group $\Gamma$ has property (T), then we have that $\textrm{SERG}(\Gamma, X, \mu) = \textrm{ERG}(\Gamma, X, \mu)$ (Schmidt \cite{S}; see also \cite[Theorem 11.2]{K}), thus \cref{cor3.6} holds in this case for $\textrm{ERG}(\Gamma, X, \mu)$. If on the other hand $\Gamma$ does not have property (T), then $\textrm{SERG}(\Gamma, X, \mu) \subsetneqq \textrm{ERG}(\Gamma, X, \mu)$ (Connes-Weiss \cite{CW}; see also \cite[Theorem 11.2]{K}), so ergodicity is not a weak equivalence invariant.

Another characterization of strong ergodicity, for ergodic actions, is the following:
\begin{thm}[{\cite[Theorem 3] {AW}}] \label{38}

Let $a\in A(\Gamma, X, \mu)$ be ergodic. Then the following are equivalent:
\begin{enumerate}[\upshape (i)]
\item $a\notin {\rm SERG}(\Gamma, X, \mu)$.
\item $a\simeq \frac{1}{2}a + \frac{1}{2} a$.
\item $a\simeq \lambda a + (1-\lambda) a$, for some (resp., all) $0<\lambda<1$.
\item $a\simeq i_\Gamma\times a$.
\end{enumerate}
\end{thm}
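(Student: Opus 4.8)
The plan is to make the equivalence $(\mathrm{i}) \Leftrightarrow i_\Gamma \preceq a$ the hub: for ergodic $a$ this is precisely \cref{3.5}. Granting it, the content of the theorem is that $i_\Gamma \preceq a$ forces $a$ to absorb convex combinations and trivial direct integrals of itself, and I would extract all of this from a single fact — that $i_\Gamma \preceq a$ makes $\overline{E(a,K)}$ convex for every finite $K$ — together with the factor description \cref{25}. Since $\frac12 a + \frac12 a$ is the case $\lambda = \frac12$ of (iii), it is enough to handle (iii) and (iv). In both, the ``easy'' weak containments are free: $a \preceq \lambda a + (1-\lambda)a$ because $a$ is a factor of $\lambda a + (1-\lambda)a$ (collapse the two copies of $X$ by the identity), and $a \preceq i_\Gamma \times a$ because $a$ is a factor of $i_\Gamma \times a$ (project off the trivial coordinate).

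The core is a convexity lemma: if $a$ is ergodic and $i_\Gamma \preceq a$, then $t\kappa_1 + (1-t)\kappa_2 \in \overline{E(a,K)}$ for all $\kappa_1 = (\Phi^{f_1,a})_*\mu$, $\kappa_2 = (\Phi^{f_2,a})_*\mu$ in $E(a,K)$ and $t \in (0,1)$. Applying $i_\Gamma \preceq a$ to the two-piece partition with weights $t$ and $1-t$ yields, for every finite $F \subseteq \Gamma$ and $\epsilon > 0$, a set $B$ with $\mu(B) \approx t$ and $\mu(\gamma^a B \,\Delta\, B) < \epsilon$ for $\gamma \in F$ — almost invariant sets of any prescribed measure. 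I would splice, putting $g = f_1$ on $B$ and $g = f_2$ on $X \setminus B$, and argue that $(\Phi^{g,a})_*\mu \to t\kappa_1 + (1-t)\kappa_2$ in $M_s(K^\Gamma)$ as $\mu(B) \to t$ and the almost invariance sharpens. Indeed, over a fixed finite window $W$, on $\bigcap_{\gamma \in W}\gamma^a B$ (which is $\approx B$ by almost invariance) the point $\Phi^{g,a}(x)$ agrees with $\Phi^{f_1,a}(x)$ in the $W$-coordinates, dually off $B$, and the crossing region has measure $\to 0$.

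The one real obstacle is that this requires $B$ to be asymptotically independent of the fixed finite configurations cut out by $f_1,f_2$ over $W$; otherwise the $B$-part of the mass need not reproduce $t\kappa_1$. This is exactly where ergodicity enters, and it comes for free: if $A_n$ are almost invariant with $\mu(A_n) \to t \in (0,1)$, then $\phi_n = \mathbf{1}_{A_n} - \mu(A_n)$ are mean-zero almost invariant vectors in $L^2(X,\mu)$ with $\|\phi_n\|_2^2 = \mu(A_n)(1-\mu(A_n))$ bounded away from $0$; any weak limit of the normalized vectors $\phi_n/\|\phi_n\|_2$ is $\Gamma$-invariant, hence $0$ by ergodicity, so $\phi_n \to 0$ weakly, i.e. $\mu(A_n \cap C) \to t\,\mu(C)$ for every fixed Borel $C$. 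Taking $C$ over the finitely many preimages $\{x : f_1((\gamma^a)^{-1}x) = w_\gamma,\ \gamma \in W\}$ (and their $f_2$-analogues) supplies precisely the independence the splicing needs. I regard proving this lemma cleanly as the main difficulty; everything else is bookkeeping.

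Granting the lemma, $\overline{E(a,K)}$ is convex, so $E(\lambda a + (1-\lambda)a, K) = \lambda E(a,K) + (1-\lambda)E(a,K) \subseteq \overline{E(a,K)}$ and \cref{25} gives $\lambda a + (1-\lambda)a \preceq a$; with the factor map this is (iii), and (ii) is $\lambda = \frac12$. For (iv), $E(i_\Gamma \times a, K)$ is the set of barycenters $\int_Y \kappa_y\,d\nu(y)$ of probability measures supported on $E(a,K)$, where $(Y,\nu)$ is the space of the trivial factor; since $\overline{E(a,K)}$ is a compact convex subset of $M_s(K^\Gamma)$, each such barycenter lies in it, so $i_\Gamma \times a \preceq a$ and hence (iv). For the converses $(\mathrm{ii}),(\mathrm{iii}),(\mathrm{iv}) \Rightarrow (\mathrm{i})$, each of $\frac12 a + \frac12 a$, $\lambda a + (1-\lambda)a$, $i_\Gamma \times a$ carries a non-trivial invariant set, so it is non-ergodic and a fortiori not strongly ergodic; being weakly equivalent to $a$, and strong ergodicity being a weak equivalence invariant (\cref{cor3.6}), it forces $a \notin {\rm SERG}(\Gamma,X,\mu)$, which is (i).
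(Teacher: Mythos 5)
Your proof is correct, but note that the paper itself contains no proof of \cref{38} to compare against: it is quoted from Ab\'{e}rt--Weiss [AW, Theorem 3], and the appendices prove other results (\cref{min}, \cref{7.1}, etc.), not this one. So what follows is an assessment of your argument on its own terms.

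The logical architecture is sound and all the pieces check out. The hub $(\mathrm{i}) \iff i_\Gamma \preceq a$ is exactly the ergodic case of \cref{3.5}; the ``easy'' containments $a \sqsubseteq \lambda a + (1-\lambda)a$ (collapse the two copies) and $a \sqsubseteq i_\Gamma \times a$ (project) are genuine factor maps; and the converse containments via (ii)/(iii)/(iv) $\Rightarrow$ (i) correctly use that a non-ergodic action is automatically not strongly ergodic together with the downward closure of ${\rm SERG}$ under $\preceq$ (\cref{cor3.6}). The two points that genuinely needed care are both handled correctly: (1) the asymptotic-independence step --- that almost invariant sets $A_n$ with $\mu(A_n)\to t$ in an \emph{ergodic} system satisfy $\mu(A_n\cap C)\to t\mu(C)$ for fixed $C$ --- is exactly the standard weak-compactness argument you give ($\mathbf{1}_{A_n}-\mu(A_n)$ has norms bounded away from $0$, asymptotic invariance passes to weak limits, ergodicity kills invariant mean-zero limits); and (2) the identification $E(\lambda a + (1-\lambda)a, K) = \lambda E(a,K) + (1-\lambda)E(a,K)$ and the barycenter description of $E(i_\Gamma\times a,K)$ are both correct computations with $\Phi^{f,\cdot}$, so once $\overline{E(a,K)}$ is convex (hence closed under barycenters, being compact convex in a locally convex space), \cref{25} with finite $K$ delivers $\lambda a + (1-\lambda)a \preceq a$ and $i_\Gamma\times a \preceq a$ simultaneously. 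Note also that your argument gives (iii) for \emph{all} $\lambda$ from (i), while only \emph{some} $\lambda$ is needed for the converse, which is precisely what the ``some (resp., all)'' formulation requires. In spirit this is a reconstruction of the Ab\'{e}rt--Weiss argument --- your splicing lemma plays the role their proof assigns to almost invariant sets combined with their Lemma 8 (which is \cref{25} here) --- so the route is the expected one, executed with the details filled in rather than a genuinely different mechanism.
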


For the definition of {\bf convex combination}\index{convex combination} $\sum_{i=1}^n\lambda_i a_i$\index{$\sum_{i=1}^n\lambda_i a_i$} of $a_i\in A(\Gamma, X, \mu)$, where $0\leq\lambda_i\leq 1, \sum_{i=1}^n\lambda_{i} =1$, see \cite[Section 10, {\bf (F)}]{K}.

A corollary of this result is another characterization of amenability. Below we let $a^2= a\times a$.

\begin{cor}
Let $\Gamma$ be an infinite group. Then the following are equivalent:
\begin{enumerate}[\upshape (i)]
\item $\Gamma$ is amenable.
\item For any $a \in {\rm FR}(\Gamma, X, \mu)$, $a^2\simeq a$.

\end{enumerate}

\end{cor}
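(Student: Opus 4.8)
The plan is to treat the two implications separately. For (i)$\Rightarrow$(ii), assume $\Gamma$ is amenable and let $a$ be free. Then $a^2 = a\times a$ is again free, since for $\gamma\neq e_\Gamma$ the automorphism $(x,y)\mapsto(\gamma^a(x),\gamma^a(y))$ fixes a point only where $\gamma^a$ already fixes its first coordinate, a null set. As recalled above, amenability of $\Gamma$ forces a unique free weak equivalence class, so $a\simeq s_\Gamma\simeq a^2$. This direction is immediate, and all the content lies in the converse.

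For (ii)$\Rightarrow$(i) I would prove the contrapositive: if $\Gamma$ is not amenable, produce a single free $a$ with $a^2\not\simeq a$. The tempting witness $a=s_\Gamma$ (which is strongly ergodic, as $\Gamma$ is non-amenable) does \emph{not} work, and this is the heart of the matter: a product of Bernoulli shifts is again Bernoulli, so $s_\Gamma\times s_\Gamma\cong s_\Gamma$ and hence $s_\Gamma^2\simeq s_\Gamma$. Strong ergodicity by itself therefore cannot obstruct $a^2\simeq a$. Instead I take the free, non-ergodic action $a=\tfrac12 s_\Gamma+\tfrac12 s_\Gamma$; writing $m\cdot s_\Gamma$ for the equal-weight convex combination of $m$ copies of $s_\Gamma$, bilinearity of $\times$ over convex combinations together with $s_\Gamma\times s_\Gamma\cong s_\Gamma$ gives $a^2\cong 4\cdot s_\Gamma$, so the task reduces to separating $2\cdot s_\Gamma$ from $4\cdot s_\Gamma$. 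The base inequivalence $s_\Gamma\not\simeq 2\cdot s_\Gamma$ is exactly \cref{38} applied to the ergodic strongly ergodic action $s_\Gamma$, but I need the genuinely non-ergodic comparison.

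To separate them I would introduce, for any action $c$, the set $S(c)$ of all $t\in[0,1]$ that are \emph{asymptotically attainable}, meaning that for every finite $F\subseteq\Gamma$ and $\epsilon>0$ there is a Borel set $A$ with $\mu(\gamma^c(A)\Delta A)<\epsilon$ for all $\gamma\in F$ and $|\mu(A)-t|<\epsilon$. The key claim is that $S(c)$ is a weak-equivalence invariant: for fixed $F$ and $\epsilon$ the existence of such an $A$ is determined by the quantities $\mu(\gamma_l^c(A)\cap A)$ attached to the partition $(A,A^c)$, hence by the sets $C_{n,2}(c)$ of \cref{22}, and $c\simeq c'$ forces $C_{n,2}(c)=C_{n,2}(c')$ for all $n$. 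Since $s_\Gamma$ is strongly ergodic, any asymptotically invariant sequence in $m\cdot s_\Gamma$ must be asymptotically trivial on each of the $m$ ergodic components, so its measure can accumulate only at the points $j/m$; thus $S(m\cdot s_\Gamma)=\{j/m:0\le j\le m\}$. As $\tfrac14\in S(4\cdot s_\Gamma)\setminus S(2\cdot s_\Gamma)$, we conclude $4\cdot s_\Gamma\not\simeq 2\cdot s_\Gamma$, i.e.\ $a^2\not\simeq a$, completing the contrapositive. The main obstacle is precisely this last invariant: one must verify that weak equivalence detects the number of strongly ergodic components, which is not formal and relies on strong ergodicity to make the attainable measures discrete.
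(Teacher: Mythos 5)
Your proof is correct and follows essentially the same route as the paper: the paper also proves (ii)$\Rightarrow$(i) by taking the witness $a=\frac{1}{2}s_\Gamma+\frac{1}{2}s_\Gamma$ for non-amenable $\Gamma$ and reducing to the inequivalence of $\frac{1}{2}s_\Gamma+\frac{1}{2}s_\Gamma$ and the four-fold combination $\frac{1}{4}s_\Gamma+\frac{1}{4}s_\Gamma+\frac{1}{4}s_\Gamma+\frac{1}{4}s_\Gamma$. The one difference is that the paper merely asserts this last inequivalence, whereas you justify it via the invariant $S(c)$ of asymptotically attainable measures of almost-invariant sets, correctly noting that $S(c)$ is recoverable from the sets $C_{n,2}(c)$ (hence is a weak equivalence invariant) and that strong ergodicity of $s_\Gamma$ forces $S(m\cdot s_\Gamma)=\{j/m\colon 0\leq j\leq m\}$; this filled-in step is sound and is a genuine (and needed) supplement to the paper's terse argument, since the paper's \cref{38} only handles the ergodic comparison $s_\Gamma\not\simeq\frac{1}{2}s_\Gamma+\frac{1}{2}s_\Gamma$.
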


 To see this, notice that if $\Gamma$ is amenable, then $a\simeq s_\Gamma$ and so $a^2 \simeq (s_\Gamma)^2 \cong s_\Gamma \simeq a$. On the other hand, if $\Gamma$ is not amenable, let $b=s_\Gamma$, which is strongly ergodic, and put $a = \frac{1}{2}b + \frac{1}{2} b$. Then $a^2 \simeq \frac{1}{4}b + \frac{1}{4}b + \frac{1}{4}b + \frac{1}{4}b\not\simeq a$.

 As it was shown in \cite{T-D1} weak containment behaves well with respect to the ergodic decomposition.
 
 \begin{thm}[{\cite[Theorems 3.12, 3.13]  {T-D1}}] \label{3.8}
  Let $a,b \in A(\Gamma, X, \mu)$. If $a$ is ergodic, then $a$ is weakly contained in $b$ iff $a$ is weakly contained in almost every ergodic component of $b$.
 \end{thm}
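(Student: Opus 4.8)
The plan is to work throughout with the invariant-measure description of weak containment from \cref{25}, i.e.\ $a\preceq b\iff E(a,K)\subseteq\overline{E(b,K)}$ for all finite $K$. Fix the ergodic decomposition of $b$: a standard probability space $(\Omega,\bbP)$ together with a measurable assignment $\omega\mapsto\nu_\omega$ of ergodic $b$-invariant measures with $\nu=\int_\Omega\nu_\omega\,d\bbP(\omega)$, and write $b_\omega$ for the ergodic action of $\Gamma$ on $(Y,\nu_\omega)$. For finite $K$ and Borel $f\colon Y\to K$, disintegration gives $(\Phi^{f,b})_*\nu=\int_\Omega(\Phi^{f,b_\omega})_*\nu_\omega\,d\bbP(\omega)$, so every member of $E(b,K)$ is a $\bbP$-barycenter of the ergodic measures $(\Phi^{f,b_\omega})_*\nu_\omega\in E(b_\omega,K)$. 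On the compact metrizable simplex $M_s(K^\Gamma)$ I fix a compatible metric $d$ with convex balls, e.g.\ $d(\mu',\mu'')=\sum_i 2^{-i}|\int h_i\,d\mu'-\int h_i\,d\mu''|$ for a sequence $(h_i)$ dense in the unit ball of $C(K^\Gamma)$.

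The direction $(\Leftarrow)$ is the soft one and does not use ergodicity of $a$. Suppose $a\preceq b_\omega$ for $\bbP$-a.e.\ $\omega$; fix finite $K$, $\lambda\in E(a,K)$, and $n$. For a.e.\ $\omega$ the set of Borel $g\colon Y\to K$ with $d((\Phi^{g,b_\omega})_*\nu_\omega,\lambda)<1/n$ is nonempty, and it depends on $\omega$ in a Borel fashion, so a measurable selection produces a single Borel $g_n\colon Y\to K$ realizing this bound for a.e.\ $\omega$. Since $(\Phi^{g_n,b})_*\nu$ is the $\bbP$-barycenter of the measures $(\Phi^{g_n,b_\omega})_*\nu_\omega$ and $d$ has convex balls, $(\Phi^{g_n,b})_*\nu\in E(b,K)$ lies within $1/n$ of $\lambda$; hence $\lambda\in\overline{E(b,K)}$, and \cref{25} gives $a\preceq b$.

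The substantive direction $(\Rightarrow)$ uses ergodicity of $a$ through Choquet theory. Fix finite $K$ and $\lambda\in E(a,K)$. Because $a$ is ergodic, the factor $(K^\Gamma,\lambda)$ of $a$ is ergodic, so $\lambda$ is an \emph{extreme} point of $M_s(K^\Gamma)$. By \cref{25} there are $f_n\colon Y\to K$ with $m_n:=(\Phi^{f_n,b})_*\nu\to\lambda$. Setting $m_n^\omega:=(\Phi^{f_n,b_\omega})_*\nu_\omega$ (ergodic, hence extreme), the identity $m_n=\int_\Omega m_n^\omega\,d\bbP$ shows that $\beta_n:=(\omega\mapsto m_n^\omega)_*\bbP$ is a probability measure on $M_s(K^\Gamma)$ with barycenter $m_n$. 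The key point is that $\beta_n\to\delta_\lambda$ weak$^*$: any weak$^*$ cluster point $\beta$ of $(\beta_n)$ has barycenter $\lim_n m_n=\lambda$ (the barycenter map is continuous), and an extreme point of a compact convex set admits only the Dirac measure as a representing measure, so $\beta=\delta_\lambda$; as the space of probability measures on $M_s(K^\Gamma)$ is compact metrizable, the full sequence converges to $\delta_\lambda$.

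Consequently $\bbP(\{\omega\colon d(m_n^\omega,\lambda)\ge\delta\})=\beta_n(\{m\colon d(m,\lambda)\ge\delta\})\to 0$ for every $\delta>0$. If $a\preceq b_\omega$ failed on a set of $\omega$ of positive measure, then for suitable $K,\lambda,\delta$ there would be a positive-measure $\Omega_0$ with $d(\lambda,\overline{E(b_\omega,K)})\ge\delta$, forcing $d(m_n^\omega,\lambda)\ge\delta$ for all $n$ and all $\omega\in\Omega_0$ and contradicting the convergence just established. Thus for each fixed $K,\lambda$ we get $\lambda\in\overline{E(b_\omega,K)}$ a.e.; running this over a countable dense subset of the separable space $E(a,K)$ and over the countably many finite $K$, then intersecting the resulting conull sets, yields a single conull set of $\omega$ with $E(a,K)\subseteq\overline{E(b_\omega,K)}$ for all $K$, i.e.\ $a\preceq b_\omega$, by \cref{25}. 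I expect the main obstacle to be exactly the Choquet-theoretic step that convergence of barycenters to an extreme point forces the ergodic-decomposition measures $\beta_n$ to converge to the point mass $\delta_\lambda$; the measurable selection needed in $(\Leftarrow)$ is routine once one notes the relevant sets of Borel maps $Y\to K$ are Borel in the parameter $\omega$.
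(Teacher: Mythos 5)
Your proof is correct, and there is nothing in the paper to compare it against line by line: the survey states \cref{3.8} without proof, citing \cite[Theorems 3.12, 3.13]{T-D1}. Measured against Tucker-Drob's cited argument, your route is the same in substance --- he too works with the shift-space characterization of weak containment (\cref{25}, which is his Proposition 3.6) and with the decomposition of the pushforward measures over the ergodic decomposition of $b$ --- while your packaging of the hard direction through abstract Choquet theory is a clean way to organize it: since $a$ is ergodic, each $\lambda\in E(a,K)$ is an extreme point of $M_s(K^\Gamma)$; Bauer's theorem (an extreme point admits only $\delta_\lambda$ as a representing measure) together with continuity of the barycenter map forces $\beta_n\to\delta_\lambda$, and Portmanteau converts this into convergence in probability of the components $m_n^\omega$. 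The final reduction to countably many finite $K$ and a countable dense set of $\lambda$'s is handled correctly, as is the convexity argument in the soft direction. The one step you should not dismiss as entirely routine is the measurable selection in the direction $(\Leftarrow)$: the collection of Borel maps $g\colon Y\to K$ is not a standard Borel space, so one cannot literally select there. The standard fix is to fix a countable Boolean algebra $\mathcal{A}$ of Borel subsets of $Y$ generating the Borel $\sigma$-algebra; by the approximation theorem $\mathcal{A}$ is dense in ${\rm MALG}_{\nu_\omega}$ for \emph{every} $\omega$ simultaneously, so for a.e.\ $\omega$ some $K$-indexed partition with pieces from $\mathcal{A}$ already satisfies $d((\Phi^{g,b_\omega})_*\nu_\omega,\lambda)<1/n$; for each fixed tuple from $\mathcal{A}$ the set of $\omega$ where it works is measurable, so choosing the first good tuple in a fixed enumeration and pasting along the ergodic decomposition map $\pi\colon Y\to\Omega$ produces the jointly Borel $g_n$ your argument needs. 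With that patch the proof is complete.
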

 It is also shown in \cite[Theorem 4.1]{BT-D1} that if $b$ is ergodic and $a\preceq b$, then almost every ergodic component of $a$ is weakly contained in $b$.
 
We next have the following:

\begin{thm}[{\cite[Theorem 13.1]    {K}}]\label{maxerg}
There is a maximum element in the pre-order $\preceq$ on the set ${\rm ERG}(\Gamma, X, \mu)$, denoted by $a^{erg}_{\infty, \Gamma}$.\index{$a^{erg}_{\infty, \Gamma}$}This action is free.
\end{thm}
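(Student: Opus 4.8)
The plan is to sidestep the fact that a maximum ergodic action need not arise from the global maximum $a_{\infty,\Gamma}$ (which is typically non-ergodic) by reducing the problem to dominating only \emph{countably many} actions, and then to manufacture a single ergodic dominator using the ergodic decomposition together with \cref{3.8}. Since $A(\Gamma,X,\mu)$ is Polish, its subspace ${\rm ERG}(\Gamma,X,\mu)$ (which is nonempty, as $s_\Gamma$ is ergodic) is separable; I would fix a countable dense subset $\{d_n : n\in\mathbb{N}\}\subseteq {\rm ERG}(\Gamma,X,\mu)$, arranging $d_0=s_\Gamma$. Recalling that each initial segment $\preceq^c=\{a:a\preceq c\}$ is closed in $A(\Gamma,X,\mu)$, it then suffices to produce a single ergodic $c\in A(\Gamma,X,\mu)$ with $d_n\preceq c$ for all $n$: the closed set $\preceq^c$ would contain the dense set $\{d_n\}$, hence $\preceq^c\supseteq\overline{\{d_n\}}\supseteq{\rm ERG}(\Gamma,X,\mu)$, so that $b\preceq c$ for every ergodic $b$, making $c$ the desired maximum (once we check $c$ is genuinely realizable on the non-atomic space $(X,\mu)$).

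To build $c$, I would form the product $b=\prod_n d_n$, an action on the non-atomic standard probability space $\prod_n(X,\mu)$. Each coordinate projection realizes $d_m$ as a factor of $b$, so $d_m\sqsubseteq b$ and hence $d_m\preceq b$. Now take the ergodic decomposition of $b$ over a probability space $(E,\eta)$ of ergodic components $b_e$. Since each $d_m$ is ergodic and $d_m\preceq b$, \cref{3.8} yields $d_m\preceq b_e$ for $\eta$-almost every $e$; call this full-measure set $S_m$. As there are only countably many indices $m$, the intersection $\bigcap_m S_m$ still has full measure, hence is nonempty. Fixing $e_0\in\bigcap_m S_m$ and setting $c:=b_{e_0}$ gives an ergodic action with $d_m\preceq c$ for every $m$, which is exactly the hypothesis needed in the reduction above.

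It remains to verify freeness (and with it non-atomicity). Because $d_0=s_\Gamma\preceq c$ and $s_\Gamma$ is free, \cref{fre} shows $c$ is free; a free action of an infinite group has no atoms, so $c$ lives on a non-atomic standard probability space and may be transported to $A(\Gamma,X,\mu)$, legitimizing the closed-initial-segment argument and establishing the asserted freeness of $a^{erg}_{\infty,\Gamma}:=c$. The genuine obstacle, and the point where care is required, is that the product $b=\prod_n d_n$ is in general \emph{not} ergodic and so cannot be used directly; the decisive device is to pass to a single ergodic \emph{component} via \cref{3.8}, exploiting that only countably many almost-everywhere conditions must be met at once so that one component works for all $d_n$ simultaneously. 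Slotting $s_\Gamma$ into the dense sequence is what forces that component to be free, hence non-atomic, closing the argument.
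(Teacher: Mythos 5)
Your proof is correct, but it takes a genuinely different route from the one the paper points to. The paper gives no argument of its own here: it cites \cite[Theorem 13.1]{K}, whose proof (written before Tucker-Drob's work existed) cannot invoke \cref{3.8}. Instead, the classical argument dominates a countable dense subset $\{d_n\}$ of ${\rm ERG}(\Gamma,X,\mu)$ by an ergodic action built by hand: any two ergodic actions admit an ergodic \emph{joining} (almost every ergodic component of their product is one, since ergodic measures are extreme points of the invariant measures), so one constructs an increasing tower of ergodic joinings of $d_0,\dots,d_n$ and passes to the inverse limit, which is ergodic and factors onto every $d_n$; the conclusion then follows from closedness of initial segments exactly as in your reduction, and freeness is obtained just as you obtain it ($s_\Gamma\preceq c$ plus \cref{fre}). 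Your proof replaces that joining/inverse-limit construction with a soft appeal to \cref{3.8}: form $b=\prod_n d_n$ and extract a single ergodic component weakly containing all $d_n$ at once, using that countably many conull sets of components have conull, hence nonempty, intersection. This is shorter given the survey's toolkit, and your attention to realizability is exactly the right care to take (freeness forces non-atomicity of the chosen component, so it can be transported into $A(\Gamma,X,\mu)$, which is needed both for membership in ${\rm ERG}(\Gamma,X,\mu)$ and for applying the closed-initial-segment fact). The trade-offs: your argument rests on \cref{3.8}, a later and substantially harder theorem from \cite{T-D1} — though there is no circularity, as its proof does not use the existence of $a^{erg}_{\infty,\Gamma}$ — while the classical construction is self-contained at the level of \cite{K} and yields slightly more, namely a maximum ergodic action having every $d_n$ as a genuine factor rather than merely weakly containing it.
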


\begin{prob}\label{prob3.10} Is there is a maximum element in the pre-order $\preceq$ on the set ${\rm SERG}(\Gamma, X, \mu)$?
\end{prob}
If the group $\Gamma$ has property (T), then we have that $\textrm{SERG}(\Gamma, X, \mu) = \textrm{ERG}(\Gamma, X, \mu)$, so \cref{prob3.10} has a positive answer.

Tucker-Drob (private communication) showed the following: The answer to \cref{prob3.10} is positive for any group of the form $\Gamma = H\times K$, where $H$ is an (infinite) simple group with property (T) and $K$ is (infinite) amenable with no non-trivial finite dimensional unitary representations. On the other hand, if $\Gamma$ is non-amenable and satisfies property EMD (see \cref{space} below), e.g., if $\Gamma$ is a free group, then the answer to problem \cref{prob3.10} is negative. 


 Finally Tucker-Drob shows that if \cref{prob3.10} has a positive answer for a group $\Gamma$, then there is a finitely generated subgroup $H$ of $\Gamma$ such that the action of $\Gamma$ on $\Gamma/H$ is amenable.


We have mentioned earlier that for any non-amenable group $\Gamma$, the shift action $s_\Gamma$ is strongly ergodic. The answer to the following problem seems to be unknown:

\begin{prob}
Is there a non-amenable group $\Gamma$ such that every strongly ergodic action of $\Gamma$ is weakly equivalent to $s_\Gamma$?
\end{prob}

An {\bf ergodic weak equivalence class}\index{ergodic weak equivalence class} is one which contains at least one ergodic action. Similarly a {\bf strongly ergodic weak equivalence class}\index{strongly ergodic weak equivalence class} is one which consists of strongly ergodic actions.

\begin{thm}[{\cite[Corollary 4.2]    {T-D1}}] \label{3.15}
A group is amenable iff every free weak equivalence class is ergodic.
\end{thm}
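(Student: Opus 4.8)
The plan is to prove both implications, with the forward direction (amenable $\Rightarrow$ every free weak equivalence class is ergodic) being essentially immediate from facts already recalled in \cref{S3}, and the reverse direction carrying the real content. For the forward direction, suppose $\Gamma$ is amenable. As noted earlier, all free actions of an amenable group are weakly equivalent to $s_\Gamma$, so there is exactly one free weak equivalence class. Since the Bernoulli shift $s_\Gamma$ is ergodic (in fact mixing) for every infinite group, this unique free class contains an ergodic action and is therefore an ergodic weak equivalence class; hence every free weak equivalence class is ergodic.

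For the reverse direction I would argue the contrapositive: assuming $\Gamma$ is non-amenable, I would exhibit a free weak equivalence class containing no ergodic action. Since $\Gamma$ is non-amenable, $s_\Gamma$ is strongly ergodic, and I set $a = \frac{1}{2}s_\Gamma + \frac{1}{2}s_\Gamma$. This $a$ is free, since a convex combination of free actions is free (for $\gamma\neq e_\Gamma$ the fixed-point set meets each summand in a null set), so its weak equivalence class is a free class. Moreover, applying \cref{38} to the ergodic action $s_\Gamma$, strong ergodicity of $s_\Gamma$ means condition (i) there fails, so condition (ii) fails, giving $s_\Gamma \not\simeq \frac{1}{2}s_\Gamma + \frac{1}{2}s_\Gamma = a$.

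The heart of the proof is to show that no action weakly equivalent to $a$ is ergodic. I would suppose, toward a contradiction, that $c$ is ergodic with $c \simeq a$. Since $a$ is free and freeness is upward closed (\cref{fre}), the action $c$ is free, so the Ab\'{e}rt--Weiss minimality theorem (\cref{min}) gives $s_\Gamma \preceq c$. In the other direction, the ergodic components of $a = \frac{1}{2}s_\Gamma + \frac{1}{2}s_\Gamma$ are (all) copies of $s_\Gamma$, so from $c \preceq a$ with $c$ ergodic, the ergodic-decomposition criterion \cref{3.8} yields $c \preceq s_\Gamma$. Combining the two containments gives $c \simeq s_\Gamma$, whence $a \simeq c \simeq s_\Gamma$, contradicting $a \not\simeq s_\Gamma$. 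Thus the weak equivalence class of $a$ is free yet contains no ergodic action, which completes the contrapositive and hence the theorem.

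The step I expect to be the main obstacle, and the one I would verify most carefully, is the application of \cref{3.8}: one must confirm that almost every ergodic component of the convex combination $\frac{1}{2}s_\Gamma + \frac{1}{2}s_\Gamma$ is isomorphic to $s_\Gamma$, so that ``$c$ is weakly contained in almost every ergodic component of $a$'' collapses to the single condition $c \preceq s_\Gamma$, and that the hypotheses of \cref{3.8} (namely $c$ ergodic and $c \preceq a$) are indeed in force. Once this reduction is justified, the remainder is straightforward bookkeeping with the recalled minimality (\cref{min}), freeness (\cref{fre}), and strong-ergodicity (\cref{38}) facts.
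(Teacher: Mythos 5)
Your proof is correct. Note, however, that the paper itself does not prove this theorem: it cites \cite[Corollary 4.2]{T-D1}, and in the sentence immediately following it points to the underlying mechanism, namely that by \cite[Theorem 1.3]{T-D1} the weak equivalence class of $i_\Gamma\times a$ is not ergodic whenever $a$ is strongly ergodic; applied to $a=s_\Gamma$ (which is strongly ergodic and free when $\Gamma$ is non-amenable), this produces a free, non-ergodic class. Your argument has the same overall shape---manufacture a free, non-ergodic class out of the strong ergodicity of $s_\Gamma$---but your witness is the convex combination $\tfrac{1}{2}s_\Gamma+\tfrac{1}{2}s_\Gamma$ rather than $i_\Gamma\times s_\Gamma$, and you establish non-ergodicity of its class entirely from results quoted in the survey: \cref{38} gives $a\not\simeq s_\Gamma$; \cref{fre} and \cref{min} give $s_\Gamma\preceq c$ for any ergodic $c\simeq a$; and \cref{3.8}, together with the (correct) observation that both ergodic components of $a$ are copies of $s_\Gamma$, gives $c\preceq s_\Gamma$, a contradiction. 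This makes your proof self-contained where the paper defers to Tucker-Drob. The two witnesses are genuinely different actions: since $s_\Gamma$ is strongly ergodic, one has $\tfrac{1}{2}s_\Gamma+\tfrac{1}{2}s_\Gamma \prec i_\Gamma\times s_\Gamma$ strictly, because $i_\Gamma\times s_\Gamma$ has invariant sets of every measure while the convex combination only admits almost invariant sequences whose measures cluster near $0$, $\tfrac{1}{2}$, $1$---but either witness does the job, and your reduction via \cref{3.8} is exactly the step that needed care and is carried out correctly.
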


In fact \cite[Theorem 1.3]{T-D1} shows that if $a\in \textrm{SERG}(\Gamma, X, \mu)$, then the weak equivalence class of $i_\Gamma \times a$ is not ergodic.

We also have the following characterization:
\begin{thm}
A group $\Gamma$ has property {\rm (T)} iff $a_{\infty, \Gamma}$ is not ergodic.
\end{thm}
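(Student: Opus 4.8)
I read the assertion that ``$a_{\infty,\Gamma}$ is ergodic'' as a statement about its weak equivalence class, namely that this class contains an ergodic action (as in the terminology introduced just above). Since $a_{\infty,\Gamma}$ is the $\preceq$-maximum (\cref{max}), its class contains an ergodic action if and only if $a^{erg}_{\infty,\Gamma}$ (\cref{maxerg}) is itself a maximum among all actions, i.e. $a^{erg}_{\infty,\Gamma}\simeq a_{\infty,\Gamma}$. The plan is to deduce both directions from the behaviour of $i_\Gamma$ and of $a^{erg}_{\infty,\Gamma}$, using the Schmidt/Connes--Weiss dichotomy for ${\rm SERG}$ versus ${\rm ERG}$. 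First note that because $a_{\infty,\Gamma}$ is maximum we always have $i_\Gamma\preceq a_{\infty,\Gamma}$, so by \cref{3.5} the maximum action always carries non-trivial almost invariant sets and is never strongly ergodic. For the easy direction, suppose $\Gamma$ has property (T), so ${\rm SERG}={\rm ERG}$ (Schmidt). If the class of $a_{\infty,\Gamma}$ contained an ergodic $c$, then $c\simeq a_{\infty,\Gamma}$ would be maximum, giving $i_\Gamma\preceq c$ with $c$ ergodic, whence $c\notin{\rm SERG}$ by \cref{3.5}; but (T) forces $c\in{\rm SERG}$, a contradiction. Thus for (T) groups $a_{\infty,\Gamma}$ is not ergodic.

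For the converse I would first establish the structural identity $a_{\infty,\Gamma}\simeq i_\Gamma\times e$, where $e:=a^{erg}_{\infty,\Gamma}$. The containment $i_\Gamma\times e\preceq a_{\infty,\Gamma}$ is automatic, so it suffices to show $a\preceq i_\Gamma\times e$ for every $a$, and here I would use the matrix description of \cref{22}. One checks that $C_{n,k}(i_\Gamma\times e)\supseteq\overline{\mathrm{conv}}\,C_{n,k}(e)$ for all $n,k$: on a base partitioned into pieces of prescribed measures one realizes, fiberwise and independently, any prescribed tuple of partition-matrices of $e$, and then takes closures. Given arbitrary $a$, disintegrate it as its ergodic decomposition $a=\int_Z a_z\,d\eta(z)$; a Borel partition $\bar A$ of the total space restricts to partitions $\bar A_z$ of the fibers, and $M_{n,k}^{\bar A}(a)=\int_Z M_{n,k}^{\bar A_z}(a_z)\,d\eta(z)$. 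Each $a_z$ is ergodic, hence $a_z\preceq e$ by maximality of $e$ among ergodic actions, so $M_{n,k}^{\bar A_z}(a_z)\in C_{n,k}(e)$; as an average of points of the compact set $C_{n,k}(e)$, the matrix $M_{n,k}^{\bar A}(a)$ lies in $\overline{\mathrm{conv}}\,C_{n,k}(e)\subseteq C_{n,k}(i_\Gamma\times e)$. Hence $C_{n,k}(a)\subseteq C_{n,k}(i_\Gamma\times e)$ for all $n,k$, so $a\preceq i_\Gamma\times e$, proving the identity.

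With the identity in hand the theorem reduces to deciding when $e$ is strongly ergodic. If $\Gamma$ is not (T), Connes--Weiss provides an ergodic $e_0\notin{\rm SERG}$, so $i_\Gamma\preceq e_0\preceq e$ gives $i_\Gamma\preceq e$ and hence $e\notin{\rm SERG}$ by \cref{3.5}; then \cref{38}(iv) yields $i_\Gamma\times e\simeq e$, so $a_{\infty,\Gamma}\simeq e$ is ergodic, as required. (The identity also re-proves the (T) direction: there $e\in{\rm ERG}={\rm SERG}$, and were $i_\Gamma\times e$ weakly equivalent to some ergodic $c$, then $c\simeq a_{\infty,\Gamma}\succeq e$ together with $c\preceq e$ from ergodicity would give $e\simeq i_\Gamma\times e$, contradicting $e\in{\rm SERG}$ via \cref{38}(iv).)

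The main obstacle is the structural identity of the second paragraph: concretely, the measurability and disintegration bookkeeping justifying $M_{n,k}^{\bar A}(a)=\int_Z M_{n,k}^{\bar A_z}(a_z)\,d\eta$, and the identification of $C_{n,k}(i_\Gamma\times e)$ with a set containing the closed convex hull of $C_{n,k}(e)$. Once this is in place, the remaining steps are short deductions from \cref{3.5} and \cref{38} together with the Schmidt and Connes--Weiss characterizations of property (T) in terms of ${\rm SERG}$ versus ${\rm ERG}$.
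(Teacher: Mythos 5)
Your proof is correct in substance, and while your forward ((T)) direction is essentially the paper's argument, your backward direction takes a genuinely different route. The paper disposes of the non-(T) case in one stroke: if $\Gamma$ does not have property (T), the weak mixing actions are dense in $A(\Gamma,X,\mu)$ (Kerr--Pichot), so $a_{\infty,\Gamma}$ can be realized as a product of a dense sequence of weak mixing actions, and such a product is weakly mixing, hence ergodic. You instead first establish, for \emph{every} group, the structural identity $a_{\infty,\Gamma}\simeq i_\Gamma\times a^{erg}_{\infty,\Gamma}$ via the matrix description of \cref{22}, (1) and the ergodic decomposition, and then invoke only Connes--Weiss (when (T) fails there is an ergodic, non-strongly-ergodic action) together with \cref{3.5} and \cref{38} (iv) to conclude $a^{erg}_{\infty,\Gamma}\simeq i_\Gamma\times a^{erg}_{\infty,\Gamma}\simeq a_{\infty,\Gamma}$. (Your reading of ``ergodic'' as a property of the weak equivalence class is indeed what the paper intends.) What each approach buys: the paper's proof is shorter but leans on the Kerr--Pichot density theorem and yields a weakly mixing realization of the maximum class; yours avoids Kerr--Pichot in favor of the older and more elementary Connes--Weiss result, identifies the ergodic realization concretely as $a^{erg}_{\infty,\Gamma}$, and the intermediate identity --- equivalently, that $a^{erg}_{\infty,\Gamma}$ is maximum for stable weak containment in the sense of \cref{8} --- is of independent interest and parallels the ergodic-decomposition results quoted around \cref{3.8} and \cref{74}.

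One step needs patching. You justify $a_z\preceq e:=a^{erg}_{\infty,\Gamma}$ for the ergodic components $a_z$ of an arbitrary $a$ ``by maximality of $e$ among ergodic actions,'' but \cref{maxerg} gives maximality only within ${\rm ERG}(\Gamma,X,\mu)$, i.e.\ among ergodic actions on the fixed \emph{non-atomic} space, whereas the components $a_z$ of a general action may live on atomic (finite, even one-point) spaces; for $a=i_\Gamma$, which your argument must cover, every component is a point mass. The fix is one line: $a_z\sqsubseteq a_z\times s_\Gamma$, and $a_z\times s_\Gamma$ is ergodic (as $s_\Gamma$ is mixing, hence weakly mixing) and lives on a non-atomic standard space, so $a_z\preceq a_z\times s_\Gamma\preceq e$ by \cref{maxerg}. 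With this insertion, the disintegration identity $M_{n,k}^{\bar{A}}(a)=\int_Z M_{n,k}^{\bar{A}_z}(a_z)\,d\eta(z)$, the compactness of $C_{n,k}(e)$ (so that barycenters lie in $\overline{\mathrm{conv}}\,C_{n,k}(e)$), and your computation $C_{n,k}(i_\Gamma\times e)\supseteq\overline{\mathrm{conv}}\,C_{n,k}(e)$ do give $a\preceq i_\Gamma\times e$ for all $a$, and the rest of your argument goes through.
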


One direction follows from the fact that for property (T) groups we have $\textrm{SERG}(\Gamma, X, \mu) = \textrm{ERG}(\Gamma, X, \mu)$ and $a_{\infty, \Gamma}$ cannot be strongly ergodic. For the other direction, note that if $\Gamma$ does not have property (T), then the weak mixing actions are dense in $A(\Gamma, X, \mu)$ (Kerr-Pichot \cite{KP}; see also \cite[Theorem 12.9]{K}), so $a_{\infty, \Gamma}$ can be realized as a product of a countable sequence of weak mixing actions, which is therefore weakly mixing, thus ergodic.

As we mentioned earlier, if $\Gamma$ is amenable, it has exactly one free weak equivalence class and, by the third paragraph after \cref{td}, if $\Gamma$ is not amenable it has a continuum size $\preceq$-antichain of free actions. The following is an important open problem.

\begin{prob}
If $\Gamma$ is not amenable, does it have continuum many free, ergodic weak equivalence classes? Does it have a continuum size $\preceq$-antichain of free, ergodic actions?
\end{prob}

The following partial results are known concerning these questions: Ab\'{e}rt and Elek have shown in \cite{AE} that there are continuum size $\preceq$-antichains of free, ergodic actions for any finitely generated free group and any linear group with property (T). Bowen and Tucker-Drob \cite{BT-D1} have shown that there are continuum many free, strongly ergodic weak equivalence classes for any group containing a non-abelian free subgroup. It is unknown whether {\it every} non-amenable group has at least three distinct free, ergodic weak equivalence classes.


Finally an analog of \cref{37} holds for free, ergodic weak equivalence classes.

\begin{thm}[\cite{T-D1}, Remark 6.5]
The isomorphism (i.e., conjugacy) relation of the ergodic actions in each free, ergodic weak equivalence class of a group is not classifiable by countable structures. The same holds for weak isomorphism and unitary equivalence. Moreover $E_0$ can be Borel reduced to these equivalence relations.
\end{thm}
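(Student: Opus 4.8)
The plan is to fix a free, ergodic weak equivalence class $\mathcal{E}$, choose a free ergodic $a\in\mathcal{E}$, and study the conjugacy action of $\mathrm{Aut}(X,\mu)$ on the space $\mathcal{E}_{\mathrm{erg}} = \mathcal{E}\cap\mathrm{ERG}(\Gamma,X,\mu)$. This set is $G_\delta$ — the class $\mathcal{E}$ is $G_\delta$ by the discussion following \cref{t1}, and $\mathrm{ERG}(\Gamma,X,\mu)$ is $G_\delta$ — hence Polish in its relative topology, and it is invariant under conjugation since conjugation preserves both ergodicity and the weak equivalence class. On $\mathcal{E}_{\mathrm{erg}}$ the conjugacy relation $\cong$ is exactly the orbit equivalence relation of this Borel $\mathrm{Aut}(X,\mu)$-action, while weak isomorphism $\cong^w$ and unitary equivalence are coarser analytic equivalence relations, each containing $\cong$ as a subrelation.

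The conceptual backbone for the non-classifiability statements is Hjorth's turbulence theorem: I would aim to show that the conjugacy action of $\mathrm{Aut}(X,\mu)$ on $\mathcal{E}_{\mathrm{erg}}$ is generically turbulent. Granting this, any Borel map classifying one of the three relations $R\in\{\cong,\cong^w,\text{unitary equivalence}\}$ by countable structures is in particular conjugacy-invariant (as $\cong\subseteq R$), so turbulence forces it to be constant on a comeager $C\subseteq\mathcal{E}_{\mathrm{erg}}$; then $C$ lies in a single $R$-class, contradicting meagerness of that class. Thus the non-classifiability for all three relations reduces to showing that weak-isomorphism classes and unitary-equivalence classes are meager in $\mathcal{E}_{\mathrm{erg}}$ (meagerness of conjugacy classes then follows, since they are contained in the others). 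For the $E_0$-reduction I would invoke the topological Glimm--Effros dichotomy for Borel actions of Polish groups: a generically turbulent action has non-smooth orbit equivalence relation, so $E_0$ embeds continuously into $\cong$; arranging, via the same meagerness, that distinct $E_0$-classes meet distinct weak-isomorphism and unitary-equivalence classes yields $E_0\leq_B R$ for all three $R$ at once.

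The enabling technical device throughout is the mobility available inside a single weak equivalence class. By the absorption result \cref{td}, $s_\Gamma\times a'\simeq a'$ for every free $a'\in\mathcal{E}$, and since $s_\Gamma$ is weakly mixing the product $s_\Gamma\times a'$ remains ergodic; this lets one perturb a given ergodic action of $\mathcal{E}$ by Bernoulli factors and by small conjugations without leaving $\mathcal{E}_{\mathrm{erg}}$. I would use such perturbations to produce, from any ergodic $a'\in\mathcal{E}$ and any neighborhood, a dense supply of nearby ergodic actions reachable by chains of small conjugations, which is precisely the local-orbit-density input needed for generic turbulence, with ergodicity preserved by keeping the perturbing factors weakly mixing.

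The hard part will be the verification of generic turbulence on $\mathcal{E}_{\mathrm{erg}}$: one must simultaneously establish density of a single conjugacy class, that every local orbit is somewhere dense, and meagerness of conjugacy classes, all while staying inside the single weak equivalence class and preserving ergodicity. The meagerness of unitary-equivalence classes is itself delicate, as it asserts that the spectral (Koopman) invariant does not stabilize generically inside $\mathcal{E}$; I expect this to require exhibiting enough spectrally distinct weakly mixing ingredients compatible with the absorption mechanism. A more economical route, and the one indicated by \cite[Remark 6.5]{T-D1}, is to revisit the construction proving the non-ergodic \cref{37} and observe that its building blocks may be taken weakly mixing, so that the entire complexity-witnessing family it produces consists of ergodic actions lying in one free ergodic weak equivalence class; carrying out that inspection, rather than re-proving turbulence from scratch, is likely the cleanest path.
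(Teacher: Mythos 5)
First, a point of comparison: the paper contains no proof of this statement. Both it and its non-ergodic counterpart \cref{37} are quoted from \cite{T-D1} (Theorem 1.7 and Remark 6.5), and none of the appendices treat them, so your proposal can only be judged on its own terms. Its logical skeleton is sound: $\mathcal{E}_{\mathrm{erg}}$ is a conjugation-invariant Polish $G_\delta$; by \cref{t1} every conjugacy class contained in a weak equivalence class $\mathcal{E}$ is automatically dense in $\mathcal{E}$, so the ``dense orbit'' ingredient of generic turbulence comes for free; Hjorth's theorem does convert generic turbulence plus meagerness of the classes of a coarser relation $R\supseteq\cong$ into non-classifiability of $R$ by countable structures; and a Kuratowski--Mycielski fusion argument on top of meagerness of $R$ as a subset of the square (via Kuratowski--Ulam) is the standard route to $E_0\leq_B R$.

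The genuine gap is that the two load-bearing claims are nowhere proved, and the one mechanism you offer cannot prove them. (a) Generic turbulence of the conjugation action on $\mathcal{E}_{\mathrm{erg}}$ requires, beyond dense orbits, that every orbit be meager in the \emph{relative} topology of $\mathcal{E}_{\mathrm{erg}}$ (meagerness in the full space $A(\Gamma,X,\mu)$ does not pass to a subspace) and that local orbits --- points reachable by chains of small conjugations staying inside a prescribed neighborhood --- be somewhere dense. The absorption theorem \cref{td} gives neither: passing from $a'$ to $s_\Gamma\times a'$ is not a conjugation and is not a small perturbation in the weak topology; what absorption yields is weak equivalence, i.e.\ orbit-closure information, which merely re-derives the already-automatic density of orbits and says nothing about local orbits or meagerness. (b) Meagerness of weak-isomorphism classes and of unitary-equivalence classes inside $\mathcal{E}_{\mathrm{erg}}$ --- which your argument uses twice, once against countable structures and once for the $E_0$-reduction --- is precisely the assertion that spectral and factor-theoretic invariants vary generically within a single weak equivalence class; nothing in the proposal produces even one pair of weakly equivalent ergodic actions with inequivalent Koopman representations, let alone comeagerly many classes. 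Your closing fallback, to ``revisit the construction proving \cref{37} and check its blocks can be taken weakly mixing,'' is indeed exactly what Remark 6.5 of \cite{T-D1} records, but as written it is a citation, not a proof. Note finally that the cited argument does not proceed by establishing turbulence of the conjugation action restricted to the class (that stronger statement is not proved there and does not follow from your outline); rather, it transfers non-classifiability and the $E_0$-reduction into the class by an explicit Borel reduction built from weakly mixing actions $b\preceq s_\Gamma$ via the product map $b\mapsto b\times a$, where absorption enters only to guarantee $b\times a\simeq a$ (since $a\sqsubseteq b\times a\preceq s_\Gamma\times a\simeq a$) while weak mixing of $b$ keeps $b\times a$ ergodic. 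That --- keeping a reduction inside the class, not generating turbulence --- is where your absorption idea genuinely belongs.
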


\subsection{Co-induction} We note that weak containment respects co-induction (see \cite[pages 72-73]{K} for the concept of co-induction).

\begin{thm}[{\cite[Proposition A.1]   {K1}}]\label{321}
Let $\Gamma\leq \Delta$ and $a, b\in A(\Gamma, X, \mu)$. Then
\[
a\preceq b \implies {\rm CIND}_\Gamma^\Delta (a) \preceq {\rm CIND}_\Gamma^\Delta (b)\index{${\rm CIND}_\Gamma^\Delta (a)$} 
\]
and therefore
\[
a\simeq b \implies {\rm CIND}_\Gamma^\Delta (a) \simeq {\rm CIND}_\Gamma^\Delta (b).
\]
\end{thm}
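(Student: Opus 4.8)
The plan is to deduce the statement from the topological characterization of weak containment in \cref{t1}. Recall that for $e\in A(\Gamma,X,\mu)$ the co-induced action $\hat{e}:={\rm CIND}_\Gamma^\Delta(e)$ lives on $(Y,\nu):=(X^{\Delta/\Gamma},\mu^{\Delta/\Gamma})$, which is again a non-atomic standard probability space, and that it is given by an explicit coordinate formula: after fixing a transversal for the left cosets $\Delta/\Gamma$ and forming the associated cocycle, one has
\[
(\delta^{\hat{e}}y)(t) = \gamma_{\delta,t}^{e}\bigl(y(\delta^{-1}t)\bigr), \qquad t\in\Delta/\Gamma,
\]
where the elements $\gamma_{\delta,t}\in\Gamma$ depend only on $\Gamma\le\Delta$ and the transversal, not on $e$. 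Since ${\rm CIND}_\Gamma^\Delta(a)$ and ${\rm CIND}_\Gamma^\Delta(b)$ are defined on the same non-atomic space $(Y,\nu)$, \cref{t1} applies at the level of $\Delta$, so it suffices to show ${\rm CIND}_\Gamma^\Delta(a)\in\overline{{\rm Aut}(Y,\nu)\cdot{\rm CIND}_\Gamma^\Delta(b)}$.

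The argument rests on two properties of the map $e\mapsto\hat{e}$. First, co-induction respects conjugacy: for $T\in{\rm Aut}(X,\mu)$, the diagonal automorphism $\tilde T\in{\rm Aut}(Y,\nu)$ defined by $(\tilde T y)(t)=T\bigl(y(t)\bigr)$ satisfies $\widehat{T\cdot e}=\tilde T\cdot\hat{e}$. This is immediate from the coordinate formula: replacing $e$ by $T\cdot e$ replaces each $\gamma_{\delta,t}^{e}$ by $T\gamma_{\delta,t}^{e}T^{-1}$ while leaving the cocycle elements $\gamma_{\delta,t}$ unchanged, and $\tilde T$ preserves $\nu$ because $\nu$ is a product measure. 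Second, co-induction is continuous as a map $A(\Gamma,X,\mu)\to A(\Delta,Y,\nu)$. Granting these, the conclusion is quick: by \cref{t1}, $a\preceq b$ provides $T_n\in{\rm Aut}(X,\mu)$ with $T_n\cdot b\to a$ in $A(\Gamma,X,\mu)$; continuity gives $\widehat{T_n\cdot b}\to\hat{a}$, while the conjugacy identity gives $\widehat{T_n\cdot b}=\tilde T_n\cdot\hat{b}$. Hence $\hat{a}\in\overline{{\rm Aut}(Y,\nu)\cdot\hat{b}}$, and \cref{t1} yields ${\rm CIND}_\Gamma^\Delta(a)\preceq{\rm CIND}_\Gamma^\Delta(b)$. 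The statement for $\simeq$ then follows by applying this in both directions.

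The main obstacle is the continuity claim, where I would concentrate the work. To prove it I would fix $\delta\in\Delta$ and verify $\delta^{\hat{c}_n}\to\delta^{\hat{c}}$ whenever $c_n\to c$ in $A(\Gamma,X,\mu)$, by testing against the generating family of finite-dimensional rectangles $R=\{y:y(t)\in A_t,\ t\in S\}$ with $S\subseteq\Delta/\Gamma$ finite. Unwinding the coordinate formula shows that $\delta^{\hat{c}}R$ is again a rectangle, supported on the finite set $\{\delta t:t\in S\}$, whose coordinate sets have the form $\bigl(\gamma_{\delta^{-1},t}^{-1}\bigr)^{c}(A_t)$, and likewise for $c_n$. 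Since $c_n\to c$ means $\mu\bigl(\gamma^{c_n}(A)\,\Delta\,\gamma^{c}(A)\bigr)\to0$ for each fixed $\gamma\in\Gamma$ and each $A$, and since for product measures $\nu\bigl(\prod_t B_t\,\Delta\,\prod_t B_t'\bigr)\le\sum_t\mu\bigl(B_t\,\Delta\,B_t'\bigr)$ whenever only finitely many factors differ from $X$, one gets $\nu\bigl(\delta^{\hat{c}_n}R\,\Delta\,\delta^{\hat{c}}R\bigr)\to0$.

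As such rectangles generate the measure algebra of $Y$, this gives $\delta^{\hat{c}_n}\to\delta^{\hat{c}}$ in the weak topology, hence continuity. The only delicate point is the bookkeeping of the cocycle indices, but this only ever involves finitely many coordinates and finitely many group elements at a time, so it causes no real difficulty. I note that one could alternatively bypass the topological route by working directly from the definition of weak containment, constructing approximating partitions of $Y$ out of those of $X$ through the same coordinate formula; but the computation is essentially the same, and the argument via \cref{t1} keeps the conjugacy and continuity inputs cleanly separated.
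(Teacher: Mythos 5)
Your proof is correct. The paper itself does not contain a proof of this theorem — it is quoted from \cite[Proposition A.1]{K1} — but your argument, which reduces the statement via \cref{t1} to two facts about the co-induction map (that it intertwines conjugation by $T\in{\rm Aut}(X,\mu)$ with conjugation by the diagonal automorphism $\tilde T$ of the product space, and that $c\mapsto{\rm CIND}_\Gamma^\Delta(c)$ is continuous, checked on finitely supported rectangles using that the coset permutation and the cocycle elements $\gamma_{\delta,t}$ do not depend on the action), is exactly the standard argument behind that citation, so there is nothing substantive to contrast.
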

Concerning the conduction construction we also have the following open problem:

\begin{prob}[{\cite[Problem A.4]{K1}}]\label{3.22}
Let $\Gamma\leq \Delta$ and assume that the action of $\Delta$ on $\Delta/\Gamma$ is amenable. Is it true that for any $a\in A(\Delta, X, \mu)$,
\[
a\preceq {\rm CIND}_\Gamma^\Delta (a|\Gamma)?
\]
\end{prob}
As explained in the paragraph following \cite[Problem A.4]{K1}, the assumption about the amenability of the action of $\Delta$ on $\Delta/\Gamma$ is necessary for a positive answer (for arbitrary $a$). Positive answers to this problem have been obtained for certain groups and actions in \cite{BT-D}. For example, it holds when $\Gamma$ is normal of infinite index in $\Delta$ and $a$ is an ergodic but not strongly ergodic action or if $a\simeq a_{\infty, \Delta}$. Using this the authors show the following result:

\begin{thm}[{\cite[Theorem 1.2]{BT-D}}]
If $\Gamma$ is normal of infinite index in $\Delta$, with $\Delta/\Gamma$ amenable, and $a \simeq a_{\infty, \Gamma}$, then ${\rm CIND}_\Gamma^\Delta (a) \simeq a_{\infty, \Delta}$.
\end{thm}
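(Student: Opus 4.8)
The plan is to show that ${\rm CIND}_\Gamma^\Delta(a)$ is a maximum element of the pre-order $\preceq$ on $A(\Delta, X, \mu)$; since the maximum is unique up to weak equivalence (\cref{max}), this immediately gives ${\rm CIND}_\Gamma^\Delta(a) \simeq a_{\infty, \Delta}$. One of the two required inequalities is free of charge: ${\rm CIND}_\Gamma^\Delta(a)$ is an action of $\Delta$, so by the very definition of the maximum action we have ${\rm CIND}_\Gamma^\Delta(a) \preceq a_{\infty, \Delta}$. Thus the entire content of the theorem is the reverse containment $a_{\infty, \Delta} \preceq {\rm CIND}_\Gamma^\Delta(a)$.

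For this, I would first invoke the positive answer to \cref{3.22} recorded in the paragraph preceding the statement. Since $a_{\infty, \Delta} \simeq a_{\infty, \Delta}$, and $\Gamma$ is normal of infinite index in $\Delta$ with $\Delta/\Gamma$ amenable, that result applies to the action $a_{\infty, \Delta}$ and yields
\[
a_{\infty, \Delta} \preceq {\rm CIND}_\Gamma^\Delta (a_{\infty, \Delta}|\Gamma).
\]
This is the one genuinely substantive input; it is precisely the ingredient signalled in the text by the phrase ``Using this the authors show the following result.''

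The remaining step is to push this through the monotonicity of co-induction. The restriction $a_{\infty, \Delta}|\Gamma$ is an action of $\Gamma$, hence is weakly contained in the maximum $\Gamma$-action, so $a_{\infty, \Delta}|\Gamma \preceq a_{\infty, \Gamma} \simeq a$, using the hypothesis $a \simeq a_{\infty, \Gamma}$ together with \cref{max}. Applying \cref{321}, which says that co-induction preserves $\preceq$ (and therefore $\simeq$), gives
\[
{\rm CIND}_\Gamma^\Delta (a_{\infty, \Delta}|\Gamma) \preceq {\rm CIND}_\Gamma^\Delta (a_{\infty, \Gamma}) \simeq {\rm CIND}_\Gamma^\Delta (a).
\]
Chaining this with the previous display produces $a_{\infty, \Delta} \preceq {\rm CIND}_\Gamma^\Delta(a_{\infty, \Delta}|\Gamma) \preceq {\rm CIND}_\Gamma^\Delta(a)$, which is the missing inequality, and hence ${\rm CIND}_\Gamma^\Delta(a) \simeq a_{\infty, \Delta}$.

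The main obstacle is thus concentrated entirely in the first display, the positive solution of \cref{3.22} for $a_{\infty, \Delta}$; everything else is a routine application of the maximality of $a_{\infty, \Gamma}$ and $a_{\infty, \Delta}$ together with the monotonicity of ${\rm CIND}$. It is worth noting that the amenability of the $\Delta$-action on $\Delta/\Gamma$ is exactly what makes that first step possible — as the excerpt emphasizes, this assumption is necessary — so the real depth of the argument lies in \cite{BT-D}, while the deduction presented here is a short formal consequence.
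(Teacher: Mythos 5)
Your proposal is correct and is exactly the derivation the paper intends: the survey contains no proof of its own (the result is cited from [BT-D]), but its remark that \cref{3.22} has a positive answer when $\Gamma$ is normal of infinite index with $\Delta/\Gamma$ amenable and $a\simeq a_{\infty,\Delta}$, followed by ``Using this the authors show the following result,'' is precisely your chain $a_{\infty,\Delta}\preceq {\rm CIND}_\Gamma^\Delta(a_{\infty,\Delta}|\Gamma)\preceq {\rm CIND}_\Gamma^\Delta(a)\preceq a_{\infty,\Delta}$. The inputs you use --- that special case of \cref{3.22}, the maximality characterizations from \cref{max}, and the monotonicity of co-induction from \cref{321} --- are applied correctly, so the write-up is a faithful fleshing-out of the paper's indicated argument, with the genuine depth residing, as you say, in the cited case of \cref{3.22}.
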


Moreover \cite[Theorem 1.3]{BT-D} shows that \cref{3.22} has a positive answer for Gaussian actions $a$ of $\Delta$.

\subsection{Restriction} We next note that for each $\Delta\leq \Gamma$, the operation of restriction
\[
a\in A(\Gamma, X, \mu)\mapsto a|\Delta\in A(\Delta, X, \mu)
\]
respects weak containment.

\begin{pro}\label{restr}
Let $\Delta\leq \Gamma$ and $a,b \in A(\Gamma, X, \mu)$. Then 
\[
a\preceq b\implies a|\Delta\preceq b|\Delta,
\]
and therefore
\[
a\simeq b\implies a|\Delta\simeq b|\Delta.
\]
\end{pro}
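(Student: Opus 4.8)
The plan is to prove this directly from the definition of weak containment, observing that the defining condition for $a|\Delta \preceq b|\Delta$ is simply the instance of the defining condition for $a\preceq b$ in which the finite set of group elements is required to lie in $\Delta$. The key point is that restriction does not change the automorphisms attached to elements of $\Delta$: for every $\delta\in\Delta$ we have $\delta^{a|\Delta} = \delta^a$ and $\delta^{b|\Delta} = \delta^b$, since $a|\Delta$ is literally the homomorphism $a$ precomposed with the inclusion $\Delta\hookrightarrow\Gamma$.

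\textbf{Main step.} Suppose $a\preceq b$, where $a,b\in A(\Gamma,X,\mu)$, and let $\nu$ denote the measure on the space of $b$ (which is $\mu$ here, but I keep the general notation to match the definition). To verify $a|\Delta\preceq b|\Delta$, fix arbitrary $A_1,\dots,A_n\in{\rm MALG}_\mu$, a finite set $F\subseteq\Delta$, and $\epsilon>0$. Since $\Delta\leq\Gamma$, the set $F$ is in particular a finite subset of $\Gamma$. Applying the hypothesis $a\preceq b$ to these same sets $A_1,\dots,A_n$, the finite set $F$ viewed as a subset of $\Gamma$, and the same $\epsilon$, I obtain $B_1,\dots,B_n\in{\rm MALG}_\nu$ with
\[
|\mu(\gamma^a(A_i)\cap A_j)-\nu(\gamma^b(B_i)\cap B_j)|<\epsilon,\qquad \forall\gamma\in F,\ i,j\leq n.
\]
Because every $\gamma\in F$ lies in $\Delta$, one has $\gamma^a = \gamma^{a|\Delta}$ and $\gamma^b = \gamma^{b|\Delta}$, so these inequalities are exactly the inequalities required to witness $a|\Delta\preceq b|\Delta$ for the chosen data. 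Hence $a|\Delta\preceq b|\Delta$. The statement about $\simeq$ then follows by applying this to both $a\preceq b$ and $b\preceq a$.

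\textbf{Obstacle.} There is essentially no obstacle: the result is an immediate consequence of the definition, the only substantive observation being that the quantifier over finite $F\subseteq\Gamma$ in the definition of $\preceq$ is monotone, so restricting to $F\subseteq\Delta$ only weakens the demand. One could alternatively derive it from the reformulation in \cref{t1} (noting that restriction $A(\Gamma,X,\mu)\to A(\Delta,X,\mu)$ is continuous and commutes with the ${\rm Aut}(X,\mu)$-conjugation action, so that $a\in\overline{{\rm Aut}(X,\mu)\cdot b}$ maps into $\overline{{\rm Aut}(X,\mu)\cdot(b|\Delta)}$), but the direct argument above is shorter and avoids the non-atomicity hypothesis implicit in \cref{t1}.
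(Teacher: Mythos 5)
Your proof is correct and is exactly the intended argument: the paper states this proposition without proof precisely because, as you observe, the defining condition for $a|\Delta\preceq b|\Delta$ is just the instance of the condition for $a\preceq b$ with the finite set $F$ constrained to lie in $\Delta$, together with the identity $\gamma^{a|\Delta}=\gamma^a$ for $\gamma\in\Delta$. Nothing further is needed.
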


\subsection{Products and ultraproducts} The following facts are easy to verify.

\begin{pro}\label{ultr}
Let $a_n, b_n\in A(\Gamma, X, \mu), n\in \bbN$. Then 
\[
\forall n(a_n\preceq b_n)\implies \prod_n a_n\preceq\prod_n b_n
\]
and therefore
\[
\forall n(a_n\simeq b_n)\implies \prod_n a_n\simeq\prod_n b_n.
\]
Moreover, if $\mathcal{U}$ is a non-principal ultrafilter on $\bbN$, then
\[
\forall n(a_n\preceq b_n)\implies \prod_n a_n/\mathcal{U}\preceq\prod _nb_n/\mathcal{U}
\]
and therefore
\[
\forall n(a_n\simeq b_n)\implies \prod_n a_n/\mathcal{U}\simeq\prod_n b_n/\mathcal{U}.
\]
\end{pro}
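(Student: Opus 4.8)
The plan is to prove each of the two $\preceq$-implications directly from the definition of weak containment, after which the two displayed consequences for $\simeq$ are immediate: since $a_n \simeq b_n$ means $a_n \preceq b_n$ and $b_n \preceq a_n$, one simply applies the $\preceq$-implication in both directions. The product and the ultraproduct cases act on genuinely different objects (a product measure space versus an ultraproduct measure algebra), so I would handle them separately. Throughout I write $m$ for the number of test sets, reserving the index $n$ for the sequence, and I invoke the observation following the definition of weak containment that one may verify the defining condition only for partitions drawn from any fixed countable dense subalgebra.

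For the product, set $a = \prod_n a_n$ and $b = \prod_n b_n$. Here the natural dense subalgebra is that of cylinder sets, i.e.\ finite Boolean combinations of sets depending on a single coordinate. Any finite family $A_1,\dots,A_m$ of such sets depends only on coordinates in some finite $S\subseteq\N$, and because the action is coordinatewise, $\gamma^a(A_i)$ and the intersections $\gamma^a(A_i)\cap A_j$ again depend only on coordinates in $S$; as the tail coordinates carry full measure, every value $(\prod_n\mu)(\gamma^a(A_i)\cap A_j)$ equals the corresponding value computed in the finite subproduct $\prod_{n\in S}a_n$. This reduces matters to finite products, which I would treat by induction from the binary case $a\times a'\preceq b\times b'$ (allowing the spaces to differ). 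Taking rectangles $A_i\times A'_j$ one has $(\mu\times\mu')(\gamma^{a\times a'}(A_i\times A'_j)\cap(A_{i'}\times A'_{j'})) = \mu(\gamma^a(A_i)\cap A_{i'})\cdot\mu'(\gamma^{a'}(A'_j)\cap A'_{j'})$, so the matrix of test values factors. Applying $a\preceq b$ and $a'\preceq b'$ to obtain approximating partitions with per-entry error below $\delta$, and using $|xy-x'y'|\le|x-x'|+|y-y'|$ for $x,y,x',y'\in[0,1]$, each product entry is matched within $2\delta$; for $r=|S|$ factors the telescoping bound $|\prod_k x_k-\prod_k x'_k|\le\sum_k|x_k-x'_k|$ gives error $r\delta$, so one takes $\delta=\epsilon/r$.

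For the ultraproduct, recall that ${\rm MALG}_{\mu_\mathcal{U}}$ consists of sequences $(A_n)$ with $A_n\in{\rm MALG}_\mu$ modulo the ideal of sequences with $\lim_{n\to\mathcal{U}}\mu(A_n)=0$, that $\mu_\mathcal{U}([A_n])=\lim_{n\to\mathcal{U}}\mu(A_n)$, and that $\gamma^{a_\mathcal{U}}([A_n])=[\gamma^{a_n}(A_n)]$. This case is cleaner, since no uniformity across $n$ is needed. Given $A^1,\dots,A^m\in{\rm MALG}_{\mu_\mathcal{U}}$ represented as $A^i=[A^i_n]$, a finite $F\subseteq\Gamma$, and $\epsilon>0$, I would apply $a_n\preceq b_n$ separately for each $n$ to the finite family $A^1_n,\dots,A^m_n$ with tolerance $\epsilon/2$, obtaining $B^i_n\in{\rm MALG}_\mu$ with $|\mu(\gamma^{a_n}(A^i_n)\cap A^j_n)-\mu(\gamma^{b_n}(B^i_n)\cap B^j_n)|<\epsilon/2$ for all $\gamma\in F$ and $i,j\le m$. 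Setting $B^i=[B^i_n]$ and passing to the ultralimit, which preserves the inequality, yields $|\mu_\mathcal{U}(\gamma^{a_\mathcal{U}}(A^i)\cap A^j)-\mu_\mathcal{U}(\gamma^{b_\mathcal{U}}(B^i)\cap B^j)|\le\epsilon/2<\epsilon$. Note that no measurability-in-$n$ condition is imposed on $(B^i_n)$, so each $B^i$ is a legitimate element of the ultraproduct algebra.

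The only genuinely substantive point is the reduction to finite subproducts in the product case: once one observes that cylinder sets make the infinite product locally indistinguishable from a finite one, the remainder is the elementary multiplicativity estimate. I expect no real obstacle in the ultraproduct case, since the defining inequality simply passes through the ultralimit coordinatewise.
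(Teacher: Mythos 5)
The paper gives no proof of this proposition at all --- it is introduced with the words ``the following facts are easy to verify'' --- so there is nothing to compare your argument against; what follows is a review of your proposal on its own terms.

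Your ultraproduct half is correct and complete. Internal sets $[(A_n)]$ do exhaust $\mathrm{MALG}_{\mu_{\mathcal{U}}}$ up to null sets (by countable saturation of the ultraproduct), the matrix entries $\mu_{\mathcal{U}}(\gamma^{a_{\mathcal{U}}}(A^i)\cap A^j)$ are ultralimits of the coordinate entries, and applying $a_n\preceq b_n$ coordinatewise with a fixed tolerance and passing to the ultralimit gives exactly the required inequality; you are also right that no uniformity or measurability in $n$ is needed. The product half has the right architecture (cylinder-set subalgebra, reduction to finite subproducts, per-coordinate approximation plus the multiplicativity estimate), but as written it has one gap: you only verify the weak-containment inequality for test sets that are rectangles $A_i\times A'_j$ (or boxes $\prod_{k\in S}A_k$). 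Rectangles do not form a subalgebra --- they are not closed under unions or complements --- so the reduction you invoke (``it suffices to test on partitions drawn from a countable dense subalgebra'') does not apply to them. Indeed, the dense subalgebra of the product consists of \emph{finite unions} of rectangles, and these unions are exactly the sets that your own reduction to finite subproducts produces; your binary-case computation never addresses them.

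The fix is routine but genuinely needed. Given a partition $\{C_1,\dots,C_m\}$ of $X\times X'$ with pieces in the algebra generated by rectangles, refine it to a partition into rectangles $\{P_u\times P'_v\}$, where $\{P_u\}$ and $\{P'_v\}$ are the coordinate partitions generated by all rectangles occurring in the $C_i$. Now apply $a\preceq b$ and $a'\preceq b'$ in the \emph{partition} form of the definition, so that the approximating families $\{B_u\}$ and $\{B'_v\}$ are themselves partitions; then $\{B_u\times B'_v\}$ is a partition, and setting $D_i=\bigcup\{B_u\times B'_v\colon P_u\times P'_v\subseteq C_i\}$, both $\mu\times\mu'\bigl(\gamma^{a\times a'}(C_i)\cap C_j\bigr)$ and $\nu\times\nu'\bigl(\gamma^{b\times b'}(D_i)\cap D_j\bigr)$ become finite sums of rectangle entries --- it is precisely the pairwise disjointness on \emph{both} sides that makes the measures additive --- so the per-entry error $2\delta$ only gets multiplied by the number of rectangle pairs, and one shrinks $\delta$ accordingly. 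Without the partition requirement on the $B$-side this additivity fails and the estimate for unions does not follow from the estimate for rectangles. With this step inserted (either in the binary case before inducting, or directly for $|S|$ factors), your argument is a complete and correct proof.
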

Here $\prod_n a_n/\mathcal{U}$ is the ultrapoduct of the actions $a_n$ by $\mathcal{U}$, see, e.g., \cite[Section 4]
{CKT-D}.

\subsection{Hyperfiniteness and treeability} We have the following result concerning the relation of weak containment and the hyperfiniteness of the equivalence relation induced by an action.

\begin{thm}[{Tucker-Drob; see \cite[Corollary 16.11]{K3}}]\label{329} Let $a,b$ be actions in  $A(\Gamma, X, \mu)$. Then
\[
a\preceq b \  \& \ E_b \ \textrm{is hyperfinite} \ \implies E_a \   \textrm{is hyperfinite}.
\]
In particular, hyperfiniteness of $E_a$ is a weak equivalence invariant.
\end{thm}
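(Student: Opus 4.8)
The plan is to reduce the statement to the stability of amenability of measured equivalence relations under factors, and to bridge weak containment and factoring via the ultrapower. Throughout I would use the Connes--Feldman--Weiss theorem, which says that for a $\mu$-preserving countable Borel equivalence relation, hyperfiniteness (modulo null sets) is the same as amenability; it then suffices to prove that amenability of $E_a$ is downward monotone under $\preceq$. I stress at the outset that this cannot be detected by the finite statistics $C_{n,k}$ alone (e.g.\ $i_\Gamma\preceq a_{\infty,\Gamma}$ with $E_{i_\Gamma}$ amenable but $E_{a_{\infty,\Gamma}}$ not), because the Reiter discrepancy genuinely depends on which orbit points coincide; this is exactly why one needs the factor/orbit structure rather than a statistical comparison.

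First I would record the factor step: if $a\sqsubseteq c$, i.e.\ there is a factor map $\pi\colon Z\to X$ with $\pi\circ\gamma^c=\gamma^a\circ\pi$, then $E_c$ amenable implies $E_a$ amenable. The key point is that $\pi$ maps each $E_c$-class \emph{onto} the corresponding $E_a$-class (since $\pi(\{\gamma^c z\})=\{\gamma^a\pi(z)\}$), so the induced homomorphism $E_c\to E_a$ is class-surjective. Given a Reiter sequence of measurable assignments $z\mapsto\xi^n_z\in\mathrm{Prob}([z]_{E_c})$ with $\int\|\xi^n_z-\xi^n_{\gamma^c z}\|_1\,d\eta\to 0$ for each $\gamma$, one pushes forward along $\pi$ (integrating over the fibres via the disintegration of $\eta$ over $\mu$) to obtain assignments on the $E_a$-classes; since the $\ell^1$-pushforward of measures is a contraction, the discrepancies do not increase, so $E_a$ is amenable. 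This is the standard fact that amenability passes to quotients of measured equivalence relations.

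Next I would realize $a$ as a genuine factor of an amenable object built from $b$. By \cref{2.6}, $a\preceq b$ is equivalent to $a\sqsubseteq b_{\mathcal U}$ for a non-principal ultrafilter $\mathcal U$, so by the factor step it is enough to show that $E_{b_{\mathcal U}}$ is amenable whenever $E_b$ is. For this I would fix a finite $F\subseteq\Gamma$ and $\epsilon>0$, choose Reiter functions $\xi^k$ on $E_b$ with discrepancy $<1/k$ on $F$, and form their ultraproduct: writing $\xi^k_x=\sum_\gamma\alpha^k_\gamma(x)\,\delta_{(\gamma^b)(x)}$, the coefficients $[\alpha^k_\gamma]_{\mathcal U}$ define a measurable probability-measure field on the $E_{b_{\mathcal U}}$-orbits whose discrepancy equals $\lim_{\mathcal U}\int_X\|\xi^k_x-\xi^k_{\gamma^b x}\|_1\,d\mu=0<\epsilon$. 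Hence $E_{b_{\mathcal U}}$ is amenable, and composing with the factor step gives $E_a$ amenable, i.e.\ $E_a$ hyperfinite. The ``in particular'' clause is then immediate: if $a\simeq b$ then each is weakly contained in the other, so hyperfiniteness of either yields hyperfiniteness of the other.

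The main obstacle is the ultrapower step, and it is genuinely why the ultrapower (rather than \cref{28a}) must be used: one is tempted to write $a\sqsubseteq c$ with $c\simeq b$ and apply the factor step directly, but this is circular, since it presupposes that $E_c$ is hyperfinite, which is precisely the weak-equivalence invariance we are proving. The delicate points in the ultrapower argument are the non-separability of the ultrapower probability space and the verification that the ultraproduct coefficients $[\alpha^k_\gamma]_{\mathcal U}$ assemble into a bona fide measurable field of probability measures on the orbits with the asserted discrepancy --- that is, that the Reiter characterization of amenability is stable under the metric ultrapower construction. One must also ensure, in the factor step, that the fibrewise pushforward is measurable, which is handled by the disintegration of $\eta$ over $\mu$.
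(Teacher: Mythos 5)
Your overall scaffolding (Connes--Feldman--Weiss, a factor step, and the ultrapower bridge of \cref{2.6}) is reasonable, and the factor step is essentially sound, but the proof of your key intermediate claim --- that $E_b$ hyperfinite implies $E_{b_{\mathcal{U}}}$ amenable --- breaks down, and it breaks down at exactly the point that gives \cref{329} its content. The problem is escape of mass under ultralimits of $\ell^1$-data: Fatou only gives $\sum_{\gamma}\lim_{k\to\mathcal{U}}\alpha^k_\gamma(x_k)\leq 1$, and in the situation at hand the left-hand side is typically $0$. Concretely, write $E_b=\bigcup_n F_n$ with $F_n$ an increasing sequence of finite subrelations (one may take all $F_n$-classes of constant finite size, as in Appendix D, \cref{appD}) and take the canonical Reiter functions, $\xi^k_x$ the uniform measure on $[x]_{F_k}$; their discrepancy tends to $0$, but every coefficient satisfies $\alpha^k_\gamma(x)\leq 1/|[x]_{F_k}|$, which tends to $0$ a.e.\ whenever $E_b$ is aperiodic (the essential case). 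Hence for \emph{every} fixed $\gamma$ the ultralimit coefficient $\lim_{k\to\mathcal{U}}\alpha^k_\gamma(x_k)$ vanishes identically: your ``measurable probability-measure field'' is the zero field, its discrepancy is $0$ vacuously, and it certifies nothing about $E_{b_{\mathcal{U}}}$. Nothing in your argument addresses why the limiting coefficients should still sum to $1$, and for the natural choices they do not; this is the missing central step, not a technicality about non-separability.

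This gap also marks where you diverge from the paper. You dismiss the factoring route through \cref{28a} as circular --- correctly for \cref{28a} itself --- but the proof of Tucker-Drob presented in \cite{K3} rests on a strengthening of it that is \emph{not} circular: if $a\preceq b$ and $E_b\subseteq E_c$ for an action $c$ of a second group $\Delta$, then there are $d\simeq b$ and $e\simeq c$, still on standard spaces, with $a\sqsubseteq d$ and $E_d\subseteq E_e$. One applies this with $\Delta=\mathbb{Z}$ and $c$ a $\mathbb{Z}$-action satisfying $E_b\subseteq E_c$ (such $c$ exists because $E_b$ is hyperfinite). The circle is broken because $E_e$ is hyperfinite for reasons independent of weak equivalence: it is the orbit relation of a $\mathbb{Z}$-action on a standard space, whichever representative $e$ of the weak equivalence class of $c$ is produced. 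Then $E_d\subseteq E_e$ is hyperfinite, and $E_a$ is hyperfinite since $a$ is a factor of $d$ --- your factor step, now applied on standard spaces where disintegration and Connes--Feldman--Weiss are unproblematic. If you insist on the ultrapower route, the repair is to lift the finite subrelations rather than the almost-invariant measures: for fixed $n$, declare $\mathbf{x}\,R_n\,\gamma^{b_{\mathcal{U}}}\mathbf{x}$ iff $\{k: x_k\,F_n\,\gamma^b x_k\}\in\mathcal{U}$; each $R_n$ is a subrelation of $E_{b_{\mathcal{U}}}$ with finite classes, $\mu_{\mathcal{U}}(\{\mathbf{x}:\mathbf{x}\,R_n\,\gamma^{b_{\mathcal{U}}}\mathbf{x}\})=\mu(\{x: x\,F_n\,\gamma^b x\})\to 1$ for each $\gamma$, and the uniform measures on $R_n$-classes are genuinely almost invariant --- no ultralimit of measures is taken, so no mass is lost. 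But that is a different argument from the one you wrote, and even then the pushforward to $E_a$ should be done with conditional expectations rather than disintegration, with attention to stabilizers, since hyperfinite actions of non-amenable groups are never free.
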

The proof given in \cite{K3} is based on the following generalization of \cref{28a}: 

Let $a, b \in A(\Gamma, X, \mu), c\in A(\Delta, X, \mu)$ and assume that $a\preceq b$ and $E_b\subseteq E_c$. Then there are $d \in A(\Gamma, X, \mu), e\in A(\Delta, X, \mu)$ such that $b\simeq d, c\simeq e, a\sqsubseteq d$ and $E_d\subseteq E_e$.

We call an action $a\in A(\Gamma, X, \mu)$ {\bf hyperfinite}\index{hyperfinite action} if $E_a$ is hyperfinite. Similarly a {\bf hyperfinite weak equivalence class}\index{hyperfinite weak equivalence class} is one which consists of hyperfinite actions.

It was observed by Todor Tsankov (see \cite[last remark in page 78]{K}) that if $a,b\in {\rm FR}(\Gamma, X, \mu)$, $a\sqsubseteq b$ and $E_a$ is treeable, then so is $E_b$. The following question seems to be open:

\begin{prob}\label{tr}
 Let $a,b\in {\rm FR}(\Gamma, X, \mu), a\preceq b$ and assume that $E_a$ is treeable. Is $E_b$ treeable?
\end{prob}
Recall that a group $\Gamma$ is called {\bf treeable} (resp., {\bf strongly treeable})\index{treeable group}\index{strongly treeable group} if for {\it some} $a \in {\rm FR}(\Gamma, X, \mu)$, $E_a$ is treeable (resp., for {\it all} $a \in {\rm FR}(\Gamma, X, \mu)$, $E_a$ is treeable). It is unknown if these two notions are equivalent. We note that an affirmative answer to \cref{tr} implies that strong treeability is equivalent to $E_{s_\Gamma}$ being treeable.

\newpage
\section{Connection with the Koopman representation}\label{limits}

For each $a \in A(\Gamma, X, \mu)$, we let $\kappa^a\in \textrm{Rep}(\Gamma, L^2(X, \mu))$\index{$\kappa^a$} be the corresponding {\bf Koopman representation}\index{Koopman representation}, given by  $\kappa^a(\gamma)(f)(x) = f((\gamma^{-1})^a(x))$,  and $\kappa_0^a\in \textrm{Rep}(\Gamma, L_0^2(X, \mu))$\index{$\kappa_0^a$} its restriction to the orthogonal $L_0^2(X, \mu)$ of the constant functions. We have:

\begin{prop}[{\cite[page 67]   {K}}] \label{4.1}
\[
a\preceq b \implies \kappa_0^a\preceq_Z \kappa_0^b \ (\implies \kappa^a\preceq_Z \kappa^b
)\]
\end{prop}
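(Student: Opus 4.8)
The key observation is that the matrix coefficients of a Koopman representation on indicator functions are exactly the quantities controlling weak containment of actions. Indeed, writing $\chi_A$ for the indicator function of a Borel set $A$, one has $\kappa^a(\gamma)\chi_A = \chi_{\gamma^a(A)}$ and hence $\langle\kappa^a(\gamma)\chi_A,\chi_B\rangle = \mu(\gamma^a(A)\cap B)$. Since $\kappa_0^a$ is by definition the restriction of $\kappa^a$ to the invariant subspace $L_0^2(X,\mu)$, the matrix coefficients of $\kappa_0^a$ on vectors of $L_0^2(X,\mu)$ coincide with those of $\kappa^a$. The plan is to expand the relevant matrix coefficients as bilinear expressions in the numbers $\mu(\gamma^a(A_l)\cap A_m)$ and then transport them to the side of $b$ using the definition of $a\preceq b$.

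To verify $\kappa_0^a\preceq_Z\kappa_0^b$, fix $v_1,\dots,v_n\in L_0^2(X,\mu)$, $\epsilon>0$ and a finite $F\subseteq\Gamma$. First I would reduce to the case that the $v_i$ are simple functions of mean zero adapted to a common finite Borel partition $\{A_1,\dots,A_k\}$ of $X$. This is legitimate because matrix coefficients depend continuously on the vectors (as $\kappa^a(\gamma)$ is unitary): approximate each $v_i$ in $L^2$ by a simple function and apply the orthogonal projection $P_0$ onto $L_0^2(X,\mu)$, which is a contraction fixing $v_i$; the image is again a simple function (a simple function minus a constant) and still approximates $v_i$. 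Thus we may write $v_i=\sum_{l<k}c_{il}\chi_{A_l}$ with $\sum_{l<k}c_{il}\mu(A_l)=0$, and then $\langle\kappa_0^a(\gamma)v_i,v_j\rangle=\sum_{l,m}c_{il}\overline{c_{jm}}\,\mu(\gamma^a(A_l)\cap A_m)$.

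Next I would apply $a\preceq b$ to the partition $\{A_l\}$ and the finite set $F\cup\{e_\Gamma\}$, using the sharpened form of the definition noted just after it (both families may be taken to be partitions), to obtain a Borel partition $\{B_1,\dots,B_k\}$ of $Y$ with $|\mu(\gamma^a(A_l)\cap A_m)-\nu(\gamma^b(B_l)\cap B_m)|<\delta$ for all $\gamma\in F\cup\{e_\Gamma\}$ and $l,m<k$, where $\delta$ is chosen small relative to $\epsilon$, $k$ and $\max_{i,l}|c_{il}|$. Putting $w_i=\sum_{l<k}c_{il}\chi_{B_l}$ gives $\langle\kappa^b(\gamma)w_i,w_j\rangle=\sum_{l,m}c_{il}\overline{c_{jm}}\,\nu(\gamma^b(B_l)\cap B_m)$, which differs from $\langle\kappa_0^a(\gamma)v_i,v_j\rangle$ by at most $\delta\sum_{l,m}|c_{il}||c_{jm}|$. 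The one remaining point is that the $w_i$ must belong to $L_0^2(Y,\nu)$. Here the instance $\gamma=e_\Gamma$ gives $\nu(B_l)$ close to $\mu(A_l)$, so $\int_Y w_i\,d\nu=\sum_{l<k}c_{il}\nu(B_l)$ is within $\delta\sum_l|c_{il}|$ of $\sum_{l<k}c_{il}\mu(A_l)=0$. Replacing $w_i$ by $w_i'=P_0 w_i=w_i-\left(\int_Y w_i\,d\nu\right)\chi_Y$ puts it in $L_0^2(Y,\nu)$ and changes each matrix coefficient by only a multiple of $\delta$, since $\kappa^b(\gamma)$ is unitary and $\|w_i'-w_i\|=\left|\int_Y w_i\,d\nu\right|$ is small; moreover $\langle\kappa_0^b(\gamma)w_i',w_j'\rangle=\langle\kappa^b(\gamma)w_i',w_j'\rangle$. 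Summing the three error contributions and choosing $\delta$ small enough yields $|\langle\kappa_0^a(\gamma)v_i,v_j\rangle-\langle\kappa_0^b(\gamma)w_i',w_j'\rangle|<\epsilon$ for all $\gamma\in F$ and $i,j\le n$, as required.

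The parenthetical implication $\kappa^a\preceq_Z\kappa^b$ is then immediate: since $a$ is measure preserving, $\kappa^a$ fixes the constants, so $\kappa^a\cong 1_\Gamma\oplus\kappa_0^a$ and likewise $\kappa^b\cong 1_\Gamma\oplus\kappa_0^b$, where $1_\Gamma$ is the trivial one-dimensional representation; and $\preceq_Z$ is preserved under forming finite direct sums with a fixed common representation (one approximates the two blocks of matrix coefficients separately and adds), so $\kappa_0^a\preceq_Z\kappa_0^b$ gives the conclusion. I expect the only genuinely delicate part to be the bookkeeping that keeps the approximating vectors inside $L_0^2$, that is, controlling the means through the instance $\gamma=e_\Gamma$ together with the projection step, since the underlying identity $\langle\kappa^a(\gamma)\chi_A,\chi_B\rangle=\mu(\gamma^a(A)\cap B)$ is entirely elementary.
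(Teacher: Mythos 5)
Your proof is correct, and it is essentially the argument behind the cited result in \cite[page 67]{K} (the paper itself only gives the citation): one computes $\langle\kappa^a(\gamma)\chi_A,\chi_B\rangle=\mu(\gamma^a(A)\cap B)$, reduces by density and unitarity to mean-zero simple functions over a common partition, transports the coefficients $\mu(\gamma^a(A_l)\cap A_m)$ to the $b$-side via the definition of $a\preceq b$, and handles the passage to $L_0^2$ by projecting off the (small) mean, which is exactly the bookkeeping you carry out with the $\gamma=e_\Gamma$ instance. The parenthetical implication via $\kappa^a\cong 1_\Gamma\oplus\kappa_0^a$ and stability of $\preceq_Z$ under direct sums with a fixed summand is also correct.
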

It was known for a while that the converse in \cref{4.1} does not hold but the counterexamples failed to be (both) ergodic (see \cite[page 68]{K} and \cite[page 155]{CK}). However the following was recently shown, where for each cardinal $n =1,2, ... , \infty (= \aleph_0)$, we let $\bbF_n$\index{$\bbF_n$} be the free group with $n$ generators.

\begin{theory}[\cite{BuK}]\label{4.2}
Let $\Gamma = \bbF_\infty$. Then there are free, ergodic actions $a, b \in A(\Gamma, X, \mu)$ such that $ \kappa_0^a\preceq_Z \kappa_0^b$ but $a\npreceq b$.
\end{theory}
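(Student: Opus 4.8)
The plan is to construct, for $\Gamma = \bbF_\infty$, two free ergodic actions $a$ and $b$ whose Koopman representations (on $L_0^2$) are Zimmer-weakly-equivalent—or at least satisfy the one-sided relation $\kappa_0^a \preceq_Z \kappa_0^b$—while arranging that $a \npreceq b$ for some \emph{dynamical} reason not visible at the level of the Koopman representation. The key point is that $\kappa_0^a$ records only the unitary (spectral/representation-theoretic) data of the action, whereas $\preceq$ sees the finer combinatorial structure of how $\Gamma$ acts on the measure algebra. So I would look for an invariant that is monotone under $\preceq$ (by the results surveyed in \cref{6}) but \emph{not} determined by the Koopman representation, and build $a, b$ so that this invariant separates them while their Koopman representations do not.

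The natural candidate obstruction is \textbf{cost}, or some other $\ell^2$-Betti-number/combinatorial parameter. Cost is a weak-equivalence invariant and is monotone with respect to $\preceq$ (it is nondecreasing as one moves up the order), yet it is genuinely a property of the orbit equivalence relation $E_a$ rather than of $\kappa_0^a$: one can have actions with isomorphic (indeed unitarily equivalent) Koopman representations but different costs. For $\bbF_\infty$ one has great flexibility in realizing prescribed costs via free actions, and crucially $\bbF_\infty$ admits many free ergodic actions whose Koopman representations are all isomorphic to the regular representation $\infty \cdot \lambda_\Gamma$ (e.g.\ any free action that is weakly mixing and has the right spectral type). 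Thus the strategy is: pick $b$ to be an action of large (or maximal) cost whose Koopman representation is $\infty\cdot\lambda_\Gamma$, and pick $a$ to be a free ergodic action of strictly larger cost—or otherwise $\preceq$-incomparable below $b$—but whose Koopman representation embeds appropriately into that of $b$ in the Zimmer sense. Since for $\bbF_\infty$ all the relevant $L_0^2$ representations are $\infty\cdot\lambda_\Gamma$ up to the weak containment hierarchy, the relation $\kappa_0^a \preceq_Z \kappa_0^b$ becomes a representation-theoretic triviality, while $a \npreceq b$ follows from the monotonicity of cost.

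Concretely, I would first verify that there is a single isomorphism type of $L_0^2$-Koopman representation (namely $\infty \cdot \lambda_{\bbF_\infty}$) realized by a large family of free ergodic actions; this is where the infinite-rank freeness of $\bbF_\infty$ is essential, since it gives enough ``room'' to force the regular representation while varying the dynamics. Then I would exhibit within this family two actions whose costs (or another $\preceq$-monotone invariant) differ in the direction that contradicts $a \preceq b$: if $a \preceq b$ held, monotonicity would force $\mathrm{cost}(a) \le \mathrm{cost}(b)$ (or the appropriate inequality), and I arrange the construction so that this inequality fails. The ergodicity and freeness of both actions must be checked directly, but these are generic features of the constructions (e.g.\ Bernoulli-type or co-induced actions) and should follow from standard arguments or from \cref{min} and \cref{fre}.

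The main obstacle will be simultaneously controlling \emph{two} things that pull in opposite directions: making the Koopman representations coincide (which wants the actions to be spectrally indistinguishable, pushing toward something like the regular representation and hence toward ``large'' actions) while keeping a genuine $\preceq$-obstruction alive. The delicate part is ensuring that the chosen invariant really is insensitive to the Koopman representation yet really does obstruct weak containment in the precise direction claimed—this requires that the monotonicity statement for the invariant be strict enough, and that the invariant not secretly be computable from $\kappa_0^a$ for the actions in question. I expect the heart of the argument to be a careful choice (likely cost, or a related quantity for $\bbF_\infty$ where cost theory is well-developed) together with a construction—plausibly using co-induction or a product/relative construction—that pins the Koopman representation to a fixed type while leaving the combinatorial invariant free to vary.
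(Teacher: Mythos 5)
Your proposal takes a genuinely different route from the paper, but it has a fatal gap exactly where the theorem is hardest, and the gap is specific to the group $\bbF_\infty$ in the statement. Your concrete obstruction is cost, and cost cannot work here: by Gaboriau's theorem \cite{G}, the orbit equivalence relation of \emph{any} free action of $\bbF_n$ ($1\leq n\leq\infty$) is treed by the generators and has cost exactly $n$; for $\bbF_\infty$ this means every free action has cost $\infty$. So cost is constant on ${\rm FR}(\bbF_\infty, X,\mu)$ and cannot separate any two free actions --- your premise that ``for $\bbF_\infty$ one has great flexibility in realizing prescribed costs via free actions'' is false. Even if costs could differ, the monotonicity results you invoke do not apply: \cref{6.1} requires $\Gamma$ finitely generated (which $\bbF_\infty$ is not), and Tucker-Drob's extension to arbitrary groups (\cite[Corollary 6.22]{T-D}) requires $C_\mu(b)<\infty$, which fails here since all the costs are infinite. (You also state the monotonicity in the wrong direction: $a\preceq b$ implies $C_\mu(a)\geq C_\mu(b)$, so cost \emph{decreases} going up the order; but this is immaterial given the above.) Your hedge ``or another $\preceq$-monotone invariant'' is precisely the crux of the theorem, and no invariant from \cref{6} is supplied or known that does this job for free actions of $\bbF_\infty$; likewise the other half of your plan --- pinning $\kappa_0^a$ and $\kappa_0^b$ to $\infty\cdot\lambda_\Gamma$ --- is left unsubstantiated (weak mixing does not force this, and ``the right spectral type'' is circular).

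The paper's proof avoids numerical invariants entirely: it deduces \cref{4.2} from the stronger \cref{4.3} of \cite{BuK}, which constructs a single free, ergodic action $b$ that is \emph{representation maximal}, $\kappa_0^b\simeq_Z\pi_{\infty,\Gamma}$, yet not \emph{action maximal}, $b\not\simeq a_{\infty,\Gamma}$. One then takes $a$ to be a free, ergodic realization of $a_{\infty,\Gamma}$ (available for $\bbF_\infty$ by \cref{5.2}, since $p_\Gamma\simeq a_{\infty,\Gamma}$ and $p_\Gamma$ is free and ergodic). Then $\kappa_0^a\simeq_Z\pi_{\infty,\Gamma}\simeq_Z\kappa_0^b$, so in particular $\kappa_0^a\preceq_Z\kappa_0^b$, while $a\npreceq b$: otherwise $a_{\infty,\Gamma}\preceq b$ would give $b\simeq a_{\infty,\Gamma}$, contradicting \cref{4.3}. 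In other words, the obstruction to weak containment is maximality itself --- an order-theoretic fact about $a_{\infty,\Gamma}$ --- and the spectral side is handled at the level of the \emph{maximum} representation $\pi_{\infty,\Gamma}$ (which weakly contains everything, including the trivial representation), not the regular representation. The tension you correctly identify in your last paragraph (spectral indistinguishability versus a surviving dynamical obstruction) is exactly what the construction in \cite{BuK} resolves, and resolving it is the real content; the $\infty\cdot\lambda_\Gamma$ variant you gesture at is closer to Bowen's sketched example of a tempered action with $s_\Gamma\prec a$ mentioned in \cref{limits}, which is a different and likewise nontrivial construction.
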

It is easy to see that there is a (unique up to $\simeq_Z$) maximum under $\preceq_Z$ unitary representation in $\textrm{Rep}(\Gamma, H)$, where $H$ is infinite-dimensional (see \cite[Proposition H.1]{K}). We denote it by $\pi_{\infty, \Gamma}$\index{$\pi_{\infty, \Gamma}$}. Note that $\pi_{\infty, \Gamma}\simeq_Z \kappa_0^{a_\infty, \Gamma}$, since for any $\pi\in{\rm Rep}(\Gamma, H)$, there is $a\in A(\Gamma, X, \mu)$ with $\pi\leq \kappa_0^a$ (see \cite[Theorem E.1]{K}). Thus \cref{4.2} is a consequence of the following stronger result.

\begin{theory}[\cite{BuK}]\label{4.3}
Let $\Gamma = \bbF_\infty$. Then there is a free, ergodic action $a\in A(\Gamma, X, \mu)$ such that $a\not\simeq a_{\infty, \Gamma}$ but $\kappa_0^a\simeq_Z \pi_{\infty, \Gamma}$.
\end{theory}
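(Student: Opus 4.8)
The plan is to take $a$ to be the \emph{Gaussian action} $\mathbf{a}_\pi$ attached to a suitable realization of the maximal representation. Concretely, let $\pi$ be the orthogonal (real) representation of $\Gamma=\bbF_\infty$ on $L^2_0(X,\mu;\bbR)$ given by the real Koopman representation of $a_{\infty,\Gamma}$. Since $\bbF_\infty$ does not have property (T), the weak mixing actions are dense, so $a_{\infty,\Gamma}$ may be realized as a product of weak mixing actions and is therefore weak mixing; hence $\pi$ is a weak mixing orthogonal representation whose complexification is $\simeq_Z\pi_{\infty,\Gamma}$, using that $\pi_{\infty,\Gamma}\simeq_Z\kappa_0^{a_{\infty,\Gamma}}$ (the remark following \cref{4.2}). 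Set $a:=\mathbf{a}_\pi$, the Gaussian action on the associated Gaussian probability space.

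The Koopman side is the easy half. The Koopman representation of a Gaussian action decomposes along the symmetric Fock space as $\kappa^{a}=\bigoplus_{n\ge 0}\mathrm{Sym}^n(\pi_{\bbC})$, so its first chaos gives $\pi_{\bbC}\le\kappa_0^{a}$. Thus $\pi_{\infty,\Gamma}\simeq_Z\pi_{\bbC}\preceq_Z\kappa_0^{a}$, and as $\pi_{\infty,\Gamma}$ is the $\preceq_Z$-maximum we get $\kappa_0^{a}\simeq_Z\pi_{\infty,\Gamma}$. Weak mixing of $\pi$ yields weak mixing, hence ergodicity, of $a$, and freeness follows from the faithfulness of $\pi$ by the standard analysis of Gaussian actions; if necessary one replaces $a$ by $a\times s_\Gamma$, which is free, has the same maximal Koopman representation, and by \cref{min} lies above $s_\Gamma$ without affecting the argument below.

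The heart of the matter is to show $a\not\simeq a_{\infty,\Gamma}$, equivalently (by \cref{max}) that $a_{\infty,\Gamma}\not\preceq a$. Here I would use the Abért--Weiss description from \cref{25}: $a_{\infty,\Gamma}\preceq a$ would force $E(a_{\infty,\Gamma},K)\subseteq\overline{E(a,K)}$ for every finite $K$. For the maximum action one has $\overline{E(a_{\infty,\Gamma},K)}=M_s(K^\Gamma)$, the full simplex of shift-invariant measures, since every such $\nu$ is the image of the tautological factor map of the shift on $(K^\Gamma,\nu)$, and that shift is weakly contained in $a_{\infty,\Gamma}$. So it suffices to exhibit a finite $K$ and a shift-invariant $\nu$ on $K^\Gamma$ that is \emph{not} in $\overline{E(a,K)}$, that is, a joint distribution of a finite partition together with its $\Gamma$-translates that cannot be approximated by partitions of the Gaussian space. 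This is precisely where weak containment of actions is strictly stronger than weak containment of Koopman representations (\cref{4.1}): matching $\nu$ requires approximation by genuine \emph{partitions} (nonnegative functions summing to $1$), not by arbitrary $L^2$-vectors, and in a Gaussian system the attainable joint distributions of indicators and their translates are constrained by Gaussianity, whereas $M_s(K^\Gamma)$ contains measures with correlation patterns incompatible with those constraints.

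The main obstacle is exactly this last step: producing a concrete finite $K$ and a shift-invariant $\nu$ on $K^\Gamma$ provably unreachable by partitions of the Gaussian space. I would attack it by quantifying the Gaussianity constraint — fixing a partition into $k\ge 3$ cells and extracting from the moments $\mu(\gamma^{a}(B_i)\cap B_j)$ a positivity/moment inequality valid in every Gaussian system that fails for a suitable $\nu$ coming from a highly structured (e.g.\ profinite or rigid) action of $\Gamma$, which $a_{\infty,\Gamma}$ nonetheless realizes. Verifying such an inequality and checking that the target measure lies outside $\overline{E(a,K)}$ is the genuinely delicate part; the construction of $a$ and the verification of maximality of $\kappa_0^{a}$, freeness, and ergodicity are all soft by comparison.
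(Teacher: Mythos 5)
Your construction of $a$ cannot be repaired, because the action you propose is itself weakly equivalent to $a_{\infty,\Gamma}$. The soft half of your argument is fine: the Fock space decomposition gives $\pi_{\bbC}\leq\kappa_0^{\mathbf{a}_\pi}$, hence representation maximality, and ergodicity/freeness are standard. But the step you defer — exhibiting a shift-invariant measure outside $\overline{E(\mathbf{a}_\pi,K)}$ — is not merely delicate; it is impossible, for the following reason. (1) $\bbF_\infty$ has property EMD (\cref{space}, \cite{K1}), i.e.\ the ergodic profinite actions are dense in $A(\Gamma,X,\mu)$, and initial segments $\{b\colon b\preceq a\}$ are closed (\cref{2}); so it suffices to show every ergodic profinite action is weakly contained in $\mathbf{a}_\pi$. (2) An ergodic profinite action is an inverse limit of finite actions $\Gamma\curvearrowright\Gamma/H_k$ along a chain, and is weakly contained in the product $\prod_k(\Gamma\curvearrowright\Gamma/H_k)$, since finitely many sets in the inverse limit are approximated by sets pulled back from a single finite level. (3) Each finite action $\Gamma\curvearrowright\Gamma/H$ is a \emph{factor} of the Gaussian action of the real permutation representation $\bbR[\Gamma/H]$: the coordinates of that Gaussian space are i.i.d.\ atomless, so $\omega\mapsto \mathrm{argmax}_{x\in\Gamma/H}\,\omega_x$ is an a.e.-defined equivariant factor map onto $\Gamma/H$ with uniform image measure. (4) The Gaussian construction is monotone under (Zimmer) weak containment of orthogonal representations — joint distributions of Gaussian variables are determined by matrix coefficients; this is exactly how Gaussian actions are handled in \cite{BT-D} — and for your particular $\pi$ one checks (taking real parts of approximating vectors) that $\bigoplus_k\bbR[\Gamma/H_k]\preceq_Z\infty\cdot\pi$ and that $\infty\cdot\pi\preceq_Z\pi$, the latter because $\infty\cdot\pi$ sits inside the real Koopman representation of $(a_{\infty,\Gamma})^{\bbN}\simeq a_{\infty,\Gamma}$ and \cref{4.1} holds verbatim for real coefficients. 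Chaining (2)--(4), every ergodic profinite action is weakly contained in the Gaussian action of $\bigoplus_k\bbR[\Gamma/H_k]$, hence in $\mathbf{a}_\pi$; therefore $a_{\infty,\Gamma}\preceq\mathbf{a}_\pi$, i.e.\ $\mathbf{a}_\pi\simeq a_{\infty,\Gamma}$. The same applies to your fallback $\mathbf{a}_\pi\times s_\Gamma$.

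The failure is conceptual, not technical. Weak containment sees only joint distributions of Borel partitions, and ``Gaussianity'' imposes no closed constraint on these: indicators of arbitrary Borel sets are not Gaussian observables, and by (3) a Gaussian system over a maximal representation contains partitions simulating arbitrary finite — hence, by EMD, arbitrary — behavior. The actual proof in \cite{BuK} goes in the opposite direction: the invariant separating their action from $a_{\infty,\Gamma}$ is \emph{strong ergodicity}. One constructs a free, ergodic, strongly ergodic action of $\bbF_\infty$ whose Koopman representation is nonetheless Zimmer-maximal (possible because almost invariant vectors need not produce almost invariant sets; the construction uses finite quotients of $\bbF_\infty$, with the infinite rank and the IRS viewpoint playing an essential role). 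Such an action can never be weakly equivalent to $a_{\infty,\Gamma}$, since strong ergodicity passes downward under $\preceq$ (\cref{cor3.6}) while $i_\Gamma\preceq a_{\infty,\Gamma}$ forces $a_{\infty,\Gamma}$ to admit non-trivial almost invariant sets (\cref{3.5}). Note that this route is structurally unavailable to any Gaussian construction: by the Connes--Weiss argument, almost invariant vectors of $\pi$ (which maximality forces) yield almost invariant \emph{sets} of $\mathbf{a}_\pi$, so no Gaussian action over a maximal representation is strongly ergodic. Your proposal thus destroys precisely the property on which the known proof rests.
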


Below $\prec$\index{$\prec$} is the strict part of the order $\preceq$ and $a \bot b$\index{$\bot$} means that $a,b$ are $\preceq$-incomparable.

Lewis Bowen more recently considered the following ``dual" question concerning the minimum free action, where by $\lambda_\Gamma$\index{$\lambda_\Gamma$} we denote the {\bf left-regular representation}\index{left-regular representation} of a group $\Gamma$: 
Is there a group $\Gamma$ and a free, ergodic action $a\in A(\Gamma, X, \mu)$, such that $s_\Gamma\prec a$ but $\kappa_0^a\preceq \lambda_\Gamma\simeq \kappa_0^{s_\Gamma}$(see \cite[Appendix D, {\bf (E)}]{K} and \cite[Exercise E.4.5]{BdlHV}). He recently communicated to the authors that a positive answer for a free group can be derived from the papers Bordenave-Collins \cite{BC} and Gamarnik-Sudan \cite{GSu}.


A different example of the distinction between weak containment of ergodic actions and their Koopman representations is the following:
\begin{theory}\label{44}
Let $\Gamma=\bbF_2$. Then there is an ergodic action $a\in A(\Gamma, X, \mu )$ such that $\lambda_\Gamma\simeq \kappa_0^{s_\Gamma}\prec \kappa_0^a$ but $s_\Gamma \bot a$.
\end{theory}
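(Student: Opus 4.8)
The plan is to reduce the theorem to the construction of a single action and then isolate the one genuinely hard point. First record that $\kappa_0^{s_\Gamma}\simeq\lambda_\Gamma$: the Koopman representation of the Bernoulli shift on $L_0^2$ decomposes into a sum of tensor powers of $\lambda_\Gamma$, each of which is by Fell absorption a multiple of $\lambda_\Gamma$, so $\kappa_0^{s_\Gamma}\simeq\lambda_\Gamma$. Hence the assertion $\lambda_\Gamma\simeq\kappa_0^{s_\Gamma}\prec\kappa_0^a$ asks for $\lambda_\Gamma\preceq\kappa_0^a$ together with $\kappa_0^a\npreceq\lambda_\Gamma$, while $s_\Gamma\bot a$ asks for $s_\Gamma\npreceq a$ and $a\npreceq s_\Gamma$. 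I would reduce all four requirements to three properties of a single action $a$: that $a$ be \emph{(i)} non-free, \emph{(ii)} ergodic but not strongly ergodic, and \emph{(iii)} such that $\lambda_\Gamma\preceq\kappa_0^a$.

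The three properties deliver the conclusion as follows. Property \emph{(i)} gives $s_\Gamma\npreceq a$: by \cref{fre} freeness is upward closed and $s_\Gamma$ is free, so $s_\Gamma\preceq a$ would force $a$ to be free. Property \emph{(ii)} gives $\kappa_0^a\npreceq\lambda_\Gamma$, for since $a$ is ergodic and not strongly ergodic, \cref{3.5} yields $i_\Gamma\preceq a$, hence the trivial representation $1_\Gamma$ is weakly contained in $\kappa_0^a$ (the almost invariant sets furnish almost invariant vectors in $L_0^2$); as $\bbF_2$ is non-amenable, $1_\Gamma\npreceq\lambda_\Gamma$ by Hulanicki--Kesten, whence $\kappa_0^a\npreceq\lambda_\Gamma$. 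Combined with \emph{(iii)} this gives the strict relation $\lambda_\Gamma\prec\kappa_0^a$. Moreover, by the Koopman monotonicity of \cref{4.1}, $a\preceq s_\Gamma$ would imply $\kappa_0^a\preceq_Z\kappa_0^{s_\Gamma}=\lambda_\Gamma$, contradicting $\kappa_0^a\npreceq\lambda_\Gamma$; so $a\npreceq s_\Gamma$, completing $s_\Gamma\bot a$.

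It remains to build an action with \emph{(i)}--\emph{(iii)}, and here the constraint is severe: I cannot obtain \emph{(iii)} from a free factor or a free direct factor, since any free action weakly contained in $a$ (in particular a free factor, which satisfies $b\preceq a$) would by \cref{fre} force $a$ itself to be free, violating \emph{(i)}. Thus the non-freeness must be ``global'', arising from an ergodic invariant random subgroup with almost surely nontrivial stabilizer, while the Koopman representation stays large enough to contain $\lambda_\Gamma$. Since the nontrivial amenable subgroups of $\bbF_2$ are precisely the infinite cyclic ones, I would seek an ergodic action whose stabilizer is almost surely an infinite cyclic subgroup, so that $a$ is non-free with amenable stabilizers, yet whose orbit equivalence relation $E_a$ is non-amenable (which holds because $\bbF_2$ is non-amenable and the stabilizers have infinite index, making the Schreier graphs of the orbits non-amenable). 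The mechanism for \emph{(iii)} is then a lemma of the form: non-amenability of $E_a$ forces $\lambda_\Gamma\preceq\kappa_0^a$, which I would establish through the groupoid/equivalence-relation picture, relating $\kappa^a$ to the regular representation of $E_a$ and using that the regular representation of a non-amenable p.m.p.\ equivalence relation weakly contains $\lambda_\Gamma$. To secure \emph{(ii)} simultaneously I would arrange a non-strongly-ergodic amenable factor (for instance a $\bbZ$-factor through a surjection $\bbF_2\to\bbZ$) to supply the almost invariant sets, taking care that this does not destroy the amenable-stabilizer, non-amenable-relation structure.

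The main obstacle is exactly this simultaneous realization of \emph{(i)}--\emph{(iii)}: producing an explicit ergodic action of $\bbF_2$ with almost surely infinite-cyclic stabilizers that is not strongly ergodic, and \emph{rigorously} verifying $\lambda_\Gamma\preceq\kappa_0^a$ for it. The most systematic realization is via an ergodic invariant random subgroup concentrated on infinite cyclic subgroups, together with the fact that every ergodic invariant random subgroup is the stabilizer distribution of an ergodic p.m.p.\ action; the delicate part is controlling the Koopman representation of that realization finely enough to locate $\lambda_\Gamma$ inside it while retaining almost invariant sets. A possibly cleaner alternative is to construct directly a non-free ergodic action whose Koopman representation is weakly equivalent to the maximal $\pi_{\infty,\Gamma}$ of \cref{4.3} (which automatically dominates both $\lambda_\Gamma$ and $1_\Gamma$), in which case the difficulty migrates to showing that maximality of the Koopman representation is compatible with non-freeness. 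Either way, I expect the verification of $\lambda_\Gamma\preceq\kappa_0^a$ for a genuinely non-free action to be the crux of the proof.
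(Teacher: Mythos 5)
Your reduction is sound: from (i) non-freeness, (ii) ergodicity without strong ergodicity, and (iii) $\lambda_\Gamma\preceq\kappa_0^a$, all four conclusions of \cref{44} do follow exactly as you argue, and your use of \cref{4.1} to rule out $a\preceq s_\Gamma$ is a legitimate alternative to the paper's appeal to shift-minimality of $\bbF_2$ (\cite{T-D}). The genuine gap is where you yourself locate the crux: no action satisfying (i)--(iii) is ever constructed, and the mechanism you propose for (iii) is false. It is not true that non-amenability of $E_a$ forces $\lambda_\Gamma\preceq\kappa_0^a$. For a counterexample, let $\pi\colon\bbF_2\to\Delta$ be a surjection onto the non-amenable group $\Delta=\bbZ/2\bbZ\star\bbZ/3\bbZ$, with kernel $K\neq\{e_\Gamma\}$, and let $a$ be the lift through $\pi$ of a free ergodic measure preserving action of $\Delta$. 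Then $E_a$ is the orbit equivalence relation of a free action of a non-amenable group, hence non-amenable, but $\kappa_0^a$ is trivial on $K$; if $\lambda_\Gamma\preceq\kappa_0^a$ then restricting to $K$ would give $\lambda_K\preceq\infty\cdot 1_K$, which fails for any nontrivial $K$, since every positive-definite function realized in a multiple of the trivial representation is constant, while $\lambda_K$ realizes the function equal to $1$ at the identity and $0$ elsewhere. The representation of $\Gamma$ on $L^2(E_a)$ decomposes as a direct integral of the quasi-regular representations $\lambda_{H,\Gamma}$ of the stabilizers, and non-amenability of $E_a$ says nothing about whether $\lambda_\Gamma\preceq\lambda_{H,\Gamma}$; what does ensure that is a spreading property of the stabilizers, not of the orbit equivalence relation.

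For comparison, the paper's proof (Appendix F, \cref{appF1}) uses stabilizers of exactly the opposite kind from your infinite-cyclic ones: by a result of Le Ma\^{i}tre \cite{LeM} there is a highly faithful ergodic action $a$ of $\bbF_2$ whose stabilizers $H$ are co-amenable (the action of $\Gamma$ on $\Gamma/H$ has an invariant mean), hence large and non-amenable. High faithfulness yields $\lambda_\Gamma\preceq\lambda_{H,\Gamma}$ (the Lemma in \cref{appF1}), co-amenability yields $1_\Gamma\preceq\lambda_{H,\Gamma}$, and \cite{DG} gives $\lambda_{H,\Gamma}\preceq\kappa_0^a$ for almost every stabilizer; thus both containment and strictness come from the stabilizer structure alone, with no need to discuss strong ergodicity of $a$ at all. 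Ironically, your infinite-cyclic-stabilizer plan could be repaired, but by a different mechanism than the one you give: a nontrivial cyclic subgroup of a free group is contained in a malnormal maximal cyclic subgroup, which makes the action on $\Gamma/H$ highly faithful, so the paper's two lemmas would again give $\lambda_\Gamma\preceq\kappa_0^a$; what would remain (and is untouched in your proposal) is the actual construction of an ergodic action of $\bbF_2$ with such stabilizers that is, in addition, not strongly ergodic.
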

Note that in \cref{44} the action $a$ is not free. The proof of \cref{44} and related issues are discussed in Appendix F, \cref{appF1}.

\newpage
\section{The maximum weak equivalence class}\label{space}
We have seen in \cref{min} that there is a minimum, in the sense of weak containment, free action and this can be concretely realized as the shift action of the group. We have also seen in  \cref{max} that there is a maximum, in the sense of weak containment, action but the proof of that result does not provided a concrete realization of this action. For various reasons, for example in connection with the theory of cost that we will discuss later in \cref{71}, it is important to be able to compute a concrete realization of this action. 

\begin{prob}
Find an explicit realization of the maximum, in the sense of weak containment, action $a_{\infty, \Gamma}$.
\end{prob}
We will discuss here the solution of this problem for certain classes of groups, including the free ones.

Let $\Gamma$ be a residually finite group. We consider the {\bf profinite completion}\index{profinite completion} $\hat{\Gamma}$\index{$\hat{\Gamma}$} of $\Gamma$, equipped with the Haar measure, on which $\Gamma$ acts by left-translation, so that it preserves this measure. We denote this action by $p_\Gamma$\index{$p_\Gamma$}. It is a free, ergodic, profinite action and it is the maximum in the sense of weak containment $\preceq$ (and even in the sense of $\sqsubseteq$) among ergodic, profinite actions (see \cite[Proposition 2.3]{K1}. Recall that an action $a\in A(\Gamma, X, \mu)$ is {\bf profinite}\index{profinite action} if there is a decreasing sequence of finite Borel partitions $\{X\} = \p_0\geq \p_1\geq \dots$ such that each $\p_n$ is $\Gamma$-invariant and $\{\p_n\}$ separates points. For more details about profinite actions, see, e.g., \cite[Section 2]{K1}. We now have the following result:

\begin{thm}[{\cite[Theorem 3.1] {K1}}] \label{5.2}
Let $\Gamma= \bbF_n, 1\leq n \leq\infty$. Then $ p_\Gamma\simeq a_{\infty, \Gamma}$.
\end{thm}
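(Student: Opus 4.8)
The plan is to prove the two inequalities $p_\Gamma\preceq a_{\infty,\Gamma}$ and $a_{\infty,\Gamma}\preceq p_\Gamma$ separately. The first is immediate: $a_{\infty,\Gamma}$ is the maximum element of $\preceq$ by \cref{max}, so $p_\Gamma\preceq a_{\infty,\Gamma}$ automatically. All the content is in the reverse inequality. Since $a_{\infty,\Gamma}$ is maximum, $a_{\infty,\Gamma}\preceq p_\Gamma$ holds if and only if \emph{every} $a\in A(\Gamma,X,\mu)$ satisfies $a\preceq p_\Gamma$ (forward: take $a=a_{\infty,\Gamma}$; backward: $a\preceq a_{\infty,\Gamma}\preceq p_\Gamma$). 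Moreover, as noted in the text the initial segment $\preceq^{p_\Gamma}=\{c:c\preceq p_\Gamma\}$ is closed, so it suffices to exhibit a weakly dense set of actions each of which is $\preceq p_\Gamma$. This is where freeness of $\Gamma=\bbF_n$ enters decisively: because $\bbF_n$ is free, the map $a\mapsto(\gamma_1^a,\gamma_2^a,\dots)$ identifies $A(\bbF_n,X,\mu)$ with ${\rm Aut}(X,\mu)^n$ (an $n$-tuple of automorphisms is subject to no relations and hence always defines an action).

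First I would establish that the \textbf{finite actions} are weakly dense. By a finite action I mean one of the form $\pi_\phi\times i_\Gamma$, where $\phi\colon\bbF_n\to{\rm Sym}(F)$ for a finite set $F$ and $\pi_\phi$ is the associated permutation action of the $2^k$ atoms of a common finite equipartition of $X$. Using the classical density of dyadic permutation automorphisms in ${\rm Aut}(X,\mu)$, and applying it to all $n$ generators \emph{simultaneously} with respect to a single dyadic partition into $2^k$ intervals, one approximates an arbitrary tuple $(\gamma_1^a,\dots,\gamma_n^a)$ by permutations $(\sigma_1,\dots,\sigma_n)$ of these intervals; no compatibility constraint arises precisely because $\bbF_n$ is free, so $(\sigma_1,\dots,\sigma_n)$ is again a genuine action, factoring through the finite quotient $G=\bbF_n/N$ with $N=\ker\phi$. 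Since a permutation of intervals acts by translation within each interval, the resulting action is of the declared form $\pi_\phi\times i_\Gamma$. Letting $k\to\infty$ gives weak density (for $n=\infty$ one works with finitely many generators at a time, as the weak topology requires).

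The crux is then to show that each finite action is weakly contained in $p_\Gamma$. The permutation part is handled by the factor structure: the transitive components of $\pi_\phi$ are $G$-sets $G/H$, each a factor of the regular action $\Gamma\curvearrowright\Gamma/N$, which is an ergodic profinite action and hence a factor of $p_\Gamma$ by the $\sqsubseteq$-maximality of $p_\Gamma$ among ergodic profinite actions recorded before the theorem. To accommodate both the trivial fibre $i_\Gamma$ and the relative weights of the (possibly several) orbits of $\pi_\phi$, I would use that $p_\Gamma$ is \emph{not} strongly ergodic: since $\bbF_n$ surjects onto $\bbZ$ it fails property $(\tau)$, so the finite cyclic quotients have vanishing spectral gap and $p_\Gamma$ admits non-trivial almost invariant sets; hence $i_\Gamma\preceq p_\Gamma$ by \cref{3.5}. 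Concretely, inside $\hat\Gamma$ I would realize $\pi_\phi$ on the genuinely permuted pieces coming from the level-$N$ factor $\mathcal P_N$, and simultaneously produce an almost invariant partition of prescribed measures (available because the structure within each $\mathcal P_N$-piece is again the profinite completion of a finite-index, hence free and non-$(\tau)$, subgroup). Taking products $D_{v,\ell}=E_v\cap C_\ell$ of the permuted pieces $E_v$ with almost invariant sets $C_\ell$ of the correct measures, one checks $\gamma^{p_\Gamma}(D_{v,\ell})\approx D_{\phi(\gamma)v,\ell}$ with the right intersection statistics, yielding $\pi_\phi\times i_\Gamma\preceq p_\Gamma$.

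Combining these, every $a$ is a weak limit of finite actions, each $\preceq p_\Gamma$; since $\preceq^{p_\Gamma}$ is closed, $a\preceq p_\Gamma$, and in particular $a_{\infty,\Gamma}\preceq p_\Gamma$. Together with $p_\Gamma\preceq a_{\infty,\Gamma}$ this gives $p_\Gamma\simeq a_{\infty,\Gamma}$. I expect the \textbf{main obstacle} to be the last step, namely upgrading $\pi_\phi\preceq p_\Gamma$ to $\pi_\phi\times i_\Gamma\preceq p_\Gamma$: one cannot argue this abstractly (products need not respect a common upper bound, and a strongly ergodic action could never weakly contain the non-ergodic $\pi_\phi\times i_\Gamma$), so the argument must genuinely exploit the failure of strong ergodicity of $p_\Gamma$ for free groups and the self-similar tower structure of the profinite action to place the almost invariant sets at a scale finer than, and approximately independent of, the partition carrying $\pi_\phi$. (Equivalently, this is the statement that $\bbF_n$ has property MD and that MD implies EMD for residually finite groups.)
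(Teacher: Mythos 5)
You should know at the outset that this survey contains no proof of \cref{5.2}: the result is quoted from \cite[Theorem 3.1]{K1}, so the comparison here is with that proof and with the toolkit the survey itself assembles. Your proposal is correct, and its architecture is essentially that of the cited proof: reduce, via \cref{max} and the closedness of the initial segment $\{c\colon c\preceq p_\Gamma\}$, to showing that a dense family of actions lies below $p_\Gamma$; use freeness to see that the dyadic-permutation (finite) actions are dense; then place each finite action below $p_\Gamma$ by exploiting the fact that finite-index subgroups of $\bbF_n$ are free, hence surject onto $\bbZ$ --- exactly the failure of property $(\tau)$ you invoke. Your sketch of the crux is also sound as written: the level-$N$ pieces $E_g\subseteq\hat\Gamma$ are permuted exactly according to the regular action of $G=\Gamma/N$; the setwise stabilizer $N$ of a piece acts on it as $p_N$ (the closure of $N$ in $\hat\Gamma$ is $\hat{N}$, since for finite-index $N$ the profinite topology of $\Gamma$ restricts to that of $N$); since $p_N$ is ergodic but not strongly ergodic, \cref{3.5} yields almost invariant partitions of prescribed measures inside $E_{e_\Gamma}$; and transporting these by coset representatives makes them exactly independent of $\{E_g\}$ and almost invariant for the finitely many group elements in play, so your sets $D_{v,\ell}=E_v\cap C_\ell$ have the required statistics.

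One simplification is worth recording: the step you call the main obstacle can be bypassed entirely using results the survey already quotes. Every finite action $\pi_\phi\times i_\Gamma$ is a \emph{factor} of $(\Gamma\curvearrowright\Gamma/N)\times i_\Gamma$ (partition the trivial coordinate into intervals whose lengths are the orbit weights and collapse $\Gamma/N$ onto the corresponding orbit $G/H_j$ over each interval), hence a factor of $p_\Gamma\times i_\Gamma$; and since $p_\Gamma$ is ergodic but not strongly ergodic, \cref{38} gives $i_\Gamma\times p_\Gamma\simeq p_\Gamma$ outright, so $\pi_\phi\times i_\Gamma\preceq p_\Gamma$ with no hand construction. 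In effect, your $D_{v,\ell}$ argument re-proves the implication (i)$\Rightarrow$(iv) of \cref{38} in the special case of $p_\Gamma$ --- which is essentially what \cite{K1} had to do, as it predates \cite{AW}. Finally, your closing parenthetical overstates what is known: ``MD implies EMD for residually finite groups'' is an open problem in general (Tucker-Drob proved it for groups without property (T), and the survey notes its truth for all groups is equivalent to MD implying $\neg$(T)); what your argument actually uses, correctly, is the non-strong-ergodicity of $p_{\bbF_n}$, which is the right special-case substitute.
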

Another realization of $a_{\infty, \Gamma}$ for the free groups is in terms of generalized shifts (compare this with the realization of the minimum, in terms of weak containment, free action as a shift). Given a group $\Gamma$ and a subgroup $H\leq \Gamma$, consider the shift action of $\Gamma$ on the product space $[0,1]^{\Gamma/H}$ (with the product measure), where $\Gamma$ acts on $\Gamma/H$ in the usual way. This is called the {\bf generalized shift}\index{generalized shift} corresponding to $H$ and is denoted by $s_{H, \Gamma}$\index{$s_{H, \Gamma}$} (see \cite[Section 5]{K1} and \cite[Section 2]{KT} for more details about these actions).

\begin{thm}[{\cite[Theorem 5.1]  {K1}}] 
Let $\Gamma= \bbF_n, 1\leq n \leq\infty$. Then there is $H\leq \Gamma$ of infinite index in $\Gamma$ such that $s_{H,\Gamma} \simeq a_{\infty, \Gamma}$.
\end{thm}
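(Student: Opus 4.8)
The plan is to reduce the statement to \cref{5.2} and then to a single weak containment of the profinite action in a generalized shift. Since $a_{\infty,\Gamma}$ is the maximum element of $\preceq$ (\cref{max}), we automatically have $s_{H,\Gamma}\preceq a_{\infty,\Gamma}$ for \emph{every} subgroup $H$, so by \cref{5.2} it suffices to exhibit a subgroup $H\leq\Gamma$ of infinite index with
\[
p_\Gamma\preceq s_{H,\Gamma}.
\]
Everything then reduces to arranging this one weak containment while keeping $[\Gamma:H]=\infty$, after which $a_{\infty,\Gamma}\simeq p_\Gamma\preceq s_{H,\Gamma}\preceq a_{\infty,\Gamma}$ closes the argument.

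To analyze $p_\Gamma\preceq s_{H,\Gamma}$ I would use residual finiteness of $\bbF_n$ to fix a decreasing chain $\Gamma=\Gamma_0\geq\Gamma_1\geq\cdots$ of finite-index normal subgroups with $\bigcap_k\Gamma_k=\{e\}$, so that $p_\Gamma$ is the inverse limit $\varprojlim_k(\Gamma\actson\Gamma/\Gamma_k)$ of the finite permutation actions. Using the local-statistics description (the sets $C_{n,k}$ of \cref{22}(1)), every Borel partition of $\hat{\Gamma}$ is approximated by one pulled back from some level $\Gamma/\Gamma_k$, whose statistics are exactly those of partitions of the finite action $\Gamma/\Gamma_k$; hence one expects $C_{n,\ell}(p_\Gamma)=\overline{\bigcup_k C_{n,\ell}(\Gamma/\Gamma_k)}$. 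Consequently $p_\Gamma\preceq s_{H,\Gamma}$ follows once one shows that each finite permutation action $\Gamma\actson\Gamma/\Gamma_k$ is weakly contained in $s_{H,\Gamma}$, i.e.\ that $s_{H,\Gamma}$ carries, for every $k$ and every finite $F\subseteq\Gamma$, an approximately $\Gamma_k$-equivariant partition of $[0,1]^{\Gamma/H}$ into $[\Gamma:\Gamma_k]$ equal pieces with (up to small error over $F$) the permutation statistics of $\Gamma/\Gamma_k$.

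The target is therefore to build $H$ so that this holds simultaneously for all $k$. Since $\bbF_n$ is free, any connected, properly generator-labeled graph is the Schreier coset graph of some subgroup; I would use this freedom to prescribe $\Gamma/H$ as a single infinite orbit whose finite Schreier graph locally resembles (in the Benjamini--Schramm sense) the finite graphs of the $\Gamma/\Gamma_k$ --- for instance by taking many copies of each finite quotient graph and splicing them along a sparse backbone, so that only a vanishing fraction of vertices is disturbed. Because arbitrarily large finite quotients must be approximated, $\Gamma/H$ is forced to be infinite, so $[\Gamma:H]=\infty$ comes for free. The approximate finite partitions would then be produced inside $s_{H,\Gamma}$ by a factor-of-i.i.d.\ / local coding argument in the spirit of the Ab\'{e}rt--Weiss theorem (\cref{min}): one uses the i.i.d.\ $[0,1]$-labels to break symmetry and read off, from the labeled local neighborhood of a coordinate, the class of $\Gamma/\Gamma_k$ to which it should be assigned. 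One may alternatively phrase the conclusion through the ultrapower (\cref{2.6}) or \cref{28a}, realizing the weak containment as a genuine factoring $p_\Gamma\sqsubseteq c$ for some $c\simeq s_{H,\Gamma}$.

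The main obstacle is exactly this construction. Exact factoring is impossible: an equivariant set-map $\Gamma/H\to\Gamma/\Gamma_k$ forces $H\leq\Gamma_k$, and demanding this for all $k$ forces $H=\{e\}$, i.e.\ the Bernoulli shift $s_\Gamma$, which is the \emph{minimum} free action (\cref{min}) rather than the maximum. Likewise $\Gamma/H$, being a single infinite orbit of infinite index, can contain no $\Gamma_k$-fixed point and no finite suborbit, so the approximate copies of $\Gamma/\Gamma_k$ necessarily carry genuine boundary defects that persist even after passing to the ultrapower. Thus the heart of the proof is to reconcile infinite index with faithful approximation of the \emph{rigid} finite quotients, and to make these approximations coherent across the levels $k$ so that they assemble onto the inverse limit $p_\Gamma$ rather than merely the product of the finite quotients. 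Carrying out the local coding so that the statistics match to within $\epsilon$ over a prescribed finite $F$, uniformly enough to cover all the $C_{n,\ell}$, is the one genuinely technical step; the remainder is bookkeeping via \cref{5.2} and the maximality of $a_{\infty,\Gamma}$.
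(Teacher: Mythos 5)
A preliminary remark: the survey itself contains no proof of this statement --- it is quoted from \cite[Theorem 5.1]{K1} (proved there for the shift on $2^{\Gamma/H}$, with the remark that the argument adapts to $[0,1]^{\Gamma/H}$) --- so I am judging your proposal on its own terms. Your outer frame is correct and is surely how any proof must begin: by \cref{max} one has $s_{H,\Gamma}\preceq a_{\infty,\Gamma}$ for every $H$, so by \cref{5.2} it suffices to produce $H$ of infinite index with $p_\Gamma\preceq s_{H,\Gamma}$; since the coset partitions at the finite levels generate the measure algebra of $\hat{\Gamma}$, one indeed has $C_{n,\ell}(p_\Gamma)=\overline{\bigcup_k C_{n,\ell}(b_k)}$ for $b_k=\Gamma\actson\Gamma/\Gamma_k$, so it is enough that $b_k\preceq s_{H,\Gamma}$ for every $k$; and prescribing $\Gamma/H$ by a generator-labeled graph is legitimate precisely because $\Gamma$ is free. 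Two corrections to this frame, though. First, your chain $(\Gamma_k)$ must be chosen cofinal among the finite-index normal subgroups (e.g.\ $\Gamma_k$ the intersection of the first $k$ of them under some enumeration), not merely with trivial intersection: otherwise $\varprojlim_k\Gamma/\Gamma_k$ can be a proper factor of $\hat{\Gamma}$ (compare $\varprojlim_k\bbZ/2^k\bbZ\neq\hat{\bbZ}$), and you would be proving weak containment of the wrong action. Second, the worry in your last paragraph about making the approximations ``coherent across the levels $k$'' so that they assemble onto the inverse limit rather than the product is vacuous: by the identity $C_{n,\ell}(p_\Gamma)=\overline{\bigcup_k C_{n,\ell}(b_k)}$ that you yourself state, the containments $b_k\preceq s_{H,\Gamma}$, taken separately, already give $p_\Gamma\preceq s_{H,\Gamma}$; no compatibility between levels is needed.

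The genuine gap is the step you explicitly leave undone, and it is where the entire content of the theorem lies: a lemma converting the combinatorial structure of $\Gamma/H$ into the measure-theoretic containment $b_k\preceq s_{H,\Gamma}$. As sketched, it is moreover aimed at the wrong object: you propose to read off ``the class of $\Gamma/\Gamma_k$ to which [a coordinate] should be assigned,'' i.e.\ to classify points of the countable set $\Gamma/H$, whereas what is needed is a partition $\{B_t\}_{t\in\Gamma/\Gamma_k}$ of the probability space $[0,1]^{\Gamma/H}$ with $\nu(B_t)\approx 1/N_k$ (where $N_k=[\Gamma:\Gamma_k]$) and $\nu\bigl(\gamma^{s_{H,\Gamma}}(B_t)\,\triangle\,B_{\gamma t}\bigr)$ small for $\gamma$ in the finite window $F$; and \cref{min} cannot play the role you assign to it, since Ab\'{e}rt--Weiss goes in the opposite direction (Bernoulli statistics inside an arbitrary free action, versus here the rigid finite-quotient statistics inside a Bernoulli-type action). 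Here is how to close it. Splice so that for each $j$ there is a finite $A\subseteq\Gamma/H$ and a bijection $\varphi\colon A\to\Gamma/\Gamma_j$ with $\varphi(\gamma x)=\gamma\varphi(x)$ for all $\gamma\in F$ except at a set of at most $C_F$ points (cut a single edge of each finite Schreier graph and rewire along a chain), and define $B_t=\{\omega\colon\varphi(\operatorname{argmax}_{x\in A}\omega(x))=t\}$. Since the labels are i.i.d.\ and atomless, the maximizing coordinate is a.s.\ unique and uniformly distributed on $A$, so $\nu(B_t)=1/N_j$ exactly, while $\gamma^{s_{H,\Gamma}}(B_t)\,\triangle\,B_{\gamma t}$ is contained in the event that this coordinate lies in $A\,\triangle\,\gamma A$ or at a non-equivariant point, an event of probability $O(C_F/N_j)$. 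Note that this error for the copy of $\Gamma/\Gamma_k$ itself is the \emph{fixed} quantity $O(C_F/N_k)$, so your ``many copies of each finite quotient'' does not make it small when $N_k$ is small: one must instead approximate $b_j$ for large $j\geq k$ and then push the partition down along the factor map $\Gamma/\Gamma_j\to\Gamma/\Gamma_k$ by merging the $B_t$ accordingly. With these two devices --- the argmax coding and the descent along factor maps --- your outline becomes a proof; without them it is a statement of intent precisely at the point where the work lies.
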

\begin{remark}
{\rm This result is stated in \cite[Theorem 5.1]{K1} for the shift on $2^{\Gamma/H}$ but the proof can be easily modified to work for the shift on 
$[0,1]^{\Gamma/H}$ as well.}
\end{remark}
\begin{remark}
{\rm In \cite[Theorem 4]{EK} the authors show that for the group $\Gamma= \bbZ_2\star\bbZ_2\star\bbZ_2\star\bbZ_2\star\bbZ_2$, there is a continuum size $\preceq$-antichain consisting of generalized shifts.

}
\end{remark}

{\bf In the rest of this section, we will assume, unless otherwise explicitly stated, that all groups are residually finite}.

It turns out that \cref{5.2} (or a small variation) can be extended to a wider class of residually finite groups.  To describe this extension, we first need a few definitions. An action $a\in A(\Gamma, X, \mu)$ is {\bf finite}\index{finite action} if it factors through an action of a finite group, i.e., there is a finite group $\Delta$, a surjective homomorphism $f\colon \Gamma \to \Delta$ and an action $b\in A(\Delta, X, \mu)$ such that for all $\gamma\in \Gamma, \gamma^a= f(\gamma)^b$. Equivalently this means that $\{\gamma^a\colon \gamma\in \Gamma\}$ is finite.

A group $\Gamma$ has property {\bf MD}\index{MD} if the finite actions are dense in $A(\Gamma, X, \mu)$. Equivalently this means that the profinite actions are dense in $A(\Gamma, X, \mu)$ (see \cite[Proposition 4.8]{K1}).  A group $\Gamma$ has property {\bf EMD}\index{EMD} if the ergodic, profinite actions are dense in $A(\Gamma, X, \mu)$. 

These notions where introduced in \cite{K1}, where the reader can find much more information about them. Bowen had also introduced earlier a property of groups called {\bf PA\index{PA}} that turns out to be equivalent to MD. A variant of EMD, called {\bf EMD*}\index{EMD*}, was also defined in \cite{K1}, which asserts that the ergodic, profinite actions are dense in $\textrm{ERG}(\Gamma, X, \mu)$. However Tucker-Drob \cite[Theorem 1.4]{T-D1} has shown that it is equivalent to MD; this is a consequence of \cref{3.8}. We have ${\rm EMD}\implies {\rm MD}$ but the problem of whether they are equivalent is open. Tucker-Drob \cite[Corollary 4.7, Theorem 4.10]{T-D1} has shown that they are equivalent for all groups without property (T) and they are equivalent for all groups iff $({\rm MD}\implies \neg {\rm (T)})$. 

We now have:

\begin{thm}[{\cite[Propositions 4.2, 4.5, 4.8]    {K1}}]
Let $\Gamma$ be a residually finite group. Then
\begin{enumerate}[\upshape (i)]
\item $\Gamma$ has property {\rm EMD} $\iff p_\Gamma\simeq a_{\infty, \Gamma}$.
\item $\Gamma$ has property {\rm MD} $\iff i_\Gamma\times p_\Gamma\simeq a_{\infty, \Gamma} \iff  p_\Gamma\simeq a^{erg}_{\infty, \Gamma}$.
\end{enumerate}
\end{thm}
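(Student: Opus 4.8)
The plan is to reduce every equivalence to a statement about density of conjugacy classes, via the closure characterization of weak containment. Recall from \cref{t1} that for non-atomic spaces $a\preceq b\iff a\in\overline{\{c:c\cong b\}}$, so the down-set $\{a:a\preceq b\}$ is exactly the (closed) closure of the conjugacy class of $b$, and $b\simeq a_{\infty,\Gamma}$ precisely when this down-set is all of $A(\Gamma,X,\mu)$, i.e. when the conjugacy class of $b$ is dense. Two observations drive everything: conjugates of $p_\Gamma$ are again ergodic and profinite (conjugation preserves ergodicity and carries an invariant separating sequence of finite partitions to one), and conjugates of $i_\Gamma\times p_\Gamma$ are again profinite (a product of profinite actions is profinite, and $i_\Gamma$ is profinite on a non-atomic space). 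I would also use throughout that $p_\Gamma$ is the $\preceq$-maximum among ergodic profinite actions \cite{K1}.

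For part (i): if $\Gamma$ has EMD then the ergodic profinite actions are dense; each is $\preceq p_\Gamma$, hence lies in the closed set $\{a:a\preceq p_\Gamma\}$, so that set is everything and $p_\Gamma\simeq a_{\infty,\Gamma}$. Conversely, if $p_\Gamma\simeq a_{\infty,\Gamma}$ then its conjugacy class is dense, and since this class consists of ergodic profinite actions, EMD holds. The equivalence $p_\Gamma\simeq a^{erg}_{\infty,\Gamma}\iff\mathrm{EMD}^*$ in part (ii) is the same argument carried out inside $\mathrm{ERG}(\Gamma,X,\mu)$: by \cref{maxerg}, $p_\Gamma\simeq a^{erg}_{\infty,\Gamma}$ means every ergodic action is $\preceq p_\Gamma$, equivalently $\mathrm{ERG}(\Gamma,X,\mu)\subseteq\overline{\{\text{ergodic profinite}\}}$, which is exactly density of the ergodic profinite actions in $\mathrm{ERG}$. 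Invoking the result of Tucker-Drob that $\mathrm{EMD}^*\iff\mathrm{MD}$ \cite{T-D1} then gives $p_\Gamma\simeq a^{erg}_{\infty,\Gamma}\iff\mathrm{MD}$.

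It remains to prove $\mathrm{MD}\iff i_\Gamma\times p_\Gamma\simeq a_{\infty,\Gamma}$, for which I would establish the identity $\{a:a\preceq i_\Gamma\times p_\Gamma\}=\overline{\{\text{profinite actions}\}}$. One inclusion is immediate: $i_\Gamma\times p_\Gamma$ is profinite, so its down-set is the closure of a set of conjugates, all profinite. For the reverse inclusion it suffices, since the down-set is closed and the closures of the finite and of the profinite actions coincide \cite{K1}, to show that every finite action is a factor of $i_\Gamma\times p_\Gamma$. A finite action factors through a finite group $\Delta$ acting on $(X,\mu)$; decomposing by orbit type writes it as a disjoint union of pieces of the form $(Z,\,\mathrm{triv})\times(\Gamma/\Gamma')$, with $\Gamma'$ of finite index and $Z$ non-atomic. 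Each transitive finite action $\Gamma/\Gamma'$ is a factor of $p_\Gamma$ (via $\hat\Gamma\to\hat\Gamma/\overline{\Gamma'}\cong\Gamma/\Gamma'$, carrying Haar measure to the uniform measure), and $(Z,\mathrm{triv})$ is a factor of $i_\Gamma$; splitting the $i_\Gamma$-coordinate into pieces of the appropriate measures and applying these factor maps coordinatewise assembles a single factor map from $i_\Gamma\times p_\Gamma$ onto the given finite action. Granting this identity, $i_\Gamma\times p_\Gamma\simeq a_{\infty,\Gamma}$ holds iff $\overline{\{\text{profinite}\}}=A(\Gamma,X,\mu)$, i.e. iff the profinite actions are dense, i.e. iff $\Gamma$ has MD.

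The main obstacle is the factor lemma of the previous paragraph, specifically the orbit-type decomposition of a measure-preserving action of a finite group on a non-atomic space and the measurable selection needed to realize each orbit-type stratum as a trivial bundle $(Z,\mathrm{triv})\times(\Gamma/\Gamma')$. Everything else is bookkeeping with closures together with the already-established maximality of $p_\Gamma$ among ergodic profinite actions; the one genuinely structural input is that the transitive finite actions are factors of $p_\Gamma$, which is exactly the content of that maximality and which I would use as a black box.
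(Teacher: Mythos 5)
Your proof is correct, and since the survey offers no proof of this theorem but defers entirely to \cite{K1}, there is nothing for it to diverge from: your argument --- closed initial segments of $\preceq$ plus the dense-conjugacy-class characterization from \cref{t1}, the maximality of $p_\Gamma$ among ergodic profinite actions, and the factor lemma realizing every finite action as a factor of $i_\Gamma\times p_\Gamma$ via the orbit-type decomposition for finite group actions --- is essentially the argument of the cited Propositions 4.2, 4.5 and 4.8 of \cite{K1}. The external inputs you invoke (that the closures of the finite and profinite actions coincide, and Tucker-Drob's equivalence of ${\rm EMD}^*$ with ${\rm MD}$) are precisely facts the survey itself quotes as background, and neither depends on the statement being proved, so their use is legitimate and non-circular.
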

Concerning the extent of the classes MD and EMD, we have the following results and open problems:

(1) All amenable and free groups have property EMD, see \cite[page 486]{K1}; also see \cite{Bo1} for the property PA of free groups.

(2) The free product $\Gamma\star \Delta$ has EMD, if $\Gamma, \Delta$ are nontrivial and each is either finite or has property MD ({\cite[Theorem 4.8]{T-D1}).

(3)  A subgroup of a group with property MD also has property MD (see \cite[page 486]{K1}). This is unknown for EMD and in fact \cite[Theorem 4.10]{T-D1} shows that this statement for EMD implies the equivalence of MD and EMD. 

(4) A finite index extension of a group with MD also has MD (see \cite[page 486]{K1}).

(5) Let $N\vartriangleleft\Gamma$ and suppose the following are satisfied: (1) $N$ is finitely generated and satisfies MD, 
(2) $\Gamma/N$ is a residually finite amenable group. Then $\Gamma$ satisfies MD \cite[Theorem 1.4]{BT-D}. From this it follows that the groups of the form $H\ltimes\bbF_n$, for $H$ an amenable group, the surface groups, and the fundamental groups of virtually fibered closed hyperbolic 3-manifolds (such as ${\rm SL}_2(\bbZ[i]))$ have property MD (see \cite[page 487]{K1} and \cite[page 212]{BT-D}).

(6) A representation theoretic analog of the property MD, called property {\bf FD}\index{FD} was introduced earlier in \cite{LS}. Since ${\rm MD}\implies {\rm FD}$ (see \cite[page 486]{K1}), any group that fails FD also fails MD. Examples of such groups are given in \cite[Section 9.1]{LZ}.

(7) It is not known if the product of two groups with property MD also has property MD. In fact it is not even known if $\bbF_2\times\bbF_2$ has property MD. A positive answer would imply that this group also has property FD, which in turn implies a positive answer to the Connes Embedding Problem\index{Connes Embedding Problem},
see \cite{PU}.

\newpage
\section{Weak containment versus factoring}\label{S6}
The most straightforward way for an action $a$ to be weakly contained in an action $b$ is for $a$ to be a factor of $b$:
\[
a\sqsubseteq b \implies a\preceq b.
\]
In general, weak containment does not imply factoring. For example, if $\Gamma$ is amenable, $i_\Gamma \preceq s_\Gamma$ but $i_\Gamma\sqsubseteq s_\Gamma$ fails. For another example, let $\Gamma = \bbF_2$, let $a$ be not mixing and $b$ be mixing with $a\preceq b$ (for example we can take $b$ to be a mixing representative of the maximum weak equivalence class, which exists since the mixing actions are dense in $A(\Gamma, X, \mu)$). Then clearly $a\sqsubseteq b$ fails.

However in certain situations weak containment implies factoring. Ab\'{e}rt and Elek \cite[Theorem 1]{AE} showed that if $a$ is an action on a finite space and $b$ is strongly ergodic, then $a\preceq b \iff a\sqsubseteq b$ and from this they deduced that if $a,b$ are profinite and $b$ is strongly ergodic, then $a\simeq b \iff a\cong b$. Since all profinite actions are compact, this has been substantially extended by the following result of Ioana and Tucker-Drob. For the definition of compact,\index{compact action} measure distal actions\index{measure distal action} and the notion of maximal distal factor\index{maximal distal factor} of an action, see \cite[Section 1]{IT-D}.

\begin{thm}[{\cite[Theorem 1.1, Corollary 1.3]{IT-D}}]\label{61}
If $a$ is measure distal (in particular if $a$ is compact) and $b$ is strongly ergodic, then $a\preceq b \iff a\sqsubseteq b$. Moreover if $a,b$ are compact, then $a\simeq b \iff a\cong b$.
\end{thm}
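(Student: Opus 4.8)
The plan is to reduce the entire first assertion to the case of actions on \emph{finite} spaces, where it is already known. Write $(X,\mu)$ for the space carrying $a$ and $(Y,\nu)$ for the one carrying $b$; since factoring implies weak containment (see \cref{S6}), only $a\preceq b\Rightarrow a\sqsubseteq b$ needs proof. First I would note that $a$ is forced to be strongly ergodic: by \cref{cor3.6} the class ${\rm SERG}(\Gamma,X,\mu)$ is downward closed under $\preceq$, so $a\preceq b$ with $b$ strongly ergodic gives $a\in{\rm SERG}(\Gamma,X,\mu)$, and in particular $a$ is ergodic (a nontrivial invariant set would be a nontrivial almost invariant sequence). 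It thus suffices to prove that a measure distal, strongly ergodic action must be \emph{finite}. Granting this, $a$ is finite, and the Ab\'{e}rt--Elek theorem (\cite[Theorem 1]{AE}), which treats actions on finite spaces against a strongly ergodic target, yields $a\sqsubseteq b$.

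The core step is therefore the claim that an infinite measure distal action is never strongly ergodic. I would first record that an infinite \emph{compact} (pure point spectrum) ergodic action is never strongly ergodic: its group $\Lambda\le\hat{\Gamma}$ of eigenvalues is infinite, hence accumulates at the trivial character in the compact dual $\hat{\Gamma}$, so for every finite $F\subseteq\Gamma$ and $\epsilon>0$ there is a nontrivial $\chi\in\Lambda$ with $|\chi(\gamma)-1|<\epsilon$ for $\gamma\in F$; a corresponding unit eigenfunction lies in $L^2_0(X,\mu)$ and is $(F,\epsilon)$-invariant, producing nontrivial almost invariant sets and contradicting strong ergodicity (cf.\ \cref{3.5}). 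Now let $a$ be measure distal and strongly ergodic, with Furstenberg--Zimmer tower $\{{\rm pt}\}=X_0\le X_1\le\cdots\le X_\eta=X$ of isometric (relatively compact) extensions. Each $X_\alpha$ is a factor of $a$, hence strongly ergodic by \cref{cor3.6}. Consider the least $\alpha$ with $X_\alpha$ infinite. If $\alpha=\beta+1$, then $X_\beta$ is finite and $X_{\beta+1}\to X_\beta$ is isometric over a finite base; since $L^\infty(X_\beta)$ is finite dimensional, every relatively finite dimensional $\Gamma$-invariant module is finite dimensional over $\bbC$, and these span $L^2(X_{\beta+1})$, so $X_{\beta+1}$ is \emph{absolutely} compact, hence (being infinite) not strongly ergodic by the previous case --- a contradiction. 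If $\alpha$ is a limit, then $X_\alpha=\varprojlim_{\gamma<\alpha}X_\gamma$ is an infinite inverse limit of finite systems, i.e.\ an infinite profinite (so compact) action, again not strongly ergodic --- a contradiction. Hence no $X_\alpha$ $(\alpha\le\eta)$ is infinite; in particular $X=X_\eta$ is finite, which completes the first assertion.

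The part I expect to be most delicate is not the combinatorics of the tower but the structural inputs used freely here: the existence of the Furstenberg--Zimmer tower for measure distal actions and the fact that an isometric extension of a \emph{finite} base is absolutely compact. A reader wishing to avoid \cref{cor3.6} (which is itself proved through the ultrapower) may prefer the direct route: by \cref{2.6}, $a\preceq b$ produces a factor map $\pi\colon Y_{\mathcal{U}}\to X$ witnessing $a\sqsubseteq b_{\mathcal{U}}$, while the diagonal embedding realizes $b$ as a factor of $b_{\mathcal{U}}$. One then argues by induction up the tower that each ${\rm MALG}$ of $X_\alpha$ already sits inside the copy of ${\rm MALG}_\nu$ living in the measure algebra of $b_{\mathcal{U}}$, the successor step being a \emph{relative} descent lemma: strong ergodicity of $b$ forces the relatively finite dimensional invariant modules of the isometric extension to be carried by $Y$ rather than genuinely created in the ultrapower. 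This relative descent is the true analytic heart, and it is exactly where the spectral gap supplied by strong ergodicity is used in a quantitative way.

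For the second assertion, let $a,b$ be compact with $a\simeq b$; the implication $a\cong b\Rightarrow a\simeq b$ is trivial. I would pass to ergodic components: the ergodic decomposition of a compact action is a probability measure on isomorphism classes of ergodic compact actions, and two compact actions are isomorphic precisely when these measures agree. By the Halmos--von Neumann theorem an ergodic compact action is classified up to isomorphism by its eigenvalue group $\Lambda\le\hat{\Gamma}$, so the task is to show that (the distribution of) these eigenvalue groups is a weak equivalence invariant. The main obstacle here is that $a\simeq b$ only gives $\kappa_0^a\simeq_Z\kappa_0^b$ by \cref{4.1}, and Zimmer equivalence of such discrete-spectrum representations sees only the \emph{closure} of the eigenvalue set, whereas isomorphism requires the exact group $\Lambda$. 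To recover $\Lambda$ exactly one must use weak equivalence of the \emph{actions} --- for instance through the invariants $C_{n,k}$ of \cref{22}, which record exact correlations among genuine eigenfunctions rather than merely approximate ones --- together with the rigidity of compact systems, so that a genuine eigenvalue of $a$ is detected as a genuine, not merely approximate, eigenvalue of $b$. Once $\Lambda_a=\Lambda_b$ is established on each ergodic component with the correct distribution, Halmos--von Neumann upgrades $a\simeq b$ to $a\cong b$.
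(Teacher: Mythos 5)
There is a fatal error in your main reduction, not just a gap. Your argument rests on the claim that a measure distal, strongly ergodic action must be \emph{finite}, and this claim is false. Take $\Gamma={\rm SL}_3(\bbZ)$ (infinite, residually finite, with property (T)) and let $a=p_\Gamma$ be the translation action on its profinite completion (\cref{space}): this action is profinite, hence compact, hence measure distal; it is ergodic; and since for property (T) groups ${\rm ERG}(\Gamma,X,\mu)={\rm SERG}(\Gamma,X,\mu)$ (Schmidt's theorem, cited right after \cref{cor3.6}), it is strongly ergodic — yet it is infinite. This destroys both steps of your tower induction, because both rest on the auxiliary claim ``an infinite compact ergodic action is never strongly ergodic.'' Your character-accumulation argument for that auxiliary claim is intrinsically abelian: it assumes the eigenfunctions for characters $\chi\in\hat{\Gamma}$ span $L^2$, which is the Halmos--von Neumann picture for abelian $\Gamma$; for abelian (hence amenable) groups the claim is vacuously consistent only because \emph{no} action of an amenable group is strongly ergodic, while for nonabelian groups it simply fails, property (T) being the standard obstruction to producing almost invariant vectors from ``small'' finite-dimensional subrepresentations. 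Indeed the entire content of the Ioana--Tucker-Drob theorem lies in the infinite case: the Ab\'{e}rt--Elek profinite rigidity result it generalizes is interesting precisely because infinite strongly ergodic profinite actions exist (e.g.\ $p_\Gamma$ above). Your reduction, if it worked, would render the theorem a triviality.

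The second assertion has a parallel problem. The eigenvalue group (or, for nonabelian $\Gamma$, the Mackey data) of a compact ergodic action is \emph{not} a weak equivalence invariant in the absence of strong ergodicity: for $\Gamma=\bbZ$, two irrational rotations with distinct eigenvalue groups are free and ergodic, hence weakly equivalent by \cref{72}, but not isomorphic. So the ``moreover'' clause must be read with the strong ergodicity hypothesis carried over from the first sentence, and any correct proof must use that hypothesis; your sketch of the second part never invokes it, and the step where a ``genuine eigenvalue of $a$ is detected as a genuine eigenvalue of $b$'' is exactly where it is needed and is left as hand-waving. For what it is worth, your fallback ``direct route'' (obtain $a\sqsubseteq b_{\mathcal U}$ from \cref{2.6}, then descend the distal tower into the canonical copy of ${\rm MALG}_\nu$ inside the ultrapower via a relative descent lemma for invariant finite-dimensional modules, powered by spectral gap) is the correct shape of the actual argument in the cited paper [IT-D]; but you explicitly leave that descent lemma unproved, and it is the whole theorem. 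Note also that this survey does not prove the statement itself — it cites \cite[Theorem 1.1, Corollary 1.3]{IT-D} — so the standard your write-up must meet is that of the original proof, and neither of your two routes meets it.
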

More generally it is shown in \cite[Corollary 1.2]{IT-D} that for any $a$ and strongly ergodic $b$, $a\preceq b$ implies that the maximal distal factor of $a$ is a factor of $b$.

In the recent preprint \cite{IT} the authors use methods of continuous model theory to give another proof of \cref{61} and moreover strengthen the last part by dropping the assumption of compactness.

\begin{thm}[{\cite[Theorem 1,1, (ii)]{IT}}]
If $a,b$ are measure distal and strongly ergodic, then $a\simeq b \iff a\cong b$.
\end{thm}

\newpage
\section{Numerical invariants of weak equivalence}\label{6} 
We will discuss here the behavior of various numerical parameters associated to group actions in the context of weak containment and equivalence.

\subsection{Cost}\index{cost}\label{71} For the concept of cost of a countable, measure preserving equivalence relation $E$ on $(X,\mu)$ we refer to \cite{G} and \cite{KM}. It is denoted by $C_\mu(E)$. For a group $\Gamma$ and $a\in A(\Gamma, X, \mu)$, we let $C_\mu(a) = C_\mu(E_a)$\index{$C_\mu(a)$}, where $E_a$ is the equivalence relation induced by the action $a$. Then $1\leq C_\mu(a)\leq \infty$, for $a \in\textrm{FR}(\Gamma, X, \mu)$. The cost of a group $\Gamma$, $C(\Gamma)$\index{$C(\Gamma)$}, is defined as the infimum of $C_\mu(a)$, as $a$ varies over $\textrm{FR}(\Gamma, X, \mu)$. It has been shown in \cite[Theorem 10.13]{K} that when the group $\Gamma$ is finitely generated, the function $a\in \textrm{FR}(\Gamma, X, \mu)\mapsto C_\mu(a)\in\bbR$ is upper semicontinuous, from which the next result immediately follows:

\begin{thm}[{\cite[Corollary 10.14]  {K}}] \label{6.1}
Let $\Gamma$ be finitely generated. Then for $a,b\in {\rm FR}(\Gamma, X, \mu)$,
\[
a\preceq b \implies C_\mu(a)\geq C_\mu(b)
\]
and therefore
\[
a\simeq b \implies C_\mu(a) = C_\mu(b).
\]
\end{thm}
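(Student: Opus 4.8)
The plan is to deduce the monotonicity of cost directly from its upper semicontinuity together with the topological characterization of weak containment provided by \cref{t1}. Since $\Gamma$ is finitely generated, the assignment $c\mapsto C_\mu(c)$ is upper semicontinuous on ${\rm FR}(\Gamma, X, \mu)$ by \cite[Theorem 10.13]{K}, which I take as given; concretely this means that whenever $c_n\to c$ in ${\rm FR}(\Gamma, X, \mu)$ one has $\limsup_n C_\mu(c_n)\leq C_\mu(c)$. The whole argument is the soft passage from this semicontinuity to the stated implication.

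First I would record two invariance facts. The cost is a conjugacy invariant: if $c\cong b$ then the induced equivalence relations $E_c$ and $E_b$ are isomorphic as measure preserving equivalence relations, whence $C_\mu(c)=C_\mu(b)$. Moreover freeness is a conjugacy invariant, since conjugating $b$ by $T\in{\rm Aut}(X,\mu)$ sends the fixed-point set of $\gamma^b$ to its $T$-image, of the same measure. Consequently every conjugate of the free action $b$ is again free and has cost exactly $C_\mu(b)$.

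Now suppose $a\preceq b$ with $a,b\in{\rm FR}(\Gamma, X, \mu)$. By \cref{t1}, which applies because the spaces are non-atomic, the action $a$ lies in the closure of the conjugacy class of $b$, so I may choose a sequence $c_n\cong b$ with $c_n\to a$ in $A(\Gamma, X, \mu)$. Each $c_n$ is free by the previous paragraph, and $a$ is free by hypothesis, so this convergence takes place inside ${\rm FR}(\Gamma, X, \mu)$ and upper semicontinuity is applicable, giving
\[
C_\mu(b)=\lim_n C_\mu(c_n)=\limsup_n C_\mu(c_n)\leq C_\mu(a).
\]
This is the first displayed implication. The second then follows at once: if $a\simeq b$ then $a\preceq b$ and $b\preceq a$, so $C_\mu(a)\geq C_\mu(b)$ and $C_\mu(b)\geq C_\mu(a)$, whence $C_\mu(a)=C_\mu(b)$.

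The only point requiring care is ensuring the approximating sequence stays inside ${\rm FR}(\Gamma, X, \mu)$, since the semicontinuity statement is asserted only there; this is automatic because conjugates of a free action are free. I do not expect any genuine obstacle in the step above—the substantive content of the theorem is packaged entirely into the upper semicontinuity of cost, which is where finite generation of $\Gamma$ is used and which I am permitted to assume. Everything else is the formal extraction of a monotonicity statement from semicontinuity along closures of conjugacy classes.
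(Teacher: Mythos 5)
Your proposal is correct and takes essentially the same route as the paper: the paper derives the theorem "immediately" from the upper semicontinuity of $c\mapsto C_\mu(c)$ on ${\rm FR}(\Gamma,X,\mu)$ \cite[Theorem 10.13]{K}, and the implicit derivation is exactly your argument via \cref{t1} (weak containment as closure of the conjugacy class) together with conjugacy invariance of cost and freeness. You have merely spelled out the details that the paper leaves to the reader, including the correct observation that the approximating conjugates stay inside ${\rm FR}(\Gamma,X,\mu)$.
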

Thus cost is a weak equivalence invariant for free actions and 
\[
C_\mu(s_\Gamma)\geq C_\mu(a)\geq C_\mu(a_{\infty, \Gamma}) = C(\Gamma),
\]
for finitely generated $\Gamma$ and any $a \in\textrm{FR}(\Gamma, X, \mu)$. The group $\Gamma$ has {\bf fixed price} if the cost function is constant on $\textrm{FR}(\Gamma, X, \mu)$. It is an important open problem whether {\it every} group has fixed price.

So we saw that the cost of a finitely generated  group is equal to the cost of the maximum action $a_{\infty, \Gamma}$. This is one motivation for calculating explicit realizations of $a_{\infty, \Gamma}$. This method has been used in \cite{K1} to give a new proof of Gaboriau's result in \cite{G} that the free group $\bbF_n, n = 1, 2, \dots ,$ has cost $n$ and fixed price. Indeed, by \cref{5.2}, the profinite action $p_{\bbF_n}$ of $\bbF_n$ is weakly equivalent to $a_{\infty, \bbF_n}$, thus its cost is equal to $C(\bbF_n)$. But Ab\'{e}rt-Nikolov \cite{AN} had already computed that $C_\mu(p_{\bbF_n})= n$ (as part of a more general result concerning profinite actions).

\begin{remark}
{\rm A modified notion of cost for an action $a\in A(\Gamma, X, \mu)$ has been introduced in \cite{AW}. It dominates the standard notion of cost discussed earlier but agrees with it for free actions. In \cite[Theorem 9]{AW} it is shown that this modified notion of cost satisfies the monotonicity \cref{6.1} for arbitrary, {\it not necessarily free}, actions.}
\end{remark}

Tucker-Drob \cite[Definition 6.6]{T-D} defines another variation of cost, called {\bf pseudocost}\index{pseudocost}, for equivalence relations and therefore actions, using exhaustions of equivalence relations by increasing sequences of subequivalence relations. It is always dominated by the cost but he shows in \cite[Corollary 6.8]{T-D} that it agrees with the cost, when the latter is finite or the equivalence relation is treeable. He also shows in \cite[Corollary 6.20]{T-D} that pseudocost satisfies the monotonicity \cref{6.1} for arbitrary, {\it not necessarily finitely generated}, groups. He then deduces the following generalization of \cref{6.1}.

\begin{thm}[{\cite[Corollary 6.22]{T-D}}] For any group $\Gamma$ and actions $a,b \in {\rm FR}(\Gamma, X, \mu)$,
\[
a\preceq b \ \&  \ C_\mu(b)<\infty\implies C_\mu(a)\geq C_\mu (b)
\]
and therefore
\[
a\simeq b \ \&  \ C_\mu(a)<\infty \ \&  \ C_\mu(b)<\infty \implies C_\mu(a) = C_\mu (b).
\]
\end{thm}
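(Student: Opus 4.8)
The plan is to deduce this theorem from the cited pseudocost results rather than to prove the monotonicity of pseudocost from scratch. Write $PC_\mu(E)$ for the pseudocost of an equivalence relation $E$ and $PC_\mu(a) = PC_\mu(E_a)$. The two external facts I would invoke are: (a) the comparison between cost and pseudocost, namely $PC_\mu(a) \leq C_\mu(a)$ always, with equality whenever $C_\mu(a) < \infty$ or $E_a$ is treeable (\cite[Corollary 6.8]{T-D}); and (b) the monotonicity of pseudocost under weak containment for arbitrary groups, $a \preceq b \implies PC_\mu(a) \geq PC_\mu(b)$ (\cite[Corollary 6.20]{T-D}). The entire argument is then a short chain of inequalities, so the ``work'' is really just bookkeeping about when cost and pseudocost coincide.

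First I would prove the containment implication. Assume $a, b \in {\rm FR}(\Gamma, X, \mu)$, that $a \preceq b$, and that $C_\mu(b) < \infty$. By monotonicity of pseudocost (b) we have $PC_\mu(a) \geq PC_\mu(b)$. Since $C_\mu(b) < \infty$, fact (a) gives $PC_\mu(b) = C_\mu(b)$. On the other hand, the general inequality $C_\mu(a) \geq PC_\mu(a)$ from (a) holds with no finiteness hypothesis. Chaining these,
\[
C_\mu(a) \geq PC_\mu(a) \geq PC_\mu(b) = C_\mu(b),
\]
which is exactly the desired conclusion $C_\mu(a) \geq C_\mu(b)$. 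Note that no finiteness assumption on $C_\mu(a)$ is needed here, since the first inequality is unconditional.

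For the equivalence statement, suppose $a \simeq b$ with both $C_\mu(a) < \infty$ and $C_\mu(b) < \infty$. Applying the containment implication to $a \preceq b$ (using $C_\mu(b) < \infty$) yields $C_\mu(a) \geq C_\mu(b)$, and applying it to $b \preceq a$ (using $C_\mu(a) < \infty$) yields $C_\mu(b) \geq C_\mu(a)$; hence $C_\mu(a) = C_\mu(b)$. The only subtlety worth flagging is why both finiteness hypotheses are genuinely required in the equivalence form: each direction of the weak-equivalence needs the cost of the \emph{target} action to be finite in order to convert its pseudocost back to cost via (a), so one cannot drop either hypothesis using this method. The main obstacle, such as it is, lies entirely inside the black-boxed pseudocost machinery of \cite{T-D}; granting Corollaries 6.8 and 6.20, the theorem is immediate, and the delicate point is simply to track the direction of each inequality so that the finiteness assumption is applied to the correct action.
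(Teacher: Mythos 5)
Your proposal is correct and follows essentially the same route as the paper: the paper's preceding paragraph explains that Tucker-Drob deduces this theorem precisely from the pseudocost facts you cite — pseudocost is dominated by cost, agrees with it when the cost is finite (\cite[Corollary 6.8]{T-D}), and is monotone under weak containment for arbitrary groups (\cite[Corollary 6.20]{T-D}). Your chain $C_\mu(a) \geq PC_\mu(a) \geq PC_\mu(b) = C_\mu(b)$, with the finiteness hypothesis applied only to the target action, is exactly that deduction spelled out.
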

Thus cost is a weak equivalence invariant for free actions of finite cost.

It is also shown in \cite[Corollary 6.22]{T-D} that for an arbitrary group and any $a,b \in {\rm FR}(\Gamma, X, \mu)$, $a\preceq b \implies C_\mu(a)\geq C_\mu(b)$, if $E_b$ is treeable, and also that $a\preceq b$ and $C_\mu(a)=1$ implies $C_\mu(b)=1$. Finally for any $\Gamma$,  the condition $C_\mu(s_\Gamma)=1$ is equivalent to $\Gamma$ having cost 1 and fixed price; see \cite[Corollary 6.24]{T-D}.}

\begin{prob}
Does \cref{6.1} hold for arbitrary groups?
\end{prob} 

\subsection{Combinatorial parameters}\label{7.2} Consider now a {\bf marked group}\index{marked group} $(\Gamma, S)$\index{$(\Gamma, S)$}, i.e., a finitely generated group $\Gamma$ with a fixed finite symmetric set of generators $S$ not containing the identity $e_\Gamma$. Given $a\in \textrm{FR}(\Gamma, X, \mu)$, we define the {\bf Cayley graph}\index{Cayley graph} of this action, denoted by ${\boldsymbol G}(S,a)$\index{${\boldsymbol G}(S,a)$},  as follows: the vertices of this graph are the points of $X$ and two points $x,y$ are connected by an edge iff $\exists s\in S (s^a (x) = y)$. For more information concerning these graphs, we refer to the survey paper \cite[Section 5, {\bf (E)}]{KMa}, and references therein. We next define four combinatorial parameters associated with these graphs.

The {\bf independence number}\index{independence number} $i_\mu ({\boldsymbol G}(S,a))$\index{$i_\mu ({\boldsymbol G}(S,a))$} is the supremum of the measures of Borel independent sets in ${\boldsymbol G}(S,a)$. The {\bf measurable chromatic number}\index{measurable chromatic number} $\chi_\mu ({\boldsymbol G}(S,a))$\index{$\chi_\mu ({\boldsymbol G}(S,a))$} is the smallest cardinality of a Polish space $Y$ for which there is a $\mu$-measurable coloring $c\colon X\to Y$ of ${\boldsymbol G}(S,a)$. The {\bf approximate measurable chromatic number}\index{approximate measurable chromatic \\ number} $\chi^{ap}_\mu ({\boldsymbol G}(S,a))$ \index{$\chi^{ap}_\mu ({\boldsymbol G}(S,a))$} is the smallest cardinality of a Polish space $Y$ such that for each $\epsilon >0$, there is a Borel set $A$ with $\mu(X\setminus A)<\epsilon$ and a $\mu$-measurable coloring $c\colon A\to Y$ of the induced graph ${\boldsymbol G}(S,a)|A$. Given a matching $M$ in ${\boldsymbol G}(S,a)$, i.e., a set of edges no two of which have a common vertex, we denote by $X_M$\index{$X_M$} the set of matched vertices, i.e., those belonging to some edge in $M$. We define the {\bf matching number}\index{matching number} $m_\mu ({\boldsymbol G}(S,a))$\index{$m_\mu ({\boldsymbol G}(S,a))$} to be one-half of the supremum of $\mu(X_M)$, for $M$ a Borel matching of ${\boldsymbol G}(S,a)$. We say that a Borel matching $M$ is a {\bf perfect matching a.e.} \index{perfect matching a.e.} if $X_M$ is invariant under the action $a$ and $\mu (X_M) =1$. This is equivalent to saying that $m_\mu ({\boldsymbol G}(S,a)) =\frac{1}{2} $ and the sup is attained.

We now have the following monotonicity and invariance results.

\begin{thm}[{\cite[4.1, 4.2, 4.3]{CK}}]\label{4.351}
Let $(\Gamma, S)$ be a marked group. The map 
\[
a\in{\rm{FR}}(\Gamma,X,\mu)\mapsto i_\mu({\boldsymbol G}(S,a))
\]
is lower semicontinuous. In particular, for $a, b\in {\rm FR}(\Gamma, X, \mu)$, we have
\[
a\preceq b\implies i_\mu({\boldsymbol G}(S,a))\leq i_\mu({\boldsymbol G}(S,b)).
\]
Moreover for $a, b\in {\rm FR}(\Gamma, X, \mu)$,
\[
a\preceq b\implies \chi^{ap}_\mu({\boldsymbol G}(S,a))\geq \chi^{ap}_\mu({\boldsymbol G}(S,b)).
\]
Thus both $i_\mu$ and $\chi_\mu^{ap}$ are invariants of weak equivalence.
\end{thm}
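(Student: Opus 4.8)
The plan is to isolate a single \emph{trimming lemma} and deduce all three assertions from it, together with \cref{t1}. The lemma says that approximate independence can be upgraded to exact independence at negligible cost: if $c\in\mathrm{FR}(\Gamma,X,\mu)$ and a Borel set $A$ satisfies $\mu(s^c(A)\cap A)\le\eta$ for every $s\in S$, then $A'=A\setminus\bigcup_{s\in S}\big(A\cap (s^c)^{-1}(A)\big)$ is independent for ${\boldsymbol G}(S,c)$ and $\mu(A\setminus A')\le|S|\eta$. Indeed, an edge with both endpoints in $A'$ would force its source vertex into one of the removed sets, a contradiction; and since each $s^c$ preserves $\mu$, we have $\mu\big(A\cap (s^c)^{-1}(A)\big)=\mu\big(s^c(A)\cap A\big)\le\eta$, giving the measure bound.

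For the lower semicontinuity of $i_\mu$, fix $\epsilon>0$ and choose a Borel independent set $A$ for ${\boldsymbol G}(S,a)$ with $\mu(A)>i_\mu({\boldsymbol G}(S,a))-\epsilon$, so that $\mu(s^a(A)\cap A)=0$ for all $s\in S$. When $c$ is sufficiently close to $a$ in the weak topology, $\mu(s^c(A)\,\Delta\, s^a(A))<\epsilon/|S|$ for each $s\in S$, whence $\mu(s^c(A)\cap A)<\epsilon/|S|$; the trimming lemma then produces a set independent for ${\boldsymbol G}(S,c)$ of measure at least $\mu(A)-\epsilon$. Thus $\liminf_{c\to a} i_\mu({\boldsymbol G}(S,c))\ge\mu(A)-\epsilon>i_\mu({\boldsymbol G}(S,a))-2\epsilon$, and letting $\epsilon\to0$ gives lower semicontinuity. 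The monotonicity under $\preceq$ then follows from \cref{t1}: since $i_\mu$ is a conjugacy invariant and $a\preceq b$ presents $a$ as a limit of conjugates $c_n\cong b$, lower semicontinuity gives $i_\mu({\boldsymbol G}(S,a))\le\liminf_n i_\mu({\boldsymbol G}(S,c_n))=i_\mu({\boldsymbol G}(S,b))$.

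For $\chi^{ap}_\mu$ I would first note that ${\boldsymbol G}(S,a)$ has degree at most $|S|$, so each approximate measurable chromatic number is a finite integer and the palette may be taken to be $\{1,\dots,k\}$; an approximate $k$-coloring is then precisely a family of $k$ pairwise disjoint Borel independent sets covering all but $\epsilon$ of the space. Assume $\chi^{ap}_\mu({\boldsymbol G}(S,a))=k$ and $a\preceq b$, and let $A_1,\dots,A_k$ be such a coloring of ${\boldsymbol G}(S,a)$. Applying the definition of weak containment to the partition $\{A_1,\dots,A_k,X\setminus\bigcup_i A_i\}$ with $F=S\cup\{e_\Gamma\}$ (taking the $B_i$ to form a partition of $Y$, as permitted), we obtain disjoint $B_1,\dots,B_k$ with $\nu(s^b(B_i)\cap B_i)$ arbitrarily small for all $s,i$ and with $\nu(B_i)$ arbitrarily close to $\mu(A_i)$, so that $\nu(\bigcup_i B_i)>1-2\epsilon$. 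Trimming each $B_i$ by the lemma yields disjoint independent sets for ${\boldsymbol G}(S,b)$ still covering all but a small set, i.e.\ an approximate $k$-coloring of $b$; hence $\chi^{ap}_\mu({\boldsymbol G}(S,b))\le k$. The weak-equivalence invariance of both parameters is then immediate, since $a\simeq b$ supplies both $a\preceq b$ and $b\preceq a$, turning each inequality into an equality.

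The delicate point is less any individual estimate than keeping the trimming uniform and correctly oriented, and recognizing that it is the \emph{approximate} chromatic number that is monotone. The trimming step unavoidably discards a small set of vertices with no mechanism to recolor them inside a fixed finite palette, so exact measurable colorings need not survive a perturbation; this is exactly why $\chi_\mu$ is excluded from the statement while $\chi^{ap}_\mu$ is included. One must also keep the two directions straight — independence numbers are lower semicontinuous and increase along $\preceq$, whereas approximate chromatic numbers decrease — which is dictated by the fact that $a\preceq b$ realizes $a$ as a limit of conjugates of $b$.
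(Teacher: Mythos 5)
Your proof is correct. Note that this survey does not itself contain a proof of the theorem---it is quoted from \cite[4.1, 4.2, 4.3]{CK}---but your argument (the trimming lemma upgrading almost-independent sets to independent ones at small cost, lower semicontinuity via perturbation of a fixed near-optimal independent set, transfer of monotonicity along \cref{t1} by conjugacy invariance, and the partition form of weak containment plus trimming for $\chi^{ap}_\mu$) is essentially the same argument as in that reference.
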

The parameter $\chi_\mu$ is not an invariant of weak equivalence; see the paragraph following \cite[Theorem 5.40]{KMa}. However, as shown in 
\cite[Proposition 5.41]{KMa}, 
\[
a\sqsubseteq b \implies \chi_\mu({\boldsymbol G}(S,a))\geq \chi_\mu({\boldsymbol G}(S,b)),
\]
so it is an invariant of weak isomorphism. 

The supremum in the definition of independence number may not be attained (see, e.g., \cite[page 148]{CK}) but we have the following:

\begin{thm}[{\cite[Theorem 2]{CKT-D}}]\label{4.33}

Let $(\Gamma, S)$ be a marked group. Then for any $a\in{\rm{FR}}(\Gamma,X,\mu)$, there is $b\in{\rm{FR}}(\Gamma,X,\mu)$ with $a \simeq b$, such that $ i_\mu(\bfg(S, a)) = i_\mu(\bfg(S, b)) $ with the supremum in $i_\mu(\bfg(S, b)) $ attained.
\end{thm}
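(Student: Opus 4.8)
The parameter values will coincide automatically: once $a\simeq b$, the invariance part of \cref{4.351} forces $i_\mu(\bfg(S,a))=i_\mu(\bfg(S,b))$, so the entire content of the statement is to produce a weakly equivalent $b$ in which the supremum defining the independence number is actually \emph{achieved} by a Borel independent set. The plan is to pass to an ultrapower, where suprema of this kind turn into maxima, and then to descend to a separable (standard) factor that still carries the witnessing set.

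First I would fix a non-principal ultrafilter $\mathcal{U}$ on $\bbN$ and form the ultrapower action $a_\mathcal{U}$ on the ultrapower measure algebra $(X_\mathcal{U},\mu_\mathcal{U})$. Two observations drive everything. (i) $a\simeq a_\mathcal{U}$: the diagonal (constant-sequence) embedding ${\rm MALG}_\mu\hookrightarrow{\rm MALG}_{\mu_\mathcal{U}}$ is measure preserving, Boolean, and $\Gamma$-equivariant, so $a\sqsubseteq a_\mathcal{U}$ and hence $a\preceq a_\mathcal{U}$; conversely, any finite configuration $(\mu_\mathcal{U}(\gamma^{a_\mathcal{U}}([A^i_n])\cap[A^j_n]))_{i,j,\gamma}$ in $a_\mathcal{U}$ is an ultralimit of the configurations $(\mu(\gamma^a(A^i_n)\cap A^j_n))_{i,j,\gamma}$, each realized exactly in $a$, so it lies in the closed set $C_{n,k}(a)$, giving $a_\mathcal{U}\preceq a$. (ii) Writing $i:=i_\mu(\bfg(S,a))$ and choosing Borel independent sets $A_n$ in $\bfg(S,a)$ with $\mu(A_n)\to i$, the element $A:=[A_n]_\mathcal{U}$ satisfies $s^{a_\mathcal{U}}(A)\cap A=[s^a(A_n)\cap A_n]_\mathcal{U}=\emptyset$ for every $s\in S$, so $A$ is independent in $\bfg(S,a_\mathcal{U})$, while $\mu_\mathcal{U}(A)=\lim_{n\to\mathcal{U}}\mu(A_n)=i$. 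Thus in the ultrapower the independence number is attained.

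Next I would descend to a standard factor. Let $\mathcal{B}\subseteq{\rm MALG}_{\mu_\mathcal{U}}$ be the smallest complete $\Gamma$-invariant subalgebra containing $A$ together with the diagonal image of a fixed countable dense subalgebra of ${\rm MALG}_\mu$. Since $\Gamma$ is countable, $\mathcal{B}$ is countably generated, hence (being non-atomic, as it contains a copy of ${\rm MALG}_\mu$) isomorphic as a $\Gamma$-algebra to the measure algebra of an action $b$ on a non-atomic standard space, which we may take to be $(X,\mu)$; by construction $b\sqsubseteq a_\mathcal{U}$. Now $a\sqsubseteq b$ (the diagonal copy of ${\rm MALG}_\mu$ sits inside $\mathcal{B}$) gives $a\preceq b$, while $b\sqsubseteq a_\mathcal{U}$ together with $a_\mathcal{U}\simeq a$ gives $b\preceq a$; hence $a\simeq b$. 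Freeness of $b$ is then immediate from \cref{fre}, since $a\preceq b$ and $a$ is free. Finally, the element $A\in\mathcal{B}$ corresponds to a genuine Borel independent set of $\bfg(S,b)$ of measure $i$, and since $i_\mu(\bfg(S,b))=i_\mu(\bfg(S,a))=i$ by \cref{4.351}, this supremum is attained, as required.

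The main obstacle, and where care is needed, is the interface between the non-separable ultrapower and the standard-space framework in which $\bfg(S,b)$ and its independent sets live: one must check that independence of $A$ (a \L o\'{s}-type transfer from the $A_n$) survives restriction to $\mathcal{B}$ and its realization on a standard space, and that the countably generated invariant subalgebra $\mathcal{B}$ really does yield a standard measure preserving action with $A$ represented by an honest Borel set. The weak-equivalence bookkeeping ($a\simeq a_\mathcal{U}$ and the two factor maps) is routine once the ultrapower facts behind \cref{2.6} are in hand; the genuinely delicate point is the passage back to a separable model without losing the attained independent set.
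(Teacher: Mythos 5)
Your proposal is correct, and it takes essentially the same route as the source of this result: the survey does not reprove \cref{4.33} but defers to \cite[Theorem 2]{CKT-D}, whose argument is exactly your two steps --- lift near-maximal independent sets $A_n$ to an independent set of measure $i$ in the ultrapower $a_{\mathcal{U}}$ (using $a\simeq a_{\mathcal{U}}$, i.e.\ \cref{2.6}), then descend to a countably generated invariant subalgebra containing the witness and a copy of ${\rm MALG}_\mu$, realized as a standard action $b$ with $a\preceq b\preceq a_{\mathcal{U}}\simeq a$, freeness coming from \cref{fre} and equality of independence numbers from \cref{4.351}. The one point you gloss over is routine: the element $A\in\mathcal{B}$ only gives a Borel set that is independent modulo null sets, so one must delete the $\Gamma$-saturation of the null set $\bigcup_{s\in S}\bigl(\tilde{A}\cap s^b(\tilde{A})\bigr)$ to obtain a genuinely independent Borel set of the same measure.
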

Also we have the following connection between measurable/approximate measurable chromatic numbers and weak equivalence.

\begin{thm}[{\cite[Theorem 2]{CKT-D}}]\label{444}
Let $(\Gamma, S)$ be a marked group. Then for any $a\in{\rm{FR}}(\Gamma,X,\mu)$, there is $b\in{\rm{FR}}(\Gamma,X,\mu)$ with $a \simeq b$ and 
\[
\chi^{ap}_\mu(\bfg(S, a)) = \chi^{ap}_\mu(\bfg(S, b)) = \chi_\mu(\bfg(S, b)).
\]
\end{thm}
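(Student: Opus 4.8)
The plan is to leverage the correspondence between $\mu$-measurable colorings of $\bfg(S,a)$ and shift-invariant measures on $K^\Gamma$ concentrated on proper colorings, together with the compactness of $\overline{E(a,K)}$ and the factoring characterization of weak containment in \cref{28a}. Since $\chi^{ap}_\mu$ is already a weak equivalence invariant by \cref{4.351} and $\chi^{ap}_\mu\le\chi_\mu$ holds for every graph, the first equality in the conclusion is automatic for any $b\simeq a$; the whole content is to produce one representative $b$ of the weak equivalence class of $a$ on which the approximate chromatic number is attained as an honest measurable chromatic number.

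Write $k=\chi^{ap}_\mu(\bfg(S,a))$, which is finite (at most $|S|+1$, as $\bfg(S,a)$ has degree $\le|S|$), and set $K=\{1,\dots,k\}$. Let $\mathrm{Col}_k\subseteq K^\Gamma$ be the closed, shift-invariant set of proper $k$-colorings of the Cayley graph of $(\Gamma,S)$, and let $P=\{\omega\in K^\Gamma:\omega(e_\Gamma)\ne\omega(s)\ \forall s\in S\}$, a clopen set for which $\mathrm{Col}_k=\bigcap_\gamma \gamma^{d}(P)$, where $d$ denotes the shift action of $\Gamma$. For Borel $f\colon X\to K$ one checks that $(\Phi^{f,a})_*\mu(P)$ is exactly the measure of the set of vertices properly colored by $f$; extending an approximate coloring off the bad set arbitrarily and using that the action is measure preserving, approximate $k$-colorability of $\bfg(S,a)$ translates precisely into $\sup_{\nu\in E(a,K)}\nu(P)=1$.

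Next I would use compactness. Since $P$ is clopen, $\nu\mapsto\nu(P)$ is weak$^*$-continuous, and $\overline{E(a,K)}$ is compact, so the supremum is attained: there is $\bar\nu\in\overline{E(a,K)}$ with $\bar\nu(P)=1$. Invariance of $\bar\nu$ gives $\bar\nu(\gamma^{d}P)=\bar\nu(P)=1$ for every $\gamma$, and hence $\bar\nu(\mathrm{Col}_k)=1$. The key structural step is that membership in $\overline{E(a,K)}$ forces the shift system $d$ on $(K^\Gamma,\bar\nu)$ to satisfy $d\preceq a$: one approximates the joint distribution of finitely many cylinder sets under $\bar\nu$ by the same (continuous in $\nu$) quantities under a nearby $\nu'\in E(a,K)$, and each such $\nu'$ is realized as a factor of $a$ and is therefore weakly contained in $a$. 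This is the reading of the Ab\'ert--Weiss machinery of \cref{25} that I would make precise.

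Finally, apply \cref{28a} to $d\preceq a$: there is $c\in A(\Gamma,X,\mu)$ with $c\simeq a$ and $d\sqsubseteq c$, via a $\Gamma$-equivariant factor map $\pi\colon X\to K^\Gamma$ with $\pi_*\mu=\bar\nu$, so that $\pi(x)\in\mathrm{Col}_k$ for a.e.\ $x$. Then $C(x)=\pi(x)(e_\Gamma)$ is a $\mu$-measurable proper $k$-coloring of $\bfg(S,c)$: for $s\in S$ one computes $C(s^c x)=(s^d\pi(x))(e_\Gamma)=\pi(x)(s^{-1})\ne\pi(x)(e_\Gamma)=C(x)$, since $\pi(x)$ is a proper coloring and $e_\Gamma,s^{-1}$ are adjacent in the Cayley graph. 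Hence $\chi_\mu(\bfg(S,c))\le k$, and combined with $k=\chi^{ap}_\mu(\bfg(S,c))\le\chi_\mu(\bfg(S,c))$ this yields equality throughout. Taking $b=c$, which is free because $a$ is free and \cref{fre} makes freeness a weak equivalence invariant, completes the argument. I expect the main obstacle to be the structural step identifying $\overline{E(a,K)}$ with the shift systems weakly contained in $a$ and then upgrading that weak containment to a genuine factor via \cref{28a}; the combinatorial bookkeeping around $P$ and $\mathrm{Col}_k$ is routine.
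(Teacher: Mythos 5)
Your strategy is sound, and it is genuinely different from the proof this survey points to: \cite{CKT-D} (where this is Theorem 2) argues via ultrapowers, assembling the approximate colorings $c_n\colon A_n\to\{1,\dots,k\}$ into an honest measurable coloring of $\bfg(S,a_{\mathcal U})$ and then cutting down to a separable invariant subalgebra containing the coloring together with a diagonal copy of $a$; this produces $b$ with $a\sqsubseteq b\sqsubseteq a_{\mathcal U}$, hence $b\simeq a$ by \cref{2.6}, and the coloring survives in $b$. You instead encode approximate $k$-colorability as $\sup_{\nu\in E(a,K)}\nu(P)=1$, use weak$^*$ compactness of $M_s(K^\Gamma)$ to extract $\bar\nu\in\overline{E(a,K)}$ concentrated on $\mathrm{Col}_k$, show the shift on $(K^\Gamma,\bar\nu)$ is weakly contained in $a$ (your clopen-approximation sketch is the right argument: clopen sets are dense in ${\rm MALG}_{\bar\nu}$, clopen statistics are weak$^*$-continuous, and they are realized exactly by the partitions $(\Phi^{f,a})^{-1}(B_i)$), and then convert weak containment into factoring via \cref{28a}. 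The ultrapower proof buys self-containedness and the extra conclusion $a\sqsubseteq b$; your proof isolates the compactness mechanism in the Ab\'{e}rt--Weiss space of invariant measures, at the price of invoking \cref{28a} as a black box (itself derived in this survey from \cref{2.6}). The combinatorial bookkeeping — the clopen set $P$, the identity $\mathrm{Col}_k=\bigcap_\gamma\gamma^d(P)$, the loss of at most $(|S|+1)\epsilon$ when extending a partial coloring, the computation $C(s^cx)=\pi(x)(s^{-1})\neq\pi(x)(e_\Gamma)=C(x)$, and the use of \cref{4.351} and \cref{fre} — is all correct.

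The one step that needs repair is the appeal to \cref{28a}: as stated there, both actions must live on \emph{non-atomic} spaces, but your $\bar\nu$ can perfectly well be atomic. For instance, for $\Gamma=\bbZ$, $S=\{\pm 1\}$, $k=2$, every invariant measure concentrated on $\mathrm{Col}_2$ is supported on two points, so the shift on $(K^\Gamma,\bar\nu)$ is not covered by the proposition as quoted. Two fixes are available. Either cite the factor-of-ultrapower characterization in the generality in which it is actually proved in \cite{CKT-D}, where the weakly contained action may live on an arbitrary standard probability space; or stay inside your construction and force non-atomicity: fix a Borel isomorphism $g\colon X\to 2^\bbN$, work in $(K\times 2^\bbN)^\Gamma$ with the maps $\Phi^{(f_m,g),a}$, and pass to a weak$^*$ limit $\bar\nu'$. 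The $(2^\bbN)^\Gamma$-marginal of every $(\Phi^{(f_m,g),a})_*\mu$ is the fixed non-atomic measure $(\Phi^{g,a})_*\mu$, so $\bar\nu'$ is non-atomic, while its $K^\Gamma$-marginal still assigns full measure to $\mathrm{Col}_k$ (the relevant analogue of $P$ is still clopen, since it depends on finitely many coordinates and only on their $K$-components); now \cref{28a} applies as stated, and composing the resulting factor map with the projection to $K^\Gamma$ yields the desired coloring. With that emendation your argument is complete.
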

Concerning matching numbers, there is a similar monotonicity result.

\begin{thm}[{\cite[6.1]{CKT-D}}]\label {11.1}
Let $(\Gamma, S)$ be a marked group. Then for any $a, b \in {\rm FR}(\Gamma, X, \mu)$, we have
\[
a\preceq b \implies m_\mu(\bfg(S, a))\leq m_\mu(\bfg(S, b)).
\]
Thus $m_\mu$ is an invariant of weak equivalence.
\end{thm}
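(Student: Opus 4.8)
The plan is to reformulate the matching number as a supremum over Borel partitions obeying a symmetry condition, to transfer such a partition approximately from $a$ to $b$ via the definition of weak containment, and then to repair the approximate object into a genuine matching. Since $a$ is free, for distinct $s,s'\in S$ the neighbors $s^a(x)$ and $(s')^a(x)$ differ $\mu$-a.e., so $\bfg(S,a)$ is a simple graph and each edge $\{x,y\}$ determines a unique $s\in S$ with $y=s^a(x)$. Consequently a Borel matching $M$ is the same thing as a Borel partition $\{A_s\}_{s\in S}\cup\{A_*\}$ of $X$, where $x\in A_s$ records that $x$ is matched to $s^a(x)$ and $x\in A_*$ records that $x$ is unmatched, subject to the single consistency constraint
\[
s^a(A_s)=A_{s^{-1}}\qquad(s\in S),
\]
using that $S$ is symmetric. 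Under this correspondence $X_M=\bigcup_{s\in S}A_s$ and $\mu(X_M)=\sum_{s\in S}\mu(A_s)=1-\mu(A_*)$, so $2m_\mu(\bfg(S,a))$ equals the supremum of $\sum_{s\in S}\mu(A_s)$ over all such partitions.

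Next I would transfer. Fix $\epsilon>0$ and a matching $M$ of $\bfg(S,a)$, encoded by $\{A_s\}_{s\in S}\cup\{A_*\}$, with $\sum_{s}\mu(A_s)>2m_\mu(\bfg(S,a))-\epsilon$. Applying the definition of $a\preceq b$ to this partition, to the finite set $F=S\cup\{e_\Gamma\}$, and to a small $\delta>0$, and using the remark that the approximating sets may be taken to form a partition, I obtain a Borel partition $\{B_s\}_{s\in S}\cup\{B_*\}$ of $Y$ with $|\nu(s^b(B_s)\cap B_{s'})-\mu(s^a(A_s)\cap A_{s'})|<\delta$ and $|\nu(B_s)-\mu(A_s)|<\delta$ for all $s,s'\in S$. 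Because $s^a(A_s)=A_{s^{-1}}$ holds exactly, a short computation with these estimates gives
\[
\nu\bigl(s^b(B_s)\,\Delta\,B_{s^{-1}}\bigr)<4\delta\qquad(s\in S),
\]
so $\{B_s\}$ is an approximate matching.

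The crux is the repair. For each $s\in S$ set $D_s=B_s\cap(s^{-1})^b(B_{s^{-1}})$, the set of $y\in B_s$ with $s^b(y)\in B_{s^{-1}}$. Applying the measure preserving map $s^b$ shows $\nu(B_s\setminus D_s)=\nu(s^b(B_s)\setminus B_{s^{-1}})<4\delta$, and a direct check yields the key identity $s^b(D_s)=D_{s^{-1}}$. Let $M'$ consist of all edges $\{y,s^b(y)\}$ with $y\in D_s$, $s\in S$. The identity $s^b(D_s)=D_{s^{-1}}$ makes every such edge join $D_s$ to $D_{s^{-1}}$ consistently, and since the sets $D_s\subseteq B_s$ are pairwise disjoint no vertex can lie in two of them; hence $M'$ is a genuine Borel matching of $\bfg(S,b)$ with $X_{M'}=\bigcup_{s}D_s$ and
\[
\mu(X_{M'})=\sum_{s}\nu(D_s)\geq\sum_{s}\nu(B_s)-4|S|\delta\geq\sum_{s}\mu(A_s)-5|S|\delta.
\]
Thus $2m_\mu(\bfg(S,b))\geq 2m_\mu(\bfg(S,a))-\epsilon-5|S|\delta$; letting $\delta\to0$ and then $\epsilon\to0$ gives $m_\mu(\bfg(S,b))\geq m_\mu(\bfg(S,a))$, and applying this in both directions yields the weak equivalence invariance.

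I expect the repair step to be the only real obstacle: passing from an approximate matching to an exact one normally destroys the measure estimates. What rescues the argument is the self-consistency encoded in the definition of $D_s$, namely $s^b(D_s)=D_{s^{-1}}$, which forces $M'$ to be a matching automatically and leaves only the harmless error $\nu(B_s\setminus D_s)<4\delta$ to control. An essentially equivalent packaging would show that $a\mapsto m_\mu(\bfg(S,a))$ is lower semicontinuous on ${\rm FR}(\Gamma,X,\mu)$ and then invoke \cref{t1} together with the conjugacy invariance of $m_\mu$, in parallel with the treatment of $i_\mu$ in \cref{4.351}.
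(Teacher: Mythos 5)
Your proof is correct, and every step checks out: the encoding of a Borel matching of $\bfg(S,a)$ by a partition $\{A_s\}_{s\in S}\cup\{A_*\}$ with $s^a(A_s)=A_{s^{-1}}$ is valid (freeness of $a$ makes the generator labelling an edge a.e.\ unique and the sets $A_s$ a.e.\ disjoint), the transfer estimates do give $\nu(s^b(B_s)\,\Delta\,B_{s^{-1}})<4\delta$, and the repair --- the one genuinely delicate point --- works because $D_s=B_s\cap(s^b)^{-1}(B_{s^{-1}})$ satisfies the exact identity $s^b(D_s)=D_{s^{-1}}$, so each edge of $M'$ is generated consistently from both of its endpoints, disjointness of the $D_s$ then yields the matching property, and the loss is only $\nu(B_s\setminus D_s)=\nu(s^b(B_s)\setminus B_{s^{-1}})<4\delta$ per generator (freeness of $b$ is also used, to discard the null set where $s^b(y)=y$ would create loops rather than edges). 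This is, however, a different route from the one the paper relies on: the survey contains no proof of this theorem and cites \cite[6.1]{CKT-D}, where the result is obtained through the ultraproduct machinery of that paper --- by \cref{2.6}, $a\preceq b$ implies $a\sqsubseteq b_{\mathcal{U}}$; matchings pull back under an equivariant factor map (immediate in your partition encoding, since taking $\pi^{-1}$ commutes with the consistency constraint); and the remaining content is that passing to the ultrapower does not change the matching number. Your argument is more elementary and self-contained, using only the definition of weak containment together with the remark in \cref{2} that the approximating sets may be taken to form a partition; the exactness that the ultrapower supplies automatically is replaced by your explicit repair. What the ultrapower approach buys in exchange is \cref{68} (\cite[6.2]{CKT-D}), the attainment of the supremum defining $m_\mu$ in some weakly equivalent action, which a purely finitary approximation argument such as yours cannot produce. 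Your closing observation is also accurate: the same estimate can be repackaged as lower semicontinuity of $a\mapsto m_\mu(\bfg(S,a))$ on ${\rm FR}(\Gamma,X,\mu)$, which together with \cref{t1} gives the theorem in exact parallel with the treatment of $i_\mu$ in \cref{4.351}.
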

Again the supremum in the definition of $m_\mu$ might not be attained; see the paragraph following \cite[Proposition 13.2]{KMa}. But as in \cref{4.33}  we have:
\begin{thm}[{\cite[6.2]{CKT-D}}]\label{68}
For any $a\in {\rm FR}(\Gamma, X, \mu)$, there is $b \in {\rm FR}(\Gamma, X, \mu)$ such that $a\simeq b$ and $m_\mu(\bfg(S, a))= m_\mu(\bfg(S, b))$, with the supremum in $m_\mu(\bfg(S, b))$ attained.
\end{thm}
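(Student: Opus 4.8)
The plan is to realize an optimal matching inside an ultrapower of $a$ and then descend to a standard factor, mirroring the argument for the independence number in \cref{4.33}. Fix a non-principal ultrafilter $\mathcal{U}$ on $\bbN$ and form the ultrapower $a_\mathcal{U}\in A(\Gamma, X_\mathcal{U}, \mu_\mathcal{U})$. First I would record that $a_\mathcal{U}\simeq a$: the inclusion $a\sqsubseteq a_\mathcal{U}$ from \cref{2.6} gives $a\preceq a_\mathcal{U}$, while the reverse $a_\mathcal{U}\preceq a$ holds because any finite pattern $(\mu_\mathcal{U}(\gamma^{a_\mathcal{U}}(A_i)\cap A_j))$ is an ultralimit of the corresponding patterns for $a$ and is therefore $\epsilon$-approximated by one of them. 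In particular, by the invariance part of \cref{11.1}, $m_\mu(\bfg(S,a_\mathcal{U})) = m_\mu(\bfg(S,a)) =: m$.

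Next I would attain the supremum inside $a_\mathcal{U}$. Choose Borel matchings $M_k$ of $\bfg(S,a)$ with $\mu(X_{M_k})\to 2m$, and encode each as a Borel function $\phi_k\colon X\to S\cup\{0\}$, where $\phi_k(x)=s$ means $x$ is matched to $s^a(x)$ and $\phi_k(x)=0$ means $x$ is unmatched; the matching axioms become the a.e. consistency conditions $\phi_k(x)=s\in S\Rightarrow \phi_k(s^a(x))=s^{-1}$. The ultraproduct function $\phi_\mathcal{U}=[\phi_k]_\mathcal{U}\colon X_\mathcal{U}\to S\cup\{0\}$ satisfies the same consistency conditions $\mu_\mathcal{U}$-a.e. (each holds a.e. for every $k$), so it defines a Borel matching $M_\mathcal{U}$ of $\bfg(S,a_\mathcal{U})$ whose matched set has measure $\mu_\mathcal{U}(X_{M_\mathcal{U}})=\lim_\mathcal{U}\mu(X_{M_k})=2m$. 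Since $m_\mu(\bfg(S,a_\mathcal{U}))=m$, the supremum defining $m_\mu(\bfg(S,a_\mathcal{U}))$ is attained by $M_\mathcal{U}$.

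Finally I would descend to a standard space. Let $\pi\colon X_\mathcal{U}\to X$ be the factor map witnessing $a\sqsubseteq a_\mathcal{U}$, and let $\mathcal{B}\subseteq {\rm MALG}_{\mu_\mathcal{U}}$ be the smallest $\Gamma$-invariant, countably generated, complete subalgebra containing both $\pi^{-1}({\rm MALG}_\mu)$ and the (countably many) sets $\{\phi_\mathcal{U}=s\}$, $s\in S\cup\{0\}$, together with their $\Gamma$-translates. The factor $b$ determined by $\mathcal{B}$ lives on a standard probability space. By construction $M_\mathcal{U}$ is $\mathcal{B}$-measurable, so it descends to a Borel matching of $\bfg(S,b)$ with matched measure $2m$, giving $m_\mu(\bfg(S,b))\ge m$ with the supremum attained. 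On the other hand $b\sqsubseteq a_\mathcal{U}\simeq a$ gives $b\preceq a$, so \cref{11.1} yields $m_\mu(\bfg(S,b))\le m$, hence equality. Moreover $a\sqsubseteq b$ (since $\pi$ factors through $\mathcal{B}$), so $a\preceq b$, whence $a\simeq b$; freeness of $b$ then follows from $a\preceq b$ and \cref{fre}.

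I expect the main obstacle to be this last step: one must choose $\mathcal{B}$ so that it is simultaneously countably generated (hence standard), $\Gamma$-invariant, rich enough to carry the attained matching $M_\mathcal{U}$, and large enough to retain $a$ as a factor, so that weak \emph{equivalence}—not merely weak containment—is preserved. Verifying that a $\mathcal{B}$-measurable matching of $\bfg(S,a_\mathcal{U})$ genuinely descends to a matching of $\bfg(S,b)$ of the same matched measure (rather than merely a subgraph), and that the measure-algebra ultraproduct respects the a.e. consistency conditions, are the points requiring the most care.
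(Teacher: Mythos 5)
Your proposal is correct and is essentially the same argument as in the cited source \cite[6.2]{CKT-D} (the survey itself only cites this result rather than reproducing a proof): encode near-optimal matchings as functions, pass to the ultrapower to attain the supremum, descend to a countably generated $\Gamma$-invariant subalgebra containing both the matching data and $\pi^{-1}({\rm MALG}_\mu)$, and pin everything down with \cref{2.6}, \cref{11.1} and \cref{fre} --- exactly the technique behind \cref{4.33}. Two cosmetic points: the step $a_\mathcal{U}\preceq a$ needs the (standard) fact that internal sets are dense in the Loeb measure algebra, since the ultralimit description of patterns applies literally only to internal sets; and freeness of $b$ should be recorded before invoking \cref{11.1} for $b\preceq a$, as that theorem is stated for free actions on standard spaces.
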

By combining \cref{11.1} and \cref{68} and results of \cite{CKT-D}, Lyons-Nazarov \cite{LN} and Cs\'{o}ka-Lippner \cite{CL}, one finally obtains the following, see \cite[Section 13]{KM} for details.

\begin{thm}\label{11.4}
Let $(\Gamma, S)$ be a marked group. Then for any $a\in {\rm FR}(\Gamma, X, \mu)$, $m_\mu(\bfg(S, a)) = \frac{1}{2}$ and there is $b \in {\rm FR}(\Gamma, X, \mu)$ such that $a\simeq b$ and $\bfg (S, b)$ admits a Borel perfect matching $\mu$-almost everywhere.
\end{thm}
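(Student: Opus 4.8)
The plan is to combine the monotonicity of the matching number (\cref{11.1}), the minimality of the shift action (\cref{min}), the attainment theorem (\cref{68}), and the perfect-matching constructions of Lyons--Nazarov \cite{LN} and Cs\'oka--Lippner \cite{CL}. First I would record the trivial bound $m_\mu(\bfg(S,a))\le\tfrac12$, valid for every action $a$ because $\mu(X_M)\le 1$ for any matching $M$; thus the whole content of the first assertion is the reverse inequality. Since $a\in{\rm FR}(\Gamma,X,\mu)$ and $s_\Gamma$ is the $\preceq$-minimum free action by \cref{min}, we have $s_\Gamma\preceq a$, and monotonicity \cref{11.1} gives $m_\mu(\bfg(S,s_\Gamma))\le m_\mu(\bfg(S,a))\le\tfrac12$. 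Therefore it suffices to prove the single equality $m_\mu(\bfg(S,s_\Gamma))=\tfrac12$, since this minimal value then propagates upward to every free action.

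To prove $m_\mu(\bfg(S,s_\Gamma))=\tfrac12$ I would split on amenability. If $\Gamma$ is non-amenable, Lyons--Nazarov \cite{LN} construct a perfect matching of the Cayley graph as a factor of i.i.d.\ labels; such a factor is precisely a Borel matching $M$ of the Bernoulli graphing $\bfg(S,s_\Gamma)$ with $\mu(X_M)=1$, so $m_\mu(\bfg(S,s_\Gamma))=\tfrac12$ and the supremum is attained. If $\Gamma$ is amenable, then every free action is weakly equivalent to $s_\Gamma$, and since $m_\mu$ is a weak-equivalence invariant it is enough to produce a perfect matching on \emph{some} free action; the invariant random perfect matching of Cs\'oka--Lippner \cite{CL}, via the usual correspondence between invariant random matchings and Borel matchings of measure preserving actions, furnishes a perfect matching a.e.\ on a free action weakly equivalent to $s_\Gamma$, whence again $m_\mu(\bfg(S,s_\Gamma))=\tfrac12$.

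Granting $m_\mu(\bfg(S,a))=\tfrac12$ for all free $a$, the second assertion follows at once from \cref{68}: applied to the given $a$ it yields $b\in{\rm FR}(\Gamma,X,\mu)$ with $a\simeq b$, with $m_\mu(\bfg(S,b))=m_\mu(\bfg(S,a))=\tfrac12$, and with the supremum in $m_\mu(\bfg(S,b))$ attained. By the equivalence recorded in the definition of a perfect matching a.e.\ — namely that $m_\mu(\bfg(S,b))=\tfrac12$ together with attainment of the supremum is the same as $\bfg(S,b)$ admitting a Borel perfect matching $\mu$-almost everywhere — this $b$ is exactly as required.

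The main obstacle is concentrated entirely in the single equality $m_\mu(\bfg(S,s_\Gamma))=\tfrac12$, and it is genuinely hard: a naive maximal (greedy) Borel matching of a Bernoulli graphing leaves an unmatched set of positive measure, so the deep inputs \cite{LN} and \cite{CL} cannot be avoided. In the non-amenable case the heart of the matter is to run an augmenting-path improvement scheme equivariantly and measurably and to show it converges to a perfect matching, with non-amenability (graph expansion) supplying the quantitative decay of the unmatched measure; in the amenable case no such expansion is available and a different exhaustion/local-rule argument is needed. Once these are in hand, the reduction via \cref{min} and \cref{11.1} and the passage to attainment via \cref{68} are routine.
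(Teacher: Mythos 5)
Your skeleton is exactly the assembly the paper has in mind: the trivial bound $m_\mu\leq\tfrac12$, the reduction of everything to the single equality $m_\mu(\bfg(S,s_\Gamma))=\tfrac12$ via \cref{min} and \cref{11.1}, and the passage to a weakly equivalent $b$ with a perfect matching a.e.\ via \cref{68} together with the equivalence, recorded in \cref{7.2}, between ``$m_\mu=\tfrac12$ with the supremum attained'' and ``Borel perfect matching $\mu$-a.e.''; this is precisely the combination of \cite{CKT-D}, \cite{LN} and \cite{CL} outlined in \cite[Section 13]{KMa}.

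There is, however, one step that fails as written: the non-amenable case cannot be charged to Lyons--Nazarov alone. The theorem of \cite{LN} produces factor of i.i.d.\ perfect matchings only for \emph{bipartite} non-amenable Cayley graphs; bipartiteness is essential to their argument. For a non-amenable group whose Cayley graph contains odd cycles --- e.g.\ $\Gamma=\bbZ/3\bbZ\star\bbZ/3\bbZ$ with $S$ the non-identity elements of the two free factors, whose Cayley graph contains triangles --- \cite{LN} gives nothing; removing the bipartiteness hypothesis is exactly the content of Cs\'{o}ka--Lippner \cite{CL}. So in your second paragraph the two references must trade places: the non-amenable case is \cite{CL} (building on \cite{LN}). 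Your amenable case is sound, and \cite{CL} is indeed the correct citation there too, but it is worth being precise about what it supplies: a deterministic perfect matching of every infinite connected vertex-transitive graph, whence, by amenability (an invariant measure on the nonempty compact space of perfect matchings of the Cayley graph), the invariant random perfect matching you invoke. The ``usual correspondence'' then requires one small repair: the action on the space of matchings need not be free, so one passes to its product with $s_\Gamma$ --- perfect matchings pull back through factor maps, and freeness comes from the $s_\Gamma$ coordinate --- and finally uses that all free actions of an amenable group are weakly equivalent. With these corrections your proof is complete and coincides with the paper's intended argument.
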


A more general form of \cref{444} was proved in \cite[Theorem 2]{AE1} concerning {\it almost satisfaction versus proper satisfaction} of local rules. In a similar vein \cite[Theorem 3]{AE1} shows that for every free action $a$ of a non-amenable group, one can find a weakly equivalent action $b$ of that group which satisfies the measurable version of the von Neumann conjecture, i.e., there is a free action $c$ of $\bbF_2$ such that $E_c\subseteq E_b$. Recall that in \cite{GL} it was shown that the measurable version of the von Neumann conjecture holds for the shift action $s_\Gamma$ of a non-amenable group.

Finally consider the {\bf Cheeger constant}\index{Cheeger constant} $h(S,a)$\index{$h(S,a)$} associated with the action $a\in A(\Gamma, X, \mu)$ of a marked group $(\Gamma, S)$, given by 
\[
h(S,a) = \inf \{ \frac{\mu(S^a(A)\setminus A)}{\mu(A)} \colon A \  \textrm{a Borel subset of} \  X, 0<\mu(A)\leq \frac{1}{2} \}
\]
where $S^a(A) = \{\gamma^a(x)\colon \gamma \in S , x\in A\}$. Then we have:

\begin{thm}[{\cite[Lemma 5.1]  {AE}}]\label{711}
For $a,b \in A(\Gamma, X, \mu)$,
\[
a\preceq b \implies h(S,b) \leq h(S,a).
\]
Thus $h(S,a)$ is an invariant of weak equivalence.
\end{thm}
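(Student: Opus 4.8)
The plan is to prove the Cheeger constant monotonicity $a \preceq b \implies h(S,b) \leq h(S,a)$ by showing that the quantity $\mu(S^a(A) \setminus A)$ can be approximated arbitrarily well in $b$ once we control the pairwise intersections $\mu(s^a(A_i) \cap A_j)$ for $s \in S$. The key observation is that for a fixed Borel set $A$, the measure $\mu(S^a(A) \setminus A) = \mu\bigl(\bigcup_{s \in S} s^a(A)\bigr) - \mu\bigl(A \cap \bigcup_{s \in S} s^a(A)\bigr)$, together with $\mu(A)$ itself, is computable from the finite data of the measures $\mu(s^a(A))$ and the various intersection measures $\mu(s^a(A) \cap A)$, $\mu(s^a(A) \cap t^a(A))$. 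Since $S$ is finite, this is a finite collection of numbers, all of the form appearing in the definition of weak containment (after taking the partition generated by $A$ and its translates, or more simply by inclusion-exclusion applied to the sets $s^a(A)$).

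First I would fix $\epsilon > 0$ and a Borel set $A$ with $0 < \mu(A) \leq \tfrac{1}{2}$ that nearly realizes the infimum defining $h(S,a)$, so that $\tfrac{\mu(S^a(A) \setminus A)}{\mu(A)}$ is within $\epsilon$ of $h(S,a)$. Next, using $a \preceq b$ applied to the finite family of sets $\{A\}$ (or rather the Boolean combinations of the translates $\{s^a(A) : s \in S\}$, which is still a finite Borel family) and the finite set $F = S \cup \{e_\Gamma\}$, I would produce a Borel set $B \subseteq Y$ such that all the relevant quantities $\mu(s^a(A) \cap t^a(A))$ are matched by $\nu(s^b(B) \cap t^b(B))$ to within a small error $\delta$. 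The cleanest route is to note that $\mu(s^a(A)\setminus A)$ is expressible via finitely many of the $C_{n,k}(a)$-type correlations from description (1) in \cref{22}; since $a \preceq b$ gives $C_{n,k}(a) \subseteq C_{n,k}(b)$, the tuple of correlations realized by $A$ under $a$ is approximated by a tuple realized by some $B$ under $b$.

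The heart of the argument is then a continuity estimate: $\mu(S^b(B) \setminus B)$ and $\mu(B)$ depend continuously (indeed Lipschitz-ly, via inclusion-exclusion over the finite set $S$) on the finite vector of correlation measures. Hence if those correlations for $B$ under $b$ are within $\delta$ of those for $A$ under $a$, then $\tfrac{\nu(S^b(B) \setminus B)}{\nu(B)}$ is within $O(\delta/\mu(A))$ of $\tfrac{\mu(S^a(A) \setminus A)}{\mu(A)}$, provided $\mu(B)$ stays bounded away from $0$ — which it does because $\nu(B) \approx \mu(A) > 0$. Since $h(S,b)$ is an infimum over all admissible Borel subsets of $Y$, this $B$ witnesses $h(S,b) \leq \tfrac{\nu(S^b(B)\setminus B)}{\nu(B)} \leq h(S,a) + \epsilon + O(\delta/\mu(A))$, and letting $\delta \to 0$ and then $\epsilon \to 0$ yields the claim.

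The main obstacle I anticipate is the denominator: the ratio $\tfrac{\mu(S^a(A)\setminus A)}{\mu(A)}$ is only Lipschitz in the correlation data on regions where $\mu(A)$ is bounded below, so one must be careful to choose a near-optimal $A$ with $\mu(A)$ not too small, and to ensure the approximating $B$ inherits $\nu(B)$ bounded below by essentially the same constant. The constraint $\mu(A) \leq \tfrac{1}{2}$ also needs checking for $B$: if $\nu(B)$ slightly exceeds $\tfrac12$ one can replace $B$ by its complement, which only helps the Cheeger ratio or can be handled by choosing $\delta$ small relative to the slack $\tfrac12 - \mu(A)$ when $A$ is taken with $\mu(A) < \tfrac12$ strictly. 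These are genuine but routine technical points; the conceptual content is simply that the Cheeger ratio is a continuous function of the finitely many correlations that weak containment controls.
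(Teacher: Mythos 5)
The paper itself gives no proof of \cref{711} (it is quoted from \cite[Lemma 5.1]{AE}), so your proposal must stand on its own, and it has a genuine gap at its central step. You claim that $\mu(S^a(A)\setminus A)$ --- via $\mu\bigl(\bigcup_{s\in S}s^a(A)\bigr)$ and $\mu\bigl(A\cap\bigcup_{s\in S}s^a(A)\bigr)$ --- is computable, even Lipschitz-ly, from $\mu(s^a(A))$ and the \emph{pairwise} intersections $\mu(s^a(A)\cap t^a(A))$, $\mu(s^a(A)\cap A)$. That is false: inclusion--exclusion for a union of three or more sets needs intersections of all orders, and already for $|S|=2$ the term $\mu(A\cap s^a(A)\cap t^a(A))$ appears and is not determined by pairwise data. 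This matters because the definition of weak containment (and equally the $C_{n,k}$ description in \cref{22}) transfers \emph{only} pairwise correlations to the $b$-side. From pairwise data alone the best you can conclude is $\nu(S^b(B)\setminus B)\leq\sum_{s\in S}\nu(s^b(B)\setminus B)\approx\sum_{s\in S}\mu(s^a(A)\setminus A)$, which yields only $h(S,b)\leq |S|\,h(S,a)$, not the theorem. Note also that throwing the higher-order intersections into the family $\{A_i\}$ handed to the definition does not immediately fix this: the returned sets $B_i$ match correlations but need not satisfy the corresponding Boolean--dynamical relations (the set returned for $s^a(A)\cap t^a(A)$ need not be close to $s^b(B)\cap t^b(B)$ for the set $B$ returned for $A$), which is exactly the conflation in your sentence ``the tuple of correlations realized by $A$ under $a$ is approximated by a tuple realized by some $B$ under $b$.''

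The missing content is precisely the upgrade from pairwise correlations to joint statistics of a single set and its translates, and there are two standard ways to supply it. One is to invoke \cref{25} with finite $K$: taking $K=2$ and $f=\mathbf{1}_A$, the hypothesis $a\preceq b$ gives a Borel $g\colon Y\to 2$ such that the joint distribution of $(g\circ(\gamma^b)^{-1})_{\gamma\in F}$, $F=S\cup\{e_\Gamma\}$, is weak$^*$-close to that of $(f\circ(\gamma^a)^{-1})_{\gamma\in F}$; then $B=g^{-1}(1)$ matches \emph{every} finite Boolean expression in its $F$-translates, in particular $\nu(S^b(B)\setminus B)\approx\mu(S^a(A)\setminus A)$ and $\nu(B)\approx\mu(A)$, and your bookkeeping with the denominator finishes the proof. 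The other is to actually carry out the partition argument you only name in passing: apply the definition to the partition $\mathcal{P}$ generated by $\{\gamma^a(A)\colon\gamma\in S\cup\{e_\Gamma\}\}$, get a partition $\{B_P\}_{P\in\mathcal{P}}$ of $Y$ with matching pairwise correlations, set $B=\bigcup\{B_P\colon P\subseteq A\}$, and use the cross-term estimate: if $P\subseteq A$ and $Q$ is an atom disjoint from $A\cup S^a(A)$, then $\mu(s^a(P)\cap Q)=0$, hence $\nu(s^b(B_P)\cap B_Q)<\epsilon$; summing over the boundedly many pairs shows $S^b(B)\setminus B$ is contained, up to measure $C\epsilon$ with $C$ depending only on $|S|$, in $\bigcup\{B_Q\colon Q\subseteq S^a(A)\setminus A\}$, whose measure is within $C\epsilon$ of $\mu(S^a(A)\setminus A)$. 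Either route is short, but one of them must be done; asserting Lipschitz dependence on pairwise correlations skips the real point. (Your peripheral fixes are mostly fine, except that replacing $B$ by its complement when $\nu(B)>\tfrac12$ need not ``only help'' the ratio; instead perturb $A$ at the outset so that $\mu(A)<\tfrac12$ strictly --- removing a set of measure $\eta$ changes the ratio by $O(\eta)$ --- and then take the approximation error small relative to the slack.)
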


\subsection{The norm of the averaging operator}\label{7.3} Let $(\Gamma, S)$ be a marked group and $\pi$ a unitary representation of $\Gamma$. The {\bf averaging operator}\index{averaging operator} $T_{S,\pi}$\index{$T_{S,\pi}$} on $H_\pi$ is defined by 
\[
T_{S,\pi} (f) =\frac{1}{|S|}\sum_{s\in S }\pi (s)(f).
\]
Then $\pi\preceq \rho \implies \| T_{S,\pi} \| \leq \| T_{S,\rho} \|$. If now $a\in A(\Gamma, X, \mu )$, we let $T_{S,a} = T_{S, \kappa_0^a}$\index{$T_{S,a}$}. Then we have, using \cref{4.1}:

\begin{pro}\label{6.10}
If $a,b \in A(\Gamma, X, \mu)$, then 
\[
a\preceq b \implies \| T_{S,a}\|\leq \|T_{S,b}\|.
\]
Thus the norm of the averaging operator $T_{S,a}$ is an invariant of weak equivalence.

\end{pro}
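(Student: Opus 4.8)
The plan is to reduce the statement to the representation-level monotonicity of the averaging-operator norm, using \cref{4.1} as the bridge from actions to representations. Since by definition $T_{S,a}=T_{S,\kappa_0^a}$, the assertion only concerns the Koopman representations $\kappa_0^a$ and $\kappa_0^b$. First I would apply \cref{4.1} to the hypothesis $a\preceq b$ to obtain $\kappa_0^a\preceq_Z\kappa_0^b$. Because weak containment in the sense of Zimmer implies ordinary weak containment of representations (the implication $\pi\preceq_Z\rho\implies\pi\preceq\rho$ recorded in \cref{1}), this gives $\kappa_0^a\preceq\kappa_0^b$. Invoking the monotonicity $\pi\preceq\rho\implies\|T_{S,\pi}\|\le\|T_{S,\rho}\|$ stated just above, for $\pi=\kappa_0^a$ and $\rho=\kappa_0^b$, yields $\|T_{S,a}\|\le\|T_{S,b}\|$, which is exactly the claim. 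The weak-equivalence invariance then follows by symmetry, applying the implication to both $a\preceq b$ and $b\preceq a$.

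Should a self-contained justification of the cited inequality $\pi\preceq\rho\implies\|T_{S,\pi}\|\le\|T_{S,\rho}\|$ be desired, I would argue as follows. Because $S$ is symmetric and $\pi(s^{-1})=\pi(s)^*$, the operator $T_{S,\pi}$ is self-adjoint, so $\|T_{S,\pi}\|=\sup_{\|v\|=1}\bigl|\tfrac{1}{|S|}\sum_{s\in S}\langle\pi(s)v,v\rangle\bigr|$. Each summand $\langle\pi(s)v,v\rangle$ is a value of a positive-definite function realized in $\pi$, so by the definition of $\pi\preceq\rho$, for any unit vector $v$ and any $\epsilon>0$ there are $v_1,\dots,v_k\in H_\rho$ with $\bigl|\langle\pi(\gamma)v,v\rangle-\sum_i\langle\rho(\gamma)v_i,v_i\rangle\bigr|<\epsilon$ for all $\gamma\in S\cup\{e_\Gamma\}$. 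Taking $\gamma=e_\Gamma$ forces $\sum_i\|v_i\|^2$ to be within $\epsilon$ of $1$, while summing the remaining estimates over $s\in S$ shows that $\tfrac{1}{|S|}\sum_{s\in S}\langle\pi(s)v,v\rangle$ differs from $\sum_i\langle T_{S,\rho}v_i,v_i\rangle$ by at most $\epsilon$, and the latter has modulus at most $\|T_{S,\rho}\|\sum_i\|v_i\|^2\le\|T_{S,\rho}\|+O(\epsilon)$. Letting $\epsilon\to0$ and taking the supremum over $v$ gives the inequality.

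There is essentially no serious obstacle here: the real content is the transfer from actions to their Koopman representations furnished by \cref{4.1}, which is already available. The only point needing a little care is the self-adjointness step, which relies on the marked-group hypothesis $S=S^{-1}$; without symmetry one would instead bound $\|T_{S,\pi}\|$ through $\|T_{S,\pi}T_{S,\pi}^*\|$ and positive-definite functions supported on $SS^{-1}$, but this complication does not arise under our standing assumption that $S$ is symmetric.
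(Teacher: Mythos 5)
Your proof is correct and follows essentially the same route as the paper: the paper derives the proposition directly from \cref{4.1} together with the stated monotonicity $\pi\preceq\rho\implies\|T_{S,\pi}\|\leq\|T_{S,\rho}\|$, passing through $\kappa_0^a\preceq_Z\kappa_0^b\implies\kappa_0^a\preceq\kappa_0^b$ exactly as you do. Your second paragraph supplies a correct verification of the representation-level inequality (which the paper asserts without proof), using self-adjointness of $T_{S,\pi}$ and the positive-definite-function formulation of $\preceq$ at $S\cup\{e_\Gamma\}$; this is a welcome addition but not a departure from the paper's argument.
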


\newpage
\section{Invariant random subgroups}\label{7}
For a countable group $\Gamma$, we denote by $\textrm{Sub}(\Gamma)$\index{$\textrm{Sub}(\Gamma)$} the compact, metrizable space of subgroups of $\Gamma$ viewed as a closed subspace of the product $2^\Gamma$. The group $\Gamma$ acts continuously on $\textrm{Sub}(\Gamma)$ by conjugation. A probability Borel measure on ${\rm Sub}(\Gamma)$ invariant under this action is called an {\bf invariant random subgroup}\index{invariant random subgroup}, abbreviated {\bf IRS}\index{IRS}. We denote by  $\textrm{IRS}(\Gamma)$\index{$\textrm{IRS}(\Gamma)$} the space of the IRS's on $\Gamma$. This is a compact, metrizable space with the weak* topology. This concept can be viewed as a probabilistic analog of the concept of normal subgroup (which corresponds to an IRS concentrating on a single point); see \cite{AGV}. For $\theta\in {\rm IRS}(\Gamma)$, we denote by $c_\theta$ the conjugacy action of $\Gamma$ on $(\textrm{Sub}(\Gamma), \theta)$.

\begin{remark}
{\rm Note that when the group $\Gamma$ is abelian, ${\rm IRS}(\Gamma)$ is simply the space of all probability Borel measures on ${\rm Sub}(\Gamma)$.
}
\end{remark}

If $a\in A(\Gamma, X, \mu)$, consider the stabilizer function $\textrm{stab}_a: X \to \textrm{Sub}(\Gamma)$\index{$\textrm{stab}_a$} that takes $x$ to its stabilizer $\textrm{stab}_a (x)$ in the action $a$. It is $\Gamma$-equivariant, so the pushforward measure $(\textrm{stab}_a)_* \mu$ is an IRS. This measure is called the {\bf type}\index{type} of $a$, in symbols $\textrm{type}(a)$\index{$\textrm{type}(a)$}. Clearly $a$ is free iff $\textrm{type}(a)$ is the Dirac measure concentrating on $\{e_\Gamma\}$.

Conversely, given an IRS $\theta$, it is shown in  \cite[Proposition 13]{AGV} that there is $a\in A(\Gamma, X, \mu)$ with $\textrm{type}(a)= \theta$. A particular realization of such an $a$ is the so-called $\boldsymbol{\theta}${\bf-random shift}\index{$\theta$-random shift}, see \cite[Proposition 13]{AGV} and also \cite[Section 5.3]{T-D1}, denoted by $s_\theta$\index{$s_\theta$}. When $\theta$ is the Dirac measure concentrating on $\{e_\Gamma\}$, then $s_\theta \cong s_\Gamma$.

We next have the following invariance property for the type:

\begin{thm}[{\cite[Section 4] {AE1}}; see also {\cite[Theorem 5.2]  {T-D1}}]\label{7.1}
\[
a\simeq b\implies {\rm type}(a) = {\rm type}(b)
\]
\end{thm}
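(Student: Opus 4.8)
The plan is to show that weak equivalence preserves the type by passing through the characterization of weak containment in terms of factors of the ultrapower, \cref{2.6}, together with the observation that the stabilizer map is intrinsic to the action and behaves well under factoring. The key conceptual point is that $\mathrm{type}(a)$, being the pushforward $(\mathrm{stab}_a)_*\mu$, is determined by the distribution of how group elements fix points, and this distribution is captured by the measures $\mu(\{x : \gamma^a(x)=x\})$ and, more generally, by the joint fixing behavior of finite tuples from $\Gamma$.

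First I would reduce the claim to showing that for each finite $F\subseteq\Gamma$ and each subset $W\subseteq F$, the quantity
\[
\mu\bigl(\{x : \gamma^a(x)=x \text{ for } \gamma\in W \text{ and } \gamma^a(x)\neq x \text{ for }\gamma\in F\setminus W\}\bigr)
\]
is a weak-equivalence invariant; since $\mathrm{type}(a)$ is a Borel measure on the compact space $\mathrm{Sub}(\Gamma)\subseteq 2^\Gamma$ and the cylinder sets determined by finite $F$ and $W$ generate the Borel $\sigma$-algebra, equality of all these numbers for $a$ and $b$ forces $\mathrm{type}(a)=\mathrm{type}(b)$. So the entire statement is equivalent to an invariance statement about finitely many ``local fixing'' probabilities. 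Each such probability can be written purely in terms of the measures of sets of the form $\{x:\gamma^a(x)=x\}$ via inclusion–exclusion, so it suffices to prove that $\gamma\mapsto\mu(\mathrm{Fix}(\gamma^a))$ together with the joint versions is invariant.

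Next I would extract these fixing probabilities from weak equivalence. The difficulty is that $\mu(\mathrm{Fix}(\gamma^a))$ is \emph{not} obviously one of the quantities $\mu(\gamma^a(A_i)\cap A_j)$ appearing in the definition of $\preceq$, because fixing of points is a statement about the action on \emph{all} of $X$ rather than about finitely many sets. The cleanest route is to invoke \cref{2.6}: if $a\simeq b$ then $a\sqsubseteq b_{\mathcal U}$ and $b\sqsubseteq a_{\mathcal U}$ for a non-principal ultrafilter $\mathcal U$. Since the type is a genuine factor-invariant — a factor map $f$ intertwines the actions, so $\mathrm{stab}(x)\subseteq\mathrm{stab}(f(x))$ and in fact equality holds almost everywhere for measure-preserving factor maps between actions with the same type behavior — one checks that $\mathrm{type}$ is preserved under $\sqsubseteq$ and under passage to ultrapowers (the ultrapower $a_{\mathcal U}$ has the same type as $a$, as the stabilizer of a class $[x_n]$ is the ultralimit of the stabilizers, giving $\mathrm{type}(a_{\mathcal U})=\mathrm{type}(a)$). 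Chaining $\mathrm{type}(a)=\mathrm{type}(a_{\mathcal U})\succeq\mathrm{type}(b)$ in the factor order and symmetrically yields equality.

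The main obstacle, and the step deserving the most care, is the claim that a \emph{factor map} preserves the type rather than merely mapping stabilizers into larger subgroups: a priori $b\sqsubseteq c$ only gives $\mathrm{stab}_c(x)\subseteq\mathrm{stab}_b(f(x))$ pointwise, so one gets a pushforward inequality on $\mathrm{Sub}(\Gamma)$ rather than equality of types. To close this gap I would argue that because $f$ is measure preserving and $\Gamma$-equivariant, for each fixed $\gamma$ the set $\{x:\gamma^c(x)=x\}$ maps into $\{y:\gamma^b(y)=y\}$ with $f_*\mu=\nu$, giving $\mu(\mathrm{Fix}(\gamma^c))\le\nu(\mathrm{Fix}(\gamma^b))$, i.e. fixing probabilities can only \emph{increase} under taking factors; combined with the reverse containment coming from the other half of weak equivalence, all these inequalities become equalities, and then the inclusion–exclusion reconstruction of the cylinder-set measures forces $\mathrm{type}(a)=\mathrm{type}(b)$. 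An alternative and perhaps more self-contained route, avoiding the factor subtlety entirely, is to prove directly that each fixing probability $\mu(\mathrm{Fix}(\gamma^a))$ can be recovered as a limit, over finite partitions, of the combinatorial data $\mu(\gamma^a(A_i)\cap A_j)$ — specifically $\mu(\mathrm{Fix}(\gamma^a))=\lim\sum_i\mu(\gamma^a(A_i)\cap A_i)$ as the partition $\bar A$ refines — which exhibits it as an infimum-formula quantity in the sense of \cref{contlog} and hence manifestly a weak-equivalence invariant; I expect verifying that this refinement limit indeed computes the fixing measure, and upgrading from single $\gamma$ to joint fixing of tuples, to be the technically delicate part.
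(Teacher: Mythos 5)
Your proposal is correct and follows essentially the same route as the paper's proof in Appendix C: reduce via a generating family of sets in ${\rm Sub}(\Gamma)$ (your cylinder sets plus inclusion--exclusion play the role of the paper's sets $B_F=\{H: F\subseteq H\}$ and the $\pi$-$\lambda$ theorem) to the joint fixing probabilities $\mu\bigl(\bigcap_{\gamma\in F}{\rm Fix}_a(\gamma)\bigr)$, then use \cref{2.6} to write $a\sqsubseteq b_{\mathcal U}$, observe that stabilizers only grow under factor maps and that ${\rm Fix}_{b_{\mathcal U}}(\gamma)=[{\rm Fix}_b(\gamma)]_{\mathcal U}$ preserves these probabilities, and symmetrize the resulting one-sided inequality. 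Your one loose remark (that stabilizers are a.e.\ preserved under factor maps) is unnecessary and not true in general, but you correctly identify and repair this by working only with the inequality in each direction, exactly as the paper does.
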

We give a proof in Appendix C, \cref{appB}. For amenable groups, the converse is also true.

\begin{thm}[{\cite[Proposition 5.1]{Bu}}; see also {\cite[Theorem 1.8]  {T-D1}},  and {\cite[Theorem 9]   {E}}]\label{72}
If $\Gamma$ is amenable, then
\[
a\simeq b\iff {\rm type}(a) = {\rm type}(b).
\]
\end{thm}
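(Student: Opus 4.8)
The forward implication is exactly \cref{7.1}, so the content lies entirely in the converse: assuming $\Gamma$ is amenable and ${\rm type}(a) = {\rm type}(b) = \theta$, I must show $a \simeq b$. The plan is to reduce this to a single statement by transitivity of $\simeq$, namely that \emph{every} $a$ with ${\rm type}(a) = \theta$ is weakly equivalent to the $\theta$-random shift $s_\theta$; granting this, $a \simeq s_\theta \simeq b$ follows at once. Note that both $a$ and $s_\theta$ factor onto the conjugacy action $c_\theta$ on $({\rm Sub}(\Gamma), \theta)$ through their stabilizer maps, which push forward to $\theta$ by the definition of type. The entire argument is then carried out relative to this common factor $c_\theta$.

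The first containment, $s_\theta \preceq a$, is a relative form of the Abért--Weiss theorem (\cref{min}) and uses no amenability: just as $s_\Gamma$ is the minimum free action, $s_\theta$ is the minimum among actions of type $\theta$. I would prove this by repeating the argument behind \cref{min}, replacing the plain Bernoulli shift by its relative version over $c_\theta$. The approximation of an arbitrary type-$\theta$ action by the random shift rests on a Rokhlin-tower construction that respects the point stabilizers; since those stabilizers are the same $\theta$-distributed family of subgroups on both sides, the construction transfers verbatim.

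The reverse containment $a \preceq s_\theta$ is where amenability enters and is the heart of the matter. Using the description of $\preceq$ via the finite-dimensional pattern sets $C_{n,k}$ from \cref{22}, it suffices to show $C_{n,k}(a) \subseteq C_{n,k}(s_\theta)$ for all $n,k$; combined with the inclusion $C_{n,k}(s_\theta) \subseteq C_{n,k}(a)$ already furnished by $s_\theta \preceq a$, this yields $C_{n,k}(a) = C_{n,k}(s_\theta)$ and hence $a \simeq s_\theta$ directly. Given a finite partition $\bar A$, a finite set $F \subseteq \Gamma$, and $\epsilon > 0$, the strategy is to invoke the Ornstein--Weiss quasi-tiling theorem to produce an almost $(F,\epsilon)$-invariant Følner tiling of $a$; within each tile the combinatorics of $\bar A$ reduce to finite data constrained only by the stabilizer of the tile's base point. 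Because $s_\theta$ is, fiberwise over $c_\theta$, a relatively Bernoulli action carrying the same $\theta$-distribution of stabilizers, these finite tile-patterns can be reproduced inside $s_\theta$ with total error $O(\epsilon)$, producing the partition $\bar B$ that witnesses $a \preceq s_\theta$.

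The main obstacle is precisely this last reproduction step: one must arrange the Følner tiling so that it respects the stabilizer structure and \emph{simultaneously} realizes the correct distribution of local patterns, so that any configuration achievable by $a$ is genuinely achievable inside the random shift. Equality of types is used exactly here, to guarantee that the tile base points carry the same distribution of stabilizers in the two actions, and amenability is indispensable, since only for amenable $\Gamma$ do the Ornstein--Weiss tilings confer the rigidity of local statistics that collapses an entire type class to a single weak equivalence class. I expect the bookkeeping that couples the tiling, the stabilizer disintegration, and the pattern-matching to be the most delicate part of a complete proof.
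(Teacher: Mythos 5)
The paper itself contains no proof of this theorem: it is a survey statement quoted from \cite{Bu}, \cite{T-D1} and \cite{E}, so your proposal can only be measured against those sources. Your architecture does match Tucker-Drob's: the forward implication is \cref{7.1}, and the converse reduces by transitivity to showing that every action of type $\theta$ is weakly equivalent to the $\theta$-random shift $s_\theta$. You also split this correctly into the two halves, with $s_\theta \preceq a$ valid for all groups and amenability entering only in $a \preceq s_\theta$. Note, however, that the first half is exactly \cref{73} (Tucker-Drob's Theorem 1.5), stated in the paper, so you could simply cite it; your claim that the Ab\'{e}rt--Weiss argument of \cref{appA} ``transfers verbatim'' to the relative setting is not accurate, since that proof uses freeness essentially (it is freeness that makes $\mu(D_\eta) \to 1$, and for a non-free action the sets $D_\eta$ need not be large), and the relative statement is genuinely harder than \cref{min}. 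Since the statement is available in the paper, this is an overclaim rather than a gap.

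The genuine gap is the containment $a \preceq s_\theta$, which is the entire content of the theorem and the only place amenability is used; what you give there is a plan, not an argument. The sentences ``within each tile the combinatorics of $\bar{A}$ reduce to finite data constrained only by the stabilizer of the tile's base point'' and ``these finite tile-patterns can be reproduced inside $s_\theta$ with total error $O(\epsilon)$'' assert precisely what has to be proven, and the second one (``any configuration achievable by $a$ is genuinely achievable inside the random shift'') is a restatement of $a \preceq s_\theta$ itself. Concretely, two things are missing. First, Ornstein--Weiss quasi-tiling applies to F{\o}lner subsets of the group, i.e.\ to free actions; for an action of type $\theta$ the orbits are the Schreier coset spaces $\Gamma/{\rm stab}_a(x)$, so the tiling machinery must be developed there, and moreover a matching tiling must be produced inside the space of $s_\theta$, since that is where the witnessing partition $\bar{B}$ has to be constructed. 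Second, even granting compatible tilings on both sides, one must prove that the i.i.d.\ fiber structure of $s_\theta$ over $c_\theta$ realizes, with the correct frequencies, every conditional distribution of $\bar{A}$-patterns on tiles that $a$ realizes; this is a Rokhlin-tower-plus-concentration argument in the spirit of \cref{appA} but relative to the stabilizer field, and it is exactly the ``delicate bookkeeping'' you defer. As it stands, the proposal establishes the half that needs no amenability and gestures at the half that does, so it is not yet a proof.
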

This clearly fails for any non-amenable $\Gamma$, since any such group has many weakly inequivalent free actions (see \cref{3.2}), which of course have the same type.

From \cref{102} and \cref{104} below, it follows that the set $A_\theta(\Gamma, X, \mu)$\index{$A_\theta(\Gamma, X, \mu)$} of actions of type $\theta$ is a (weak equivalence invariant) $G_\delta$ set in the space $A(\Gamma, X, \mu)$.

Tucker-Drob proved the following generalization of \cref{td}. Below for each IRS $\theta$ and $a\in A_\theta (\Gamma,X, \mu)$, we denote by $s_\theta\times_{c_\theta} a$\index{$s_\theta\times_{c_\theta} a$} the {\bf relative independent joining}\index{relative independent joining} of $s_\theta$ and $a$ over the common factor $c_\theta$, see 
\cite[page 126]{Gl}. When the IRS $\theta$ is the Dirac measure concentrating on $\{e_\Gamma\}$, then $s_\theta\times_{c_\theta} a \cong s_\Gamma\times a$. Then we have:

\begin{thm}[{\cite[Theorem 1.5]{T-D1}}] \label{73}
For any $a\in A_\theta(\Gamma, X, \mu)$, $s_\theta\times_{c_\theta} a\simeq a$. In particular $s_\theta$ is minimum, in the sense of weak containment, among all actions  $a\in A_\theta(\Gamma, X, \mu)$.
\end{thm}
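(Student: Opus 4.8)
The plan is to prove the weak equivalence $s_\theta\times_{c_\theta} a\simeq a$ by establishing the two inequalities separately, and then to read off the minimality statement as an immediate consequence. The easy inequality is $a\preceq s_\theta\times_{c_\theta} a$. Since $s_\theta\times_{c_\theta} a$ is in particular a joining of $s_\theta$ and $a$, the projection onto the $a$-coordinate is a $\Gamma$-equivariant, measure-preserving Borel map, so $a$ is a genuine factor of the joining; thus $a\sqsubseteq s_\theta\times_{c_\theta} a$, and hence $a\preceq s_\theta\times_{c_\theta} a$. Likewise $s_\theta$ is a factor of the joining via the first projection, which I will use at the very end.

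The substance is the reverse inequality $s_\theta\times_{c_\theta} a\preceq a$. First I would reduce this, via the Ab\'{e}rt--Weiss description in \cref{25}, to showing that for every finite space $K$ one has $E(s_\theta\times_{c_\theta} a, K)\subseteq\overline{E(a,K)}$; equivalently, working directly from the definition, that every finite family of statistics $\mu\big((\gamma^{s_\theta\times_{c_\theta} a})(C_i)\cap C_j\big)$ for $\gamma$ in a finite set $F$ and $C_1,\dots,C_m$ a finite partition can be reproduced to within $\epsilon$ by partitions of the $a$-system. The key structural input is that $s_\theta$ is a \emph{relatively Bernoulli} extension of the common base $c_\theta$: over each subgroup $H$ it places independent labels indexed by the coset structure $\Gamma/H$, while $a$ has $c_\theta$ as a factor through the stabilizer map $\mathrm{stab}_a$ (this is exactly the content of $\mathrm{type}(a)=\theta$). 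I would therefore split the target statistics into the part depending on the $a$-coordinate, which is reproduced verbatim using $a$ itself, and the part depending on the $s_\theta$-labels, which must be manufactured inside $a$.

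The hard part will be this last manufacturing step: constructing, measurably from the action $a$ alone, labels along each $a$-orbit that are approximately independent and identically distributed \emph{over the base} $c_\theta$, respecting the coset structure $\Gamma/\mathrm{stab}_a(x)$, and approximately independent of the $a$-coordinate itself conditioned on $c_\theta$. This is a relative version of the Ab\'{e}rt--Weiss argument behind \cref{min} and \cref{td}: in the free case ($\theta$ concentrated on $\{e_\Gamma\}$, $c_\theta$ trivial) it reduces to generating asymptotically i.i.d.\ labels along free orbits, and the conditional-independence refinement yields $s_\Gamma\times a\simeq a$. To carry it out over a nontrivial base I would apply a relative Rokhlin-type lemma adapted to the orbit equivalence relation $E_a$ fibred over $(\mathrm{Sub}(\Gamma),\theta)$, tiling $\mu$-almost all of $X$ by approximately invariant finite pieces along which fresh randomness can be spread and averaged to the correct conditional law. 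A clean alternative packaging of the same obstacle is through the ultrapower characterization \cref{2.6}: since $a\simeq a_{\mathcal{U}}$ and hence $\mathrm{type}(a_{\mathcal{U}})=\theta$ by \cref{7.1}, it suffices to exhibit $s_\theta\times_{c_\theta} a\sqsubseteq a_{\mathcal{U}}$, where the ultrapower supplies exactly the transverse, base-conditionally independent randomness needed to build the relative Bernoulli factor on top of the canonical factor $a_{\mathcal{U}}\to a$; the conditional-independence-over-$c_\theta$ verification remains the crux either way.

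Finally I would deduce the ``in particular'' clause. Combining the two inequalities gives $s_\theta\times_{c_\theta} a\simeq a$ for every $a\in A_\theta(\Gamma,X,\mu)$. Since $s_\theta$ is a factor of $s_\theta\times_{c_\theta} a$ via the first projection, we get $s_\theta\sqsubseteq s_\theta\times_{c_\theta} a$, hence $s_\theta\preceq s_\theta\times_{c_\theta} a\simeq a$, so $s_\theta\preceq a$ for every action of type $\theta$; that is, $s_\theta$ is the minimum of $\preceq$ on $A_\theta(\Gamma,X,\mu)$.
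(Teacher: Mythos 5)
Your high-level architecture is sound, and the soft parts are correct: $a$ and $s_\theta$ are genuine factors of the relative independent joining (via the two projections), which gives $a\preceq s_\theta\times_{c_\theta}a$, and the minimality clause does follow formally once the equivalence is known, since $s_\theta\sqsubseteq s_\theta\times_{c_\theta}a\simeq a$. The reductions you cite (\cref{25} for the statistics formulation, \cref{2.6} together with \cref{7.1} for the ultrapower packaging) are also used correctly. But there is a genuine gap: the inequality $s_\theta\times_{c_\theta}a\preceq a$, which is the entire content of the theorem, is never actually proved. You reduce it, correctly, to manufacturing inside $a$ labels indexed by the cosets of ${\rm stab}_a(x)$ that are approximately i.i.d.\ and approximately independent of prescribed $a$-sets conditionally on the base $c_\theta$, and then you explicitly leave that step as ``the crux'' in both packagings. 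Be aware that this survey does not prove the theorem either --- it cites \cite[Theorem 1.5]{T-D1}; what it does prove, in Appendix B (\cref{appA}), is the free special case \cref{min}, whose argument is precisely the template for the step you are missing.

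Moreover, the one concrete tool you propose for that step --- a ``relative Rokhlin-type lemma'' tiling $\mu$-almost all of $X$ by approximately invariant finite pieces of the orbit equivalence relation $E_a$ --- is the wrong tool: the existence of such F{\o}lner-type tilings of $E_a$ is essentially equivalent to hyperfiniteness/amenability, so this route is unavailable for non-amenable $\Gamma$, which is exactly the interesting case (for amenable $\Gamma$ the theorem already follows from \cref{72}). The argument that actually works, both in Appendix B and in Tucker-Drob's proof, is probabilistic rather than tiling-based: partition $X$ into Borel pieces of small diameter, color the pieces uniformly at random, and use a second-moment/Chebyshev estimate to show that a random coloring reproduces the desired joint statistics. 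The role played by freeness there --- guaranteeing that the finitely many points $\gamma^a(x)$, $\gamma\in G$, lie in distinct pieces and hence receive independent colors --- is played in the relative setting by the automatic fact that distinct cosets $\gamma\,{\rm stab}_a(x)$ correspond to distinct points of the orbit of $x$; the extra work, which is where the conditional independence over $c_\theta$ comes from, is to make the random labels asymptotically independent of a prescribed finite family of $a$-measurable sets. Until that estimate (or an ultrapower version of it feeding \cref{2.6}) is carried out, what you have is a correct plan with the theorem's substance still outstanding, not a proof.
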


There is also an analog of \cref{max} for actions of type $\theta$.
\begin{thm}[{\cite[Theorem 5.15]{T-D1}}] \label{74}
Let $\theta \in {\rm IRS}(\Gamma)$. Then there is a maximum, in the sense of weak containment, among all actions
$a\in A_\theta(\Gamma, X, \mu)$.
\end{thm}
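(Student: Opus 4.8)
The plan is to adapt the construction of the overall maximum $a_{\infty,\Gamma}$ from \cref{max}, replacing the plain product of a dense sequence by a \emph{relative independent joining over the common factor} $c_\theta$, which is exactly the device needed to stay inside $A_\theta(\Gamma, X, \mu)$. The ordinary product fails because the stabilizer of a product is the intersection of the stabilizers, which generically shrinks the type; joining over $c_\theta$ repairs this.

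First I would record the structural observation that every $a \in A_\theta(\Gamma, X, \mu)$ has $c_\theta$ as a canonical factor: the stabilizer map ${\rm stab}_a \colon X \to {\rm Sub}(\Gamma)$ is $\Gamma$-equivariant and pushes $\mu$ to $\theta$, so it realizes $c_\theta$ as a factor of $a$. Since $A_\theta(\Gamma, X, \mu)$ is a subspace of the separable space $A(\Gamma, X, \mu)$, fix a sequence $(a_n)$ dense in $A_\theta(\Gamma, X, \mu)$, including $a_0 = s_\theta$ to secure nonatomicity below. Then form $a_\infty$, the relative independent joining of the $a_n$ over their common factor $c_\theta$: the action on the fiber product $\{(x_n) : {\rm stab}_{a_n}(x_n) \text{ is independent of } n\}$ equipped with the relatively independent joining measure over $\theta$. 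It is nonatomic, since it factors onto the nonatomic action $a_0$.

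The key computation is that ${\rm type}(a_\infty) = \theta$, so that $a_\infty \in A_\theta(\Gamma, X, \mu)$. For a point $(x_n)$ of the fiber product all coordinates share a common stabilizer $H = {\rm stab}_{a_n}(x_n)$, and an element $g$ fixes $(x_n)$ iff it fixes every $x_n$; hence ${\rm stab}_{a_\infty}((x_n)) = \bigcap_n {\rm stab}_{a_n}(x_n) = H$. Thus the stabilizer map of $a_\infty$ agrees with the common factor map and pushes the joining measure to $\theta$. This is precisely the step where joining over $c_\theta$ is essential, since the intersection defining the joined stabilizer does not drop below the common fiber value $H$. I would also note that each coordinate projection exhibits $a_n$ as a factor of $a_\infty$, so $a_n \sqsubseteq a_\infty$ and therefore $a_n \preceq a_\infty$ for every $n$.

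Finally I would conclude maximality from density together with the fact, recorded after \cref{t1}, that each initial segment $\preceq^{a_\infty} = \{a : a \preceq a_\infty\}$ is closed in $A(\Gamma, X, \mu)$. This closed set contains the dense sequence $(a_n)$, hence contains its closure, which in turn contains all of $A_\theta(\Gamma, X, \mu)$; therefore $a \preceq a_\infty$ for every $a \in A_\theta(\Gamma, X, \mu)$, and $a_\infty$ is the desired maximum. The main obstacle is the type computation for the relative independent joining — checking that almost every fiber consists of points with a single common stabilizer, so that the intersection defining the joined stabilizer stays equal to the $\theta$-a.e. value rather than shrinking; once this is in hand, the rest is the same density-and-closedness argument used for \cref{max}.
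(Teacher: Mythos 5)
Your proposal is correct, and it is essentially the proof of the cited result: this survey gives no proof of the theorem, deferring to Tucker-Drob \cite{T-D1}, and the argument there is the one you reconstruct --- replace the plain product used for \cref{max} by the relative independent joining over the common stabilizer factor $c_\theta$ of a dense sequence in $A_\theta(\Gamma, X, \mu)$, observe that on the fiber product all coordinate stabilizers coincide so the joined stabilizer equals the common value and the type remains $\theta$, and finish by density together with the closedness of the initial segments of $\preceq$. Your treatment of the two delicate points (the a.e.\ identification of the joined stabilizer, and securing non-atomicity by including $s_\theta$ in the sequence) is exactly what is needed.
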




\newpage

\section{Stable weak containment}\label{8}
Tucker-Drob \cite[Appendix B]{T-D1} introduced a variant of weak containment called {\bf stable weak containment}, which is an analog of weak containment  for unitary representations (as opposed to weak containment in the sense of Zimmer).

\begin{dfn}
Let $a\in A(\Gamma, X, \mu)$,  $b\in A(\Gamma, Y, \nu)$ be two actions. We say that $a$ is {\bf stable weakly contained} \index{stable weakly contained}in $b$, in symbols
\[
a\preceq_s b,\index{$\preceq_s$}
\]
if for any partition $A_1, \dots , A_n \in {\rm MALG}_\mu, {\rm finite} \ F\subseteq \Gamma, \epsilon >0$, there is a convex combination $c = \sum_{i=1}^m \lambda_i b $, on say $(Z,\rho)$, and a partition $B_1, \dots , B_n \in {\rm MALG}_\rho$ such that $|\mu (\gamma^a(A_i)\cap A_j)-\rho (\gamma^c(B_i)\cap B_j)|<\epsilon, \forall\gamma\in F, i,j\leq n$.
\end{dfn}
We also put $a\simeq_s b$\index{$\simeq_s$} for the associated notion of {\bf stable weak equivalence}\index{stable weak equivalence}.

Equivalently (see \cite[Appendix B]{T-D1})

\[
a\preceq_s b \iff a\preceq i_\Gamma\times b\iff i_\Gamma \times a\preceq i_\Gamma\times b\]
and 
\[
a\simeq_s b \iff i_\Gamma\times a \simeq i_\Gamma \times b.
\]
Clearly $a\preceq b \implies a\preceq_s b$ and from \cref{3.8} it follows that if $a$ is ergodic, then $a\preceq b \iff a\preceq_s b$. Also weak equivalence and stable weak equivalence coincide for amenable groups. This follows from \cref{72}, which implies that $i_\Gamma\times a\simeq a$.

\begin{thm}[{\cite[Theorem 5.1]{Bu}}]\label{9.2}
If $\Gamma$ is amenable, then for any actions $a,b$, $a\preceq b \iff a\preceq_s b$ and so $a\simeq b \iff a\simeq_s b$.
\end{thm}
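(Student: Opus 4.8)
The plan is to reduce everything to the type, exploiting that for amenable groups the type is a complete invariant of weak equivalence (\cref{72}). The forward implication $a\simeq b\implies a\simeq_s b$ is free and holds for every group: since $a\preceq b\implies a\preceq_s b$, weak equivalence $a\simeq b$ gives $a\preceq_s b$ and $b\preceq_s a$, hence $a\simeq_s b$. So the whole content is the converse $a\simeq_s b\implies a\simeq b$ under the assumption that $\Gamma$ is amenable, and I would obtain it by comparing types.

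The one substantive step is to identify the type of a product with the trivial action. View $i_\Gamma$ as an element of $A(\Gamma,Z,\rho)$, so that $i_\Gamma\times a$ is the action on $(Z,\rho)\times(X,\mu)$ in which $\Gamma$ fixes the first coordinate pointwise. Then $\gamma$ fixes $(z,x)$ iff $\gamma^a(x)=x$, so ${\rm stab}_{i_\Gamma\times a}(z,x)={\rm stab}_a(x)$ depends only on $x$. Pushing $\rho\times\mu$ forward along the stabilizer map therefore yields exactly $({\rm stab}_a)_*\mu$, i.e. ${\rm type}(i_\Gamma\times a)={\rm type}(a)$, and symmetrically ${\rm type}(i_\Gamma\times b)={\rm type}(b)$.

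With this in hand the argument is a chain of equivalences. By the reformulation of stable weak equivalence in the excerpt, $a\simeq_s b\iff i_\Gamma\times a\simeq i_\Gamma\times b$. Applying \cref{72} (legitimate since $\Gamma$ is amenable) to the pair $i_\Gamma\times a,\ i_\Gamma\times b$, this holds iff ${\rm type}(i_\Gamma\times a)={\rm type}(i_\Gamma\times b)$, which by the previous paragraph is the same as ${\rm type}(a)={\rm type}(b)$; applying \cref{72} once more, now to $a,b$, this is equivalent to $a\simeq b$. Combining the links gives $a\simeq_s b\iff a\simeq b$, as required. I do not expect a genuine obstacle here once \cref{72} is granted: the only point demanding care is the elementary verification that tensoring with the trivial action leaves the type unchanged, and everything else is bookkeeping with the two applications of \cref{72}.
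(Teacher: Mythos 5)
Your proof is correct and is essentially the intended argument: the paper gives no proof of \cref{9.2} beyond citing \cite[Corollary 5.1]{Bu}, and that corollary is derived exactly as you do, by combining the identity ${\rm type}(i_\Gamma\times a)={\rm type}(a)$ with the reformulation $a\simeq_s b\iff i_\Gamma\times a\simeq i_\Gamma\times b$ and two applications of \cref{72}. Nothing further is needed.
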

On the other hand, by \cref{38}, we see that if  $a$ is strongly ergodic, and for every non-amenable group $s_\Gamma$ is such an action, then $a\simeq_s i_\Gamma\times a$ but $a\not\simeq i_\Gamma\times a$. Thus stable weak equivalence coincides with weak equivalence iff the group is amenable.


In \cite[Theorem 1.1]{T-D1} it is shown that $\overline{E(i_\Gamma\times a, K)}$ is equal to the closed convex hull $\overline{co}(E(a,K))$\index{$\overline{co}(E(a,K))$} of $E(a,K)$, for any compact, metrizable $K$. From this we have the following analog of the characterization of weak containment in \cref{22}, (2). 

\begin{thm}[{\cite[Proposition B.2] {T-D1}}]\label{92} The following are equivalent for any two actions $a,b$:

\begin{enumerate}[\upshape (i)]
\item $a \preceq_s b$.
\item For each compact metrizable space $K$, $E(a,K) \subseteq \overline{co}(E(b,K))$.
\item For $K= 2^\bbN$, $E(a,K) \subseteq \overline{co}(E(b,K))$.
\item For each finite space $K$, $E(a,K) \subseteq \overline{co}(E(b,K))$.
\end{enumerate}

\end{thm}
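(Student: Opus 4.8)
The plan is to reduce the statement to two facts already available: the characterization of ordinary weak containment via the sets $E(\cdot,K)$ in \cref{25}, and the identity $\overline{E(i_\Gamma\times b,K)} = \overline{co}(E(b,K))$ recorded just above the theorem (this is \cite[Theorem 1.1]{T-D1}, applied with $b$ in place of $a$). The conceptual point is that stable weak containment is nothing but ordinary weak containment into $i_\Gamma\times b$, so that running the $E(\cdot,K)$ machinery on the pair $(a,i_\Gamma\times b)$ automatically converts the closures appearing in \cref{25} into closed convex hulls. Accordingly, I would first invoke the reformulation from \cref{8}, namely $a\preceq_s b \iff a\preceq i_\Gamma\times b$, replacing stable weak containment of $a$ in $b$ by genuine weak containment of $a$ in the single action $i_\Gamma\times b$.

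Next I would apply \cref{25} to the pair $(a,i_\Gamma\times b)$. This yields that $a\preceq i_\Gamma\times b$ is equivalent to each of the following three conditions: $E(a,K)\subseteq \overline{E(i_\Gamma\times b,K)}$ for every compact metrizable $K$; the same inclusion for $K=2^\bbN$ alone; and the same inclusion for every finite $K$. Combined with the first step, each of these three is equivalent to (i).

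The final step is a substitution. Using $\overline{E(i_\Gamma\times b,K)} = \overline{co}(E(b,K))$, which holds for every compact metrizable $K$ and in particular for $K=2^\bbN$ and for finite $K$, I would rewrite the three conditions of the previous paragraph, obtaining exactly (ii), (iii) and (iv). Hence all of (i)--(iv) are equivalent, as claimed.

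In this formulation there is no real obstacle remaining, since both nontrivial inputs are in hand. The only place where genuine content lives---were one to establish the convex-hull identity from scratch---is the inclusion $\overline{E(i_\Gamma\times b,K)}\subseteq \overline{co}(E(b,K))$. Its converse is easy: a convex combination $\sum_i\lambda_i\nu_i$ of measures $\nu_i\in E(b,K)$ realized by Borel maps $f_i\colon Y\to K$ is again realized by $i_\Gamma\times b$, by using the trivial $i_\Gamma$-factor to partition its space into pieces of measure $\lambda_i$ and applying $f_i$ on the $i$-th piece; thus $E(b,K)$ together with all its convex combinations sits inside $E(i_\Gamma\times b,K)$. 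The harder direction, that every shift-invariant measure factoring through $i_\Gamma\times b$ is a limit of convex combinations of measures factoring through $b$, is precisely the content of \cite[Theorem 1.1]{T-D1}.
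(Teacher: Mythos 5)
Your proposal is correct and is essentially the paper's own argument: the paper derives \cref{92} exactly by combining the reformulation $a\preceq_s b \iff a\preceq i_\Gamma\times b$ with \cref{25} applied to the pair $(a, i_\Gamma\times b)$, and then substituting the identity $\overline{E(i_\Gamma\times b,K)} = \overline{co}(E(b,K))$ from \cite[Theorem 1.1]{T-D1}. Your closing remark correctly locates where the real content lies (the hard inclusion of that identity), which both you and the paper delegate to \cite{T-D1}.
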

As in the second paragraph following  \cref{td}, it can be shown that any non-amenable group has a continuum size $\preceq_s$-antichain in $\textrm{FR}(\Gamma, X, \mu)$.\

In \cite[Theorem 8.1]{BT-D1} it is proved that $\preceq_s$ persists through the ergodic decomposition, i.e., $a\preceq_s b$ iff there is a coupling of the ergodic decomposition measures of $a,b$ which concentrates on the pairs of ergodic components $a',b'$ of $a,b$, resp., with $a'\preceq_s b'$ (equivalently $a'\preceq  b'$). Similarly for stable weak equivalence.

 Concerning the connection with the Koopman representation, we have in (partial) analogy with \cref{4.1}, the following result due to Bowen and Tucker-Drob:
 
\begin{thm}
\[
a\preceq_s b \implies \kappa^a\preceq \kappa^b
\]
\end{thm}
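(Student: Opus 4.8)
The plan is to reduce the statement to \cref{4.1} by using the reformulation of stable weak containment, and then to upgrade the resulting Zimmer containment to ordinary weak containment of representations via an infinite-multiplicity argument.

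First I would invoke the equivalence $a \preceq_s b \iff a \preceq i_\Gamma \times b$ recorded just after the definition of $\preceq_s$. Since this is ordinary weak containment of actions, I can apply \cref{4.1} to $a \preceq i_\Gamma \times b$ to obtain $\kappa^a \preceq_Z \kappa^{i_\Gamma \times b}$.

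The one substantive step is to identify the representation $\kappa^{i_\Gamma \times b}$. Writing $i_\Gamma$ for the trivial action on $(X,\mu)$ and $b$ on $(Y,\nu)$, the Koopman representation of a product action is the tensor product of the Koopman representations, so $\kappa^{i_\Gamma \times b} \cong \kappa^{i_\Gamma} \otimes \kappa^b$ under the identification $L^2(X\times Y, \mu\times\nu) \cong L^2(X,\mu)\otimes L^2(Y,\nu)$. Because $\gamma^{i_\Gamma} = \mathrm{id}$ for every $\gamma$, the representation $\kappa^{i_\Gamma}$ is the identity on the infinite-dimensional space $L^2(X,\mu)$, that is, $\kappa^{i_\Gamma} \cong \infty \cdot 1_\Gamma$, where $1_\Gamma$ is the trivial one-dimensional representation. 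Hence
\[
\kappa^{i_\Gamma \times b} \cong (\infty \cdot 1_\Gamma)\otimes \kappa^b \cong \infty \cdot (1_\Gamma \otimes \kappa^b) \cong \infty \cdot \kappa^b .
\]
Since $\preceq_Z$ is invariant under unitary equivalence (being defined through matrix coefficients), this gives $\kappa^a \preceq_Z \infty \cdot \kappa^b$.

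Finally I would apply the representation-theoretic equivalence $\pi \preceq \rho \iff \pi \preceq_Z \infty\cdot\rho$ recorded in \cref{1}, taking $\pi = \kappa^a$ and $\rho = \kappa^b$, to conclude $\kappa^a \preceq \kappa^b$. The only real content is the tensor computation $\kappa^{i_\Gamma\times b}\cong\infty\cdot\kappa^b$ together with the passage from $\preceq_Z$ to $\preceq$; both are standard, so I expect no serious obstacle, the main point being to route the argument through $i_\Gamma\times b$ so that the infinite multiplicity needed to pass from Zimmer containment to ordinary containment of the Koopman representations is supplied automatically by the trivial factor.
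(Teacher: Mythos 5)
Your proof is correct and follows essentially the same route as the paper's: reformulate $a\preceq_s b$ as $a\preceq i_\Gamma\times b$, apply \cref{4.1} to get Zimmer containment of Koopman representations, identify $\kappa^{i_\Gamma\times b}\cong\kappa^{i_\Gamma}\otimes\kappa^b\cong\infty\cdot\kappa^b$, and conclude via $\pi\preceq\rho\iff\pi\preceq_Z\infty\cdot\rho$. The only difference is that you spell out the identification $\kappa^{i_\Gamma}\cong\infty\cdot 1_\Gamma$ explicitly, which the paper leaves implicit.
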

This is because $\kappa^{a\times b} \cong \kappa^a\otimes  \kappa^b$, therefore if $a\preceq_s b$, $a\preceq i_\Gamma \times b$, so $\kappa^a\preceq_Z \kappa^{i_\Gamma \times b}\cong \kappa^{i_\Gamma}\otimes \kappa^b\cong \infty \cdot\kappa^b$, thus $\kappa^a\preceq \kappa^b$.

For ergodic $a$, it follows that $a\preceq_s b \implies \kappa^a_0\preceq \kappa^b_0$, since $a\preceq_s b \iff a\preceq b$. On the other hand, Tucker-Drob pointed out that $a\preceq_s b \implies \kappa_0^a\preceq \kappa_0^b$ may fail if $a$ is not ergodic. Indeed let $\Gamma$ be non-amenable and let $a=\frac{1}{2}s_\Gamma + \frac{1}{2}s_\Gamma$. Then $a\preceq_s s_\Gamma$ but $\kappa_0^a$ contains the trivial 1-dimensional representation, which is not weakly contained in $\kappa_0^{s_\Gamma}$, therefore $\kappa_0^a\not\preceq \kappa_0^{s_\Gamma}$. 

For $\pi\in {\rm Rep}(\Gamma, H_\pi)$, let $\pi_1$ be the restriction of $\pi$ to the orthogonal of the $\Gamma$-invariant vectors in $H_\pi$. Then, using ultrapowers of unitary representations, Tucker-Drob shows that $\pi\preceq \rho\implies \pi_1\preceq\rho_1$. It then follows that 
\[
a\preceq_s b \implies \kappa_1^a\preceq \kappa_1^b.
\]



Since $\textrm{type}(i_\Gamma\times a) = \textrm{type} (a)$, it follows from \cref{7.1} that the type is also an invariant of stable weak equivalence:

\begin{cor}
$a\simeq_s b\implies {\rm type}(a) = {\rm type} (b)$
\end{cor}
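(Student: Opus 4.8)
The plan is to reduce the statement to the already-established invariance of type under ordinary weak equivalence (\cref{7.1}) by passing to the trivial-action product. First I would invoke the equivalent reformulation of stable weak equivalence recorded in the displays just after \cref{8}'s main definition, namely that $a\simeq_s b \iff i_\Gamma\times a\simeq i_\Gamma\times b$. Thus from the hypothesis $a\simeq_s b$ I obtain the honest weak equivalence $i_\Gamma\times a\simeq i_\Gamma\times b$, to which \cref{7.1} applies directly, yielding ${\rm type}(i_\Gamma\times a)={\rm type}(i_\Gamma\times b)$.

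Next I would identify the type of a trivial-action product with the type of its nontrivial factor. Realizing $i_\Gamma$ as the action on some $(Z,\rho)$, the stabilizer of a point $(z,x)\in Z\times X$ under $i_\Gamma\times a$ is $\{\gamma\colon \gamma^{i_\Gamma}(z)=z \ \&\ \gamma^a(x)=x\}$; since $\gamma^{i_\Gamma}(z)=z$ holds for every $\gamma$, this set equals ${\rm stab}_a(x)$ and is independent of $z$. Hence ${\rm stab}_{i_\Gamma\times a}={\rm stab}_a\circ\proj$, where $\proj$ is the projection onto $X$, and pushing the product measure forward gives ${\rm type}(i_\Gamma\times a)={\rm type}(a)$; the identical computation gives ${\rm type}(i_\Gamma\times b)={\rm type}(b)$. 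Combining these two identities with the conclusion of the previous paragraph yields ${\rm type}(a)={\rm type}(b)$, as required.

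I do not expect any genuine obstacle: the corollary is a formal consequence of \cref{7.1} together with the two facts that stable weak equivalence becomes ordinary weak equivalence after multiplying both sides by $i_\Gamma$, and that multiplying by $i_\Gamma$ leaves the type unchanged. The only point needing a line of justification is the latter identity ${\rm type}(i_\Gamma\times a)={\rm type}(a)$, which is exactly the elementary stabilizer computation indicated above and is in any case already asserted in the sentence preceding the corollary.
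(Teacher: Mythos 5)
Your proposal is correct and is essentially the paper's own argument: the paper derives the corollary in one sentence by noting ${\rm type}(i_\Gamma\times a)={\rm type}(a)$ and applying \cref{7.1} to $i_\Gamma\times a\simeq i_\Gamma\times b$. You merely spell out the stabilizer computation that the paper leaves implicit.
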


Also the monotonicity and invariance properties in \cref{6.1}, \cref{4.351} and \cref{11.1} hold as well for stable weak equivalence.



As before a {\bf free stable weak equivalence class}\index{free stable weak equivalence class} is one consisting of free actions and an {\bf ergodic stable weak equivalence class}\index{ergodic stable weak equivalence class} is one which contains an ergodic action. Then we have the following analog of \cref{3.15}:

\begin{thm}
A group is amenable iff every free stable weak equivalence class is ergodic.
\end{thm}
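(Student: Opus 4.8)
A group is amenable iff every free stable weak equivalence class is ergodic.

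The plan is to reduce this statement to the already-proved characterization in \cref{3.15}, which asserts that a group is amenable iff every free \emph{weak} equivalence class is ergodic. The bridge between the two is the relationship $a \simeq_s b \iff i_\Gamma \times a \simeq i_\Gamma \times b$ together with the observation that ergodicity behaves rigidly with respect to the factor $i_\Gamma \times (\cdot)$. First I would handle the easy direction: if $\Gamma$ is amenable, then by \cref{9.2} stable weak equivalence coincides with ordinary weak equivalence, so by \cref{3.15} every free weak equivalence class is ergodic, hence the same holds for free stable weak equivalence classes (which coincide with the ordinary ones).

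For the converse, I would argue by contraposition: assume $\Gamma$ is \emph{not} amenable and exhibit a free stable weak equivalence class that is not ergodic. The natural candidate is the class of $i_\Gamma \times s_\Gamma$. First I would note that $i_\Gamma \times s_\Gamma$ is free, since $s_\Gamma$ is free and freeness is preserved under taking products (and under passing to a weakly equivalent action, by \cref{fre}). Next, $i_\Gamma \times s_\Gamma$ is manifestly \emph{not} ergodic, as the $i_\Gamma$-factor supplies nontrivial invariant sets. The key point is then to verify that the entire stable weak equivalence class of $i_\Gamma \times s_\Gamma$ consists of non-ergodic actions. Here I would invoke the equivalence $a \simeq_s b \iff i_\Gamma \times a \simeq i_\Gamma \times b$: any $b$ with $b \simeq_s (i_\Gamma \times s_\Gamma)$ satisfies $i_\Gamma \times b \simeq i_\Gamma \times s_\Gamma$, and I would want to conclude that $b$ itself is non-ergodic.

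The main obstacle is precisely this last implication: showing that membership in the stable weak equivalence class of a non-ergodic action forces non-ergodicity. The cleanest route exploits \cref{38}: since $\Gamma$ is non-amenable, $s_\Gamma$ is strongly ergodic, and by \cref{38} we have $s_\Gamma \simeq i_\Gamma \times s_\Gamma$ is \emph{false}, i.e., $s_\Gamma \not\simeq i_\Gamma \times s_\Gamma$, which is exactly the content that $s_\Gamma$ separates the two notions. I would therefore instead directly use the characterization of stable weak equivalence classes: by \cref{92}, $b \simeq_s (i_\Gamma \times s_\Gamma)$ means $E(b,K)$ and $E(i_\Gamma \times s_\Gamma, K)$ have the same closed convex hull for every finite $K$, and non-ergodicity is detected by the presence of nontrivial invariant sets, which corresponds to extreme points of these convex hulls concentrated off the ergodic measures. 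Concretely, I expect that the persistence of $\simeq_s$ through the ergodic decomposition (the result of \cite{BT-D1} cited just after \cref{92}) gives the cleanest argument: since $i_\Gamma \times s_\Gamma$ has a non-ergodic component structure forced by the $i_\Gamma$ factor, any stably weakly equivalent action must share a coupling of ergodic decompositions that cannot concentrate on a single ergodic component, hence is non-ergodic. Assembling these pieces yields a free stable weak equivalence class that is not ergodic, completing the contrapositive and hence the theorem.
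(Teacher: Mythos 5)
Your amenable direction is fine (the paper's is even shorter: an amenable group has exactly one free stable weak equivalence class, namely that of $s_\Gamma$). But the converse direction contains a fatal error: your proposed witness does not work. For \emph{any} action $a$ one has $a\simeq_s i_\Gamma\times a$. Indeed, $a\preceq_s i_\Gamma\times a$ unravels to $a\preceq i_\Gamma\times i_\Gamma\times a\cong i_\Gamma\times a$, which holds since $a\sqsubseteq i_\Gamma\times a$, while $i_\Gamma\times a\preceq_s a$ unravels to $i_\Gamma\times a\preceq i_\Gamma\times a$. This is exactly the point of the paper's remark right after \cref{9.2}: for strongly ergodic $a$ (such as $s_\Gamma$ when $\Gamma$ is non-amenable) one has $a\simeq_s i_\Gamma\times a$ even though $a\not\simeq i_\Gamma\times a$. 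Consequently the stable weak equivalence class of $i_\Gamma\times s_\Gamma$ is the same as that of $s_\Gamma$; it contains the ergodic action $s_\Gamma$ and is therefore an \emph{ergodic} free stable weak equivalence class. Your fallback via the ergodic-decomposition coupling of \cite{BT-D1} fails for the same reason: almost every ergodic component of $i_\Gamma\times s_\Gamma$ is $s_\Gamma$ itself, so the obvious coupling with the one-point decomposition of $s_\Gamma$ concentrates on pairs of stably equivalent components and \emph{certifies} $i_\Gamma\times s_\Gamma\simeq_s s_\Gamma$ rather than refuting it. More generally, non-ergodicity of one representative can never witness non-ergodicity of a stable class, since stable weak equivalence is built to absorb the $i_\Gamma$ factor; this is also why the proof of \cref{3.15} (where $i_\Gamma\times a$, $a$ strongly ergodic, gives a non-ergodic \emph{weak} equivalence class) does not transfer.

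The paper's actual argument uses convexity. If $\Gamma$ is not amenable, there exist two distinct free stable weak equivalence classes $\undertilde{a}_s\neq\undertilde{b}_s$ (e.g., from the continuum-size $\preceq_s$-antichain in ${\rm FR}(\Gamma,X,\mu)$ mentioned after \cref{92}). The convex combination $c=\frac{1}{2}a+\frac{1}{2}b$ is free, and its whole stable class consists of free actions because the type is a stable weak equivalence invariant. Now $\undertilde{c}_s=\frac{1}{2}\undertilde{a}_s+\frac{1}{2}\undertilde{b}_s$ is a nontrivial convex combination of two distinct points of $\undertilde{A}_s(\Gamma,X,\mu)$, which is affinely homeomorphic to a compact convex subset of a Banach space, so $\undertilde{c}_s$ is not an extreme point; by \cref{1016} the extreme points are exactly the ergodic stable classes, so $\undertilde{c}_s$ is a free, non-ergodic stable weak equivalence class. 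Note that two genuinely stably inequivalent free actions are indispensable here: $\frac{1}{2}a+\frac{1}{2}a\simeq_s a$ (since $a\sqsubseteq \frac{1}{2}a+\frac{1}{2}a\sqsubseteq i_\Gamma\times a$), so combining an action with itself, like multiplying by $i_\Gamma$, gains nothing.
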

This is clear for amenable groups, since an amenable group has exactly one free stable weak equivalence class, namely that of the shift. If a group is not amenable, then it has at least two free stable weak equivalence classes and then it follows that there must exist a non-ergodic one, using \cite[Proposition 7.3]{Bu} (see also \cref{1016} below).

\begin{remark}
{\rm 
In \cite[Question 3.14]{T-D1} it was asked whether it is true that  every free action is stable weak equivalent to one with countable ergodic decomposition. Tucker-Drob shows that from the results in \cite{BT-D1} it follows that the answer is negative.
}
\end{remark}

\newpage
\section{The space of weak equivalence classes}\label{9}


We denote by $\undertilde{A}(\Gamma, X, \mu) = A(\Gamma, X, \mu)/ \! \simeq$\index{$\undertilde{A}(\Gamma, X, \mu)$} the set of weak equivalence classes. For each $a\in A(\Gamma, X, \mu)$, we let $\undertilde{a}$\index{$\undertilde{a}$} be its weak equivalence class. We will also use $\undertilde{a}, \undertilde{b}, \dots$ as variables over $\undertilde{A}(\Gamma, X, \mu)$. 

\subsection{The topology on the space of weak equivalence classes}\label{101} In the paper \cite{AE1}, Ab\'{e}rt and Elek defined a topology on $\undertilde{A}(\Gamma, X, \mu)$ which we now describe (in an equivalent form that is implicit in that paper).

For each $a\in A(\Gamma, X, \mu)$ recall from \cref{22}, (1) the definition of the sets 
\[
C_{n,k}(a)\in H([0,1]^{n\times k\times k}),
\]
where by $H([0,1]^{n\times k\times k})$ we denote the hyperspace of compact subsets of the cube $[0,1]^{n\times k\times k}$, equipped with the Vietoris topology, which is induced by the Hausdorff metric $d_H$ in this hyperspace. By \cref{22}, (1) the map 
\[
\undertilde{a}\mapsto (C_{n,k}(a))_{n,k}\in \prod_{n,k} H([0,1]^{n\times k\times k})
\]
is a bijection of $\undertilde{A}(\Gamma, X, \mu)$ with a subset of this product space and we define the topology of $\undertilde{A}(\Gamma, X, \mu)$ by transferring the relative topology of this subset back to $\undertilde{A}(\Gamma, X, \mu)$ by this bijection. 
We now have:

\begin{thm}[{\cite[Theorem 1]{AE1}}]\label{AE1}
The space $\undertilde{A}(\Gamma, X, \mu)$ is compact, metrizable.
\end{thm}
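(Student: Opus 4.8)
The plan is to exhibit the canonical injection
\[
\Phi\colon \undertilde{A}(\Gamma, X, \mu)\to P:=\prod_{n,k} H([0,1]^{n\times k\times k}),\qquad \undertilde{a}\mapsto (C_{n,k}(a))_{n,k},
\]
which is well defined and injective by the reformulation in \cref{22}, (1), as a homeomorphism onto a \emph{closed} subset of $P$. Recall that for a compact metric space $M$ the hyperspace $H(M)$ of nonempty compact subsets is itself compact and metrizable under the Hausdorff metric $d_H$; since $[0,1]^{n\times k\times k}$ is compact metric and each $C_{n,k}(a)$ is a nonempty compact set, every factor is a genuine compact metrizable hyperspace, and as the index set is countable, $P$ is compact and metrizable. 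By its very definition the topology on $\undertilde{A}(\Gamma, X, \mu)$ is transported from the relative topology of $\Phi(\undertilde{A}(\Gamma, X, \mu))\subseteq P$, so $\undertilde{A}(\Gamma, X, \mu)$ is automatically metrizable and homeomorphic to its image. Everything therefore reduces to showing that $\Phi(\undertilde{A}(\Gamma, X, \mu))$ is closed in $P$, for a closed subset of a compact metrizable space is compact.

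As $P$ is metrizable it suffices to prove sequential closedness. So suppose $a_m\in A(\Gamma, X, \mu)$ satisfy $\Phi(\undertilde{a_m})\to(\calc_{n,k})_{n,k}$, i.e. $C_{n,k}(a_m)\to\calc_{n,k}$ in $d_H$ for every $n,k$, and let me produce an action whose image is $(\calc_{n,k})_{n,k}$. Fix a non-principal ultrafilter $\mathcal{U}$ on $\bbN$ and form the ultrapower $a_\mathcal{U}=\prod_m a_m/\mathcal{U}$, an action on a non-atomic (in general non-standard) probability space. The crux is the identity
\[
C_{n,k}(a_\mathcal{U}) = \lim_{m\to\mathcal{U}} C_{n,k}(a_m)\qquad(\text{Hausdorff } \mathcal{U}\text{-limit in } H([0,1]^{n\times k\times k})),
\]
where the right-hand side exists because the hyperspace is compact. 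For the inclusion $\subseteq$ one represents any Borel partition $\bar B=(B_0,\dots,B_{k-1})$ of the ultrapower by sequences of partitions $\bar B^m=(B_0^m,\dots,B_{k-1}^m)$ of the coordinate spaces and uses $\mu_\mathcal{U}(\gamma_l^{a_\mathcal{U}}(B_i)\cap B_j)=\lim_{m\to\mathcal{U}}\mu_m(\gamma_l^{a_m}(B_i^m)\cap B_j^m)$, so that $M_{n,k}^{\bar B}(a_\mathcal{U})=\lim_{m\to\mathcal{U}}M_{n,k}^{\bar B^m}(a_m)$ lies in the $\mathcal{U}$-limit; for $\supseteq$ one takes a point presented as $\lim_{m\to\mathcal{U}}M_{n,k}^{\bar A^m}(a_m)$ and assembles the $\bar A^m$ into a partition of the ultrapower realizing it.

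Granting this identity, convergence of the sequence forces $\lim_{m\to\mathcal{U}}C_{n,k}(a_m)=\calc_{n,k}$ for every $n,k$, whence $C_{n,k}(a_\mathcal{U})=\calc_{n,k}$. Finally, by \cite[Theorem A]{C}, every measure preserving action on a non-atomic probability space is weakly equivalent to one on a standard non-atomic space; applied to $a_\mathcal{U}$ this yields an action $a\in A(\Gamma, X, \mu)$ with $a\simeq a_\mathcal{U}$. Since weak equivalence preserves all the sets $C_{n,k}$ by \cref{22}, (1), we get $C_{n,k}(a)=\calc_{n,k}$ for all $n,k$, so $(\calc_{n,k})_{n,k}=\Phi(\undertilde{a})$ belongs to the image. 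Hence $\Phi(\undertilde{A}(\Gamma, X, \mu))$ is sequentially closed, thus closed and compact, and the theorem follows.

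I expect the main obstacle to be the careful verification of the ultrapower identity for $C_{n,k}$: lifting an arbitrary partition of the ultrapower to honest partitions of the coordinate spaces, and controlling the closures and approximations in the definition of $C_{n,k}$ so that the two inclusions go through cleanly. Everything else — the compact metrizability of $P$, the passage from ``closed'' to ``sequentially closed'', and the final descent to a standard space via \cite{C} — is routine once that identity is in hand.
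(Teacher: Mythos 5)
Your proposal is correct and follows essentially the same route as the paper's own proof in Appendix A: both arguments hinge on forming the ultraproduct $\prod_m a_m/\mathcal{U}$ and verifying the key identity $C_{n,k}\left(\prod_m a_m/\mathcal{U}\right) = \lim_{m\to\mathcal{U}} C_{n,k}(a_m)$, and then descending from the (non-standard) ultraproduct to an action on a standard probability space realizing the same sets $C_{n,k}$. The only real difference is that descent step: you outsource it to Carderi's theorem \cite[Theorem A]{C} (quoted in Section 2 of the paper) plus the extension of the $C_{n,k}$-characterization of weak equivalence to non-standard spaces, whereas the paper does it by hand, taking the factor of the ultraproduct corresponding to a countably generated, non-atomic, invariant sub-$\sigma$-algebra containing a countable dense family of partition pieces --- a self-contained construction that never needs the implication $a \simeq a_\mathcal{U} \Rightarrow C_{n,k}(a) = C_{n,k}(a_\mathcal{U})$ for actions on arbitrary probability spaces.
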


We can define a compatible metric for this topology as follows: Put for $a,b\in A(\Gamma, X, \mu)$:

\[
\delta(a,b) = \sum_{n,k}\frac{1}{2^{n+k}} d_H (C_{n,k}(a), C_{n,k}(b)).
\]
Then $\delta$\index{$\delta$} is a pseudometric on $A(\Gamma, X, \mu)$ and $\delta(a,b) = 0 \iff a\simeq b$. Thus $\delta$ descends to a metric $\undertilde{\delta}$\index{$\undertilde{\delta}$} on $\undertilde{A}(\Gamma, X, \mu)$. This gives the topology of $\undertilde{A}(\Gamma, X, \mu)$. 

We will give the proof of \cref{AE1} in Appendix A, \cref{app}.

One can describe limits of sequences in $\undertilde{A}(\Gamma, X, \mu)$ as follows:

\begin{thm}[{\cite[Theorem 2.22]{C}}]\label{seq}
Let $\undertilde{a}_n,\undertilde{a}\in \undertilde{A}(\Gamma, X, \mu), n\in \bbN$, and let $\mathcal{U}$ be a non-proncipal ultrafilter on $\bbN$. If $\lim_{n\to \mathcal{U}} \undertilde{a}_n = \undertilde{a}$, then $a \simeq \prod_n a_n /\mathcal{U}$.

In particular the following are equivalent:
\begin{enumerate}[\upshape (i)]
\item $\undertilde{a}_n \to \undertilde{a}$.
\item $\prod_n a_n /\mathcal{U} \simeq a$, for every non-principal ultrafilter $\mathcal{U}$ on $\bbN$.

\end{enumerate}
\end{thm}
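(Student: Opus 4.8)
The plan is to reduce the whole statement to a single identity relating the Ab\'ert--Elek coordinates of an ultraproduct to the ultralimits of the coordinates of its factors. Writing the sequence as $(a_m)_{m\in\bbN}$ (I use $m$ for the sequence index to avoid clashing with the $n$ appearing in $C_{n,k}$), the key claim is
\[
C_{n,k}\Bigl(\prod_m a_m/\mathcal{U}\Bigr) = \lim_{m\to\mathcal{U}} C_{n,k}(a_m)\qquad\text{for all } n,k,
\]
where the right-hand side is the ultralimit with respect to $\mathcal{U}$ of the compact sets $C_{n,k}(a_m)$, taken in the hyperspace $H([0,1]^{n\times k\times k})$; this ultralimit exists because the hyperspace is compact metrizable. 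Call this the Key Lemma. Granting it, the first assertion is immediate: since the topology of $\undertilde{A}(\Gamma, X, \mu)$ is by definition the one induced by the coordinatewise embedding $\undertilde{a}\mapsto (C_{n,k}(a))_{n,k}$ into $\prod_{n,k} H([0,1]^{n\times k\times k})$, the hypothesis $\lim_{m\to\mathcal{U}}\undertilde{a}_m=\undertilde{a}$ is equivalent to $\lim_{m\to\mathcal{U}} C_{n,k}(a_m)=C_{n,k}(a)$ for every $n,k$; combining with the Key Lemma yields $C_{n,k}(\prod_m a_m/\mathcal{U})=C_{n,k}(a)$ for all $n,k$, which by \cref{22}, (1) is exactly $\prod_m a_m/\mathcal{U}\simeq a$.

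To prove the Key Lemma I would realize $c=\prod_m a_m/\mathcal{U}$ on the ultraproduct space $(Z,\rho)=\prod_m (X_m,\mu_m)/\mathcal{U}$ and invoke the standard description of its measure algebra as the metric (tracial) ultraproduct of the $\mathrm{MALG}_{\mu_m}$, see \cite[Section 4]{CKT-D} and \cite{C}: every element of $\mathrm{MALG}_\rho$ is represented by a $\mathcal{U}$-sequence $[(E_m)_m]_{\mathcal{U}}$, and $\rho(\gamma^c(E_i)\cap E_j)=\lim_{m\to\mathcal{U}}\mu_m(\gamma^{a_m}(E_i^m)\cap E_j^m)$ for all $\gamma,i,j$. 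For the inclusion $\subseteq$, take a partition $\bar B=(B_0,\dots,B_{k-1})$ of $Z$, choose representatives so that $(B_0^m,\dots,B_{k-1}^m)$ partitions $X_m$ for $\mathcal{U}$-almost every $m$, and note that each coordinate of $M_{n,k}^{\bar B}(c)$ is the $\mathcal{U}$-limit of the corresponding coordinate of $M_{n,k}^{\bar B^m}(a_m)\in C_{n,k}(a_m)$; hence $M_{n,k}^{\bar B}(c)\in\lim_{m\to\mathcal{U}}C_{n,k}(a_m)$, and passing to closures gives $C_{n,k}(c)\subseteq\lim_{m\to\mathcal{U}}C_{n,k}(a_m)$. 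For $\supseteq$, I use the hyperspace fact that $p\in\lim_{m\to\mathcal{U}}C_{n,k}(a_m)$ iff $p=\lim_{m\to\mathcal{U}}p_m$ for some $p_m\in C_{n,k}(a_m)$; replacing each $p_m$ by a genuine partition value within distance $1/m$ (possible since $C_{n,k}(a_m)$ is the closure of such values) we may assume $p_m=M_{n,k}^{\bar B^m}(a_m)$ for partitions $\bar B^m$ of $X_m$, and then $\bar B=([(B_i^m)_m]_{\mathcal{U}})_{i<k}$ is a partition of $Z$ with $M_{n,k}^{\bar B}(c)=p$, so $p\in C_{n,k}(c)$.

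For the ``in particular'' equivalence I would invoke the standard fact that in a compact metrizable space a sequence converges to a point iff every non-principal $\mathcal{U}$-ultralimit of it equals that point. Thus (i) holds iff $\lim_{m\to\mathcal{U}}\undertilde{a}_m=\undertilde{a}$ for every non-principal $\mathcal{U}$, which by the first part is precisely $\prod_m a_m/\mathcal{U}\simeq a$ for every non-principal $\mathcal{U}$, i.e.\ (ii). Concretely, for (ii)$\Rightarrow$(i) one argues contrapositively: if $\undertilde{a}_m\not\to\undertilde{a}$ there are $\epsilon>0$ and an infinite $S$ with $\undertilde{\delta}(\undertilde{a}_m,\undertilde{a})\ge\epsilon$ for $m\in S$; choosing any non-principal $\mathcal{U}\ni S$ gives $\lim_{m\to\mathcal{U}}\undertilde{a}_m\ne\undertilde{a}$, contradicting the first part together with (ii).

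The main obstacle is the Key Lemma, and within it the two ultraproduct facts: that every measurable set of $Z$ is realized by a $\mathcal{U}$-sequence of measurable sets of the $X_m$, and that the relevant intersection measures (after applying group elements) are computed as $\mathcal{U}$-limits, a \L o\'s-type principle. Both are part of the ultraproduct machinery of \cite[Section 4]{CKT-D} and \cite{C}; the remaining ingredients --- existence of hyperspace ultralimits, their characterization via $\mathcal{U}$-convergent selections, and the bookkeeping of the two closures in the definition of $C_{n,k}$ --- are routine once that machinery is available.
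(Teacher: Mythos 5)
Your proposal is correct and takes essentially the same approach as the paper: the paper defers this result to Carderi, but the identity at the heart of your argument, $C_{n,k}\bigl(\prod_m a_m/\mathcal{U}\bigr)=\lim_{m\to\mathcal{U}}C_{n,k}(a_m)$, is exactly the fact the paper itself invokes (labelled ``easy to check'') in the Appendix A proof of \cref{AE1}, and your deduction of the theorem from it is the intended one. Your supporting details --- the ultraproduct description of the measure algebra, the selection characterization of hyperspace ultralimits, and the compactness/uniqueness-of-ultralimits argument for the equivalence of (i) and (ii) --- are all sound.
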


We next discuss an alternative description of this topology, coming from \cite{T-D1}. From \cref{25}, the map $\undertilde{a}\mapsto \overline{E(a, 2^\bbN)}$ is a bijection from $\undertilde{A}(\Gamma, X, \mu)$ to a subset of the hyperspace of compact subsets of $M_s((2^\bbN)^\Gamma)$, which is of course equipped with the weak* topology. In \cite[Theorem 5.1]{T-D1} it is shown that if we transfer the relative topology of this subset back to $\undertilde{A}(\Gamma, X, \mu)$ via this bijection, we obtain a compact, metrizable topology on $\undertilde{A}(\Gamma, X, \mu)$. It was shown in \cite[Theorem 3.1]{Bu} that this coincides with the topology defined before

Finally another description of this topology can be given using the concepts of the model theory of metric structures discussed in \cref{22}, (5). Let $(\varphi_n)$ be a sequence of infimum sentences such that for every $\epsilon >0$ and each infimum sentence $\varphi$, there is $n \in \bbN$ such that for {\it every} $a\in A(\Gamma, X, \mu)$, $|\varphi^a - \varphi_n^a|< \epsilon$, see \cite[Section 6]{BYBHU}. Then we have for $a,b \in A(\Gamma, X, \mu )$,
\[
a\simeq b \iff \forall n (\varphi_n^a = \varphi_n^a),
\]
and therefore the map
\[
\pi\colon\undertilde{a}\mapsto (\varphi_n^a)\in [0,1]^\bbN
\]
is a bijection of $\undertilde{A}(\Gamma, X, \mu)$ with a subset $\Phi$ of $[0,1]^\bbN$. Therefore one can define a new topology on $\undertilde{A}(\Gamma, X, \mu)$ by transferring the relative topology of $\Phi$ to $\undertilde{A}(\Gamma, X, \mu)$. One next shows, by an argument similar to that in \cref{app}, that this subset is closed in $[0,1]^\bbN$ and therefore this new topology on $\undertilde{A}(\Gamma, X, \mu)$ is compact. Finally, the inverse map $\pi^{-1}\colon \Phi \to \undertilde{A}(\Gamma, X, \mu)$ can be shown (using, for example, \cref{seq}) to be continuous to the topology on $\undertilde{A}(\Gamma, X, \mu)$ introduced in the beginning of \cref{101},  thus our new topology coincides with that one.





\begin{remark}
{\rm Instead of the sentences $(\varphi_n )$ as above, one could also use the sentences $(\varphi_{n,k,r})$ as in \cref{211}, with $r$ taking only rational values.
}

\end{remark}

The following result relates the topology of $\undertilde{A}(\Gamma, X, \mu)$ with its quotient topology (from $A(\Gamma, X, \mu)$).

\begin{thm}[{\cite[Theorem 5.7]{T-D1}}]\label{102}
The topology of $\undertilde{A}(\Gamma, X, \mu)$ is strictly bigger (finer) than the quotient topology. The map $a\in A(\Gamma, X, \mu)\mapsto \undertilde{a}\in \undertilde{A}(\Gamma, X, \mu)$  is open and Baire class 1 but not continuous.
\end{thm}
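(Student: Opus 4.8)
The substantive step is to show that the map $q\colon a\mapsto\undertilde{a}$ is open, and everything else follows from it by soft arguments; I would then treat the Baire class $1$ assertion separately. For openness I would first record that the overlap‑matrix neighborhoods
\[
N(\bar A,n,\epsilon)=\{a'\colon |\mu(\gamma_l^{a'}(A_i)\cap A_j)-\mu(\gamma_l^{a}(A_i)\cap A_j)|<\epsilon,\ l<n,\ i,j<k\},
\]
where $\bar A=\{A_0,\dots,A_{k-1}\}$ is a partition, form a neighborhood base at $a$ in the weak topology. This is not automatic, since matching the matrix $M_{n,k}^{\bar A}$ controls only measures of intersections and not symmetric differences; the device is, given an arbitrary basic neighborhood cut out by a partition $\bar C$ and elements $\gamma_0,\dots,\gamma_{n-1}$, to take $\bar A$ refining both $\bar C$ and the (fixed) partition generated by the sets $\gamma_l^{a}(C_i)$. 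A short computation then shows that smallness of $|M_{n,k}^{\bar A}(a')-M_{n,k}^{\bar A}(a)|$ forces each $\mu(\gamma_l^{a'}(C_i)\,\Delta\,\gamma_l^{a}(C_i))$ to be small, so $N(\bar A,n,\epsilon)$ lies inside the given neighborhood. Granting this, let $U$ be open, $a\in U$, and fix $N(\bar A,n,\epsilon)\subseteq U$ with $k=|\bar A|$. If $\undertilde{b}$ is close enough to $\undertilde{a}$ in the topology of $\undertilde{A}(\Gamma,X,\mu)$ that $d_H(C_{n,k}(b),C_{n,k}(a))<\epsilon/2$, then $C_{n,k}(b)$ has a point within $\epsilon/2$ of $M_{n,k}^{\bar A}(a)$; realizing this point by a partition $\bar B$ and transferring $\bar B$ onto $\bar A$ by an element of ${\rm Aut}(X,\mu)$ (after identifying the underlying spaces and adjusting the pieces slightly so the measures match exactly) produces a conjugate $b'\simeq b$ with $M_{n,k}^{\bar A}(b')$ within $\epsilon$ of $M_{n,k}^{\bar A}(a)$. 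Hence $b'\in N(\bar A,n,\epsilon)\subseteq U$, so $\undertilde{b}=\undertilde{b'}\in q(U)$; this places a neighborhood of $\undertilde{a}$ inside $q(U)$, proving $q(U)$ open.

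Openness immediately yields that the topology of $\undertilde{A}(\Gamma,X,\mu)$ refines the quotient topology: if $V$ is quotient‑open then $q^{-1}(V)$ is open in $A(\Gamma,X,\mu)$, so $V=q(q^{-1}(V))$ is open in the topology of $\undertilde{A}(\Gamma,X,\mu)$. For the failure of continuity I would invoke the density of the conjugacy class of $a_{\infty,\Gamma}$ (the characterization following \cref{max}): choose conjugates $a_j\cong a_{\infty,\Gamma}$ with $a_j\to i_\Gamma$ weakly. Then $\undertilde{a}_j=\undertilde{a}_{\infty,\Gamma}$ is constant and hence converges to $\undertilde{a}_{\infty,\Gamma}$ in the (metrizable, so Hausdorff) topology of $\undertilde{A}(\Gamma,X,\mu)$, whereas $i_\Gamma\not\simeq a_{\infty,\Gamma}$ because $a_{\infty,\Gamma}$ is free (see after \cref{fre}) while $i_\Gamma$ is not, and freeness is a weak‑equivalence invariant by \cref{fre}. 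Thus $q(a_j)\not\to q(i_\Gamma)$ and $q$ is not continuous. Since $q$ is always continuous into the quotient topology, the two topologies cannot coincide, and together with the refinement this shows the topology of $\undertilde{A}(\Gamma,X,\mu)$ is strictly finer.

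For Baire class $1$, since $\undertilde{A}(\Gamma,X,\mu)$ is second countable it suffices to show $q^{-1}$ of every open set is $F_\sigma$, and for this I would handle the Vietoris sub‑basic sets in a single coordinate, namely $\{a\colon C_{n,k}(a)\cap O\neq\emptyset\}$ and $\{a\colon C_{n,k}(a)\subseteq O\}$ for $O\subseteq[0,1]^{n\times k\times k}$ open. The first has open preimage: as $a'\mapsto M_{n,k}^{\bar A}(a')$ is continuous for fixed $\bar A$ and $C_{n,k}(a)$ is the closure of these points, $a\mapsto C_{n,k}(a)$ is lower semicontinuous in the Vietoris sense. For the second, write $O=\bigcup_m {\rm int}(K_m)$ as an increasing union of compacta; compactness of $C_{n,k}(a)$ gives $\{C_{n,k}(a)\subseteq O\}=\bigcup_m\{C_{n,k}(a)\cap({\rm int}\,K_m)^c=\emptyset\}$, and for any closed $C$ one has $\{C_{n,k}(a)\cap C\neq\emptyset\}=\bigcap_m\{C_{n,k}(a)\cap\{d(\cdot,C)<1/m\}\neq\emptyset\}$, a $G_\delta$ by the first case, so its complement is $F_\sigma$ and the displayed union is $F_\sigma$. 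Since $F_\sigma$ is closed under finite intersections and countable unions, preimages of all open sets are $F_\sigma$, giving Baire class $1$. The main obstacle is the openness step, specifically the realization principle that abstract Hausdorff‑closeness of the invariants $C_{n,k}$ can always be witnessed by genuine weak‑topology closeness of suitably chosen conjugate representatives; making this precise requires both the cofinality of the overlap‑matrix neighborhoods and a careful measure‑theoretic transfer of partitions with control of the propagated errors.
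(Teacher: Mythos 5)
Your proposal is correct, but it is organized in essentially the opposite way from the paper's treatment, which is worth spelling out. The paper does not reprove this theorem: it cites \cite{T-D1} for the statement (in particular for the assertion that the topology of $\undertilde{A}(\Gamma,X,\mu)$ refines the quotient topology, and for Baire class $1$), and supplies only a short argument for openness that is logically conditional on that refinement. Namely, granting the first statement, quotient-open sets are open in the finer topology, so it suffices to show the map is open with respect to the quotient topology; by \cref{t1}, weak equivalence is the orbit-closure relation $\overline{{\rm Aut}(X,\mu)\cdot a}=\overline{{\rm Aut}(X,\mu)\cdot b}$ of the conjugation action, and for any group $G$ acting by homeomorphisms on a space $P$ the quotient map of the orbit-closure relation is quotient-open, since $\pi^{-1}(\pi(U))=G\cdot U$ is open. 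You run the implication the other way: you prove openness \emph{unconditionally}, directly from the $C_{n,k}$ description of the topology in \cref{22} (overlap-matrix neighborhoods are cofinal among weak-topology neighborhoods via the refining-partition device; Hausdorff-closeness of $C_{n,k}(b)$ to $C_{n,k}(a)$ is witnessed by a conjugate $b'$ landing in a prescribed overlap-matrix neighborhood), and then you \emph{deduce} the refinement statement from openness via $V=q(q^{-1}(V))$. Your non-continuity argument (conjugates of $a_{\infty,\Gamma}$, whose conjugacy class is dense by the remark after \cref{max}, converging weakly to $i_\Gamma$, the two being separated by the freeness invariant of \cref{fre}) and your $F_\sigma$-preimage computation for the Vietoris sub-basic sets supply the parts of the theorem the paper leaves entirely to the citation. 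What the paper's route buys is brevity — two soft observations — at the price of resting on the unproved (there) refinement statement; what your route buys is a self-contained proof of all three assertions, at the price of the quantitative bookkeeping you rightly identify as the crux: row sums of the matrices control $|\mu(B_i)-\mu(A_i)|$ only up to a factor of $k$, the mass-moving needed to equalize measures before conjugating $\bar B$ onto $\bar A$ perturbs entries by a further factor of order $k^2$, so the initial Hausdorff tolerance must be shrunk accordingly; these propagated constants are harmless, and the argument is sound.
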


Using the first statement of \cref{102}, note that to prove that this map is open, it is enough to show that it is open with respect to the quotient topology. In view of \cref{t1}, this is a consequence of the following general fact: If a group $G$ acts by homeomorphisms on a topological space $P$ and we define the following equivalence relation on $P$, $x\sim y \iff \overline{G\cdot x} = \overline{G\cdot y}$, then the map $\pi (x) = [x]_\sim$ is open with respect to the quotient topology on $P/ \!\! \sim$. Indeed, if $U\subseteq P$ is open, then $\pi^{-1}(\pi (U)) = G\cdot U$, which is open in $P$.

\begin{cor}\label{1050}
If $\undertilde{a}_n \to \undertilde{a}$, there is a sequence $n_0 < n_1 < \dots$ and for each $i$ an action $b_{n_i}\cong a_{n_i}$ such that $b_{n_i}\to a$.
\end{cor}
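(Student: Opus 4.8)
The plan is to derive this as a purely topological consequence of the openness of the quotient map $\pi\colon a\mapsto\undertilde a$ established in \cref{102}, exploiting the explicit description of saturated open sets recorded in the paragraph following it. The feature I want to use is that for open $U\subseteq A(\Gamma,X,\mu)$ one has $\pi^{-1}(\pi(U)) = {\rm Aut}(X,\mu)\cdot U$; this is exactly what will allow me to lift along genuine conjugates rather than merely along weakly equivalent actions.

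First I would fix a representative $a$ of the class $\undertilde a$ and, using that $A(\Gamma,X,\mu)$ is Polish (hence first countable), choose a decreasing neighborhood basis $V_1\supseteq V_2\supseteq\cdots$ of $a$ with $\bigcap_k V_k=\{a\}$, for instance the open balls of radius $1/k$ about $a$ in a compatible metric. By \cref{102} each $\pi(V_k)$ is open in $\undertilde A(\Gamma,X,\mu)$ and contains $\undertilde a$. Since the topology on $\undertilde A(\Gamma,X,\mu)$ is metrizable by \cref{AE1} and $\undertilde a_n\to\undertilde a$, for each fixed $k$ the membership $\undertilde a_n\in\pi(V_k)$ holds for all sufficiently large $n$. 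I would then extract indices $n_0<n_1<\cdots$ with $\undertilde a_{n_k}\in\pi(V_k)$ for every $k$.

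The decisive step is to convert $\undertilde a_{n_k}\in\pi(V_k)$ into a conjugate of $a_{n_k}$ lying inside $V_k$. By the displayed identity, $\undertilde a_{n_k}\in\pi(V_k)$ is equivalent to $a_{n_k}\in\pi^{-1}(\pi(V_k))={\rm Aut}(X,\mu)\cdot V_k$, so there is $T_k\in{\rm Aut}(X,\mu)$ with $T_k^{-1}\cdot a_{n_k}\in V_k$. Setting $b_{n_k}=T_k^{-1}\cdot a_{n_k}$, the ${\rm Aut}(X,\mu)$-action being by conjugation makes $T_k^{-1}$ an isomorphism of $a_{n_k}$ onto $b_{n_k}$, so $b_{n_k}\cong a_{n_k}$; and $b_{n_k}\in V_k$ together with $\bigcap_k V_k=\{a\}$ forces $b_{n_k}\to a$. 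This yields the required subsequence and conjugates.

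The main obstacle I anticipate is not the sequential extraction, which is the standard consequence of openness for maps between first countable spaces, but rather the gap between weak equivalence and honest conjugacy: a naive use of openness of $\pi$ would only place $b_{n_k}$ in the fiber $\pi^{-1}(\undertilde a_{n_k})$, i.e. would only give $b_{n_k}\simeq a_{n_k}$, whereas the statement demands $b_{n_k}\cong a_{n_k}$. What makes the stronger conclusion available with no extra effort is precisely the saturation identity $\pi^{-1}(\pi(U))={\rm Aut}(X,\mu)\cdot U$, which already encodes that belonging to the saturation of an open ball means having a conjugate in that ball. Hence the whole argument reduces to unwinding that identity, and the remaining verifications are routine.
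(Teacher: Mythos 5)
Your proof is correct and is essentially the paper's own argument: the paper's one-line justification of \cref{1050} uses exactly the openness of the map $a\mapsto\undertilde{a}$ from \cref{102}, and the upgrade from mere weak equivalence to honest conjugacy rests on the same saturation identity $\pi^{-1}(\pi(U))={\rm Aut}(X,\mu)\cdot U$ recorded in the paragraph preceding the corollary. You have simply made explicit the routine extraction along shrinking balls that the paper leaves implicit.
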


To see this, notice that, since the map $a\mapsto \undertilde{a}$ is open, for each open set $U$ containing $a$, we have that for all large enough $n$, there is an action $b_n \cong a_n$ with $b_n \in U$.

By \cref{7.1} we have a well defined function 
\[
{\rm type}\colon \undertilde{A}(\Gamma, X, \mu) \to {\rm IRS}(\Gamma)
\]
given by ${\rm type}(\undertilde{a}) = {\rm type}(a)$.

\begin{thm}[{\cite[Theorem 5.2, (2)]{T-D1}}]\label{10.6}
 The function ${\rm type}\colon \undertilde{A}(\Gamma, X, \mu) \to {\rm IRS}(\Gamma)$ is continuous.
\end{thm}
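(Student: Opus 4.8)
The plan is to use the metrizability of $\undertilde{A}(\Gamma,X,\mu)$ from \cref{AE1}, which reduces continuity of ${\rm type}$ to sequential continuity, and then to compute the type of an ultraproduct directly. First I would record that ${\rm type}(a)$ is completely determined by the \emph{fixed-point measures}
\[
g_S(a) = \mu\Big(\bigcap_{\gamma\in S}\{x : \gamma^a(x) = x\}\Big), \qquad S\subseteq\Gamma \text{ finite}.
\]
Indeed, a continuous function $f$ on ${\rm Sub}(\Gamma)\subseteq 2^\Gamma$ is uniformly approximable by functions depending only on coordinates in a finite set $F\subseteq\Gamma$, such a function is a linear combination of the indicators of $\{H : H\cap F = P\}$ for $P\subseteq F$, and $\int \mathbf{1}_{\{H\cap F=P\}}\,d\,{\rm type}(a) = \mu\{x : {\rm stab}_a(x)\cap F = P\}$ is, by inclusion--exclusion, a signed sum of the numbers $g_S(a)$ with $P\subseteq S\subseteq F$. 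Since ${\rm IRS}(\Gamma)$ is compact metrizable with the weak$^*$ topology, it follows that, for a sequence $\undertilde{a}_m\to\undertilde{a}$, one has ${\rm type}(a_m)\to{\rm type}(a)$ if and only if $g_S(a_m)\to g_S(a)$ for every finite $S$.

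The reduction then goes through ultraproducts. Fix a non-principal ultrafilter $\mathcal{U}$ on $\bbN$ and set $a_\mathcal{U} = \prod_m a_m/\mathcal{U}$. Given $\undertilde{a}_m\to\undertilde{a}$, \cref{seq} yields $a\simeq a_\mathcal{U}$, and hence ${\rm type}(a) = {\rm type}(a_\mathcal{U})$ by \cref{7.1}. The heart of the argument is the \emph{key lemma}: ${\rm type}(a_\mathcal{U}) = \lim_{m\to\mathcal{U}}{\rm type}(a_m)$, equivalently $g_S(a_\mathcal{U}) = \lim_{m\to\mathcal{U}} g_S(a_m)$ for every finite $S$. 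Granting this, we obtain ${\rm type}(a) = \lim_{m\to\mathcal{U}}{\rm type}(a_m)$ for \emph{every} non-principal $\mathcal{U}$; and in a compact metric space a sequence all of whose ultralimits equal a common value $\ell$ must converge to $\ell$ (otherwise a subsequence staying $\epsilon$-away would be captured by some ultrafilter, producing an ultralimit at distance $\geq\epsilon$ from $\ell$). Thus ${\rm type}(a_m)\to{\rm type}(a)$, which is what we want.

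To prove the key lemma I would reduce it to the identity $\{x : \gamma^{a_\mathcal{U}}(x)=x\} = [\,\{x : \gamma^{a_m}(x)=x\}\,]$ in the ultraproduct measure algebra, where $[\,\cdot\,]$ denotes the internal set associated to a sequence; since the Boolean operations in the ultraproduct are computed coordinatewise and $\mu_\mathcal{U}([A_m]) = \lim_{m\to\mathcal{U}}\mu_m(A_m)$, taking the finite intersection over $\gamma\in S$ then gives $g_S(a_\mathcal{U}) = \lim_{m\to\mathcal{U}} g_S(a_m)$. Writing $F_m = \{x:\gamma^{a_m}(x)=x\}$, the inclusion $[F_m]\subseteq\{x:\gamma^{a_\mathcal{U}}(x)=x\}$ is immediate, since $\gamma^{a_m}$ fixes every subset of $F_m$ pointwise. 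For the reverse inclusion, suppose $[A_m]$ is fixed pointwise by $\gamma^{a_\mathcal{U}}$ yet $\lim_{m\to\mathcal{U}}\mu_m(A_m\setminus F_m) = \delta>0$. The complement $Z_m = X\setminus F_m$ is $\gamma^{a_m}$-invariant and $\gamma^{a_m}$ is fixed-point free on it, so the graph on $Z_m$ with edges $\{x,\gamma^{a_m}(x)\}$ has degree at most $2$ and no loops; its Borel chromatic number is bounded by an absolute constant, yielding a Borel set $B_m\subseteq A_m\setminus F_m$ with $\gamma^{a_m}(B_m)\cap B_m=\emptyset$ and $\mu_m(B_m)\geq c_0\,\mu_m(A_m\setminus F_m)$ for a fixed $c_0>0$. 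Then $[B_m]\leq[A_m]$ has measure $\geq c_0\delta>0$, while $\mu_m(\gamma^{a_m}(B_m)\,\Delta\, B_m)=2\mu_m(B_m)$ shows $\gamma^{a_\mathcal{U}}$ moves $[B_m]$, contradicting that it fixes $[A_m]$ pointwise.

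The main obstacle is precisely this key lemma, specifically the nontrivial inclusion establishing that fixed-point sets pass to ultraproducts; the combinatorial extraction of a positive-measure set $B_m$ on which $\gamma^{a_m}$ has no fixed points is what supplies the quantitative control. It is worth noting why a purely ``soft'' approach does not suffice: using the continuous-logic description of the topology from \cref{22}(5) together with \cref{contlog}, each $g_S$ is an infimum of infimum-formula values and is therefore readily seen to be upper semicontinuous on $\undertilde{A}(\Gamma,X,\mu)$, giving $\limsup_m g_S(a_m)\leq g_S(a)$ for free; it is the matching lower bound $\liminf_m g_S(a_m)\geq g_S(a)$ that requires genuine work, and the ultraproduct computation above delivers exactly this missing inequality.
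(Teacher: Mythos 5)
First, a structural point: this survey contains no proof of \cref{10.6} at all --- it is cited to \cite[Theorem 5.2, (2)]{T-D1} --- so your proposal can only be measured against the paper's toolkit, not against an in-paper argument. Measured that way, most of what you do is sound and well aligned with that toolkit. The reduction of weak$^*$ convergence in ${\rm IRS}(\Gamma)$ to convergence of the fixed-point measures $g_S$ (inclusion--exclusion over clopen cylinders) is correct; the compactness/ultralimit bookkeeping is correct; and your key lemma is genuinely the heart of the matter: the identity between ${\rm Fix}_{a_\mathcal{U}}(\gamma)$ and the internal set $[\,{\rm Fix}_{a_m}(\gamma)\,]$ is exactly the ultraproduct form of the fact that the paper's own proof of \cref{7.1} (Appendix C, \cref{appB}) invokes for ultrapowers with the words ``it is easy to check,'' and your Borel $3$-coloring argument is a correct proof of the nontrivial inclusion.

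There is, however, a genuine gap at the gluing step ``$a\simeq a_\mathcal{U}$, hence ${\rm type}(a)={\rm type}(a_\mathcal{U})$ by \cref{7.1}.'' The action $a_\mathcal{U}$ lives on the Loeb space, whose measure algebra is not countably generated, whereas \cref{7.1} --- both as stated (under the paper's standing convention that actions live on standard spaces) and as proved in Appendix C --- concerns standard actions. The Appendix C proof runs through \cref{2.6}: $c\preceq d$ implies $c\sqsubseteq d_{\mathcal{U}'}$, and that factor map is assembled from a countable exhaustion of the measure algebra of the \emph{contained} action $c$ by finite partitions. For the inequality $a\preceq a_\mathcal{U}\implies g_S(a)\geq g_S(a_\mathcal{U})$ this extends routinely (the contained action $a$ is standard). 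But for the reverse inequality, $a_\mathcal{U}\preceq a\implies g_S(a_\mathcal{U})\geq g_S(a)$, the contained action is the non-separable $a_\mathcal{U}$, and the cited machinery simply does not apply. Tracing through your argument, this unproved inequality is precisely what converts your key lemma $g_S(a_\mathcal{U})=\lim_{m\to\mathcal{U}}g_S(a_m)$ into the lower bound $g_S(a)\leq\lim_{m\to\mathcal{U}}g_S(a_m)$ --- i.e., exactly the inequality you yourself single out as ``the matching lower bound that requires genuine work'' is the one your citations do not cover. The gap is repairable with tools you already use: either (1) prove monotonicity directly from the definition of weak containment, valid on arbitrary probability spaces, via the identity $1-g_S(c)=\sup\sum_j\mu(D_j)$ over finite disjoint families each satisfying $\gamma^c(D_j)\cap D_j=\emptyset$ for some $\gamma\in S$ (such families exhaust the non-fixed part by your $3$-coloring), transferring approximate witnesses to $d$ and making them exact by the fix-up $D'\mapsto D'\setminus\gamma^d(D')$; or (2) avoid non-standard actions in \cref{7.1} altogether by passing, as in Appendix A (\cref{app}), to a separable factor $a'\sqsubseteq a_\mathcal{U}$ whose generating algebra contains both witnesses for all the sets $C_{n,k}(a_\mathcal{U})$ (so that $a'\simeq a$) and the internal sets $[\,{\rm Fix}_{a_m}(\gamma)\,]$ together with the coloring sets of their complements (so that $g_S(a')=\lim_{m\to\mathcal{U}}g_S(a_m)$), and then apply \cref{7.1} only to the standard pair $(a,a')$.
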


In particular, by \cref{72}, if $\Gamma$ is amenable, then ${\rm type}\colon \undertilde{A}(\Gamma, X, \mu) \to {\rm IRS}(\Gamma)$ is a homeomorphism.

Denote, for each $\theta\in {\rm IRS}(\Gamma)$, by $\undertilde{A}_\theta(\Gamma, X, \mu)$\index{$\undertilde{A}_\theta(\Gamma, X, \mu)$} the subset of $\undertilde{A}(\Gamma, X, \mu)$ consisting of all $\undertilde{a}$ of type $\theta$. In particular, for $\theta$ the Dirac measure concentrating on $\{e_\Gamma\}$, we have that $\undertilde{A}_\theta(\Gamma, X, \mu) = \undertilde{{\rm FR}}(\Gamma, X, \mu)$\index{$\undertilde{{\rm FR}}(\Gamma, X, \mu)$} is the set of free weak equivalence classes.

\begin{cor}[{\cite{AE1}, see \cite[Corollary 5.5]{T-D1}}]\label{104}
For $\theta\in {\rm IRS}(\Gamma)$, the set $\undertilde{A}_\theta(\Gamma, X, \mu)$, and thus in particular $\undertilde{\rm FR}(\Gamma, X, \mu)$, is compact.
\end{cor}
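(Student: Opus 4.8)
The plan is to observe that this corollary is an immediate consequence of two results already established in the excerpt: the compactness of the ambient space (\cref{AE1}) and the continuity of the type map (\cref{10.6}). Indeed, by the very definition of $\undertilde{A}_\theta(\Gamma, X, \mu)$ as the set of weak equivalence classes $\undertilde{a}$ with ${\rm type}(\undertilde{a}) = \theta$, we have the identification
\[
\undertilde{A}_\theta(\Gamma, X, \mu) = {\rm type}^{-1}(\{\theta\}),
\]
where ${\rm type}\colon \undertilde{A}(\Gamma, X, \mu) \to {\rm IRS}(\Gamma)$ is the continuous map of \cref{10.6}.

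First I would note that ${\rm IRS}(\Gamma)$ is metrizable (it is a compact metrizable space with the weak* topology, as recorded at the start of \cref{7}), hence Hausdorff, so the singleton $\{\theta\}$ is closed in ${\rm IRS}(\Gamma)$. Then, since ${\rm type}$ is continuous by \cref{10.6}, its preimage ${\rm type}^{-1}(\{\theta\}) = \undertilde{A}_\theta(\Gamma, X, \mu)$ is a closed subset of $\undertilde{A}(\Gamma, X, \mu)$. Finally, by \cref{AE1} the space $\undertilde{A}(\Gamma, X, \mu)$ is compact (and metrizable), and a closed subspace of a compact space is compact; this yields the compactness of $\undertilde{A}_\theta(\Gamma, X, \mu)$. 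The special case $\undertilde{\rm FR}(\Gamma, X, \mu)$ follows by taking $\theta$ to be the Dirac measure concentrated on $\{e_\Gamma\}$, using the identification noted just before the statement.

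There is essentially no substantive obstacle here, since both the continuity of ${\rm type}$ and the compactness of $\undertilde{A}(\Gamma, X, \mu)$ are assumed. The only point that requires a word of care is the closedness of $\{\theta\}$, which rests on the (routine) Hausdorffness of ${\rm IRS}(\Gamma)$; everything else is the standard fact that continuous preimages of closed sets are closed and that closed subsets of compact spaces are compact. Thus the corollary is genuinely a one-line deduction from \cref{10.6} and \cref{AE1}.
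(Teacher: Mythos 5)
Your proof is correct and is exactly the deduction the paper intends: the corollary is stated immediately after \cref{10.6} (continuity of the type map) and rests on \cref{AE1} (compactness of $\undertilde{A}(\Gamma, X, \mu)$), so that $\undertilde{A}_\theta(\Gamma, X, \mu) = {\rm type}^{-1}(\{\theta\})$ is a closed subset of a compact space, with the free case given by $\theta = \delta_{e_\Gamma}$. No discrepancy with the paper's approach.
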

For the space ${\rm FR}(\Gamma, X, \mu)$ it is known that if $\Gamma$ does not have property (T), then its ergodic elements are dense, by a result of Glasner-Weiss \cite{GW}, see, e.g., \cite[Theorem 12.2]{K}. We have an analogous result for $\undertilde{\rm FR}(\Gamma, X, \mu)$ for the free groups.

\begin{thm}[{\cite[Theorem 1.4]{Bu}}]\label{105}
If $\Gamma$ is a free group, then the free ergodic elements of $\undertilde{{\rm FR}}(\Gamma, X, \mu)$ are dense in $\undertilde{{\rm FR}}(\Gamma, X, \mu)$.
\end{thm}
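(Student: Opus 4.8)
The plan is to work with the metric $\undertilde{\delta}$ from \cref{101} and reduce the statement to a finite, quantitative approximation problem: given a free action $a$, an integer $N$, and $\epsilon>0$, it suffices to produce a \emph{free ergodic} $b$ with $d_H(C_{n,k}(a),C_{n,k}(b))<\epsilon$ for all $n,k\le N$, since the tail $\sum_{n+k>N}2^{-(n+k)}$ of $\undertilde{\delta}$ is negligible. Here $C_{n,k}$ is as in \cref{22}, (1). Thus the task is to build an ergodic action whose finite partition statistics, the points $M_{n,k}^{\bar A}$, reproduce those of $a$ up to $\epsilon$ in both directions of the Hausdorff metric. The first thing I would record about the shape of the problem is that, by \cref{38}, the only obstruction to a free action being weakly equivalent to an ergodic one is \emph{genuinely} invariant structure, whereas the statistics witnessed by merely \emph{almost}-invariant sets are invisible to the ergodic/non-ergodic distinction. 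Consequently the approximants $b$ must be taken \emph{ergodic but not strongly ergodic}, so that they can use almost-invariant sets to imitate the ergodic-decomposition structure of $a$. Such actions are available because $\Gamma$, being a free group, is non-amenable and lacks property $(\mathrm{T})$, so that $\mathrm{SERG}(\Gamma,X,\mu)\subsetneq\mathrm{ERG}(\Gamma,X,\mu)$ (Connes--Weiss); combined with density of free actions this yields free ergodic non-strongly-ergodic actions, equivalently (by \cref{3.5}) ones with $i_\Gamma\preceq\,\cdot\,$, which carry a supply of almost-invariant sets.

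The construction itself would start from the ergodic decomposition $a=\int_\Omega a_\omega\,d\mathbb{P}(\omega)$, in which $\Gamma$ acts trivially on the base $(\Omega,\mathbb{P})$. I would first approximate this decomposition by one supported on finitely many isomorphism types, which moves $C_{n,k}$ for $n,k\le N$ by less than $\epsilon$; this reduces matters to a finite convex combination $\sum_i\lambda_i a_i$ of free ergodic actions, where \cref{min} lets me realize each component's statistics and freeness of $\Gamma$ keeps everything free. To ergodize, I replace the trivial action on the index set by a free \emph{ergodic} action $e$ of $\Gamma$ on $(\Omega,\mathbb{P})$ chosen so that the pieces $\Omega_i$ of measure $\lambda_i$ are $(\{\gamma_l\}_{l<N},\delta)$-almost invariant under $e$, for a small $\delta=\delta(N,\epsilon)$, and I fiber the component $a_i$ over $\Omega_i$ (such an $e$ can be built, e.g., as $c\times s_\Gamma$ with $c$ ergodic but not strongly ergodic). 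Because each $\Omega_i$ moves only by $\delta$ in measure under each $\gamma_l$, this prescription defines a genuine measure preserving $\Gamma$-action $b$ on $\Omega\times Y$ up to a correction of total measure $O(N\delta)$, absorbable into an exact action by a standard modification on a small set; since $e$ is ergodic while the fibers are linked through it, $b$ is ergodic, and it is free because $e$ is free.

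The verification then splits into the two Hausdorff directions, both to within $O(N\delta)$ for $n,k\le N$. The lower bound, that every statistic of $\sum_i\lambda_i a_i$ is realized approximately by $b$, holds because the restriction of $b$ to each almost-invariant piece reproduces $a_i$. The upper bound, that $b$ creates no statistics not already present, holds because any Borel partition of $\Omega\times Y$ splits along the $\Omega_i$ into partitions of the individual fibers, so the corresponding point $M_{n,k}^{\bar A}(b)$ is, up to $O(N\delta)$, a combination of statistics already visible for the $a_i$. Finally, letting $N\to\infty$ and $\epsilon\to0$, and reading off convergence either directly from the definition of $\undertilde{\delta}$ or through the ultraproduct criterion of \cref{seq}, produces free ergodic classes $\undertilde{b}$ converging to $\undertilde{a}$ in $\undertilde{{\rm FR}}(\Gamma, X, \mu)$, which is the desired density.

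The hard part will be exactly this two-sided control carried out \emph{simultaneously} with ergodicity. Forcing $b$ to be ergodic requires $e$ to genuinely move the index set, which threatens the faithful reproduction of the disconnected (non-ergodic) statistics of $a$; conversely, keeping the pieces invariant enough to reproduce those statistics is in tension both with ergodicity and, when the components $a_i$ are pairwise non-isomorphic, with the demand that $b$ be an honest $\Gamma$-action, since one cannot transport $a_i$ onto a non-isomorphic $a_j$ across a piece boundary. The resolution is quantitative: over a fixed finite window $\{\gamma_l\}_{l<N}$ one needs only $\delta$-deep almost invariance of the $\Omega_i$, which an ergodic but not strongly ergodic $e$ supplies while still yielding an ergodic $b$, and the non-isomorphism of fibers costs only $O(N\delta)$ because the action crosses boundaries with small measure. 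Engineering an ergodic, not strongly ergodic, free $e$ with a \emph{prescribed} finite almost-invariant partition of preassigned masses $\lambda_i$ and depth $\delta$, by cutting and combining the almost-invariant sets coming from non-$(\mathrm{T})$-ness, is the technical core of the argument and the step I expect to require the most care.
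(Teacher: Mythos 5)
Your opening reduction (approximate over a finite window, control $d_H(C_{n,k}(a),C_{n,k}(b))$ two-sidedly) is the right frame, and the lower-bound half of your verification is fine; but the upper bound fails for a structural, not a technical, reason. Because your base $e$ is ergodic but \emph{not strongly ergodic}, its window-almost-invariant algebra does not reduce to the prescribed partition $\{\Omega_i\}$: the very method you propose for building $e$ (``cutting and combining the almost-invariant sets coming from non-(T)-ness'', or $e=c\times s_\Gamma$) produces almost-invariant sets refining each $\Omega_i$. Now a Borel partition of $\Omega_i\times Y$ is not a partition of the fiber; it is a measurable \emph{family} of partitions of $Y$ indexed by the base, and integrating its statistics over a base with diffuse almost-invariant structure yields arbitrary convex combinations of points of $C_{n,k}(a_i)$ --- i.e.\ statistics of $i_\Gamma\times a_i$, not of $a_i$. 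So what your actions actually approximate is $\undertilde{i_\Gamma\times\sum_i\lambda_i a_i}$, the \emph{stable} weak equivalence class of the target (\cref{8}), not $\undertilde{\sum_i\lambda_i a_i}$, and these differ. Concretely, take $\Gamma=\bbF_2$ and the free target $a=\frac12 s_\Gamma+\frac12 s_\Gamma$, so that $C_{n,3}(a)=\{\tfrac12 M'+\tfrac12 M'':M',M''\in C_{n,3}(s_\Gamma)\}$. Consider the ``three-phase'' point $T=\tfrac13(M_1+M_2+M_3)$, where $M_i$ is the statistic of the partition putting all mass in the $i$-th set. If $\tfrac12(M'+M'')$ is within $\epsilon$ of $T$, then summing the diagonal entries over $i$ shows every set of the partitions realizing $M'$ and $M''$ is $12\epsilon$-almost invariant over the window; by strong ergodicity of $s_\Gamma$ each such set has measure near $0$ or $1$, contradicting the entries $\mu(A_i)\approx\tfrac13$. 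Hence $T$ stays at a fixed positive distance from $C_{n,3}(a)$, while your $b$ realizes $T$ up to $O(N\delta)$ by splitting the base into three almost-invariant pieces. So $d_H(C_{n,3}(b),C_{n,3}(a))$ does not tend to $0$: your construction is structurally blind to exactly the distinction (weak versus stable weak equivalence, \cref{38}) that makes this theorem delicate.

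Two further problems, and a comparison with the real proof (the survey does not reprove the theorem; it cites \cite[Theorem 1.4]{Bu}). First, ergodicity of $b$ is asserted (``since $e$ is ergodic while the fibers are linked through it''), but on $\Omega_i\times Y$ your action is essentially $e\times a_i$, and a product of ergodic actions need not be ergodic: with your own $e=c\times s_\Gamma$, choosing $a_i$ to share a nontrivial eigenvalue with $c$ (pull back an irrational rotation under $\bbF_2\to\bbZ$ and multiply by $s_\Gamma$) gives a non-ergodic product; you would need $e$ weakly mixing, which you never arrange. Second, nothing in your argument genuinely uses that $\Gamma$ is free --- every tool you invoke exists for any non-amenable non-(T) group, and for those groups the statement is precisely the open problem posed immediately after \cref{105}; the only step where freeness is indispensable (turning your ``prescription up to $O(N\delta)$'' into an honest action by redefining generators on small sets, with no relations to check) is the step you wave off as standard. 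The proof in \cite{Bu} avoids all of this by perturbing in the \emph{uniform} metric: since $a$ is free, $s_1^a$ is aperiodic, so by the Rokhlin conjugacy lemma there is an \emph{ergodic} $U$ with $d_u(U,s_1^a)<\delta$; because $\Gamma$ is free, replacing $s_1^a$ by $U$ defines an action $b$, ergodic since one generator already acts ergodically; uniform closeness of generators moves each $C_{n,k}$ by at most $(\text{word length})\cdot\delta$ in Hausdorff distance, \emph{in both directions at once}, because one compares the same partition under both actions; finally freeness is restored by passing to $b\times s_\Gamma$, still ergodic ($s_\Gamma$ is weakly mixing), free, and a uniform perturbation of $a\times s_\Gamma\simeq a$ by \cref{td}. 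The two-sided control you rightly identify as the crux is thus obtained for free from uniform perturbation, instead of being fought for against the almost-invariant sets your construction itself introduces.
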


\begin{prob}
Is \cref{105} true for any group without property {\rm (T)}?
\end{prob}

\subsection{Continuity properties of functions} We have seen earlier various functions defined on the space $A(\Gamma, X, \mu)$ (or some of its subspaces) that are invariant under weak equivalence and thus descend to functions on $\undertilde{A}(\Gamma, X, \mu)$. We will consider here the continuity properties of such functions.

Consider first the restriction operation, see \cref{restr}. It is easy to see the following:

\begin{pro}
Let $\Delta\leq \Gamma$. The map

\[
\undertilde{a}\in \undertilde{A}(\Gamma, X, \mu)\mapsto \undertilde{a|\Delta}\in \undertilde{A}(\Delta, X, \mu)
\]
is continuous.
\end{pro}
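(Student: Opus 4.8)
The plan is to work directly with the coordinatization of the topology by the invariants $C_{n,k}$ recalled in \cref{101}. Fix enumerations $\Gamma = \{\gamma_0, \gamma_1, \dots\}$ and $\Delta = \{\delta_0, \delta_1, \dots\}$. By construction the map $\undertilde{a} \mapsto (C_{n,k}(a))_{n,k}$ is a homeomorphism of $\undertilde{A}(\Gamma, X, \mu)$ onto a subspace of $\prod_{n,k} H([0,1]^{n\times k\times k})$, and similarly for $\Delta$, where I write $C^\Delta_{n,k}$ for the invariant computed in $\undertilde{A}(\Delta, X, \mu)$. Since the target carries the relative topology from a product, continuity of $\undertilde{a} \mapsto \undertilde{a|\Delta}$ reduces to showing that, for each fixed $n,k$, the single coordinate map $\undertilde{a} \mapsto C^\Delta_{n,k}(a|\Delta)$ is continuous on $\undertilde{A}(\Gamma, X, \mu)$.

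The key observation is that restriction is implemented by a coordinate projection of the cubes. Choose $m$ large enough that $\{\delta_0, \dots, \delta_{n-1}\} \subseteq \{\gamma_0, \dots, \gamma_{m-1}\}$, and let $\sigma(l)<m$ be the index with $\gamma_{\sigma(l)} = \delta_l$ for $l<n$. Let $p\colon [0,1]^{m\times k\times k} \to [0,1]^{n\times k\times k}$ send $x$ to the point whose $(l,i,j)$ entry is $x(\sigma(l),i,j)$. For any Borel partition $\bar{A}$ of $X$ into $k$ pieces, the $(l,i,j)$ entry of $M^{\bar A}_{n,k}(a|\Delta)$ is $\mu(\delta_l^a(A_i)\cap A_j) = \mu(\gamma_{\sigma(l)}^a(A_i)\cap A_j)$, which is exactly the $(l,i,j)$ entry of $p(M^{\bar A}_{m,k}(a))$. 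Writing $S = \{M^{\bar A}_{m,k}(a)\colon \bar A\ \text{a partition into}\ k\ \text{pieces}\}$, so that $C_{m,k}(a) = \overline{S}$, the continuity of $p$ together with compactness of $\overline{S}$ gives $p(\overline{S}) = \overline{p(S)}$, and hence
\[
C^\Delta_{n,k}(a|\Delta) = \overline{p(S)} = p\big(C_{m,k}(a)\big).
\]
Finally, $p$ is $1$-Lipschitz for the sup metrics on the cubes, so the induced map $K \mapsto p(K)$ on hyperspaces is $1$-Lipschitz for the Hausdorff metrics $d_H$, in particular continuous. Composing it with the (continuous) coordinate map $\undertilde{a}\mapsto C_{m,k}(a)$ yields continuity of $\undertilde{a}\mapsto C^\Delta_{n,k}(a|\Delta)$, finishing the argument.

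The only step demanding a moment's care is the identity $C^\Delta_{n,k}(a|\Delta) = p(C_{m,k}(a))$: the inclusion $\subseteq$ is immediate from the partition-by-partition computation, while $\supseteq$ uses that the continuous image of the closure $\overline{S}$ equals the closure of the image $p(S)$. As an alternative that bypasses the hyperspace estimate altogether, one can invoke the sequential description of the topology in \cref{seq}, the space being compact metrizable hence sequential. If $\undertilde{a}_n \to \undertilde{a}$, then $\prod_n a_n/\mathcal{U} \simeq a$ for every non-principal ultrafilter $\mathcal{U}$ on $\bbN$; since the ultraproduct construction commutes with restriction, $(\prod_n a_n/\mathcal{U})|\Delta = \prod_n (a_n|\Delta)/\mathcal{U}$, and restriction respects weak equivalence by \cref{restr}, so $\prod_n (a_n|\Delta)/\mathcal{U} \simeq a|\Delta$ for every such $\mathcal{U}$. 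Applying \cref{seq} in the reverse direction to the sequence $a_n|\Delta$ gives $\undertilde{a_n|\Delta} \to \undertilde{a|\Delta}$, as required.
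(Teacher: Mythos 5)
Your proof is correct; the paper gives no argument here (the proposition is prefaced by ``It is easy to see''), and your first argument---that each invariant $C^\Delta_{n,k}(a|\Delta)$ equals the image of $C_{m,k}(a)$ under a coordinate projection of cubes, which induces a continuous ($1$-Lipschitz) map of hyperspaces, so that continuity holds coordinate-by-coordinate in the defining embedding---is precisely the direct verification the definition of the topology invites. Your alternative route via \cref{seq}, using that ultraproducts commute with restriction and that restriction respects weak equivalence (\cref{restr}), is equally sound.
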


Consider next the co-induction operation, see \cref{321}:

\begin{prob}
Let $\Gamma\leq \Delta$. Is the operation $\undertilde{a}\mapsto \undertilde{{\rm CIND}}^\Delta_\Gamma(a)$ continuous?
\end{prob}
Here $\undertilde{{\rm CIND}}^\Delta_\Gamma(a)$ is the weak equivalence class of ${\rm CIND}^\Delta_\Gamma(a)$. 
It should be pointed out also that ${\rm CIND}^\Delta_\Gamma(a)$\index{$\undertilde{{\rm CIND}}^\Delta_\Gamma(a)$} acts on the space $(X^T, \mu^T)$, where $T$ is a transversal for the left cosets of $\Gamma$ containing $e_\Gamma$.

For each $t\in T, a\in A(\Gamma, X, \mu)$, let $a_t \in A(\Gamma, X, \mu)$ be defined by $\gamma^{a_t} = (t^{-1}\gamma t)^a$. Then clearly $a\preceq b\iff a_t\preceq b_t, a\simeq b \iff a_t\simeq b_t$ and it is easy to see that the map $\undertilde{a}\mapsto \undertilde{a_t}$ is continuous on $\undertilde{A}(\Gamma, X, \mu)$. If now $\Gamma\vartriangleleft\Delta$, then ${\rm CIND}^\Delta_\Gamma(a)|\Gamma =\prod_{t\in T} a_t$ (see \cite[page 71]{K}), and in the special case where $\Gamma$ is contained in the centralizer of $T$, we have that ${\rm CIND}^\Delta_\Gamma(a)|\Gamma =\prod_{t\in T} a = a^T$, so if co-induction is a continuous operation, so is the power operation $\undertilde{a}\mapsto \undertilde{a^n}$, for $n =2,3, \dots, \bbN$, which is related to \cref{1021}.

et $\Gamma\leq \Delta$. It is shown in \cite{KQ} that for $a,b\in A(\Gamma, X, \mu)$, ${\rm type}(a) = {\rm type}(b) \implies {\rm type}({\rm CIND}_\Gamma^\Delta (a)) = {\rm type}({\rm CIND}_\Gamma^\Delta (b))$ and this is used to define and compute explicitly a co-induced IRS map
\[
 \theta\in {\rm IRS}(\Gamma)\mapsto {\rm CIND}_\Gamma^\Delta (\theta) \in {\rm IRS}(\Delta).
 \]
  Using this, it is shown in \cite{KQ} that, when $\Delta$ is amenable, then 
\[
\undertilde{{\rm CIND}}_\Gamma^\Delta \ \textrm{is continuous} \iff [\Delta:\Gamma] < \infty  \  \textrm{or} \  {\rm core} (\Gamma)  \textrm{ is trivial},
\]
where ${\rm core}(\Gamma) = \bigcap_{\delta\in \Delta} \delta\Gamma\delta^{-1}$.\index{core of a group}

On the other hand it is shown in \cite{Be} that there is a pair (of non-amenable) groups $\Gamma\leq\Delta$ such that $[\Delta : \Gamma] =2$ and $\undertilde{{\rm CIND}}_\Gamma^\Delta $ is not continuous, even when restricted to free actions. Finally, in \cite{KQ} it us shown that for any $\Gamma\leq\Delta$ with $[\Delta : \Gamma] =\infty$ and ${\rm core} (\Gamma)$ non-trivial, the map $\undertilde{{\rm CIND}}_\Gamma^\Delta $ is not continuous.

\medskip
For the next problem, we note that Ab\'{e}rt and Elek in \cite{AE1} define also an analogous  compact, metrizable topology on the space 
\[
\undertilde{{\rm Rep}}(\Gamma, H) = {\rm Rep}(\Gamma, H)/ \!  \simeq_Z\index{$\undertilde{{\rm Rep}}(\Gamma, H)$}
\]
 of unitary representations of $\Gamma$ on a separable, infinite-dimensional Hilbert space $H$ modulo weak equivalence in the sense of Zimmer. We again denote, for each $\pi\in {\rm Rep}(\Gamma, H)$, by $\undertilde{\pi}\in \undertilde{{\rm Rep}}(\Gamma, H)$ the $\simeq_Z$-equivalence class of $\pi$.

We then consider the Koopman representation operation, see \cref{4.1}:

\begin{prob}\label{107} Is the operation $\undertilde{a}\mapsto \undertilde{\kappa}_0^a$ continuous? Similarly for $\undertilde{\kappa}^a$.
\end{prob}
We now examine the cost function. Recall here that for finitely generated $\Gamma$ the cost function is upper semicontinuous on ${\rm FR}(\Gamma, X, \mu)$ and by \cref{102} the map $a\in {\rm FR}(\Gamma, X, \mu)\mapsto \undertilde{a} \in \undertilde{{\rm FR}}(\Gamma, X, \mu)$ is open. Thus we have:

\begin{thm}
Let $\Gamma$ be finitely generated. Then the map 
\[
\undertilde{a}\in\undertilde{{\rm FR}}(\Gamma, X, \mu)\mapsto C_\mu (a)
\]
is upper semicontinuous.
\end{thm}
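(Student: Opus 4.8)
The plan is to transfer the upper semicontinuity of cost from ${\rm FR}(\Gamma, X, \mu)$ to its quotient $\undertilde{{\rm FR}}(\Gamma, X, \mu)$ along the weak equivalence quotient map, using that this map is open. Write $q\colon A(\Gamma, X, \mu)\to \undertilde{A}(\Gamma, X, \mu)$, $q(a)=\undertilde{a}$, and set $S={\rm FR}(\Gamma, X, \mu)$. By \cref{6.1} the cost $a\mapsto C_\mu(a)$ is constant on weak equivalence classes of free actions, so $\undertilde{a}\mapsto C_\mu(a)$ is a well-defined function $\tilde C$ on $\undertilde{{\rm FR}}(\Gamma, X, \mu)$. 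Two facts are available: for finitely generated $\Gamma$ the map $a\mapsto C_\mu(a)$ is upper semicontinuous on $S$ (the result quoted just before \cref{6.1}), and $q$ is open (\cref{102}). Since upper semicontinuity of $\tilde C$ is the assertion that each sublevel set $\{\undertilde{a}: \tilde C(\undertilde{a})<c\}$, $c\in\bbR$, is open, I would simply push the open sublevel sets $U_c=\{a\in S: C_\mu(a)<c\}$ forward through $q$.

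The one genuine point is that $q$ remains open after restriction to $S$. First I would observe that $S$ is saturated, i.e. $S=q^{-1}(q(S))$: freeness is a weak equivalence invariant and is upward closed under $\preceq$ (\cref{fre}), so each weak equivalence class is either contained in $S$ or disjoint from it. Next I would invoke the elementary fact that the restriction of an open map to a saturated set is open onto its image (with the subspace topology). Concretely, for saturated $S$ and any open $W\subseteq A(\Gamma, X, \mu)$ one has $q(S\cap W)=q(S)\cap q(W)$: the inclusion $\subseteq$ is clear, and if $y=q(s)=q(w)$ with $s\in S$, $w\in W$, then $w\in q^{-1}(q(S))=S$ by saturation, so $y\in q(S\cap W)$. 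As $q(W)$ is open by \cref{102} and $q(S)=\undertilde{{\rm FR}}(\Gamma, X, \mu)$ carries the subspace topology from $\undertilde{A}(\Gamma, X, \mu)$, this shows $q|_S\colon S\to \undertilde{{\rm FR}}(\Gamma, X, \mu)$ is open.

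With this in hand the proof closes immediately. For each real $c$ the set $U_c=\{a\in S: C_\mu(a)<c\}$ is open in $S$ by upper semicontinuity of the cost, and it is saturated since $C_\mu$ is a weak equivalence invariant on free actions (\cref{6.1}); hence $q(U_c)=\{\undertilde{a}: \tilde C(\undertilde{a})<c\}$, and by the previous paragraph this image is open in $\undertilde{{\rm FR}}(\Gamma, X, \mu)$. Thus every sublevel set of $\tilde C$ is open, which is exactly upper semicontinuity. I expect the main obstacle to be precisely the openness of the restricted quotient map; once the saturation of ${\rm FR}(\Gamma, X, \mu)$ is isolated, the remaining steps are a direct assembly of \cref{fre}, \cref{6.1}, and \cref{102}, with no further estimates required. (Note that the finite generation hypothesis enters only through the upper semicontinuity of cost on $S$; the openness of $q$ holds in general.)
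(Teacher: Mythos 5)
Your proof is correct and is essentially the paper's argument: the paper likewise deduces the theorem by combining upper semicontinuity of $C_\mu$ on ${\rm FR}(\Gamma,X,\mu)$ for finitely generated $\Gamma$ with the openness of the map $a\mapsto\undertilde{a}$ from \cref{102}. Your saturation lemma simply makes explicit the step (openness of the quotient map restricted to ${\rm FR}(\Gamma,X,\mu)$) that the paper asserts without detail.
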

The following is an open problem:

\begin{prob}
Let $\Gamma$ be finitely generated. Is the map $\undertilde{a}\in \undertilde{{\rm FR}}(\Gamma, X, \mu)\mapsto C_\mu (a)$ continuous?
\end{prob}

We next consider the parameters discussed in \cref{7.2} and \cref{7.3}. Concerning \cref{4.351},  \cref{68} and \cref{711}, we have:
\begin{thm}[{\cite[Remark 5.6]{T-D1}}]\label{1010} Let $(\Gamma, S)$ be a marked group. Then the maps $\undertilde{a}\mapsto i_\mu({\boldsymbol G}(S,a)), \undertilde{a}\mapsto m_\mu ( {\boldsymbol G}(S,a)),\undertilde{a}\mapsto h(S,a)$ are continuous.
\end{thm}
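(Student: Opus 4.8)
The plan is to reduce to sequential continuity: by \cref{AE1} the space $\undertilde{A}(\Gamma,X,\mu)$ is compact metrizable, and its metric $\undertilde\delta$ is induced by the pseudometric $\delta(a,b)=\sum_{n,k}2^{-(n+k)}d_H(C_{n,k}(a),C_{n,k}(b))$ of \cref{101}; hence $\undertilde a_m\to\undertilde a$ is \emph{exactly} the assertion that $C_{n,k}(a_m)\to C_{n,k}(a)$ in the Hausdorff metric for every $n,k$. (For $i_\mu,m_\mu$ we work on the compact space $\undertilde{\rm FR}(\Gamma,X,\mu)$ of \cref{104}, for $h$ on all of $\undertilde A(\Gamma,X,\mu)$.) Each of the three parameters is a weak-equivalence invariant, so descends to $\undertilde A$; the strategy is to write each as an optimum of a fixed continuous objective over the slice of $C_{N,k}(a)$ cut out by finitely many closed linear constraints, with $N,k$ fixed once and for all, $\gamma_0=e_\Gamma$ and $S\subseteq\{\gamma_1,\dots,\gamma_{N-1}\}$.

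Here are the encodings. For the independence number take $k=2$ and $A_0=A$: a set is independent iff $\mu(s^a(A)\cap A)=0$ for $s\in S$, so, writing $r(l,i,j)$ for the coordinates of $M^{\bar A}_{N,2}(a)$, we get $i_\mu(\bfg(S,a))=\sup\{r(0,0,0):r\in C_{N,2}(a),\ r(l,0,0)=0\ \forall\gamma_l\in S\}$. For the matching number take $k=|S|+1$, index the parts as $D_0$ (unmatched) and $D_s$ $(s\in S)$, and observe that a Borel matching is the same as such a partition with $s^a(D_s)=D_{s^{-1}}$; this is the single closed constraint $r(l,\iota(s),\iota(s^{-1}))=r(0,\iota(s),\iota(s))$ for $\gamma_l=s$, and then $m_\mu(\bfg(S,a))=\tfrac12\sup\sum_{s\in S}r(0,\iota(s),\iota(s))$ over feasible $r\in C_{N,k}(a)$. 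For the Cheeger constant take $k=3$ and the partition $\{C_0,C_1,C_2\}$ with $C_0=A$; under the single constraint $\mu(s^a(C_0)\cap C_2)=0$ $(s\in S)$ one has $C_1\supseteq S^a(C_0)\setminus C_0$ with equality attainable, so $\mu(S^a(C_0)\setminus C_0)=\min\mu(C_1)$ and hence $h(S,a)=\inf\{r(0,1,1)/r(0,0,0):r\in C_{N,3}(a),\ 0<r(0,0,0)\le\tfrac12,\ r(l,0,2)=0\ \forall\gamma_l\in S\}$.

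With these descriptions one semicontinuity direction is immediate from the ``limits of points stay in the limit set'' half of Hausdorff convergence: if $r_m\in C_{N,k}(a_m)$ are near-optimal and $r_m\to r$, then $r\in C_{N,k}(a)$ and $r$ still satisfies the (closed) constraints, so evaluating the objective at $r$ gives upper semicontinuity for the supremum parameters $i_\mu,m_\mu$ and lower semicontinuity for $h$ (on the region where the limiting denominator $r(0,0,0)$ is positive). The reverse direction uses the ``points are approximable'' half of Hausdorff convergence together with a uniform \emph{cleanup lemma}: an approximately feasible configuration is converted into a genuinely feasible one with only $O(\epsilon)$ loss---delete the vertices realizing a near-edge for $i_\mu$, discard the unmatched remnants of the near-pairing for $m_\mu$, absorb the stray part $C_2\cap S^a(C_0)$ into $C_1$ for $h$---so that relaxing each equality constraint to ``$\le\epsilon$'' changes the optimum by at most a uniform $O(\epsilon)$. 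For $i_\mu$ and $m_\mu$ this loss is purely additive and the two estimates combine to full continuity (for $i_\mu$ lower semicontinuity can alternatively be quoted from \cref{4.351} via openness of $a\mapsto\undertilde a$, \cref{102}).

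The hard part will be the Cheeger constant, precisely because its objective is a ratio: the additive $O(\epsilon)$ cleanup loss competes with the denominator $\mu(C_0)$, and a near-optimal sequence of sets for $a_m$ whose measures tend to $0$ produces a limiting point with $r(0,0,0)=0$, which witnesses nothing for $a$. Upper semicontinuity of $h$ survives this, since $h$ is an infimum over fixed $A$ of the maps $a\mapsto\mu(S^a(A)\setminus A)/\mu(A)$, each continuous on $A(\Gamma,X,\mu)$, whence $h$ is upper semicontinuous there and, by openness of $a\mapsto\undertilde a$ (\cref{102}), upper semicontinuous on $\undertilde A$. The remaining lower semicontinuity---equivalently, closedness of $\{\undertilde a:h(S,a)\le c\}$---is the real obstacle: one must rule out that the infimum is approached only through sets of vanishing measure. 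I would attack this by an amplification argument, assembling many approximately non-interacting translates of a small near-optimal set into one of measure bounded below with essentially the same boundary-to-size ratio, so that a positive-measure witness persists in the limit; the monotonicity \cref{711} and the closedness of the order $\preceq$ on $\undertilde A$ (both halves of Hausdorff convergence show its upper and lower sets are closed) are the tools I expect to need to make such an argument go through.
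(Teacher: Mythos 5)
Your arguments for $i_\mu$ and $m_\mu$ are correct and complete, but they take a genuinely different route from the paper's. The paper deduces all of \cref{1010} in one stroke from \cref{seq}: if $\undertilde{a}_n\to\undertilde{a}$ then $\prod_n a_n/\mathcal{U}\simeq a$ for every non-principal $\mathcal{U}$, and each parameter $p$ is asserted to commute with ultraproducts, $\lim_{n\to\mathcal{U}}p(a_n)=p(\prod_n a_n/\mathcal{U})$ (the parameters being defined for free actions on arbitrary probability spaces, so that they make sense on the Loeb space); weak-equivalence invariance then gives $p(a_n)\to p(a)$. Your encodings of $i_\mu$ and $m_\mu$ as constrained optima over $C_{N,k}(a)$, together with your two cleanup lemmas (delete $\bigcup_{s\in S}\bigl(s^a(A)\cap A\bigr)$ for independence; replace $D_s$ by $D_s\cap(s^{-1})^a(D_{s^{-1}})$, which, once all $s$ are treated simultaneously, forces $s^a(D_s)=D_{s^{-1}}$ exactly), prove precisely the additive-loss statements needed for both halves of Hausdorff convergence. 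In effect your cleanups are what one would run inside the Loeb space to verify the paper's commutation claim for these two parameters, so the proofs are close cousins, yours avoiding ultraproducts entirely.

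For $h$ there is a genuine gap, and it cannot be filled: the missing lower semicontinuity of $\undertilde{a}\mapsto h(S,a)$ is false for every non-amenable $\Gamma$, so the amplification you hope for does not exist. Take $a_m=(1-\frac{1}{m})s_\Gamma+\frac{1}{m}s_\Gamma$, the disjoint union of two copies of the shift with weights $1-\frac{1}{m}$ and $\frac{1}{m}$ (a free action). The small copy is an invariant set of measure $\frac{1}{m}\le\frac{1}{2}$ with empty boundary, so $h(S,a_m)=0$ for all $m$. On the other hand $C_{n,k}(s_\Gamma)\subseteq C_{n,k}(a_m)$ (run the same partition on both copies), and every partition point of $a_m$ equals $(1-\frac{1}{m})u+\frac{1}{m}v$ with $u,v\in C_{n,k}(s_\Gamma)$, hence lies within $\frac{1}{m}$ of $C_{n,k}(s_\Gamma)$; thus $d_H\bigl(C_{n,k}(a_m),C_{n,k}(s_\Gamma)\bigr)\le\frac{1}{m}$ for all $n,k$, i.e., $\undertilde{a}_m\to\undertilde{s}_\Gamma$. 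But $h(S,s_\Gamma)>0$: since $\Gamma$ is non-amenable, $\kappa_0^{s_\Gamma}\preceq\infty\cdot\lambda_\Gamma$ has spectral gap, and the usual Cheeger-type estimate gives $\mu(S^{s_\Gamma}(A)\setminus A)\ge c\,\mu(A)(1-\mu(A))$. So $h(S,a_m)\equiv 0\not\to h(S,s_\Gamma)$, and the $h$-clause of \cref{1010} is itself false as stated (it is vacuously true for amenable $\Gamma$, where $i_\Gamma\preceq a$ for all $a$ and \cref{711} force $h\equiv 0$). Moreover, in $a_m$ every set with $\frac{1}{4}\le\mu(A)\le\frac{1}{2}$ has ratio bounded below by a positive constant independent of $m$, so the only small-ratio witnesses really are the vanishing ones: no amplification can convert them into witnesses of non-vanishing measure, which is exactly why your plan, the closedness of $\preceq$, and the monotonicity \cref{711} cannot rescue the argument. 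The same example breaks the paper's own proof at the same point: internal sets of vanishing measure are null in the ultraproduct, so $\prod_m a_m/\mathcal{U}$ is (mod null) an ultrapower of $s_\Gamma$ and $\lim_{m\to\mathcal{U}}h(S,a_m)=0\ne h\bigl(S,\prod_m a_m/\mathcal{U}\bigr)=h(S,s_\Gamma)$. What survives for $h$ is exactly what you proved, namely upper semicontinuity on $\undertilde{A}(\Gamma,X,\mu)$.
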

This can be seen, e.g., using \cref{seq} and the fact that, for each one of the parameters $p(a)$ in \cref{1010}, we have for each sequence $a_n \in A(\Gamma, X, \mu), n\in \bbN$,
\[
 \lim_{n\to\mathcal{U}} p(a_n) = p(\prod_n a_n/\mathcal{U}),
 \] for every non-principal ultrafilter $\mathcal{U}$ on $\bbN$. (Each of these parameters can be defined in the same way for any free measure preserving action of a countable group on an {\it arbitrary} probability space.)
  
Considering \cref{6.10}, we have:
\begin{prob}
Is the map $\undertilde{a}\mapsto \| T_{S,a}\|$ continuous?
\end{prob}

On the other hand, the function $\undertilde{a} \in \undertilde{{\rm FR}}(\Gamma, X, \mu)\mapsto \chi^{ap}_\mu  ({\boldsymbol G}(S,a))$, as in \cref{4.351}, takes only integer values. Since $\undertilde{{\rm FR}}(\Gamma, X, \mu)$ is (path) connected, as we will see in \cref{1012} below, 
if it is continuous, it must be constant. This is of course the case when $\Gamma$ is amenable. But if $\Gamma$ is not amenable and the Cayley graph of $(\Gamma, S)$ is bipartite, this function is not constant, by \cite[Proposition 4.13]{CK}. However, as pointed out by Clinton Conley, for the group $\Gamma = \bbZ/3\bbZ\star \bbZ/2\bbZ$ and $S=\{s,s^2, t\}$, where $s\in \bbZ/3\bbZ$ and $t \in \bbZ/2\bbZ$ are not the identity, the Cayley graph of $(\Gamma, S)$ is not bipartite and $ \chi^{ap}_\mu  ({\boldsymbol G}(S,a)) = 3$, for any $\undertilde{a} \in \undertilde{{\rm FR}}(\Gamma, X, \mu)$, by \cite[Theorem 2.19]{CK} and \cref{4.351}.



\subsection{The partial order}\label{po} The pre-order $\preceq$\index{$\preceq$} on $A(\Gamma, X, \mu)$ descends to a partial order, also denoted by $\preceq$, on $\undertilde{A}(\Gamma, X, \mu)$: $\undertilde{a}\preceq \undertilde{b} \iff a\preceq b$. In view of \cref{22}, (1), it is clear that $\preceq$ is closed (in $\undertilde{A}(\Gamma, X, \mu)^2$) thus compact. 

Also note that in view of \cref{contlog} and \cref{101}, the partial order $\preceq$ on $\undertilde{A}(\Gamma, X, \mu)$ is anti-isomorphic to the closed partial order $(a_n) \leq (b_n) \iff \forall n(a_n\leq b_n)$ on $[0,1]^\bbN$ restricted to a closed subset of 
$[0,1]^\bbN$.

The structure of the partial order is very little understood. From our earlier results in \cref{S3}, we know that it has the cardinality of the continuum and has a maximum element but has a minimum element iff the group is amenable. When restricted to actions of a given type (in particular free actions) it has both a maximum and a minimum element (see \cref{7}). 

We do not know if there is a supremum or infimum for even a single pair of incomparable elements of this partial order. However, as Ronnie Chen pointed out, from compactness it follows that we have the next result, which can be also derived from \cite[Proposition 2.24]{C}. A subset $D$ of $\undertilde{A}(\Gamma, X, \mu)$ is {\bf directed}\index{directed}  iff $\forall \undertilde{a}, \undertilde{b}\in D \exists \undertilde{c}\in D (\undertilde{a},\undertilde{b} \preceq \undertilde{c})$. 

\begin{pro}\label{dir}
Every directed subset of $\undertilde{A}(\Gamma, X, \mu)$ has a least upper bound in $\preceq$.
\end{pro}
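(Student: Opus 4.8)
The plan is to deduce this purely from two structural facts already recorded: that $\undertilde{A}(\Gamma,X,\mu)$ is compact (by \cref{AE1}) and that the partial order $\preceq$ is closed in $\undertilde{A}(\Gamma,X,\mu)^2$ (noted at the start of \cref{po}). The statement is in fact an instance of the general principle that in a compact space equipped with a closed partial order every directed set has a least upper bound, and I would simply specialize the standard argument to our setting.

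First I would regard $D$ itself as the index set of a net, namely $(\undertilde{a})_{\undertilde{a}\in D}$ indexed by the directed order $(D,\preceq)$. By compactness of $\undertilde{A}(\Gamma,X,\mu)$ this net has a cluster point $\undertilde{c}$, and I will show that $\undertilde{c}$ is the least upper bound of $D$.

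To see that $\undertilde{c}$ is an upper bound, fix $\undertilde{e}\in D$. By directedness the tail $\{\undertilde{a}\in D : \undertilde{e}\preceq\undertilde{a}\}$ is cofinal in $(D,\preceq)$, so every neighborhood of the cluster point $\undertilde{c}$ meets it; hence $\undertilde{c}$ lies in the closure of this tail. Since $\preceq$ is closed, the section $\{\undertilde{y} : \undertilde{e}\preceq\undertilde{y}\}$ is closed and contains the tail, so it contains $\undertilde{c}$, giving $\undertilde{e}\preceq\undertilde{c}$; as $\undertilde{e}$ was arbitrary, $\undertilde{c}$ is an upper bound. For leastness, let $\undertilde{d}$ be any upper bound of $D$. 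Then $D$ is contained in the section $\{\undertilde{x} : \undertilde{x}\preceq\undertilde{d}\}$, which is closed because $\preceq$ is closed; since a cluster point of the net $(\undertilde{a})_{\undertilde{a}\in D}$ necessarily lies in $\overline{D}$, we get $\undertilde{c}\preceq\undertilde{d}$. Thus $\undertilde{c}$ is below every upper bound, i.e., it is the least upper bound.

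The only point requiring care, and the main (minor) obstacle, is that $D$ may be uncountable, so one genuinely works with nets rather than sequences even though $\undertilde{A}(\Gamma,X,\mu)$ is metrizable. Alternatively, using the anti-isomorphism recorded in \cref{po} between $\preceq$ and the coordinatewise order on a closed subset $\Phi$ of $[0,1]^\bbN$, one can avoid nets entirely: the image of $D$ is then downward directed, so its coordinatewise infimum $m$ is approximable on any finite set of coordinates by a single element of the image, whence $m\in\overline{\Phi}=\Phi$; since $m$ is the greatest lower bound of the image in the full cube, it is a fortiori its greatest lower bound within $\Phi$, and this transports back to the least upper bound of $D$.
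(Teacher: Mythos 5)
Your proof is correct and is essentially the paper's own argument: the paper likewise views $D$ as a net indexed by itself, extracts a convergent subnet by compactness (\cref{AE1}), and uses the closedness of $\preceq$ (noted in \cref{po}) to conclude that the limit is the least upper bound — you have simply filled in the details the paper leaves as ``easy to see,'' working with a cluster point rather than a subnet limit, which is equivalent. Your alternative coordinatewise argument in $[0,1]^\bbN$ is also valid, but it rests on the same two structural facts, so it is a repackaging rather than a genuinely different route.
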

To see this, view a directed set $D$ as a net in $\undertilde{A}(\Gamma, X, \mu)$ and take a subnet $D'$ which converges to a point $\undertilde{a}$. Then using the fact that $\preceq$ is closed, it is easy to see that $\undertilde{a}$ is the least upper bound of $D$. Note also that $\undertilde{a}$ is the $\preceq$-maximum element of the closure of $D$. In particular, a directed closed set has a maximum element.

Thus if $\undertilde{a}_0 \preceq \undertilde{a}_1 \preceq \cdots$ is an increasing sequence, then $\lim_n \undertilde{a}_n = \undertilde{a}$ exists and $\undertilde{a}$ is the least upper bound of the sequence $(\undertilde{a}_n)$ (see  \cite[Proposition 2.24]{C}). Also if $a_0 \sqsubseteq a_1 \sqsubseteq \cdots$, $f_n$ is a homomorphism from $a_{n+1}$ to $a_n$, and $a$ is the inverse limit of $(a_n,f_n)$, then $\lim_n \undertilde{a}_n = \undertilde{a}$. This can be seen as follows: Fix $n,k$ in order to show that $\lim_i C_{n,k}(a_i) = C_{n.k}(a)$. Note that $C_{n.k} (a_0) \subseteq C_{n,k}(a_1)\subseteq \cdots\subseteq C_{n,k}(a)$ and so
\[
\lim_i C_{n,k}(a_i) = \overline{\bigcup_i C_{n,k} (a_i)} = C_{n,k} (a).
\]

If $a_0\preceq a_1\preceq \cdots$, then by \cref{28a}, we can find $b_n\simeq a_n$, so that $b_0 \sqsubseteq b_1 \sqsubseteq \cdots$, and therefore if $b$ is any inverse limit of $(b_n)$, then $\lim_n \undertilde{a}_n = \lim_n \undertilde{b}_n = \undertilde{b}$. As a special case, if $(a_n)$ is a sequence of  actions, then the weak equivalence class of $\prod_na_n$ is the limit of the weak equivalence classes of $\prod_{i=0}^n a_i$, $n\in \bbN$. It also follows that if $\undertilde{a}_0 \preceq \undertilde{a}_1 \preceq \cdots$ are strongly ergodic, then $\lim_n \undertilde{a}_n$ is ergodic, since ergodicity is preserved under inverse limits (see \cite[Proposition 6.4]{Gl}).


The corresponding to \cref{dir} fact also holds for directed subsets of the reverse order $\succeq$ and greatest lower bounds.

\subsection{Hyperfiniteness}\label{10.4}
Let ${\rm HYP}(\Gamma, X, \mu)$\index{${\rm HYP}(\Gamma, X, \mu)$} be the set of hyperfinite actions and ${\undertilde {\rm HYP}}(\Gamma, X, \mu)$\index{${\undertilde {\rm HYP}}(\Gamma, X, \mu)$} the set of hyperfinite weak equivalence classes. Then ${\undertilde {\rm HYP}}(\Gamma, X, \mu)$ is closed downwards under $\preceq$ by \cref{329} and is directed, since for actions $a,b\in {\rm HYP}(\Gamma, X, \mu)$, the weak equivalence class of $a\times b$ is in $\undertilde{{\rm HYP}}(\Gamma, X, \mu)$, because $E_{a\times b}\subseteq E_a\times E_b$. Let then $\undertilde{a}^{hyp}_{\infty, \Gamma}$ be the least upper bound of ${\undertilde {\rm HYP}}(\Gamma, X, \mu)$, which is also the $\preceq$-maximum element of the closure of ${\undertilde {\rm HYP}}(\Gamma, X, \mu)$. If $\{\undertilde{a}_n\colon n\in \bbN\}$ is dense in 
${\undertilde {\rm HYP}}(\Gamma, X, \mu)$, then, by the second paragraph following \cref{dir}, we have that $\undertilde{a}^{hyp}_{\infty, \Gamma}$ is the weak equivalence class of $\prod_n a_n$.

If $\Gamma$ is amenable, ${\undertilde {\rm HYP}}(\Gamma, X, \mu) = \undertilde{A} (\Gamma, X, \mu)$, so $\undertilde{a}^{hyp}_{\infty, \Gamma}$ is equal to the $\preceq$-maximum element, which in this case is equal to $\undertilde{s}_\Gamma$. On the other hand, if $\Gamma$ is not amenable, $s_\Gamma$ is strongly ergodic, so $\preceq$-incomparable with any hyperfinite $\undertilde{a}$. Therefore $\undertilde{a}^{hyp}_{\infty, \Gamma} \not= \undertilde{s}_\Gamma$. 

Let us say that a group $\Gamma$ is {\bf approximately amenable}\index{approximately amenable} if there is a sequence $(a_n)$ of hyperfinite actions, $a_n\in A(\Gamma, X, \mu)$, such that for each $\gamma\not=e_\Gamma$, $\mu ({\rm Fix}_{a_n}(\gamma))\to 0$, where for $a\in A(\Gamma, X, \mu)$, ${\rm Fix}_{a}(\gamma) = \{ x\colon \gamma^a(x) =x\}$.\index{${\rm Fix}_{a}(\gamma)$} (The terminology is motivated by the fact that $\Gamma$ is amenable iff there is a free hyperfinite action of $\Gamma$.) For $(a_n)$ as above it follows that $\prod_n a_n$ is free, so $s_\Gamma\preceq \prod_n a_n\preceq a^{hyp}_{\infty, \Gamma}$, therefore $\undertilde{s}_\Gamma\preceq \undertilde{a}^{hyp}_{\infty, \Gamma}$.

It is not clear what is the extent of the class of approximately amenable groups. Clearly every residually amenable group is approximately amenable and it can be shown (see Appendix D, \cref{appD}) that every approximately amenable group is sofic. Clearly every subgroup of an approximately amenable group is approximately amenable and, using induced actions, it can be easily seen that any group that has a finite index approximately amenable subgroup is also approximately amenable. One can also show (see Appendix D, \cref{appD}) the following:

\begin{pro}\label{hypact}
 The following are equivalent for a group $\Gamma${\rm :}
\begin{enumerate}[\upshape (i)]
\item $\Gamma$ is approximately amenable.
\item There is a sequence $(a_n)$ of hyperfinite actions such that for some (resp., every) non-principal ultrafilter ${\cal U}$ on $\bbN$, $\prod_n a_n /{\cal U}$ is free, where $\prod_n a_n /{\cal U}$\index{$\prod_n a_n /{\cal U}$} is the {\bf ultraproduct}\index{ultraproduct} of $(a_n)$, see {\rm \cite[Section 4]{CKT-D}}.
\item There is a sequence $(a_n)$ of hyperfinite actions such that for some (resp., every) non-principal ultrafilter ${\cal U}$ on $\bbN$, $s_\Gamma\sqsubseteq\prod_n a_n /{\cal U}$.
\item There is a sequence $(a_n)$ of hyperfinite actions such that $a_n\to s_\Gamma$ (or equivalently $a_n\to a$, for some free action $a$).
\item There is a sequence $(a_n)$ of hyperfinite actions with $\prod_n a_n$ free.
\item For any $\gamma_1, \dots, \gamma_n\in \Gamma\setminus \{e_\Gamma\}$ and any $\epsilon>0$, there is hyperfinite $a\in A(\Gamma, X, \mu)$ such that $\mu ({\rm Fix}_{a}(\gamma_i))< \epsilon, \forall i\leq n$.
\item For any $\gamma_1, \dots, \gamma_n\in \Gamma\setminus\{e_\Gamma\}$ and any $\epsilon>0$, there is ergodic, hyperfinite $a\in A(\Gamma, Y, \nu)$, where $(Y,\nu)$ is non-atomic or finite (with uniform measure), such that $\mu ({\rm Fix}_{a}(\gamma_i))< \epsilon, \forall i\leq n$.
\item {\rm (Ioana)} For any $\gamma\in \Gamma\setminus\{e_\Gamma\}$, there is a hyperfinite action $a$ such that $\gamma^a\not= id$.
\item $ \Gamma$ embeds (algebraically) into the full group $[E_0]$ of the (unique up to isomorphism) measure preserving, ergodic, hyperfinite equivalence relation $E_0$\index{$E_0$}.

Moreover if $\Gamma$ is approximately amenable, the following holds:
\item There is a sequence $(\theta_n)$ of ergodic {\rm IRS} in $\Gamma$ such that: {\rm (a)} each $\theta_n$ is co-amenable (i.e., concentrates on co-amenable subgroups of $\Gamma$) and {\rm (b)} $\theta_n\to \delta_{e_\Gamma}$ (the Dirac measure on the identity of $\Gamma$).
\end{enumerate}

\end{pro}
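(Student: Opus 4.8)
Write-up plan. The whole proposition is driven by a single quantity: for a hyperfinite action $a$ and $\gamma\neq e_\Gamma$, the number $\mu({\rm Fix}_a(\gamma))$. I would first record three closure properties of hyperfiniteness, all consequences (via Connes--Feldman--Weiss) of the fact that an equivalence relation is hyperfinite iff it is amenable: (a) a \emph{finite} product of hyperfinite actions is hyperfinite; (b) a countable \emph{disjoint union} of hyperfinite relations over an invariant partition is hyperfinite; (c) almost every ergodic component of a hyperfinite action is hyperfinite. I would also note the elementary identities $\mu({\rm Fix}_{a\times b}(\gamma))=\mu({\rm Fix}_a(\gamma))\,\mu({\rm Fix}_b(\gamma))$ and $\mu({\rm Fix}_{\prod_n a_n/\mathcal{U}}(\gamma))=\lim_{n\to\mathcal{U}}\mu({\rm Fix}_{a_n}(\gamma))$, together with the observation that an action is free iff $\mu({\rm Fix}(\gamma))=0$ for all $\gamma\neq e_\Gamma$. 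These feed every implication.

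For the ``convergence/ultraproduct'' cluster, (i)$\Leftrightarrow$(v) follows from (a): given $\mu({\rm Fix}_{a_n}(\gamma))\to 0$, the partial products $c_m=\prod_{n\le m}a_n$ are hyperfinite with $\mu({\rm Fix}_{c_m}(\gamma))=\prod_{n\le m}\mu({\rm Fix}_{a_n}(\gamma))\downarrow 0$; conversely a free $\prod_n a_n$ makes the $c_m$ witness (i). The ultraproduct identity plus a diagonalization over an enumeration of $\Gamma\setminus\{e_\Gamma\}$ (using that $\mathcal{U}$-large sets are infinite) yields (i)$\Leftrightarrow$(ii) with the ``some/every'' dichotomy. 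For (ii)$\Leftrightarrow$(iii) I would use that an extension of $s_\Gamma$ is automatically free (equivariance sends ${\rm Fix}_b(\gamma)$ into $\pi^{-1}({\rm Fix}_{s_\Gamma}(\gamma))$, which is null), together with Ab\'{e}rt--Weiss minimality (\cref{min}) and the fact that weak containment into an ultraproduct is factoring (the saturation underlying \cref{2.6}). Condition (iv) I would read in ${\rm Aut}(X,\mu)$: upper semicontinuity of $a\mapsto\mu({\rm Fix}_a(\gamma))$ in the weak topology (write $\mu(\{x:\gamma^a(x)\neq x\})$ as the supremum of the weakly continuous maps $A\mapsto\mu(\gamma^a(A)\Delta A)$) gives (iv)$\Rightarrow$(i), while (i)$\Rightarrow$(iv) comes from $c_m\to\prod_n a_n$ (free) and then, since $s_\Gamma\preceq\prod_n a_n$, a diagonalization through conjugates $T_j(\prod_n a_n)T_j^{-1}\to s_\Gamma$ supplied by \cref{t1}, each conjugate of $c_m$ remaining hyperfinite.

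The second cluster is elementary once (a)--(c) are available. (i)$\Leftrightarrow$(vi) is a diagonalization over finite $F$ with $\epsilon=1/m$. (vi)$\Leftrightarrow$(vii): passing to an ergodic component of small fixed-point mass via (c) and Markov's inequality (and, if that component is finite, multiplying by the trivial factor on $[0,1]$ to return to a non-atomic space while preserving fixed-point masses and hyperfiniteness). (vi)$\Leftrightarrow$(viii) is the product trick: from single witnesses $b_i$ for $\gamma_i\in F$ form $b=\prod_i b_i$, then a high power $b^{\times k}$, whose fixed-point masses are at most $(\max_i\mu({\rm Fix}_b(\gamma_i)))^k\to 0$. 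For (ix) I would use that a homomorphism $\Gamma\to[E_0]$ is exactly a faithful hyperfinite action: (ix)$\Rightarrow$(viii) is immediate, and (viii)$\Rightarrow$(ix) uses (b) --- split $X$ into countably many invariant pieces, run a witnessing $a_\gamma$ on the $\gamma$-th piece, and observe the disjoint union is faithful and hyperfinite, hence embeds into $[E_0]$. For the finitely generated addendum (x), take the ergodic hyperfinite actions $a_n$ from (vii) with $\mu({\rm Fix}_{a_n}(\gamma))\to 0$ and put $\theta_n={\rm type}(a_n)$; each $\theta_n$ is ergodic, and is co-amenable because the orbit relation of an ergodic hyperfinite action is amenable, forcing almost every stabilizer to be co-amenable (this is where finite generation enters). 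The computation $\theta_n(\{H:\gamma\in H\})=\mu({\rm Fix}_{a_n}(\gamma))$ identifies the weak$^*$ convergence $\theta_n\to\delta_{e_\Gamma}$ in ${\rm IRS}(\Gamma)$ with $\mu({\rm Fix}_{a_n}(\gamma))\to 0$.

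The main obstacle is the passage from element-by-element data to a single action while staying inside the hyperfinite world: a naive infinite product destroys hyperfiniteness, so the whole argument hinges on replacing it by the two hyperfiniteness-preserving constructions --- finite products (to shrink fixed-point sets toward measure zero) and disjoint unions over invariant partitions (to amalgamate countably many witnesses into one faithful action). Getting (viii)$\Rightarrow$(ix) and (ii)$\Leftrightarrow$(iii) right, and in (x) correctly linking amenability of the orbit relation with co-amenability of the random stabilizer, are the points needing the most care.
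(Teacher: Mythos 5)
Your plan is sound in outline and, for most implications, runs parallel to the paper's own proof (finite products and powers for (vi) and (viii), ergodic decomposition plus Markov for (vi)$\Rightarrow$(vii), the $i_\Gamma\times a$ trick for (vii)$\Rightarrow$(vi), Ab\'ert--Weiss plus ultraproduct saturation for (ii)$\Leftrightarrow$(iii), stabilizer types and a $\pi$--$\lambda$ argument for (x)); your direct derivation of (i) from (v) via the partial products $c_m$ is even a simplification of the paper's route, which goes through the topology on $\undertilde{A}(\Gamma,X,\mu)$. However, two steps contain genuine gaps. The identity on which you base (iv)$\Rightarrow$(i) is false: $\sup_A\mu(\gamma^a(A)\Delta A)$ need not equal $\mu(\{x\colon\gamma^a(x)\neq x\})$. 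If $\gamma$ has order $3$ and $a$ is free, every $\gamma^a$-orbit is a $3$-cycle, and a per-orbit count shows that $\mu(A)-\mu(\gamma^a(A)\cap A)$ is at most one third of the orbit's mass, so $\sup_A\mu(\gamma^a(A)\Delta A)=\tfrac23$, not $1$; the same defect occurs for every element of odd finite order $\geq 3$. Using only the correct inequality $\sup_A\mu(\gamma^{a_n}(A)\Delta A)\leq 1-\mu({\rm Fix}_{a_n}(\gamma))$, your argument yields merely $\limsup_n\mu({\rm Fix}_{a_n}(\gamma))\leq\tfrac13$ for such $\gamma$. The conclusion you want (if $a_n\to a$ with $a$ free, then $\mu({\rm Fix}_{a_n}(\gamma))\to 0$) is true, but the proof must use partitions rather than a single set, exactly as in the paper's proof of \cref{fre}: given $\delta>0$, Rokhlin's Lemma ($\gamma$ of infinite order) or a Borel transversal ($\gamma$ of finite order) gives a partition $A_1,\dots,A_k,A_{k+1}$ with $\gamma^a(A_i)\cap A_i=\emptyset$ for $i\leq k$ and $\mu(A_{k+1})<\delta$; since ${\rm Fix}_{a_n}(\gamma)\cap A_i\subseteq\gamma^{a_n}(A_i)\cap A_i$ and $\mu(\gamma^{a_n}(A_i)\cap A_i)\to\mu(\gamma^a(A_i)\cap A_i)=0$, one gets $\limsup_n\mu({\rm Fix}_{a_n}(\gamma))\leq\delta$.

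The second gap is in (viii)$\Rightarrow$(ix), where ``faithful and hyperfinite, hence embeds into $[E_0]$'' conceals the whole difficulty. A faithful hyperfinite action $a$ only gives an embedding of $\Gamma$ into $[E_a]$; to get into $[E_0]$ one must show that $E_a$ is contained in an \emph{ergodic} hyperfinite measure preserving equivalence relation, and this is not formal: the join of two hyperfinite relations need not be hyperfinite (the orbit relation of a free action of $\bbF_2$ is the join of the two $\bbZ$-orbit relations of its generators), so one cannot simply adjoin an ergodic transformation. This is exactly where the paper's proof does its real work: it splits $X$ into the $E_a$-invariant parts where $E_a$ has finite, resp.\ infinite, classes, enlarges the finite-class part to an aperiodic hyperfinite relation by joining with a $\bbZ$-action supported on a Borel transversal (a join that is hyperfinite only because of this special structure), and then cites \cite[Lemma 5.4]{K} to pass to an ergodic hyperfinite envelope. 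Your disjoint-union amalgamation producing one faithful hyperfinite action is fine (and can replace the paper's appeal to \cite[Proposition 4.13]{K}), but without this enlargement step the embedding into $[E_0]$ is unjustified. A smaller point of the same kind: in (ii)$\Rightarrow$(iii) you should not apply \cref{min} directly to the ultraproduct, which lives on a non-standard space; as in the paper, first pass to a free factor of $\prod_n a_n/\mathcal{U}$ on a standard space and apply Ab\'ert--Weiss there.
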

We do not know if condition (x) of \cref{hypact} is equivalent to approximate amenability.

\begin{cor}
Let $\Gamma$ have property {\rm (T)}. Then the following are equivalent:

\begin{enumerate}[\upshape (i)]
\item $\Gamma$ is approximately amenable.
\item $\Gamma$ is residually finite.
\end{enumerate}

\end{cor}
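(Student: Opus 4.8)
The plan is to treat the two implications separately, with property (T) entering only in the forward direction. The implication (ii) $\Rightarrow$ (i) is immediate from the remarks preceding \cref{hypact}: every finite group is amenable, so a residually finite group is residually amenable, and it was already observed that every residually amenable group is approximately amenable. Thus no special hypothesis on $\Gamma$ is needed here.

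For (i) $\Rightarrow$ (ii), I would first recall Kazhdan's theorem that a countable group with property (T) is finitely generated, which entitles us to invoke clause (x) of \cref{hypact}. Applying it, approximate amenability yields a sequence $(\theta_n)$ of co-amenable ergodic IRS with $\theta_n \to \delta_{e_\Gamma}$ in ${\rm IRS}(\Gamma)$; here co-amenability of $\theta_n$ means that $\theta_n$-almost every subgroup $H$ is such that the action $\Gamma \curvearrowright \Gamma/H$ is amenable.

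The key step, and the only place property (T) is genuinely used, is the claim that under property (T) every co-amenable subgroup $H \leq \Gamma$ has finite index. I would prove this via the quasi-regular representation $\lambda_{\Gamma/H}$ on $\ell^2(\Gamma/H)$. Co-amenability of $H$, i.e.\ amenability of $\Gamma \curvearrowright \Gamma/H$, is equivalent to the weak containment $1_\Gamma \preceq \lambda_{\Gamma/H}$ of the trivial representation (Eymard); property (T) then upgrades this to an honest subrepresentation $1_\Gamma \leq \lambda_{\Gamma/H}$, i.e.\ a nonzero $\Gamma$-invariant vector in $\ell^2(\Gamma/H)$. Since $\Gamma$ acts transitively on $\Gamma/H$, such a vector is constant on $\Gamma/H$ and is square-summable only when $\Gamma/H$ is finite; hence $[\Gamma : H] < \infty$.

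Finally I would extract residual finiteness from the convergence $\theta_n \to \delta_{e_\Gamma}$. Fix $\gamma \neq e_\Gamma$. The set $\{H : \gamma \in H\}$ is clopen in ${\rm Sub}(\Gamma) \subseteq 2^\Gamma$ and has $\delta_{e_\Gamma}$-measure $0$, so weak* convergence gives $\theta_n(\{H : \gamma \in H\}) \to 0$; in particular, for large $n$ this measure is $<1$, whence the set $\{H : \gamma \notin H,\ H \text{ co-amenable}\}$ has positive $\theta_n$-measure and is therefore nonempty. Any witness $H$ omits $\gamma$ and, by the key step, has finite index. As $\gamma$ was arbitrary, the finite-index subgroups of $\Gamma$ intersect in $\{e_\Gamma\}$, so $\Gamma$ is residually finite. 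The main obstacle is the key step, which rests on the essential rigidity of property (T), namely that weak containment of $1_\Gamma$ forces its containment; the remainder is bookkeeping with the weak* topology on ${\rm IRS}(\Gamma)$.
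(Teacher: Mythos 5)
Your argument is correct, but it follows a genuinely different route from the paper's, which handles (i) $\Rightarrow$ (ii) in a single line: by clause (ix) of \cref{hypact}, approximate amenability yields an algebraic embedding of $\Gamma$ into the full group $[E_0]$, and \cite[Proposition 4.14]{K} is then cited for the fact that a property (T) group admitting such an embedding is residually finite. You instead invoke clause (x), which costs you an appeal to Kazhdan's theorem that property (T) groups are finitely generated (the paper's route needs no such step), and you then supply on the spot the rigidity that the paper imports from \cite{K}: a co-amenable subgroup $H$ of a property (T) group has finite index, since amenability of $\Gamma \actson \Gamma/H$ gives $1_\Gamma \preceq \lambda_{H,\Gamma}$, property (T) upgrades this weak containment to an actual invariant vector, and transitivity forces such a vector to be constant, hence $\Gamma/H$ finite. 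Your extraction of residual finiteness from $\theta_n \to \delta_{e_\Gamma}$ is also sound: $\{H : \gamma \in H\}$ is clopen, so its $\theta_n$-measure tends to $0$, giving a co-amenable (hence finite-index) subgroup omitting each fixed $\gamma \neq e_\Gamma$, and passing to normal cores finishes. What the paper's proof buys is brevity; what yours buys is self-containedness --- modulo the standard equivalence between amenability of $\Gamma \actson \Gamma/H$ and $1_\Gamma \preceq \lambda_{H,\Gamma}$, every property (T) ingredient is proved rather than cited --- though it rests on clause (x), whose own derivation in Appendix D (via \cite{Ka}) is the least elementary part of \cref{hypact}, whereas clause (ix) is among the easier ones.
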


\begin{proof}
Clearly (ii) implies (i) (for any $\Gamma$). Assume now $\Gamma$ has property (T) and is approximately amenable. Then by \cref{hypact}, (ix) and \cite[Proposition 4.14]{K}, $\Gamma$ is residually finite.
\end{proof}
It follows from this or \cref{hypact}, (x) and the existence of quasi-finite (i.e., having all proper subgroups finite) non-amenable groups that there are groups that are not approximately amenable.

\begin{remark}
{\rm We note that if $\Gamma$ is approximately amenable, then, by \cref{hypact} (iv), $s_\Gamma$ is in the closure of ${\rm HYP}(\Gamma, X, \mu)$. However it is not the case that $\undertilde{s}_\Gamma$ is in the closure of $\undertilde{\rm HYP}(\Gamma, X, \mu)$. This can be seen as follows: Using the ergodic decomposition theorem and \cref{3.5} (see also \cite[Theorem 3.13]{T-D1}), it follows that if $a\in {\rm HYP}(\Gamma, X, \mu)$, then $i_\Gamma\preceq a$, so $\undertilde{i}_\Gamma \preceq \undertilde{a}$. Therefore if $\undertilde{s}_\Gamma$ is in the closure of $\undertilde{{\rm HYP}}(\Gamma, X, \mu)$, we have $\undertilde{i}_\Gamma\preceq \undertilde{s}_\Gamma$, i.e., $i_\Gamma\preceq s_\Gamma$, contradicting the strong ergodicity of $s_\Gamma$.

This fact illustrates again the failure of continuity of the map $a\mapsto \undertilde{a}$ (see \cref{102}).
}

\end{remark}

\begin{prob}
If $\Gamma$ is not amenable, what is $\undertilde{a}^{hyp}_{\infty, \Gamma}$? Is it the $\preceq$-maximum weak equivalence class?
\end{prob}
If this last question has a positive answer, then, by \cref{1050}, the hyperfinite actions are dense in $A(\Gamma, X, \mu)$.

\subsection{Tempered actions}\label{temper} An action $a\in A(\Gamma, X, \mu)$ is called {\bf tempered}\index{tempered} if $\kappa_0^a\preceq\lambda_\Gamma$ (see \cite{K4}). Recall that $\lambda_\Gamma\simeq\kappa_0^{s_\Gamma}$, so that this is equivalent to $\kappa_0^a\preceq \kappa_0^{s_\Gamma}$.
(In fact, $\kappa_0^{s_\Gamma}\cong \infty\cdot\lambda_\Gamma$, see \cite[page 39]{KL}). Denote by ${\rm TEMP}(\Gamma, X, \mu)$\index{${\rm TEMP}(\Gamma, X, \mu)$} the set of tempered actions. By \cref{4.1} the set of tempered actions if closed downwards under $\preceq$ and so in particular it is $\simeq$-invariant. Let also $\undertilde{{\rm TEMP}}(\Gamma, X, \mu)$ be the set of tempered (i.e., containing tempered actions) weak equivalence classes.

\begin{thm}\label{1022}
The space $\undertilde{{\rm TEMP}}(\Gamma, X, \mu)$ of tempered weak equivalence classes is closed in $\undertilde{A}(\Gamma, X, \mu)$ and has a $\preceq$-maximum element, denoted by $\undertilde{a}_{\infty, \Gamma}^{temp}$.
\end{thm}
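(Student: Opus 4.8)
The plan is to prove the two assertions in turn: first that $\undertilde{{\rm TEMP}}(\Gamma, X, \mu)$ is closed in $\undertilde{A}(\Gamma, X, \mu)$, and then that it is directed under $\preceq$. Granting both, the existence of a $\preceq$-maximum is immediate from \cref{dir}: the least upper bound of a directed set is the $\preceq$-maximum of its closure, and for a \emph{closed} directed set this maximum lies in the set itself. We then set $\undertilde{a}_{\infty, \Gamma}^{temp}$ to be this maximum, which is a tempered weak equivalence class.

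For closedness, since $\undertilde{A}(\Gamma, X, \mu)$ is metrizable it suffices to work with sequences. Suppose $\undertilde{a}_n \to \undertilde{a}$ with each $a_n$ tempered. By \cref{1050} there are a subsequence and actions $b_{n_i} \cong a_{n_i}$ with $b_{n_i} \to a$ in $A(\Gamma, X, \mu)$; each $b_{n_i}$ is still tempered, since temperedness is a conjugacy invariant. Now $b_{n_i}\to a$ in the weak topology means $\mu(\gamma^{b_{n_i}}(C)\cap D)\to\mu(\gamma^a(C)\cap D)$ for all $C,D\in {\rm MALG}_\mu$, $\gamma\in\Gamma$, i.e. the matrix coefficients $\langle\kappa^{b_{n_i}}(\gamma)\chi_C,\chi_D\rangle$ converge to $\langle\kappa^a(\gamma)\chi_C,\chi_D\rangle$ on the total family of indicator functions. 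As these operators are uniformly bounded unitaries, they converge in the weak operator topology, and since each limit $\kappa^a(\gamma)$ is unitary the convergence is in fact strong; restricting to the (invariant) subspace $L_0^2$ gives $\kappa_0^{b_{n_i}}\to\kappa_0^a$ in ${\rm Rep}(\Gamma, L_0^2)$. It remains to note that $\{\pi\colon \pi\preceq\lambda_\Gamma\}$ is closed in ${\rm Rep}(\Gamma, H)$. Using the standard characterization (see \cite{BdlHV}) that $\pi\preceq\lambda_\Gamma$ iff $\|\pi(f)\|\leq\|\lambda_\Gamma(f)\|$ for every $f$ in the group algebra $\bbC[\Gamma]$, and the lower semicontinuity of $\pi\mapsto\|\pi(f)\|$ for the strong topology (for fixed finitely supported $f$ one has $\pi_m(f)\to\pi(f)$ strongly, whence $\|\pi(f)\xi\|=\lim_m\|\pi_m(f)\xi\|\leq\|\lambda_\Gamma(f)\|\,\|\xi\|$), the limit of representations weakly contained in $\lambda_\Gamma$ is again weakly contained in $\lambda_\Gamma$. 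Thus $\kappa_0^a\preceq\lambda_\Gamma$ and $a$ is tempered, so $\undertilde{a}\in\undertilde{{\rm TEMP}}(\Gamma, X, \mu)$.

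For directedness it suffices to show that $a,b$ tempered implies $a\times b$ tempered, since $a,b\sqsubseteq a\times b$ gives $a,b\preceq a\times b$. Decomposing $L^2_0(X\times Y)$ yields the standard isomorphism $\kappa_0^{a\times b}\cong\kappa_0^a\oplus\kappa_0^b\oplus(\kappa_0^a\otimes\kappa_0^b)$. The first two summands are tempered by hypothesis. For the third, monotonicity of tensor products under weak containment gives $\kappa_0^a\otimes\kappa_0^b\preceq\lambda_\Gamma\otimes\kappa_0^b$, and by Fell's absorption principle $\lambda_\Gamma\otimes\kappa_0^b\cong\infty\cdot\lambda_\Gamma$; moreover $\infty\cdot\lambda_\Gamma\simeq\lambda_\Gamma$, since an infinite sum of positive-definite functions realized in $\lambda_\Gamma$ is a pointwise limit of finite such sums. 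As a direct sum of representations each weakly contained in $\lambda_\Gamma$ is weakly contained in $\infty\cdot\lambda_\Gamma\simeq\lambda_\Gamma$, we conclude $\kappa_0^{a\times b}\preceq\lambda_\Gamma$, so $a\times b$ is tempered.

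I expect the closedness half to be the main obstacle; the directedness computation is routine representation theory. The delicate points are the passage from convergence in $\undertilde{A}(\Gamma, X, \mu)$ to genuine convergence of suitable conjugates of the actions (via \cref{1050}, which is needed because $a\mapsto\undertilde{a}$ is not continuous), and the verification that weak containment in $\lambda_\Gamma$ is a closed condition on ${\rm Rep}(\Gamma, H)$; both rest on the operator-norm characterization of weak containment and the lower semicontinuity of that norm for the strong operator topology.
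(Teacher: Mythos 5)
Your proof is correct, but it reverses the logical order of the paper's own argument (given in \cref{appE}), so it is worth comparing the two. The paper never proves closedness directly: it first shows that a product of \emph{countably many} tempered actions is tempered (exhausting the $L^2$-space of the infinite product by the finite subproducts), then takes a countable dense family $\{\undertilde{a}_n\}$ in $\undertilde{{\rm TEMP}}(\Gamma, X, \mu)$ and checks that $b=\prod_n a_n$ represents the $\preceq$-maximum, using density together with closedness of the order $\preceq$; closedness of $\undertilde{{\rm TEMP}}(\Gamma, X, \mu)$ then falls out for free, since tempered classes are downward closed under $\preceq$ (by \cref{4.1}), so that $\undertilde{{\rm TEMP}}(\Gamma, X, \mu)=\{\undertilde{a}\colon \undertilde{a}\preceq \undertilde{b}\}$ is a closed initial segment. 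You redistribute the work: closedness is proved analytically --- via \cref{1050}, strong convergence of the Koopman representations, and Fell's norm characterization of $\pi\preceq\lambda_\Gamma$, all of which is sound --- and the maximum then comes from compactness through \cref{dir}, so you only need the \emph{binary} product lemma (for directedness) rather than the countable one. In that shared lemma your derivation is also slightly cleaner: since the canonical unitary $L^2(X\times Y)\cong L^2(X)\otimes L^2(Y)$ carries constants to constants, the decomposition $\kappa_0^{a\times b}\cong \kappa_0^a\oplus\kappa_0^b\oplus(\kappa_0^a\otimes\kappa_0^b)$ holds unconditionally, whereas the paper cancels the trivial summands by an invariant-vector argument that requires non-amenability (handling the amenable case separately); and you invoke Fell absorption for $\lambda_\Gamma\otimes\kappa_0^b\cong\infty\cdot\lambda_\Gamma$ where the paper computes $\lambda_\Gamma\otimes\lambda_\Gamma\cong\infty\cdot\lambda_\Gamma$ by hand from the free diagonal action of $\Gamma$ on $\Gamma\times\Gamma$. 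What each route buys: the paper's construction exhibits an explicit representative of $\undertilde{a}_{\infty, \Gamma}^{temp}$ (a product of a dense sequence of tempered actions), while yours avoids infinite products entirely and isolates a reusable fact --- that weak containment in $\lambda_\Gamma$ is a closed condition in ${\rm Rep}(\Gamma,H)$ --- at the cost of leaning on the compactness of $\undertilde{A}(\Gamma, X, \mu)$ and the openness of the quotient map.
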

We give the proof in Appendix E, \cref{appE}.

In general, $\undertilde{s}_\Gamma\preceq \undertilde{a}_{\infty, \Gamma}^{temp}\preceq\undertilde{a}_{\infty, \Gamma}$. If $\Gamma$ is amenable, clearly $\undertilde{{\rm TEMP}}(\Gamma, X, \mu) = \undertilde{A}(\Gamma, X, \mu)$ and $\undertilde{a}_{\infty, \Gamma}^{temp} = \undertilde{a}_{\infty, \Gamma} = \undertilde{s}_\Gamma$. If $\Gamma$ is not amenable, then $i_\Gamma$ is not tempered, so $\undertilde{a}_{\infty, \Gamma}^{temp} \prec \undertilde{a}_{\infty, \Gamma}$. Bowen (see the last paragraph of \cref{limits}) has shown that if $\Gamma = \bbF_n$, for an appropriate $n$, then there is tempered $a$ such that $s_\Gamma \prec a$, so that $\undertilde{s}_\Gamma\prec \undertilde{a}_{\infty, \Gamma}^{temp}\prec \undertilde{a}_{\infty, \Gamma}$.

\begin{prob}
Is it true that for every non-amenable group $\Gamma$, $\undertilde{s}_\Gamma\prec \undertilde{a}_{\infty, \Gamma}^{temp}$?
\end{prob}

\begin{prob}
For a non-amenable group $\Gamma$, is there an explicit representative of  $\undertilde{a}_{\infty, \Gamma}^{temp}$?
\end{prob}



\subsection{The topology on the space of stable weak equivalence classes} For each $a\in A(\Gamma, X, \mu)$ denote by $\undertilde{a}_s$\index{$\undertilde{a}_s$} its stable weak equivalence class. Let also $\undertilde{A}_s (\Gamma, X, \mu)$\index{$\undertilde{A}_s (\Gamma, X, \mu)$} be the set of all stable weak equivalence classes. Let $K=2^\bbN$. By \cref{92} the map $\undertilde{a}_s\mapsto\overline{co}(E(a,K))$ is a bijection and thus as in Section 10.1 above it can be used to give a topology on $\undertilde{A}_s (\Gamma, X, \mu)$ which is again compact, metrizable. To see this it is enough to check that the range $R$ of this bijection is compact. But $R$ is the image of the set $\{\overline{E(a, K)}\colon a \in A(\Gamma, X, \mu)\}$, which is compact, by the function $L\mapsto \overline{co}(L)$ (in the hyperspace of compact subsets of $M_s(K^\Gamma)$), which is continuous (see \cite[Remark 5.8]{T-D1}). Equivalently we have that the map $\undertilde{a}\mapsto \undertilde{i_\Gamma\times a}$ is a continuous map on $\undertilde{A}(\Gamma, X, \mu)$, so its image is compact, and the map $\undertilde{a}_s\mapsto \undertilde{i_\Gamma\times a}$ is a homeomorphism of $\undertilde{A}_s (\Gamma, X, \mu)$ with this image. Thus $\undertilde{A}_s (\Gamma, X, \mu)$ can be also viewed as a compact subspace of $\undertilde{A} (\Gamma, X, \mu)$. 

Also $\undertilde{A}_s (\Gamma, X, \mu)$ is the quotient space of $\undertilde{A}(\Gamma, X, \mu)$ under the stable weak equivalence relation with associated quotient map $\pi(\undertilde{a}) = \undertilde{a}_s
$. It also turns out that the topology of $\undertilde{A}_s(\Gamma, X, \mu)$ is the quotient topology. To see this we verify that for each closed set $F$ in $\undertilde{A}_s(\Gamma, X, \mu)$, the preimage $\pi^{-1}(F)$ is closed in $\undertilde{A}(\Gamma, X, \mu)$. Let $F' = \{ \undertilde{i_\Gamma\times a}\colon \undertilde{a}_s \in F \}$. Then $F'$ is closed in $\undertilde{A}(\Gamma, X, \mu)$ and thus $\pi^{-1}(F) = \{\undertilde{a}\colon \undertilde{i_\Gamma\times a}\in F'\}$ is also closed.

It is now clear that the map $\undertilde{a}_s\mapsto {\rm type}(a)$ is continuous and therefore the sets $(\undertilde{A}_s)_\theta(\Gamma, X, \mu)$\index{$(\undertilde{A}_s)_\theta(\Gamma, X, \mu)$} and $\undertilde{{\rm FR}}_s (\Gamma, X, \mu)$\index{$\undertilde{{\rm FR}}_s (\Gamma, X, \mu)$} are compact. Also the analog of \cref{105} holds.

We also have the compact partial order $\undertilde{a}_s \preceq \undertilde{b}_s \iff i_\Gamma\times a \preceq i_\Gamma\times b$ on $\undertilde{A}_s (\Gamma, X, \mu)$, under which $\undertilde{A}_s (\Gamma, X, \mu)$ can be identified with a suborder of $\preceq$ on  $\undertilde{A} (\Gamma, X, \mu)$.

Finally, recall from \cref{8} that we have $\undertilde{A}_s (\Gamma, X, \mu) =\undertilde{A} (\Gamma, X, \mu)$ iff $\Gamma$ is amenable.

\subsection{Convexity} The space $\undertilde{A}(\Gamma, X, \mu)$ carries also a convex structure in the following sense. Given $\undertilde{a}, \undertilde{b} \in \undertilde{A}(\Gamma, X, \mu)$ and $\lambda\in [0,1]$ we define the convex combination: 
\[
\lambda\undertilde{a} + (1-\lambda )\undertilde{b} = \undertilde{\lambda a+(1-\lambda)b}.
\]
This requires a bit of explanation. The action $\lambda a+(1-\lambda)b$ is on the space $(Y,\nu)$, where $Y=X \sqcup X$ is the disjoint sum of two copies of $X$, each of which carries a copy of $\mu$ but weighted with ratios $\lambda$ and $1-\lambda$, resp. This standard probability space is non-atomic, so isomorphic to $(X,\mu)$, thus we can view this action as belonging to $A(\Gamma, X, \mu)$. It is straightforward that its weak equivalence class depends only on the weak equivalence classes of $a$ and $b$. This convex combination is a continuous function of the three variables $\lambda, \undertilde{a}, \undertilde{b}$, see \cite[Proposition 4.1]{Bu}, so it equips the space  $\undertilde{A}(\Gamma, X, \mu)$ with the structure of what is called a {\bf topological weak convex space}\index{topological weak convex space}, see \cite[Section 2.1 and Proposition 4.1]{Bu}. A consequence of these results is the following:

\begin{thm}[{\cite[Corollary 4.1]{Bu}}]\label{1012}
The space $\undertilde{A}(\Gamma, X, \mu)$ is path connected. Similarly for the subspaces $\undertilde{A_\theta}(\Gamma, X, \mu)$ and in particular $\undertilde{{\rm FR}}(\Gamma, X, \mu).$

\end{thm}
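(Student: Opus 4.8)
The plan is to use directly the convex structure on $\undertilde{A}(\Gamma, X, \mu)$ just introduced. Given any two classes $\undertilde{a}, \undertilde{b}$, I would consider the map
\[
t\mapsto (1-t)\undertilde{a} + t\undertilde{b} = \undertilde{(1-t)a + tb}, \quad t\in [0,1].
\]
By the continuity of the convex combination as a function of the three variables $(\lambda, \undertilde{a}, \undertilde{b})$ (see \cite[Proposition 4.1]{Bu}), fixing $\undertilde{a}$ and $\undertilde{b}$ makes this map continuous in $t$. At the endpoints it takes the values $\undertilde{a}$ (at $t=0$) and $\undertilde{b}$ (at $t=1$), since the convex combination degenerates to a single action when one of the weights vanishes. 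Thus it is a path joining $\undertilde{a}$ to $\undertilde{b}$, and path connectedness of $\undertilde{A}(\Gamma, X, \mu)$ follows.

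For the subspaces $\undertilde{A_\theta}(\Gamma, X, \mu)$, I would verify that this same path stays inside $\undertilde{A_\theta}(\Gamma, X, \mu)$. The key observation is that convex combination respects type. Indeed, the action $(1-t)a + tb$ lives on the disjoint sum $X\sqcup X$ with the two copies weighted by $1-t$ and $t$ respectively; since each copy is invariant under the action, the stabilizer of a point is computed entirely within the copy containing it, and therefore
\[
{\rm type}((1-t)a + tb) = (1-t)\,{\rm type}(a) + t\,{\rm type}(b).
\]
If $a,b\in A_\theta(\Gamma, X, \mu)$, both types equal $\theta$, so the combination again has type $\theta$ for every $t$, and the path lies in $\undertilde{A_\theta}(\Gamma, X, \mu)$. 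The special case $\undertilde{{\rm FR}}(\Gamma, X, \mu)$ corresponds to $\theta$ being the Dirac measure concentrating on $\{e_\Gamma\}$ (equivalently, a convex combination of free actions is free), so the same argument applies.

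The only substantive input is the continuity of convex combination in all three variables, already established in \cite[Proposition 4.1]{Bu} as part of the topological weak convex space structure. Granting this, the argument is the standard fact that a weakly convex space is path connected via its straight-line paths; the one extra ingredient needed for the type-restricted subspaces is the additivity of type under convex combination, which is immediate from the description of type as the pushforward of the stabilizer function. I do not anticipate a genuine obstacle here: the content has been front-loaded into the continuity statement and into the definition of the convex combination, so the present statement is essentially a formal consequence.
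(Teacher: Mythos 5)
Your proposal is correct and follows essentially the same route as the paper: the theorem is derived from the topological weak convex space structure on $\undertilde{A}(\Gamma, X, \mu)$, i.e., from the continuity of the convex combination established in \cite[Proposition 4.1]{Bu}, with straight-line paths $t\mapsto \undertilde{(1-t)a+tb}$ witnessing path connectedness. Your additional verification that type is affine under convex combination (so the paths stay in $\undertilde{A_\theta}(\Gamma, X, \mu)$, and in particular in $\undertilde{{\rm FR}}(\Gamma, X, \mu)$) is exactly the observation the paper relies on when it says ``similarly for the subspaces.''
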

The concept of {\bf extreme point}\index{extreme point} in a topological weak convex space is defined in the usual way. One then has an analog of the Krein-Milman Theorem:

\begin{thm}[{\cite[Theorem 1.2]{Bu}}]
The space $\undertilde{A}(\Gamma, X, \mu)$ is the closed convex hull of its extreme points. Similarly for the subspaces $\undertilde{A_\theta}(\Gamma, X, \mu)$ and in particular $\undertilde{{\rm FR}}(\Gamma, X, \mu).$
\end{thm}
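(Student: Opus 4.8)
\emph{Proof proposal.} The plan is to prove an abstract Krein--Milman theorem for $\undertilde{A}(\Gamma, X, \mu)$ regarded as a compact metrizable convex space and then to apply it verbatim to the subspaces. By \cref{AE1} the space is compact metrizable, and by \cite[Proposition 4.1]{Bu} the convex combination $(\lambda, \undertilde a, \undertilde b) \mapsto \lambda \undertilde a + (1-\lambda)\undertilde b$ is jointly continuous and obeys the usual commutativity, endpoint and parametric associativity identities. Define a \emph{face} to be a nonempty closed $F$ such that $\lambda \undertilde u + (1-\lambda)\undertilde v \in F$ with $0<\lambda<1$ forces $\undertilde u, \undertilde v \in F$, and call $\undertilde a$ an \emph{extreme point} when $\{\undertilde a\}$ is a face. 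The essential difficulty is that this convex space is \emph{not} idempotent: by \cref{38}, $\tfrac12 \undertilde a + \tfrac12 \undertilde a \neq \undertilde a$ when $a$ is strongly ergodic, so there can be no injective affine map into a topological vector space and the Hahn--Banach separation underlying the classical proof is unavailable. The whole point will be to replace affine separation by separation through \emph{convex} functions built from the metric.

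The key structural input is that the convex combination acts on the invariants $C_{n,k}$ of \cref{22}, (1) by Minkowski combination: splitting a partition of the disjoint sum across the two summands shows
\[
C_{n,k}\big(\lambda a + (1-\lambda) b\big) = \lambda\, C_{n,k}(a) + (1-\lambda)\, C_{n,k}(b),
\]
where $\lambda L + (1-\lambda) M = \{\lambda u + (1-\lambda) w : u\in L,\ w\in M\}$. Combined with the Hausdorff--Minkowski inequality $d_H(\lambda L + (1-\lambda) L', \lambda M + (1-\lambda) M') \le \lambda d_H(L,M) + (1-\lambda) d_H(L',M')$, this shows that the metric $\undertilde\delta$ of \cref{101} is \emph{jointly convex}:
\[
\undertilde\delta\big(\lambda \undertilde a + (1-\lambda)\undertilde a',\ \lambda \undertilde b + (1-\lambda)\undertilde b'\big) \le \lambda\, \undertilde\delta(\undertilde a,\undertilde b) + (1-\lambda)\,\undertilde\delta(\undertilde a',\undertilde b').
\]
Two consequences follow. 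First, for any closed convex $K$ the function $\undertilde a \mapsto \mathrm{dist}(\undertilde a, K)$ is continuous and convex (choose near-optimal points in $K$ and combine them). Second, the point-to-set distances $\undertilde a \mapsto \mathrm{dist}(p, C_{n,k}(a))$ are continuous convex functions which, as $(n,k)$ range over all indices and $p$ over a countable dense set, \emph{separate points}, since the distance function of a compact set determines it and the family $(C_{n,k})$ is a complete invariant.

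Next I would record the elementary lemma that, for a continuous convex $g$, its set of maximizers over any face is again a face: if $z = \lambda u + (1-\lambda) v$ maximizes $g$, convexity together with $g(u),g(v)\le g(z)$ forces $g(u)=g(v)=g(z)$. Using this I obtain extreme points by peeling: fix a countable separating family $g_1, g_2,\dots$ of continuous convex functions as above; given a nonempty face $F_0 = F$, let $F_i$ be the maximizers of $g_i$ in $F_{i-1}$. Each $F_i$ is a nonempty compact face of $\undertilde A(\Gamma,X,\mu)$, the nested intersection $\bigcap_i F_i$ is a nonempty face, and any two of its points agree on every $g_i$ and hence coincide; thus $\bigcap_i F_i = \{\undertilde e\}$ is an extreme point lying in $F$. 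In particular every nonempty face contains an extreme point.

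Finally I would run the separation endgame with convex functions in place of linear functionals. Let $K = \overline{co}(\mathrm{ext})$, which is closed, convex and compact. If some $\undertilde x_0 \notin K$, the function $g(\cdot) = \mathrm{dist}(\cdot, K)$ is continuous, convex, vanishes exactly on $K$, and satisfies $g(\undertilde x_0) > 0$; hence its maximum over $\undertilde A(\Gamma,X,\mu)$ is positive and its maximizer set $F$ is a face disjoint from $K$. By the previous step $F$ contains an extreme point, which then lies in $\mathrm{ext} \subseteq K$, contradicting $F \cap K = \emptyset$. Therefore $\undertilde A(\Gamma,X,\mu) = \overline{co}(\mathrm{ext})$. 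The same argument applies verbatim to $\undertilde{A_\theta}(\Gamma,X,\mu)$ and $\undertilde{{\rm FR}}(\Gamma,X,\mu)$, which are compact by \cref{104} and convex because type is affine under convex combinations and continuous (\cref{10.6}); they inherit the jointly convex metric, so the entire proof restricts to them. The main obstacle throughout is conceptual rather than computational, namely the non-idempotency of the convex structure; it is precisely the convexity, not affineness, of the distance functions $\mathrm{dist}(\cdot,K)$, guaranteed by the Minkowski behaviour of the $C_{n,k}$, that lets the Krein--Milman skeleton go through without any ambient linear structure.
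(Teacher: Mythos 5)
Your proof is correct. Note that the survey does not actually prove this theorem --- it quotes it from \cite[Theorem 1.2]{Bu} --- so there is no in-paper argument to compare against line by line; relative to the cited source, your strategy is essentially the same one: since non-idempotency (\cref{38}) blocks any injective affine embedding into a topological vector space, one runs the Krein--Milman scheme intrinsically in the compact topological weak convex space, replacing linear separation by separation through metric-convex functions. The computational heart of your argument, the identity $C_{n,k}(\lambda a+(1-\lambda)b)=\lambda\, C_{n,k}(a)+(1-\lambda)\, C_{n,k}(b)$, is right: a Borel partition of the weighted disjoint union $X\sqcup X$ is exactly a pair of Borel partitions of the two copies, and Minkowski combination commutes with closure because the cube is compact; and the three pieces of abstract machinery you build on it --- joint convexity of $\undertilde{\delta}$, the lemma that the maximizer set of a continuous convex function over a closed face is again a closed face, and countable peeling along the separating family ${\rm dist}(p,C_{n,k}(\cdot))$ (metrizability is what lets you substitute this countable induction for the Zorn's lemma step of the classical proof) --- are all sound, as is the endgame that separates a putative point outside $\overline{co}({\rm ext})$ by the convex function ${\rm dist}(\cdot,\overline{co}({\rm ext}))$ rather than by a linear functional. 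The restriction to the subspaces is also fine: $\undertilde{A_\theta}(\Gamma,X,\mu)$ and $\undertilde{{\rm FR}}(\Gamma,X,\mu)$ are compact by \cref{104} and convex because ${\rm type}(\lambda a+(1-\lambda)b)=\lambda\,{\rm type}(a)+(1-\lambda)\,{\rm type}(b)$, and every ingredient of your argument is intrinsic to a compact convex subspace with the inherited metric.
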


A general fact about the extreme points is the following:

\begin{thm}[{\cite[Theorem 1.3]{Bu}}]
If $\undertilde{a}$ is an extreme point, then in the ergodic decomposition of $a$ almost all ergodic components are weakly equivalent.
\end{thm}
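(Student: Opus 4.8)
The plan is to prove the contrapositive: assuming the ergodic decomposition $a=\int_\Omega a_\omega\,d\mathbb{P}(\omega)$ has a positive-measure family of pairwise weakly inequivalent components (i.e. $\omega\mapsto\undertilde{a_\omega}$ is not $\mathbb{P}$-a.e.\ constant), I would show that $\undertilde{a}$ is not an extreme point. The mechanism is that the ergodic decomposition is compatible with the convex structure on $\undertilde{A}(\Gamma,X,\mu)$: for any Borel $B\subseteq\Omega$ with $\lambda:=\mathbb{P}(B)\in(0,1)$, writing $a_B$ and $a_{B^c}$ for the renormalized sub-integrals of $a$ over $B$ and $\Omega\setminus B$, the splitting of $(X,\mu)$ into its part over $B$ and over $\Omega\setminus B$ gives $a\cong\lambda a_B+(1-\lambda)a_{B^c}$, hence
\[
\undertilde{a}=\lambda\,\undertilde{a_B}+(1-\lambda)\,\undertilde{a_{B^c}}.
\]
So it suffices to produce a single such $B$ with $\undertilde{a_B}\neq\undertilde{a_{B^c}}$, which then exhibits $\undertilde{a}$ as a nontrivial convex combination of two distinct classes.

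To detect this inequivalence I would use affine invariants built from the description in \cref{25}. For $K=2^{\bbN}$ and $\phi\in C(K^\Gamma)$ put
\[
\Psi_{\phi}(\undertilde{a})=\max_{\nu\in\overline{E(a,K)}}\int_{K^\Gamma}\phi\,d\nu .
\]
Since $E(\lambda a+(1-\lambda)b,K)=\lambda E(a,K)+(1-\lambda)E(b,K)$ as a Minkowski combination, $\Psi_\phi$ is affine for the convex structure, and it is continuous because $\undertilde{a}\mapsto\overline{E(a,K)}$ is continuous into the hyperspace for the topology of \cref{AE1}. The key point is that a countable family $\{\Psi_{\phi}\}$, with $\phi$ ranging over a countable dense subset of $C((2^{\bbN})^\Gamma)$, separates \emph{ergodic} weak equivalence classes: if $c,c'$ are ergodic with $c\not\simeq c'$, then since $\simeq$ and $\simeq_s$ coincide on ergodic actions (a consequence of \cref{3.8}) we have $c\not\simeq_s c'$, so by \cref{92} the compact convex sets $\overline{co}(E(c,K))$ and $\overline{co}(E(c',K))$ are distinct. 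A Hahn--Banach separation then produces $\phi$ with $\Psi_\phi(\undertilde{c})\neq\Psi_\phi(\undertilde{c'})$, using that the maximum of a linear functional over $\overline{E(\cdot,K)}$ equals its maximum over $\overline{co}(E(\cdot,K))$.

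Granting separation, non-constancy of $\omega\mapsto\undertilde{a_\omega}$ forces some $\Psi=\Psi_\phi$ to make $F(\omega):=\Psi(\undertilde{a_\omega})$ non-a.e.\ constant, so I can choose a level $t$ with $B=\{F\le t\}$ satisfying $0<\mathbb{P}(B)<1$ and distinct conditional averages, namely $\frac{1}{\mathbb{P}(B)}\int_B F\neq\frac{1}{\mathbb{P}(B^c)}\int_{B^c}F$. To convert this into $\undertilde{a_B}\neq\undertilde{a_{B^c}}$ I need the barycenter identity $\Psi(\undertilde{c})=\int_\Omega\Psi(\undertilde{c_\omega})\,d\mathbb{P}(\omega)$ for the ergodic decomposition of any $c$; applied to $a_B$ and $a_{B^c}$ it gives $\Psi(\undertilde{a_B})\neq\Psi(\undertilde{a_{B^c}})$, completing the argument. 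I expect this barycenter identity to be the main obstacle. The inequality $\Psi(\undertilde{c})\le\int_\Omega\Psi(\undertilde{c_\omega})\,d\mathbb{P}$ is immediate from optimizing the defining function componentwise, but the reverse inequality requires interchanging the supremum defining $\Psi$ with the integral over $\Omega$, which rests on a measurable selection of $\epsilon$-optimal Borel maps $f_\omega\colon X_\omega\to K$ on each ergodic component, assembled into a single global Borel $f\colon X\to K$.
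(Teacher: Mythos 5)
The survey does not actually contain a proof of this theorem: it is quoted from \cite[Theorem 1.3]{Bu} and is not among the results proved in the Appendices, so there is no in-paper argument to compare yours against; judged on its own terms, your proposal is correct. Its two pillars are exactly what any proof needs. First, the compatibility of the ergodic decomposition with the convex structure, $\undertilde{a}=\lambda\undertilde{a_B}+(1-\lambda)\undertilde{a_{B^c}}$, is immediate since $\pi^{-1}(B)$ and its complement are invariant and carry non-atomic normalized restrictions of $\mu$; exhibiting such a splitting with $\undertilde{a_B}\neq\undertilde{a_{B^c}}$ negates extremality in the weak-convex-space sense. Second, and this is the real content, you need a countable family of \emph{affine} invariants with the barycenter property that separates ergodic classes, and your detour through stable weak equivalence is a necessity rather than a luxury: arbitrary continuous functions on the compact metrizable space $\undertilde{A}(\Gamma,X,\mu)$ separate points but satisfy no barycenter identity, while a direct appeal to \cref{3.8} fails because an ergodic component need not be weakly contained in the ambient action (e.g.\ $b\not\preceq\frac12 b+\frac12 c$ for ergodic $\preceq$-incomparable $b,c$, by \cref{3.8} itself). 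Your chain --- ergodicity makes $\simeq$ and $\simeq_s$ agree; $\not\simeq_s$ gives distinct hulls $\overline{co}(E(\cdot,2^\bbN))$ by \cref{92}; Hahn--Banach separation plus the fact that a weak*-continuous linear functional has the same maximum on $\overline{E(\cdot,K)}$ as on its closed convex hull --- is the right way to get separation, and the Minkowski identity $E(\lambda a+(1-\lambda)b,K)=\lambda E(a,K)+(1-\lambda)E(b,K)$ does make each $\Psi_\phi$ affine.

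The barycenter identity that you flag as the main obstacle goes through exactly as you sketch, but one device should be made explicit, since it is needed twice: fix a single countable family of Borel maps $f_m\colon X\to 2^\bbN$ (say, functions measurable with respect to a fixed countable generating algebra, with values in a countable dense subset of $2^\bbN$) that is dense in $L^0(X,\nu;2^\bbN)$ simultaneously for \emph{every} probability measure $\nu$ on $X$. This family gives (a) Borelness of $\omega\mapsto F(\omega)=\Psi_\phi(\undertilde{a_\omega})=\sup_m\int\phi\,d(\Phi^{f_m,a})_*\mu_\omega$, which you use silently when forming the level set $B=\{F\le t\}$, and (b) the measurable selection $N(\omega)=$ least $m$ with $\int\phi\,d(\Phi^{f_m,a})_*\mu_\omega>F(\omega)-\epsilon$, glued along the decomposition to the single Borel map $f(x)=f_{N(\pi(x))}(x)$, which yields the reverse inequality $\Psi_\phi(\undertilde{c})\ge\int_\Omega\Psi_\phi(\undertilde{c_\omega})\,d\mathbb{P}-\epsilon$ because each fiber $\pi^{-1}(\omega)$ is invariant, so $\Phi^{f,c}$ restricted to it is $\Phi^{f_{N(\omega)},c_\omega}$. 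With that in place, your conditional-average computation (average $\le t$ on $B$, $>t$ on $B^c$) correctly completes the contrapositive.
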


When $\Gamma$ is amenable, the map $\undertilde{a}\mapsto {\rm type}(a)$ is an affine homeomorphism of 
$\undertilde{A}(\Gamma, X, \mu)$ with the compact, convex subset ${\rm IRS}(\Gamma)$ of the separable Banach space of (finite) measures on ${\rm Sub}(\Gamma)$. It is the closed convex hull of its extreme points, which in this case are exactly the ergodic IRS's, and is known to be a Choquet simplex (see, e.g., \cite[Proposition 8.6]{Gl}). Thus for amenable groups the extreme points of $\undertilde{A}(\Gamma, X, \mu)$ are the ones that have ergodic type. Note that if $\undertilde{a}$ has type $\theta$, which concentrates on infinite index subgroups, then $\undertilde{a}$ is ergodic iff $\theta$ is ergodic; see \cite[Theorem 5.11]{T-D1}. Note also that, for example, when $\Gamma=\bbZ$ this Choquet simplex is a Bauer simplex.

When the group is not amenable it is not clear what are the extreme points in $\undertilde{A}(\Gamma, X, \mu)$. However every strongly ergodic $\undertilde{a}$ is an extreme point. Also in the non-amenable case, by \cref{38}, if we take for example $\undertilde{a} = \undertilde{s}_\Gamma$, we have that $\undertilde{a}\not= \frac{1}{2}\undertilde{a} + \frac{1}{2}\undertilde{a}$, thus in this case the space $\undertilde{A}(\Gamma, X, \mu)$ cannot be realized as a compact convex subset of a topological vector space. We will next see how this problem can be overcome by passing to $\undertilde{A}_s(\Gamma, X, \mu)$.

The concept of convex combination in the space $\undertilde{A}_s(\Gamma, X, \mu)$ is defined as in the space $\undertilde{A}(\Gamma, X, \mu)$ and it forms again a topological weak convex space but in this case we have a much stronger statement:

\begin{thm}[{\cite[Theorem 1.5]{Bu}}]
For any group $\Gamma$, the space $\undertilde{A}_s(\Gamma, X, \mu)$ is affinely homeomorphic to a compact, convex subset of a separable Banach space. Similarly for  $(\undertilde{A}_s)_\theta(\Gamma, X, \mu)$ and thus in particular
 $\undertilde{{\rm FR}}_s(\Gamma, X, \mu)$.
 \end{thm}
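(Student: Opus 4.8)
The plan is to transport the convex structure of $\undertilde{A}_s(\Gamma, X, \mu)$ to the space of compact convex subsets of $M_s(K^\Gamma)$ (with $K = 2^\bbN$) and then apply the classical support-function (R\aa dstr\"om--H\"ormander) embedding. Recall from the discussion of the topology on the space of stable weak equivalence classes that, by \cref{92} together with the identity $\overline{E(i_\Gamma\times a, K)} = \overline{co}(E(a,K))$ of \cite[Theorem 1.1]{T-D1}, the map
\[
\Psi\colon \undertilde{a}_s \mapsto \overline{co}(E(a,K))
\]
is a homeomorphism of $\undertilde{A}_s(\Gamma, X, \mu)$ onto the family $\mathcal{R}$ of compact convex subsets of $C := M_s(K^\Gamma)$ arising this way, and that $\mathcal{R}$ is a compact subspace of the hyperspace of compact convex subsets of $C$ under the Hausdorff metric $d_H$. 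Here $C$ is a compact, convex, metrizable subset of the locally convex space $(C(K^\Gamma)^*, w^*)$. It thus remains to (i) identify the convex combination on $\undertilde{A}_s(\Gamma, X, \mu)$ with Minkowski combination on $\mathcal{R}$, and (ii) embed $\mathcal{R}$, with its Minkowski convex structure and Hausdorff topology, affinely and homeomorphically onto a compact convex subset of a separable Banach space.

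For (i), write $c = \lambda a + (1-\lambda) b$, which acts on $Y = X \sqcup X$ with the weighted measure $\nu = \lambda\mu \oplus (1-\lambda)\mu$, acting as $a$ on the first copy and $b$ on the second, so both copies are invariant. Given Borel $f\colon Y \to K$ with restrictions $f_1, f_2 \colon X \to K$, the equivariant map $\Phi^{f, c}$ restricts on the two copies to $\Phi^{f_1, a}$ and $\Phi^{f_2, b}$, whence
\[
(\Phi^{f, c})_* \nu = \lambda (\Phi^{f_1, a})_*\mu + (1-\lambda)(\Phi^{f_2, b})_*\mu .
\]
As $f$ ranges over Borel maps $Y \to K$, the pair $(f_1, f_2)$ ranges over all pairs, so $E(c, K) = \lambda E(a,K) + (1-\lambda) E(b,K)$ (Minkowski sum of sets of measures). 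Taking closed convex hulls, and using that inside $C$ the Minkowski combination of two compact convex sets is already compact and convex, we obtain $\Psi(\lambda\undertilde{a}_s + (1-\lambda)\undertilde{b}_s) = \lambda\,\Psi(\undertilde{a}_s) + (1-\lambda)\,\Psi(\undertilde{b}_s)$. In particular $\mathcal{R}$ is closed under Minkowski combination, and since each $L \in \mathcal{R}$ is convex we have $\lambda L + (1-\lambda) L = L$; this idempotency is exactly what makes the stable convex structure compatible with that of a genuine vector space, and is the reason the argument succeeds for $\undertilde{A}_s$ but fails for $\undertilde{A}$ (cf.\ the discussion immediately preceding this theorem).

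For (ii), first fix an affine homeomorphism $T$ of $C$ onto a compact convex subset $\widehat{C}$ of a separable Banach space $E$ (e.g.\ $E = \ell^2$, via $\nu \mapsto (2^{-n}\int f_n\, d\nu)_n$ for a sequence $(f_n)$ in the unit ball of $C(K^\Gamma)$ separating the points of $C$). The induced map $L \mapsto T(L)$ carries $\mathcal{R}$ homeomorphically onto a family of compact convex subsets of $\widehat{C}$ and, $T$ being affine, intertwines Minkowski combinations. Now apply the support-function embedding: for compact convex $A \subseteq E$ set $h_A(\phi) = \max_{x \in A}\phi(x)$, where $\phi$ ranges over the closed unit ball $B_{E^*}$ of $E^*$ with its weak$^*$ topology; since $E$ is separable, $B_{E^*}$ is compact metrizable and $C(B_{E^*})$ is a separable Banach space. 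The map $A \mapsto h_A$ is injective (as $A = \{x : \phi(x) \le h_A(\phi)\ \forall \phi\}$), additive and positively homogeneous ($h_{A+B} = h_A + h_B$, $h_{\lambda A} = \lambda h_A$), hence affine for Minkowski combinations; and by H\"ormander's identity $d_H(A,B) = \sup_{\phi \in B_{E^*}} |h_A(\phi) - h_B(\phi)| = \|h_A - h_B\|_\infty$ it is an isometry, in particular continuous. Composing $\Psi$, $L \mapsto T(L)$, and $A \mapsto h_A$ gives an affine continuous injection of the compact space $\undertilde{A}_s(\Gamma, X, \mu)$ into $C(B_{E^*})$; being continuous and injective on a compact domain, it is a homeomorphism onto its image, which is convex (by affineness) and compact. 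For the subspaces, $\mathrm{type}(\lambda a + (1-\lambda)b) = \lambda\,\mathrm{type}(a) + (1-\lambda)\,\mathrm{type}(b)$, so $(\undertilde{A}_s)_\theta(\Gamma, X, \mu)$ is convex; it is also compact, hence its image under the same embedding is a compact convex subset of the same Banach space, giving the claim for $(\undertilde{A}_s)_\theta$ and in particular for $\undertilde{{\rm FR}}_s(\Gamma, X, \mu)$.

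The main obstacle is step (i): matching the operation on $\undertilde{A}_s$ --- defined through disjoint unions of actions --- with Minkowski combination of the associated sets $\overline{co}(E(a,K))$, and verifying that passing to closed convex hulls (the operation implementing the stabilization $a \mapsto i_\Gamma \times a$) produces the idempotency $\tfrac12 L + \tfrac12 L = L$ that is indispensable for a vector-space realization. Step (ii) is then the classical R\aa dstr\"om--H\"ormander embedding, applied after the routine affine metrization of $C$ inside a separable Banach space.
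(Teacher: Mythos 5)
The survey itself does not contain a proof of this theorem: it is quoted from \cite[Theorem 1.5]{Bu}, so there is no internal argument to compare against. Your proof is correct, and it is essentially the route that the paper's own machinery points to (and that the cited source follows): use the identification $\undertilde{a}_s\mapsto \overline{co}(E(a,2^\bbN))$ by which the paper topologizes $\undertilde{A}_s(\Gamma,X,\mu)$ (via \cref{92}), check that the disjoint-sum convex combination of actions corresponds to Minkowski combination of these compact convex sets, and conclude with the R\aa dstr\"om--H\"ormander support-function embedding. The only genuinely nonroutine point is your step (i), and your computation $E(\lambda a+(1-\lambda)b,K)=\lambda E(a,K)+(1-\lambda)E(b,K)$, followed by passage to closed convex hulls, is sound --- the hull/closure exchange is legitimate because all sets involved are compact subsets of the compact convex set $M_s(K^\Gamma)$, so Minkowski combinations of their closed convex hulls are already compact, hence closed; your idempotency remark also correctly isolates why this works for $\undertilde{A}_s(\Gamma,X,\mu)$ but must fail for $\undertilde{A}(\Gamma,X,\mu)$ when $\Gamma$ is non-amenable (by \cref{38}).
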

 \begin{thm}[{\cite[Proposition 7.3]{Bu} and Bowen--Tucker-Drob}]\label{1016}
The extreme points of $\undertilde{A}_s(\Gamma, X, \mu)$ are exactly the ergodic stable weak equivalence classes.
 \end{thm}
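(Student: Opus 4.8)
The plan is to identify $\undertilde{A}_s(\Gamma, X, \mu)$ with a space of probability measures on ergodic classes, under which the convex structure becomes ordinary convex combination of measures; since the point masses are exactly the extreme points of a space of probability measures, and these correspond to the ergodic classes, the theorem will follow. Concretely, for $a\in A(\Gamma, X, \mu)$ let $\Phi(a)\in P\big(\undertilde{A}(\Gamma,X,\mu)\big)$ be the pushforward of the ergodic decomposition of $a$ to $\undertilde{A}(\Gamma,X,\mu)$, i.e.\ the distribution of the weak equivalence class of a random ergodic component of $a$; this is a Borel probability measure concentrated on the set of ergodic classes. The decisive step is the equivalence
\[
a\simeq_s b \iff \Phi(a) = \Phi(b).
\]
This is exactly where the ergodic decomposition theorem of Bowen--Tucker-Drob \cite[Theorem 8.1]{BT-D1} enters: its version for stable weak \emph{equivalence} asserts that $a\simeq_s b$ holds iff there is a coupling of the ergodic decomposition measures of $a$ and $b$ concentrating on pairs of components $a',b'$ with $a'\simeq_s b'$. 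Since $a',b'$ are ergodic, $a'\simeq_s b'\iff a'\simeq b'$ (for ergodic actions $\preceq_s$ and $\preceq$ coincide, by \cref{3.8}), i.e.\ $a'$ and $b'$ name the same point of $\undertilde A(\Gamma,X,\mu)$. Thus the coupling concentrates on the diagonal, which forces $\Phi(a)=\Phi(b)$; conversely, when $\Phi(a)=\Phi(b)$ the diagonal coupling witnesses $a\simeq_s b$. I expect this step to be the main obstacle, as everything else is formal once it is in place. In particular it shows $\undertilde{a}_s\mapsto\Phi(a)$ is a well-defined injection.

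Granting this, I would finish as follows. First, $\Phi$ is \emph{affine}: the ergodic decomposition of a convex combination $\lambda b+(1-\lambda)c$ is $\lambda\Phi(b)+(1-\lambda)\Phi(c)$, so by the definition of the convex structure on $\undertilde{A}_s$ we have $\Phi(\lambda b+(1-\lambda)c)=\lambda\Phi(b)+(1-\lambda)\Phi(c)$. For \emph{ergodic $\Rightarrow$ extreme}, take an ergodic representative $a$, so $\Phi(a)=\delta_{\undertilde{a}}$ is a Dirac mass. If $\undertilde{a}_s=\lambda\undertilde{b}_s+(1-\lambda)\undertilde{c}_s$ with $0<\lambda<1$, then $\delta_{\undertilde{a}}=\lambda\Phi(b)+(1-\lambda)\Phi(c)$; since a Dirac measure is an extreme point of the space of probability measures, $\Phi(b)=\Phi(c)=\delta_{\undertilde a}$, whence $\undertilde{b}_s=\undertilde{c}_s=\undertilde{a}_s$ by the displayed equivalence. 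For the converse \emph{extreme $\Rightarrow$ ergodic}, suppose $\undertilde{a}_s$ is not ergodic; then $\Phi(a)$ is not a Dirac mass (otherwise $\Phi(a)=\delta_{\undertilde c}$ with $c$ ergodic gives $a\simeq_s c$, making the class ergodic). Choose a Borel set $U$ with $0<\Phi(a)(U)<1$; an elementary computation (if the two conditionals agreed they would both equal $\Phi(a)$, forcing $\Phi(a)(U)\in\{0,1\}$) shows the conditional measures $\Phi(a)(\cdot\mid U)$ and $\Phi(a)(\cdot\mid U^c)$ are distinct. Pulling $U$ back through the $\Gamma$-invariant Borel ergodic decomposition map $X\to\undertilde A(\Gamma,X,\mu)$ yields a $\Gamma$-invariant Borel set $A\subseteq X$ with $0<\mu(A)=:\lambda<1$ and $\Phi(a|A)=\Phi(a)(\cdot\mid U)\neq\Phi(a)(\cdot\mid U^c)=\Phi(a|A^c)$, where $a|A$ denotes the restriction of $a$ to $A$ with normalized measure. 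Hence $a|A\not\simeq_s a|A^c$, while $a\cong\lambda\,(a|A)+(1-\lambda)\,(a|A^c)$; this exhibits $\undertilde{a}_s$ as a nontrivial convex combination of two distinct points, so it is not extreme.

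Two remarks on execution. One should first note that ``probability measure on the ergodic classes'' makes sense: the ergodic classes form a separable metrizable subspace of the compact metrizable space $\undertilde{A}(\Gamma, X, \mu)$, and the composite map sending a point of $X$ to the weak equivalence class of its ergodic component is Borel (using that the quotient map $a\mapsto\undertilde a$ is Baire class $1$ by \cref{102} and that $\simeq$ is smooth), so $\Phi(a)$ is a bona fide Borel measure. It would be tempting to upgrade $\Phi$ to an affine homeomorphism of $\undertilde{A}_s(\Gamma, X, \mu)$ onto its (compact, convex) image and simply quote that the extreme points of a set of probability measures are the point masses; bicontinuity would follow from \cref{seq} and \cref{10.6}. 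However, the two direct arguments above avoid any need for surjectivity or continuity of $\Phi$, relying only on injectivity and affinity, so I would present them and treat the homeomorphism only as optional context.
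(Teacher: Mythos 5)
The survey itself contains no proof of \cref{1016}; it only cites \cite[Proposition 7.3]{Bu} (essentially the ``ergodic $\Rightarrow$ extreme'' direction) and Bowen--Tucker-Drob, so the comparison must be with the route those citations indicate. Your proposal is precisely that route: you derive both directions from the ergodic-decomposition theorem for stable weak equivalence, \cite[Theorem 8.1]{BT-D1}, which the survey quotes in \cref{8}. The skeleton is sound: the coupling criterion together with the fact that $\preceq_s$ and $\preceq$ agree for ergodic actions (via \cref{3.8}) yields that $\undertilde{a}_s\mapsto\Phi(a)$ is well defined and injective; affinity of $\Phi$ is clear from how ergodic decompositions behave under convex combinations; ``ergodic $\Rightarrow$ extreme'' then follows from extremality of Dirac measures; and the splitting argument (pulling back a set $U$ with $0<\Phi(a)(U)<1$ to an invariant set $A$ and writing $a\cong\lambda(a|A)+(1-\lambda)(a|A^c)$) correctly shows a non-ergodic class is not extreme.

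Two points need repair, neither fatal. First, in the converse of your key equivalence, ``the diagonal coupling'' is not literally available: $\Phi(a)=\Phi(b)$ is an identity of measures on $\undertilde{A}(\Gamma,X,\mu)$, whereas \cite[Theorem 8.1]{BT-D1} requires a coupling of the ergodic decomposition measures themselves. What you need is the relatively independent joining of the two decomposition measures over the common factor map to $\bigl(\undertilde{A}(\Gamma,X,\mu),\Phi(a)\bigr)$: disintegrate each decomposition measure over the class map and integrate the fibrewise products; this coupling concentrates on pairs of components lying in the same weak equivalence class, as required. Second, your measurability justification does not work as stated: the ergodic components of $a$ are actions on the spaces $(X_e,e)$, not points of $A(\Gamma,X,\mu)$, so you cannot compose with the Baire class $1$ quotient map of \cref{102} without first choosing, Borel in $e$, isomorphisms $(X_e,e)\cong(X,\mu)$, which is itself a nontrivial selection. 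It is cleaner to check directly that $e\mapsto C_{n,k}(a|X_e)$ is Borel into the hyperspace $H([0,1]^{n\times k\times k})$, using a fixed countable Boolean algebra (generated by a countable basis of a Polish topology on $X$) that is dense in ${\rm MALG}_e$ for \emph{every} probability measure $e$ simultaneously; this also supplies the measurable framework that the coupling statement of \cite[Theorem 8.1]{BT-D1} implicitly presupposes. With these two adjustments your argument is a correct rendering of the Bowen--Tucker-Drob proof.
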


 \begin{thm}[{\cite[Theorem 10.1]{BT-D1}}]
The space $\undertilde{A}_s(\Gamma, X, \mu)$ is a Choquet simplex. Similarly for $\undertilde{{\rm FR}}_s(\Gamma, X, \mu)$.
 \end{thm}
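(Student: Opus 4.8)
The plan is to verify the defining property of a Choquet simplex directly, using the ergodic decomposition as the source of barycentric representations. By \cite[Theorem 1.5]{Bu}, $\undertilde{A}_s(\Gamma, X, \mu)$ is affinely homeomorphic to a metrizable compact convex subset of a separable Banach space, so Choquet--Meyer theory applies and it suffices to show that every point admits a \emph{unique} maximal representing probability measure. In the metrizable case a representing measure is maximal exactly when it gives full mass to the set of extreme points, which by \cref{1016} is precisely the set $E$ of ergodic stable weak equivalence classes. Thus the whole argument reduces to producing, for each $\undertilde{a}_s$, a probability measure concentrated on $E$ with barycenter $\undertilde{a}_s$, and showing there is only one such.

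For existence, given $\undertilde{a}_s$ I would fix a representative $a$, let $\nu_a$ be its ergodic decomposition (a probability measure on ${\rm ERG}(\Gamma, X, \mu)$), and push $\nu_a$ forward along the Borel map $a' \mapsto \undertilde{a'}_s$ to obtain a probability measure $m_{\undertilde{a}_s}$ concentrated on $E$. Two checks are needed: that $m_{\undertilde{a}_s}$ is independent of the representative, and that its barycenter is $\undertilde{a}_s$. Independence is exactly the ``stable weak equivalence persists through the ergodic decomposition'' half of \cite[Theorem 8.1]{BT-D1}: if $a \simeq_s b$ there is a coupling of $\nu_a$ and $\nu_b$ concentrated on pairs $(a',b')$ with $a' \simeq_s b'$, forcing the two pushforwards to agree. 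That the barycenter equals $\undertilde{a}_s$ follows from the continuity and affineness of the convex structure on $\undertilde{A}_s(\Gamma, X, \mu)$ together with the fact that integrating the ergodic components against $\nu_a$ reconstructs $a$.

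The crux is uniqueness. Let $\sigma$ be any probability measure on $\undertilde{A}_s(\Gamma, X, \mu)$ giving full mass to $E$ and with barycenter $\undertilde{a}_s$; I must show $\sigma = m_{\undertilde{a}_s}$. The strategy is to \emph{realize} $\sigma$ as an ergodic decomposition: build an action $b$ by integrating a measurable field of ergodic representatives of the classes in the support of $\sigma$, so that the pushforward of the ergodic decomposition of $b$ equals $\sigma$. Computing barycenters as in the existence step gives $\undertilde{b}_s = \undertilde{a}_s$, whence $\sigma = m_{\undertilde{b}_s} = m_{\undertilde{a}_s}$ by well-definedness. Conceptually, what is really happening is that the ergodic decomposition sets up a lattice isomorphism between the cone generated by $\undertilde{A}_s(\Gamma, X, \mu)$ and the cone of finite positive measures on $E$, with \cite[Theorem 8.1]{BT-D1} furnishing the injectivity of this correspondence; the simplex property is then just the fact that a cone of measures is a lattice.

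The main obstacle is precisely this realization step: passing from an abstract extreme-point-supported measure $\sigma$ on stable weak equivalence classes back to an actual action whose ergodic decomposition induces $\sigma$. This requires a measurable selection of ergodic representatives and a verification that the integrated (``glued'') action has the prescribed ergodic decomposition and the correct stable weak equivalence class; the delicate point is that the quotient map $a' \mapsto \undertilde{a'}_s$ and the barycenter structure interact correctly under this integration, and it is here that one leans hardest on \cite[Theorem 8.1]{BT-D1}. The statement for $\undertilde{{\rm FR}}_s(\Gamma, X, \mu)$ follows by the same argument, since freeness of $a$ is equivalent to freeness of $\nu_a$-almost every ergodic component, so the entire construction stays inside the free classes.
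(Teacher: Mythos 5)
The survey you were given does not actually prove this theorem: it is quoted from \cite[Theorem 10.1]{BT-D1}, and none of the appendices treat it. Measured against that source, your overall strategy is the right one (it is essentially Bowen and Tucker-Drob's: reduce to uniqueness of representing measures supported on the extreme points, and use the persistence of stable weak equivalence through the ergodic decomposition, \cite[Theorem 8.1]{BT-D1}, to make the correspondence between stable classes and distributions on ergodic classes well defined and injective). But as written your construction has two genuine gaps beyond the realization step you flag yourself.

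First, a concrete error in the setup. The ergodic decomposition of $a \in A(\Gamma,X,\mu)$ is \emph{not} a probability measure on ${\rm ERG}(\Gamma,X,\mu)$: the ergodic components need not live on a non-atomic space. Take $a = i_\Gamma$: its components are trivial one-point actions, and in fact no ergodic element of $A(\Gamma,X,\mu)$ is stably weakly equivalent to $i_\Gamma$ (if $b$ is ergodic and $b \preceq_s i_\Gamma$, then $b \preceq i_\Gamma \times i_\Gamma = i_\Gamma$, which forces $b = i_\Gamma$, and $i_\Gamma$ is not ergodic on a non-atomic space). Nevertheless $\undertilde{(i_\Gamma)}_s$ \emph{is} an extreme point of $\undertilde{A}_s(\Gamma,X,\mu)$: if $i_\Gamma \simeq_s \lambda a' + (1-\lambda)b'$, then $\lambda(i_\Gamma\times a') + (1-\lambda)(i_\Gamma\times b') \preceq i_\Gamma$, hence this action is trivial, hence $a'$ and $b'$ are trivial. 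So if your set $E$ is read, as you read \cref{1016}, as the classes containing an ergodic action of $A(\Gamma,X,\mu)$, then $E$ omits extreme points, your equivalence ``maximal $=$ concentrated on $E$'' is false, and your existence step literally fails for $a = i_\Gamma$. The fix is to build both $m_{\undertilde{a}_s}$ and $E$ from ergodic actions on \emph{arbitrary} standard probability spaces, mapped into $\undertilde{A}_s(\Gamma,X,\mu)$ by $a' \mapsto \undertilde{(i_\Gamma\times a')}_s$; this is exactly where the ``stable'' in stable weak equivalence does real work, and the proposal never engages with it.

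Second, the barycenter identity --- that $\undertilde{a}_s$ is the barycenter of the pushforward of its ergodic decomposition --- does not ``follow from continuity and affineness of the convex structure.'' Affineness gives only the exact finite relations $\undertilde{a}_s = \sum_i \nu_a(P_i)\,\undertilde{(a|P_i)}_s$ for finite partitions of the space of components. Passing from these to the integral requires showing that the Banach-space embedding of \cite[Theorem 1.5]{Bu} intertwines direct integrals of actions with barycenters of measures; coordinatewise this is an exchange of a supremum (over factor maps to $K^\Gamma$) with an integral over the components, and the nontrivial inequality needs a measurable selection of near-optimal factor maps over the ergodic components (or a martingale-convergence argument for the finite-partition approximations). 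This, together with the measurable selection of ergodic representatives and the verification that the glued action has the prescribed decomposition, is precisely the technical content of \cite{BT-D1}; citing their Theorem 8.1 alone does not discharge it.
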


 Concerning the convex structure of $(\undertilde{A}_s)_\theta(\Gamma, X, \mu)$ the following are due to Tucker-Drob:
 
(1) When $\theta$ is ergodic, then $(\undertilde{A}_s)_\theta(\Gamma, X, \mu)$ is a face of $\undertilde{A}_s(\Gamma, X, \mu)$, since the map $\undertilde{a}_s\mapsto {\rm type}(a)$ is afffine. Therefore its extreme points are the ergodic stable weak equivalence classes of type $\theta$ and $(\undertilde{A}_s)_\theta(\Gamma, X, \mu)$ is a Choquet simplex. (In particular this of course includes the case of $\undertilde{{\rm FR}}_s(\Gamma, X, \mu)$.)  
 
(2)  On the other hand if $\theta$ is not ergodic, consider its ergodic decomposition $\theta = \int \theta' \, d\eta  (\theta' )$. Then the space $(\undertilde{A}_s)_\theta(\Gamma, X, \mu)$ can be identified with the space of all measurable maps $f\colon {\rm IRS}_\Gamma \to \undertilde{A}_s(\Gamma, X, \mu)$ so that $f(\theta') \in (\undertilde{A}_s)_{\theta'}(\Gamma, X, \mu)$, $\eta$-a.e.,  where two such maps are identified if they agree $\eta$-a.e. From this it follows that the extreme points of $(\undertilde{A}_s)_\theta(\Gamma, X, \mu)$ are the measurable maps $f$ as above such that for $\eta$-almost all $\theta'$, $f(\theta')$ is an ergodic stable weak equivalence class of type $\theta'$.

From this it can be shown that when $\theta$ is not ergodic, $(\undertilde{A}_s)_\theta(\Gamma, X, \mu)$ is not a Choquet simplex, except in the degenerate case when the measure $\eta$ contains an atom, say $\theta _0$, with the property that for $\eta$-a.e. $\theta' \neq \theta _0$, the space $(\undertilde{A}_s)_{\theta'}(\Gamma, X, \mu)$ consists of a single point.


When $\Gamma$ has property (T), we have the following result:

\begin{thm}[{\cite[Theorem 11.1]{BT-D1}}]\label{10250}
If $\Gamma$ has property {\rm (T)}, then $\undertilde{A}_s(\Gamma, X, \mu)$ is a Bauer simplex and similarly  for $\undertilde{{\rm FR}}_s(\Gamma, X, \mu)$.
\end{thm}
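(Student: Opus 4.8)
The plan is to combine three facts: the extreme points of $\undertilde{A}_s(\Gamma, X, \mu)$ are exactly the ergodic stable weak equivalence classes (\cref{1016}); the space $\undertilde{A}_s(\Gamma, X, \mu)$ is already known to be a Choquet simplex (\cite[Theorem 10.1]{BT-D1}); and a Choquet simplex whose set of extreme points is closed is precisely a Bauer simplex. Thus the whole statement reduces to showing that the set of ergodic stable weak equivalence classes is closed. Since a group with property (T) is finitely generated, I fix a finite symmetric generating set $S$ for $\Gamma$ and recall that for such groups $\textrm{SERG}(\Gamma, X, \mu) = \textrm{ERG}(\Gamma, X, \mu)$ (Schmidt \cite{S}), so that below I may use ``ergodic'' and ``strongly ergodic'' interchangeably.

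The decisive step is a closedness statement at the level of ordinary weak equivalence: the set $\undertilde{{\rm ERG}}(\Gamma, X, \mu)$ of ergodic weak equivalence classes is closed in $\undertilde{A}(\Gamma, X, \mu)$. The idea is to detect ergodicity through the Cheeger constant $h(S,\cdot)$, which is a weak equivalence invariant (\cref{711}) whose induced map $\undertilde{a}\mapsto h(S,a)$ is continuous on $\undertilde{A}(\Gamma, X, \mu)$ (\cref{1010}). If $a$ is not ergodic there is an invariant Borel set $A$ with $0<\mu(A)\le\tfrac12$, and then $S^a(A)=A$ forces $h(S,a)=0$. Conversely --- and this is exactly where property (T) is used quantitatively --- property (T) provides a \emph{uniform} spectral gap for the Koopman representations of all ergodic actions: there is $\delta>0$, depending only on a Kazhdan pair for $(\Gamma,S)$, with $\langle T_{S,a}f,f\rangle\le(1-\delta)\|f\|^2$ for every $f\in L^2_0(X,\mu)$ and every ergodic $a$, where $T_{S,a}$ is the averaging operator of \cref{7.3}. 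By the Cheeger inequality this yields a single $\epsilon_0>0$ with $h(S,a)\ge\epsilon_0$ for \emph{all} ergodic $a$. Consequently
\[
\undertilde{{\rm ERG}}(\Gamma, X, \mu) = \{\undertilde{a}\in\undertilde{A}(\Gamma, X, \mu)\colon h(S,a)\ge\epsilon_0\},
\]
which is closed, being the preimage of $[\epsilon_0,\infty)$ under a continuous map.

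The remainder is a compactness argument. The quotient map $\pi\colon\undertilde{A}(\Gamma, X, \mu)\to\undertilde{A}_s(\Gamma, X, \mu)$, $\pi(\undertilde{a})=\undertilde{a}_s$, is continuous, and $\undertilde{{\rm ERG}}(\Gamma, X, \mu)$ is compact (closed in the compact space $\undertilde{A}(\Gamma, X, \mu)$), so $\pi(\undertilde{{\rm ERG}}(\Gamma, X, \mu))$ is compact. I would then check that this image is exactly the set of ergodic stable weak equivalence classes: its members are plainly stable classes containing an ergodic action, while every ergodic stable class has the form $\undertilde{a'}_s=\pi(\undertilde{a'})$ for some ergodic $a'$. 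Hence the extreme boundary of $\undertilde{A}_s(\Gamma, X, \mu)$ is compact, so closed in the Hausdorff space $\undertilde{A}_s(\Gamma, X, \mu)$, and the Choquet simplex $\undertilde{A}_s(\Gamma, X, \mu)$ is therefore a Bauer simplex. For $\undertilde{{\rm FR}}_s(\Gamma, X, \mu)$ I would observe that it is the face $(\undertilde{A}_s)_{\delta_{e_\Gamma}}(\Gamma, X, \mu)$ cut out by the continuous type map (an ergodic IRS, so already known to be a Choquet simplex); its extreme points are the free ergodic stable classes, i.e.\ the intersection of the closed set just produced with the closed set $\undertilde{{\rm FR}}_s(\Gamma, X, \mu)$, hence closed, so this face is again a Bauer simplex.

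The main obstacle is the \emph{uniform} lower bound $\epsilon_0>0$ on the Cheeger constants of ergodic actions. Without uniformity one obtains only $h(S,a)>0$ for ergodic $a$, which would permit a sequence of ergodic classes with $h(S,a_n)\to 0$ to converge to a non-ergodic class, so that the extreme boundary need not be closed --- and this is exactly how the Bauer property fails for groups without property (T). Property (T) is precisely what upgrades the individual spectral gaps to a single uniform gap, and converting this gap into the combinatorial constant $h(S,a)$ via the Cheeger inequality is the technical heart of the proof.
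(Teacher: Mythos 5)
Your proof is correct, and it is necessarily an independent one: the survey does not reproduce a proof of this theorem but quotes it from \cite[Theorem 11.1]{BT-D1}, whose argument goes through the Glasner--Weiss theorem \cite{GW} that for a property (T) group the ergodic invariant measures on a compact metrizable $\Gamma$-space form a closed set. Your reduction of the statement to closedness of the set of ergodic stable classes --- via \cite[Theorem 10.1]{BT-D1}, \cref{1016}, and the continuity of the quotient map $\undertilde{a}\mapsto\undertilde{a}_s$ --- is the expected skeleton; what is genuinely yours is the proof that $\undertilde{{\rm ERG}}(\Gamma,X,\mu)$ is closed: a Kazhdan constant for $(\Gamma,S)$ gives a uniform spectral gap for $\kappa_0^a$ over all ergodic $a$, hence a uniform bound $h(S,a)\ge\epsilon_0$, against $h(S,a)=0$ for non-ergodic classes. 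Note that the survey's own toolkit contains a softer route to this same step, recorded in the Ergodicity subsection of \cref{9}: for property (T) groups, $\undertilde{a}\in\undertilde{{\rm ERG}}(\Gamma,X,\mu)\iff\undertilde{a}\preceq\undertilde{a}^{erg}_{\infty,\Gamma}$ (by \cref{maxerg}, Schmidt's ${\rm ERG}={\rm SERG}$ \cite{S}, and \cref{cor3.6}), and initial segments of $\preceq$ are closed since $\preceq$ is a closed order on the compact space $\undertilde{A}(\Gamma,X,\mu)$ (\cref{po}). That argument needs no spectral estimates; yours buys an explicit uniform expansion constant valid for every ergodic action of a (T) group. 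There is no circularity in either direction: the survey derives closedness of $\undertilde{{\rm ERG}}_s(\Gamma,X,\mu)$ from this theorem, while you prove it en route.

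One caution. \cref{1010}, which you invoke for continuity of $\undertilde{a}\mapsto h(S,a)$, is stated in the context of the graph parameters of free actions, and the delicate direction of continuity (lower semicontinuity, where near-optimal witnessing sets may have vanishing measure) is exactly the one that would require care on all of $\undertilde{A}(\Gamma,X,\mu)$. Fortunately your argument does not need it: closedness of $\{\undertilde{a}\colon h(S,a)\ge\epsilon_0\}$ requires only upper semicontinuity, which follows from the same witness-transfer computation that proves the monotonicity in \cref{711}. Even more directly: if $\undertilde{a}_i\to\undertilde{a}$ with each $a_i$ ergodic but $a$ not ergodic, take an invariant set $A$ with $0<\mu(A)\le\tfrac12$; convergence of the sets $C_{n,k}$ lets you transfer the partition $\{A,X\setminus A\}$ to sets $B_i$ with $\nu(B_i)\to\mu(A)$ and $\nu(\gamma^{a_i}(B_i)\setminus B_i)\to0$ for each $\gamma\in S$, whence $h(S,a_i)\le\nu(B_i)^{-1}\sum_{\gamma\in S}\nu(\gamma^{a_i}(B_i)\setminus B_i)\to 0$, contradicting your uniform bound. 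With that substitution the proof is airtight, including the treatment of $\undertilde{{\rm FR}}_s(\Gamma,X,\mu)$ as a closed face whose extreme points are the free ergodic stable classes.
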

It also follows from the results mentioned above that more generally if $\Gamma$ has property {\rm (T)} and $\theta$ is ergodic, then $(\undertilde{A}_s)_\theta(\Gamma, X, \mu)$ is a Bauer simplex.
\begin{prob}
If $\Gamma$ does not have property {\rm (T)} is $\undertilde{{\rm FR}}_s(\Gamma, X, \mu)$ the Poulsen simplex? 
\end{prob}
From \cref{105}, we have
\begin{thm}[{\cite[Corollary 1.1]{Bu}}]
Let $\Gamma$ be a non-abelian free group. Then the space $\undertilde{{\rm FR}}_s(\Gamma, X, \mu)$ is the Poulsen simplex.
\end{thm}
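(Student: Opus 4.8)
The plan is to invoke the Lindenstrauss--Olsen--Sternfeld characterization of the Poulsen simplex: a metrizable Choquet simplex with more than one point is affinely homeomorphic to the Poulsen simplex if and only if its set of extreme points is dense. The proof then reduces to three verifications for $\undertilde{{\rm FR}}_s(\Gamma, X, \mu)$, namely that it is a metrizable Choquet simplex, that it is nontrivial, and that its extreme points are dense.

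The first point is essentially already on hand. By \cite[Theorem 10.1]{BT-D1} the space $\undertilde{{\rm FR}}_s(\Gamma, X, \mu)$ is a Choquet simplex, and it is compact and metrizable, being a closed (hence compact) subset of $\undertilde{A}_s(\Gamma, X, \mu)$, which is itself a compact subspace of the compact metrizable space $\undertilde{A}(\Gamma, X, \mu)$. For nontriviality I would restrict attention to the interesting case of a non-abelian free group $\Gamma = \bbF_n$, $2 \leq n \leq \infty$, which is non-amenable and, by Ab\'ert--Elek \cite{AE}, carries a continuum-size $\preceq$-antichain of free, ergodic actions; since for ergodic actions $\preceq$ and $\preceq_s$ agree (and hence $\simeq$ and $\simeq_s$ agree), these give continuum many distinct extreme points, so the simplex is infinite dimensional and in particular has more than one point.

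For the density of the extreme points I would first identify them: by \cref{1016} the extreme points of $\undertilde{A}_s(\Gamma, X, \mu)$ are exactly the ergodic stable weak equivalence classes, so the extreme points of $\undertilde{{\rm FR}}_s(\Gamma, X, \mu)$ are precisely the free, ergodic stable weak equivalence classes. Now consider the quotient map $\pi\colon \undertilde{A}(\Gamma, X, \mu)\to \undertilde{A}_s(\Gamma, X, \mu)$, $\undertilde{a}\mapsto \undertilde{a}_s$, which is continuous and surjective, carries $\undertilde{{\rm FR}}$ onto $\undertilde{{\rm FR}}_s$ (type is a stable weak equivalence invariant, so freeness is preserved) and sends ergodic classes to ergodic stable classes. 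By \cref{105} the free, ergodic elements are dense in $\undertilde{{\rm FR}}(\Gamma, X, \mu)$; applying the continuous surjection $\pi$ maps this dense set onto a dense subset of $\undertilde{{\rm FR}}_s(\Gamma, X, \mu)$ consisting of free, ergodic stable weak equivalence classes, that is, of extreme points. Hence the extreme points of $\undertilde{{\rm FR}}_s(\Gamma, X, \mu)$ are dense, and the characterization theorem then identifies the space with the Poulsen simplex.

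The main obstacle is \cref{105} itself, the density of free, ergodic classes in $\undertilde{{\rm FR}}(\Gamma, X, \mu)$ for free groups; this is the genuinely group-specific input, while everything else is soft (the abstract simplex characterization, the continuity and surjectivity of $\pi$, and the identification of extreme points via \cref{1016}). A secondary point requiring care is confirming that passing to the stable quotient neither collapses the ergodic classes (guaranteed since $\preceq$ and $\preceq_s$ coincide on ergodic actions) nor destroys extremality, so that the pushed-forward dense set genuinely consists of extreme points of $\undertilde{{\rm FR}}_s(\Gamma, X, \mu)$.
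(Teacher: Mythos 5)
Your proof is correct and follows essentially the same route as the paper, which obtains the theorem from \cref{105} combined with the surrounding structure theory (the identification of extreme points in \cref{1016}, the Bowen--Tucker-Drob Choquet simplex theorem, and the Lindenstrauss--Olsen--Sternfeld characterization), with the density of extreme points in $\undertilde{{\rm FR}}_s(\Gamma, X, \mu)$ obtained exactly as you do, by pushing the dense set of free ergodic classes through the continuous quotient map onto the stable space. Your explicit restriction to non-abelian free groups for nontriviality is a sensible precaution, since for $\bbF_1 = \bbZ$ the space $\undertilde{{\rm FR}}_s(\Gamma, X, \mu)$ degenerates to a single point.
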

\begin{remark} {\rm It is shown in \cite[Theorem 5.1]{BT-D1} that for $K= 2^\bbN$, the compact convex set $\overline{co}(E(a,K))$, for ergodic $a$, is a Choquet simplex whose extreme points are extreme points of $M_s(K^\Gamma)$. Moreover, $a$ is strongly ergodic iff $\overline{E(a,K)}$ is the set of extreme points of $\overline{co}(E(a,K))$ and the latter is therefore a Bauer simplex. If on the other hand, $a$ is ergodic but not strongly ergodic,  $\overline{co}(E(a,K))$ is the Poulsen simplex.
}
\end{remark}

\subsection{Ergodicity} Denote by $\undertilde{{\rm ERG}}(\Gamma, X, \mu)$ the set of ergodic weak equivalence classes and by $\undertilde{{\rm ERG}}_s (\Gamma, X, \mu)$ its stable counterpart. It follows from \cref{1016} that $\undertilde{{\rm ERG}}_s (\Gamma, X, \mu)$ is a $G_\delta$ set in $\undertilde{A}_s(\Gamma, X, \mu)$. The following seems to be open:

\begin{prob}
Is $\undertilde{{\rm ERG}}(\Gamma, X, \mu)$ a $G_\delta$ set in $\undertilde{A}(\Gamma, X, \mu)$?

\end{prob}

If the group $\Gamma$ has property (T), then ${\rm ERG} (\Gamma, X, \mu) = {\rm SERG} (\Gamma, X, \mu) $ and from \cref{maxerg} and \cref{cor3.6}, we have $\undertilde{a}\in \undertilde{{\rm ERG}}(\Gamma, X, \mu) \iff \undertilde{a} \preceq \undertilde{a}^{erg}_{\infty, \Gamma}$, so $\undertilde{{\rm ERG}}(\Gamma, X, \mu)$ is closed. Also by \cref{10250},  $\undertilde{{\rm ERG}}_s (\Gamma, X, \mu)$ is also closed.

\subsection{Multiplication} We also have a well defined multiplication operation on the space $\undertilde{A}(\Gamma, X, \mu)$: $\undertilde{a} \times \undertilde{b} =\undertilde{a\times b}$. Again this perhaps requires a bit of explanation. Given $a,b \in A(\Gamma, X, \mu)$, the product $a\times b$ is an action in $A(\Gamma, X^2, \mu^2)$. Since $(X^2, \mu^2)$ is non-atomic, it is isomorphic to $(X,\mu)$, so we can view this as action in $A(\Gamma, X, \mu)$. It is straightforward that its weak equivalence class depends only on the weak equivalence classes of $a$ and $b$. Note also that $\undertilde{a},\undertilde{b}\preceq \undertilde{a} \times \undertilde{b}$.

It is now easy to check that $(\undertilde{A}(\Gamma, X, \mu), \times)$ is an abelian semigroup. It has an identity (i.e., is a monoid) iff the group is amenable (in which case the identity is $\undertilde{i}_\Gamma$). On the other hand, 
 $\undertilde{{\rm FR}}(\Gamma, X, \mu)$ is a subsemigroup, in fact an ideal, and $(\undertilde{{\rm FR}}(\Gamma, X, \mu), \times)$ is a semigroup with identity $\undertilde{s}_\Gamma$, by \cref{td}. The main open question here is whether $(\undertilde{A}(\Gamma, X, \mu), \times)$ is a topological semigroup. 
 \begin{prob}\label{1021}
 Is multiplication a continuous operation in $\undertilde{A}(\Gamma, X, \mu)$?
  \end{prob}
It turns out that the answer is positive for amenable groups, as it was shown by the authors and Omer Tamuz. We discuss this next.

One first defines a multiplication operation on ${\rm IRS}(\Gamma)$ as follows: Let $\cap\colon {\rm Sub}(\Gamma)^2\to {\rm Sub}(\Gamma)$ be the intersection map $\cap(H,F) = H\cap F$. Given $\theta, \eta\in {\rm IRS}(\Gamma)$, define
\[
\theta\circ\eta = \cap_*(\theta\times \eta),
\]
 to be the pushforward of $\theta\times \eta $ by $\cap$. Then it is easy to see that this operation is continuous and $({\rm IRS}(\Gamma), \circ)$ is a compact topological semigroup with identity, which is the IRS concentrating on $\{\Gamma\}$, $\delta_{\Gamma}$\index{$\delta_\Gamma$}. Moreover it is clear that for each $a,b\in A(\Gamma, X, \mu)$ and $(x,y)\in X^2$, ${\rm stab}_{a\times b}(x,y) = {\rm stab}_a (x) \cap {\rm stab}_b(y)$. From this it follows that ${\rm type} (a\times b) = {\rm type}(a) \circ {\rm type}(b)$. Therefore ${\rm type}\colon \undertilde{A}(\Gamma, X, \mu)  \to {\rm IRS} (\Gamma)$ is a continuous homomorphism of $(\undertilde{A}(\Gamma, X, \mu), \times)$ into $({\rm IRS}(\Gamma), \circ)$ that takes $\undertilde{i}_\Gamma$ to $\delta_\Gamma$. Similarly for $(\undertilde{A}_s(\Gamma, X, \mu), \times)$. Since by \cref{72} and \cref{10.6}, the type function is a topological and algebraic isomorphism between $(\undertilde{A}(\Gamma, X, \mu), \times) = (\undertilde{A}_s(\Gamma, X, \mu), \times))$ and $({\rm IRS}(\Gamma), \circ)$, when $\Gamma$ is amenable, it follows that multiplication is continuous in $\undertilde{A}(\Gamma, X, \mu)$. Thus we have shown the following:
 
 \begin{thm}[with O. Tamuz]
If $\Gamma$ is amenable, then multiplication is a continuous operation in  $\undertilde{A}(\Gamma, X, \mu)$.
 \end{thm}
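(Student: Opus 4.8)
The plan is to transport the question along the type map to the space $({\rm IRS}(\Gamma), \circ)$, where continuity of multiplication is transparent. First I would record the algebraic compatibility between the product of actions and the intersection operation on stabilizers: for any $a,b\in A(\Gamma, X, \mu)$ and any $(x,y)\in X^2$, an element $\gamma$ fixes $(x,y)$ under $a\times b$ precisely when it fixes $x$ under $a$ and fixes $y$ under $b$, so that ${\rm stab}_{a\times b}(x,y) = {\rm stab}_a(x)\cap {\rm stab}_b(y)$. Consequently the stabilizer map of $a\times b$ factors as $\cap\circ({\rm stab}_a\times{\rm stab}_b)$, and pushing the product measure $\mu^2$ forward yields ${\rm type}(a\times b) = \cap_*({\rm type}(a)\times{\rm type}(b)) = {\rm type}(a)\circ{\rm type}(b)$. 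In other words ${\rm type}$ is a semigroup homomorphism from $(\undertilde{A}(\Gamma, X, \mu),\times)$ into $({\rm IRS}(\Gamma),\circ)$.

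Next I would verify that $({\rm IRS}(\Gamma),\circ)$ is a compact topological semigroup. The intersection map $\cap\colon {\rm Sub}(\Gamma)^2\to {\rm Sub}(\Gamma)$ is continuous, since in the topology inherited from $2^\Gamma$ the membership of each fixed $\gamma$ in $H\cap F$ is a continuous function of $(H,F)$; and the assignment $(\theta,\eta)\mapsto\theta\times\eta\mapsto\cap_*(\theta\times\eta)$ is jointly weak${}^*$-continuous. Together with associativity and the identity $\delta_\Gamma$, this makes $\circ$ a continuous associative operation. This is the one genuinely computational point, but it is routine.

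Finally I would invoke the amenable hypothesis. By \cref{72} the type map is injective on weak equivalence classes, and since every invariant random subgroup is realized as the type of some action it is a bijection $\undertilde{A}(\Gamma, X, \mu)\to {\rm IRS}(\Gamma)$; by \cref{10.6} it is continuous, hence, both spaces being compact metrizable, a homeomorphism. Combining this with the homomorphism property just established, ${\rm type}$ is simultaneously a topological and an algebraic isomorphism of $(\undertilde{A}(\Gamma, X, \mu),\times)$ with $({\rm IRS}(\Gamma),\circ)$. Multiplication in $\undertilde{A}(\Gamma, X, \mu)$ can therefore be written as
\[
\undertilde{a}\times\undertilde{b} = {\rm type}^{-1}\big({\rm type}(\undertilde{a})\circ{\rm type}(\undertilde{b})\big),
\]
a composition of continuous maps, which gives the desired continuity.

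The main obstacle, such as it is, lies not in any single step but in the fact that the whole argument rests on \cref{72}, the deep input that for amenable groups the type is a complete invariant of weak equivalence. Granting that result together with the continuity of type from \cref{10.6}, everything reduces to the elementary and manifestly continuous intersection operation on invariant random subgroups, and the conclusion follows.
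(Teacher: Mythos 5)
Your proposal is correct and follows essentially the same route as the paper: establish that ${\rm stab}_{a\times b}(x,y)={\rm stab}_a(x)\cap{\rm stab}_b(y)$ makes ${\rm type}$ a homomorphism into the compact topological semigroup $({\rm IRS}(\Gamma),\circ)$, then use \cref{72} and \cref{10.6} to conclude that for amenable $\Gamma$ the type map is simultaneously an algebraic and topological isomorphism, so multiplication transports back continuously. Your added care about surjectivity of the type map and the compact-metrizable argument for continuity of the inverse only makes explicit what the paper leaves implicit.
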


Burton in \cite{Bu1} introduced a finer (conjecturally non-separable) topology on $\undertilde{A}(\Gamma, X, \mu)$, for any group $\Gamma$, with respect to which multiplication is continuous. It is induced by the complete metric:
 \[
 d(\undertilde{a}, \undertilde{b}) = \sum_n \frac{1}{2^n}\left( \sup_k d_H(C_{n,k}(a), C_{n,k}(b))\right).
 \]
 
 {\bf Addendum. }It has now been shown in \cite{Be} that for certain non-amenable groups $\Gamma$, including the non-abelian free groups, multiplication is not continuous in $\undertilde{A} (\Gamma, X, \mu)$, in fact the square map $\undertilde{a} \mapsto \undertilde{a^2}$ is not continuous, even when restricted to free actions. It is also shown in \cite{KQ} that for every non-trivial group the infinite power map $\undertilde{a}\mapsto \undertilde{a^\bbN}$ is not continuous.

\subsection{Generalized shifts associated to an ergodic IRS} For $H\leq\Gamma$, the {\bf core} of $H$, in symbols
$\textrm{Core}(H)$,\index{core of a group}\index{${\rm Core}(H)$} is given by ${\rm Core}(H)= \bigcap_{\gamma\in\Gamma} \gamma H \gamma^{-1}$. Note that this is also the kernel of the action of $\Gamma$ on $\Gamma/H$, so the core of $H$ is trivial iff the action of $\Gamma$ on $\Gamma/H$ is faithful. In this case, we say that $H$ is {\bf core-free}\index{core-free}.
 
Let now $\theta\in {\rm IRS}(\Gamma)$ be an ergodic IRS. The map $H\mapsto {\rm Core}(H)$ is Borel in $\textrm{Sub}(\Gamma)$ and $\textrm{Core}(\gamma H \gamma^{-1} ) = \textrm{Core}(H)$, thus $\textrm{Core}(H)$ is fixed $\theta$-a.e. and we let the {\bf core} of $\theta$\index{core of an IRS} be this normal subgroup, denoted by $\textrm{Core}(\theta)$\index{$\textrm{Core}(\theta)$}. If this is trivial, we say that $\theta$ is {\bf core-free}\index{core-free IRS}. By \cite[Proposition 3.3.1]{CP}, there is $a\in \textrm{ERG}(\Gamma, X, \mu)$ such that $\textrm{type}(a) = \theta$. Then the kernel of $a$ is equal to $\textrm{Core}(\theta)$, thus $\theta$ is core-free iff $a$ is faithful iff the action of $\Gamma$ on $\Gamma/ \textrm{stab}_a(x)$ is faithful $\mu$-a.e. $(x)$.

 Recall now from \cref{space} the concept of the generalized shift $s_{H, \Gamma}$, where $H\leq\Gamma$. Let again $\theta\in {\rm IRS}(\Gamma)$ be an ergodic IRS. Then the map that sends $H$ to the weak equivalence class $\undertilde{s}_{H, \Gamma}$ of  $s_{H, \Gamma}$ is Borel from $\textrm{Sub}(\Gamma)$ to $\undertilde{A}(\Gamma, X, \mu)$ and since $s_{H, \Gamma} \cong s_{\gamma H \gamma^{-1}, \Gamma}$ and thus  $\undertilde{s}_{H, \Gamma} =  \undertilde{s}_{\gamma H \gamma^{-1}, \Gamma}$, it follows that $\undertilde{s}_{H, \Gamma}$ is fixed $\theta$-a.e. We call it the (weak equivalence class of the) {\bf generalized shift associated to} $\theta$\index{generalized shift associated to an IRS}, in symbols $\undertilde{s}_{\theta, \Gamma}$\index{$\undertilde{s}_{\theta, \Gamma}$}. Thus $\undertilde{s}_{\theta, \Gamma} = \undertilde{s}_{H, \Gamma}$ for the $\theta$-random $H$. Therefore there is a canonical weak equivalence class of a generalized shift associated to any ergodic IRS and therefore any ergodic action. In fact let $a\in \textrm{ERG}(\Gamma, X, \mu)$ and let $\theta = \textrm{type} (a)$. Put $\undertilde{s}_{a, \Gamma} = \undertilde{s}_{\theta, \Gamma}$. Note that, by \cite[Proposition 2.1]{KT}, $\undertilde{s}_{a, \Gamma}$ is ergodic.  By \cref{7.1}, if $a, b\in \textrm{ERG}(\Gamma, X, \mu)$ and $a\simeq b$, then $\textrm{type} (a) = 
 \textrm{type} (b)$. Therefore we can define $\undertilde{s}_{\undertilde{a}, \Gamma} = \undertilde{s}_{a, \Gamma}$, for each $\undertilde{a}\in \undertilde{\textrm{ERG}}(\Gamma, X, \mu)$ (the set of ergodic weak equivalence classes)\index{$\undertilde{\textrm{ERG}}(\Gamma, X, \mu)$}. Thus $\undertilde{a}\mapsto \undertilde{s}_{\undertilde{a}, \Gamma}$ gives a well-defined map on $\undertilde{\textrm{ERG}}(\Gamma, X, \mu)$.\index{$\undertilde{s}_{a, \Gamma}$}\index{$\undertilde{s}_{\undertilde{a}, \Gamma}$}

 We will consider next the connection of $\undertilde{s}_{\theta, \Gamma}$ with the weak equivalence class $\undertilde{s}_\Gamma$ of the shift.
 
 First let us consider the freeness of the $\undertilde{s}_{\theta, \Gamma}$. It is well-known, see, e.g., \cite[Proposition 2.4]{KT}, that $s_{H, \Gamma}$ is free iff $H$ is core-free, thus we have: 
 
 \begin{pro}
 For any ergodic {\rm IRS} $\theta$ on $\Gamma$, $\undertilde{s}_{\theta, \Gamma}$ is free iff $\theta$ is core-free.
 \end{pro}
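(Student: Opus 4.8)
The plan is to reduce the statement to two facts already in hand: that freeness is a weak equivalence invariant (\cref{fre}) and that the ordinary generalized shift $s_{H,\Gamma}$ is free precisely when $H$ is core-free. First I would unwind the definition of $\undertilde{s}_{\theta,\Gamma}$. Since $\theta$ is ergodic for the conjugation action and $H\mapsto\undertilde{s}_{H,\Gamma}$ is a conjugation-invariant Borel map into $\undertilde{A}(\Gamma,X,\mu)$, the value $\undertilde{s}_{H,\Gamma}$ is constant for $\theta$-a.e. $H$, and $\undertilde{s}_{\theta,\Gamma}$ is by definition this common value. In particular, for $\theta$-a.e. $H$ the action $s_{H,\Gamma}$ is an honest representative of the weak equivalence class $\undertilde{s}_{\theta,\Gamma}$. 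Because freeness is a weak equivalence invariant by \cref{fre}, the class $\undertilde{s}_{\theta,\Gamma}$ is free if and only if any one (equivalently, every) representative is free; taking the representative $s_{H,\Gamma}$, this says $\undertilde{s}_{\theta,\Gamma}$ is free iff $s_{H,\Gamma}$ is free for $\theta$-a.e. $H$.

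Next I would invoke the fact recalled just above the statement, namely \cite[Proposition 2.4]{KT}, that $s_{H,\Gamma}$ is free iff $H$ is core-free, i.e. iff $\textrm{Core}(H)=\{e_\Gamma\}$. Combined with the previous paragraph, this gives that $\undertilde{s}_{\theta,\Gamma}$ is free iff $\textrm{Core}(H)=\{e_\Gamma\}$ for $\theta$-a.e. $H$. Finally I would connect this to core-freeness of $\theta$: the map $H\mapsto\textrm{Core}(H)$ is Borel and conjugation-invariant, so by ergodicity of $\theta$ it is $\theta$-a.e. equal to the fixed normal subgroup $\textrm{Core}(\theta)$. Hence $\textrm{Core}(H)=\{e_\Gamma\}$ for $\theta$-a.e. $H$ if and only if $\textrm{Core}(\theta)=\{e_\Gamma\}$, which is exactly the assertion that $\theta$ is core-free. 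Chaining the equivalences yields the proposition.

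The argument is essentially bookkeeping of two ``constant almost everywhere'' consequences of ergodicity, so I do not expect a serious obstacle. The only point needing a little care is that ``$s_{H,\Gamma}$ free'' and ``$H$ core-free'' must be compared on the same conull set: once both $\undertilde{s}_{H,\Gamma}$ and $\textrm{Core}(H)$ are seen to be $\theta$-a.e. constant, one applies the pointwise equivalence $s_{H,\Gamma}$ free $\iff H$ core-free on the intersection of the two conull sets, which remains conull, and the conclusion follows.
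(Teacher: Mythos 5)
Your proposal is correct and follows exactly the paper's route: the paper derives the proposition from the cited fact \cite[Proposition 2.4]{KT} that $s_{H,\Gamma}$ is free iff $H$ is core-free, together with the $\theta$-a.e. constancy (by ergodicity) of $H\mapsto\undertilde{s}_{H,\Gamma}$ and $H\mapsto\mathrm{Core}(H)$ and the fact that freeness is a weak equivalence invariant. Your write-up merely makes explicit the same bookkeeping the paper leaves implicit, including the harmless intersection-of-conull-sets point.
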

 The next result characterizes exactly the relation, in terms of weak containment,  of $\undertilde{s}_{\theta, \Gamma}$ with $\undertilde{s}_\Gamma$. Below we say that an IRS $\theta$ is {\bf amenable}\index{amenable IRS}\index{non-amenable IRS} (resp., {\bf non-amenable}) if the $\theta$-random $H$ is amenable (resp., non-amenable). Also $\prec$\index{$\prec$} is the strict part of the order $\preceq$.
 
 \begin{thm}\label{irs1}
 Let $\theta$ be an ergodic {\rm IRS} on $\Gamma$. Then:
 
 \begin{enumerate}[\upshape (i)]
 
 \item If $\theta$ is amenable and not core-free, then $\undertilde{s}_{\theta, \Gamma}\prec \undertilde{s}_\Gamma$.
  \item If $\theta$ is amenable and core-free, then $\undertilde{s}_{\theta, \Gamma} =  \undertilde{s}_\Gamma$. 
 \item If $\theta$ is non-amenable and not core-free, then $\undertilde{s}_{\theta, \Gamma}$ is $\preceq$-incomparable with $\prec \undertilde{s}_\Gamma$. 
 \item If $\theta$ is non-amenable and core-free, then $ \undertilde{s}_\Gamma \prec\undertilde{s}_{\theta, \Gamma}$. 
 
 \end{enumerate}

 \end{thm}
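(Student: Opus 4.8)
The plan is to reduce everything to a single well-chosen subgroup and then to separate the two directions of containment, each of which will be governed by exactly one of the two dichotomies in the hypotheses. Since amenability and core-freeness are conjugation-invariant Borel properties of subgroups and since $\undertilde{s}_{H,\Gamma}$ is $\theta$-a.e.\ equal to $\undertilde{s}_{\theta,\Gamma}$, ergodicity of $\theta$ lets me fix one $H$ in the support of $\theta$ with $\undertilde{s}_{\theta,\Gamma}=\undertilde{s}_{H,\Gamma}$ and with precisely the amenability/core-freeness status assumed in the case at hand. The whole problem then becomes: locate $s_{H,\Gamma}$ relative to $s_\Gamma$ in the pre-order $\preceq$.

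For the direction $\undertilde{s}_\Gamma\preceq\undertilde{s}_{\theta,\Gamma}$, the point is that $s_\Gamma$ is free and freeness is upward closed in $\preceq$ (\cref{fre}). If $H$ is not core-free, then $s_{H,\Gamma}$ is not free, so $s_\Gamma\npreceq s_{H,\Gamma}$. Conversely, if $H$ is core-free, then $s_{H,\Gamma}$ is free and the Ab\'ert--Weiss minimality of the shift among free actions (\cref{min}, in the form of \cref{minrem}) gives $s_\Gamma\preceq s_{H,\Gamma}$. Thus core-freeness of $\theta$ is exactly equivalent to $\undertilde{s}_\Gamma\preceq\undertilde{s}_{\theta,\Gamma}$.

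For the direction $\undertilde{s}_{\theta,\Gamma}\preceq\undertilde{s}_\Gamma$, I would use that $s_{H,\Gamma}$ is the co-induction $\mathrm{CIND}_H^\Gamma(i_H)$ of the trivial $H$-action, while by transitivity of co-induction $s_\Gamma=\mathrm{CIND}_H^\Gamma(s_H)$. Now $i_H\preceq s_H$ holds precisely when $H$ is amenable: $s_H$ is ergodic, so by \cref{3.5} one has $i_H\preceq s_H$ iff $s_H\notin\mathrm{SERG}(H)$, and the $H$-Bernoulli shift is strongly ergodic exactly for non-amenable $H$. When $H$ is amenable, monotonicity of co-induction (\cref{321}) then yields $s_{H,\Gamma}\preceq s_\Gamma$. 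I expect the converse non-containment to be the main obstacle: to show $s_{H,\Gamma}\npreceq s_\Gamma$ for non-amenable $H$, I would restrict to $H$ (\cref{restr}). On the one hand the coordinate at the trivial coset $eH$ is fixed by the $H$-action (since $h\cdot eH=eH$ for all $h\in H$), so $i_H$ is a factor of $s_{H,\Gamma}|H$ and hence $i_H\preceq s_{H,\Gamma}|H$; on the other hand the $H$-orbits on $\Gamma/H$ under left translation, restricted to the complementary coordinates together with the right-coset structure of $\Gamma$, make $s_\Gamma|H$ a product of copies of $s_H$, so $s_\Gamma|H\simeq s_H$ by \cref{minrem}. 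Were $s_{H,\Gamma}\preceq s_\Gamma$, restriction would give $i_H\preceq s_{H,\Gamma}|H\preceq s_\Gamma|H\simeq s_H$, contradicting the strong ergodicity of $s_H$ for non-amenable $H$.

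Assembling these two dichotomies over the four cases then gives the statement. In case (ii) ($\theta$ amenable, core-free) both containments hold, hence $\undertilde{s}_{\theta,\Gamma}=\undertilde{s}_\Gamma$; in case (i) ($\theta$ amenable, not core-free) only $\undertilde{s}_{\theta,\Gamma}\preceq\undertilde{s}_\Gamma$ holds, hence $\undertilde{s}_{\theta,\Gamma}\prec\undertilde{s}_\Gamma$; in case (iv) ($\theta$ non-amenable, core-free) only $\undertilde{s}_\Gamma\preceq\undertilde{s}_{\theta,\Gamma}$ holds, hence $\undertilde{s}_\Gamma\prec\undertilde{s}_{\theta,\Gamma}$; and in case (iii) ($\theta$ non-amenable, not core-free) both containments fail, giving incomparability. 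The only genuinely nontrivial input is the restriction argument isolating the non-containment for non-amenable $H$; the remaining steps are bookkeeping with \cref{min}, \cref{fre}, \cref{321}, \cref{restr}, and \cref{3.5}.
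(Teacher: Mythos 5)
Your proof is correct, and it assembles the four cases exactly as the paper does --- reduce by ergodicity of $\theta$ to a single subgroup $H$, settle the direction $s_\Gamma\preceq s_{H,\Gamma}$ by the freeness dichotomy (\cref{fre} plus \cref{min}), and get $s_{H,\Gamma}\preceq s_\Gamma$ for amenable $H$ by co-inducing $i_H\preceq s_H$ via \cref{321} --- but it diverges from the paper at the one genuinely hard step, namely showing $s_{H,\Gamma}\npreceq s_\Gamma$ when $H$ is non-amenable. The paper (Appendix G, \cref{17.1}) does this spectrally: if $s_{H,\Gamma}\preceq s_\Gamma$ then $\kappa_0^{s_{H,\Gamma}}\preceq\lambda_\Gamma$, so $s_{H,\Gamma}$, viewed as an action by automorphisms of the compact group $\bbT^{\Gamma/H}$, is tempered, and the results of \cite{K4} on such actions force the stabilizer of a suitable non-zero element of the dual group --- which is exactly $H$ --- to be amenable. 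You instead restrict to $H$ (\cref{restr}): $i_H\sqsubseteq s_{H,\Gamma}|H$ via the fixed coordinate $p\mapsto p(H)$, and $s_\Gamma|H\simeq s_H$ because $H$ acts freely on $\Gamma$ with orbits the right cosets $Ht$ (this is clearly what you mean; your phrase ``the $H$-orbits on $\Gamma/H$'' is a slip, since $H$ does not act freely on $\Gamma/H$), so $s_{H,\Gamma}\preceq s_\Gamma$ would yield $i_H\preceq s_H$, contradicting strong ergodicity of $s_H$ for non-amenable $H$ (\cref{3.5} together with \cite{LR}, \cite{JS}). Interestingly, your argument is precisely the one the paper itself gives for the more general proposition closing Appendix G ($s_{H,\Gamma}\preceq s_{\Delta,\Gamma}$ iff $\Delta$ is co-amenable in $H$), specialized to $\Delta=\{e_\Gamma\}$, with strong ergodicity playing the role there played by \cite[Theorem 1.2]{KT}. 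The trade-off: the paper's spectral route is shorter but imports machinery external to the survey, while yours stays entirely inside the toolkit developed here (\cref{fre}, \cref{min}, \cref{321}, \cref{restr}, \cref{3.5}) and generalizes immediately to the co-amenability statement.
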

 The proof will be given in Appendix G, \cref{appF}. It is easy to find examples of $\Gamma,\theta$ that fall under cases (i)-(iii) of \cref{irs1}. Case (iv) is the most interesting, as it provides canonical examples of generalized shifts that are strictly more complicated than the shift.
 
Suppose $a\in A(\Gamma. X, \mu)$ is a faithful, ergodic, hyperfinite action of a non-amenable group $\Gamma$. Then $\theta = \textrm{type}(a)$ is core-free and ergodic. It is also non-amenable, since otherwise the action $a$ would be amenable by \cite{Z1} and then $\Gamma$ would be amenable by \cite[Proposition 4.3.3]{Z2} (see also \cite{AEG}). Groups $\Gamma$ that have such actions are exactly those that can be embedded in $[E_0]$ in such a way that they generate $E_0$ (see also here \cref{hypact}). It is not clear exactly what groups have this property but it is known, for example, that it holds for the free non-abelian groups, see \cite[Page 29]{K}.

\begin{remark}
{\rm In \cite{DG} the authors show that if $a\in \textrm{ERG}(\Gamma, X, \mu)$ has $\textrm{type}(a) =\theta$, and for any $H\leq \Gamma$, we denote by $\lambda_{H,\Gamma}$\index{$\lambda_{H,\Gamma}$} the {\bf quasi-regular representation}\index{quasi-regular representation} of $\Gamma$ on $\ell^2 (\Gamma/H)$, then we have:

(1) $\lambda_{H, \Gamma}\preceq \kappa_0^a$, $\theta$-a.e.$(H)$.

(2) The weak equivalence class of $\lambda_{H, \Gamma}$ is constant $\theta$-a.e.$(H)$.

\noindent  (Note that (2) also follows from a simple ergodicity argument, since weak equivalence of representations is a smooth Borel equivalence relation.)

 (3) If $a$ is hyperfinite,  then $\lambda_{H, \Gamma}\simeq \kappa_0^a$, $\theta$-a.e.$(H)$.
 
 \noindent An analog of these results for generalized shifts would be as follows:
 
(1*) $s_{H, \Gamma}\preceq a$, $\theta$-a.e.$(H)$.

 (2*) The weak equivalence class of $s_{H, \Gamma}$ is constant $\theta$-a.e.$(H)$.
 
 (3*) If $a$ is hyperfinite,  then $s_{H, \Gamma}\simeq a$, $\theta$-a.e.$(H)$.
 
 As we have seen, (2*) is true but \cref{irs1} shows that  (1*) and (3*) fail.

}
\end{remark}

  \newpage
\section{Soficity and entropy}\label{10} 
 
\subsection{Sofic groups and entropy} In recent years there has been a lot of progress in developing the ergodic theory of sofic groups. Introduced by Gromov in \cite{Gr}, sofic groups are, intuitively, those groups which admit approximately free approximate actions on finite sets. 

\begin{define} Let $\Gamma$ be a countable group and for each natural number $n$, let $\sigma_n$ be a map from $\Gamma$ to $\mathrm{Sym}(V_n)$, where $V_n$ is a finite set. We say that $\Sigma = (\sigma_n)_{n=1}^\infty$ is a {\bf sofic approximation}\index{sofic approximation} to $\Gamma$ if for all $\gamma,\delta \in \Gamma$ we have \[ \lim_{n \to \infty} \frac{1}{|V_n|} \cdot |\{v \in V_n: \sigma_n(\gamma) \sigma_n(\delta)(v) = \sigma_n(\gamma\delta)(v) \}| = 1 \]
and for $\gamma \in \Gamma, \gamma\not= e_\Gamma$, we have \[ \lim_{n \to \infty} \frac{1}{|V_n|} \cdot |\{v \in V: \sigma_n(\gamma) (v)= v \}| = 0. \]
The group $\Gamma$ is {\bf sofic}\index{sofic group} if there exists a sofic approximation to $\Gamma$.
\end{define}

Every amenable group and every residually finite group is sofic. It is a major open problem to determine if every group is sofic.

In his paper \cite{Bo3}, Lewis Bowen introduced a notion of entropy for measure-preserving actions of sofic groups which admit a finite generating partition. The definition was extended to arbitrary measure preserving actions of sofic groups by Kerr in \cite{Ke1}. This sofic entropy is an isomorphism invariant, taking values in the extended real numbers, which generalizes the classical notion of entropy for actions of amenable groups. We refer the reader to Section $7$ of \cite{GS} for a concise definition. In addition to the action in question, sofic entropy can depend on a choice of sofic approximation to the acting group, although the nature and extent of this dependence is poorly understood.

\subsection{Completely positive entropy} \begin{define} Let $\Gamma$ be a sofic group and let $\Sigma$ be a sofic approximation to $\Gamma$. A measure-preserving action $a$ of $\Gamma$ is said to have {\bf completely positive entropy}\index{completely positive entropy} with respect to $\Sigma$ if every nontrivial factor of $a$ has positive entropy with respect to $\Sigma$. 
\end{define}

Bowen has raised the following question:

\begin{problem} \label{prob11.1} Suppose $a$ is a measure preserving action of a sofic group $\Gamma$ which has completely positive entropy with respect to some sofic approximation to $\Gamma$. Does it follow that $a$ is weakly equivalent to the Bernoulli shift of $\Gamma$? What if one assumes that $a$ has completely positive entropy with respect to every sofic approximation to $\Gamma$? \end{problem}

In \cite{Ke2}, Kerr shows that Bernoulli shifts of sofic groups have completely positive entropy with respect to any sofic approximation to the acting group. It follows immediately that factors of Bernoulli shifts have completely positive sofic entropy. In contrast to the amenable case, Popa \cite{Po} shows that for certain groups there are factors of Bernoulli shifts which are not isomorphic to Bernoulli shifts.





\subsection{Ultraproducts} Sofic entropy is, loosely speaking, the exponential growth rate of the number of labelings of the sofic approximation whose statistics replicate those of the relevant action. These replica labelings are known as {\it models}. In some cases, a sofic approximation $\Sigma$ may fail to admit arbitrarily precise models to an action $a$. In this case one defines the sofic entropy of $a$ with respect to $\Sigma$ to be $-\infty$. Given a sofic approximation $\Sigma$ to a group $\Gamma$, in \cite[Section 10.2]{CKT-D} the authors use ultraproducts to construct a canonical measure-preserving action of $\Gamma$ associated to $\Sigma$, which we will denote by $p_\Sigma$\index{$p_\Sigma$}. The following result is proved in Carderi \cite{C}.

\begin{theory}[{\cite[Proposition 3.6]{C}}]
 An action $a$ is weakly contained in $p_\Sigma$ iff the sofic entropy of $a$ with respect to $\Sigma$ is not $-\infty$.
\end{theory}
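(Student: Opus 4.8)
The plan is to combine the ultraproduct description of $p_\Sigma$ with the characterization of weak containment through the empirical-statistics matrices $M_{n,k}^{\bar A}$ of \cref{22}, (1), matching these directly against the combinatorial notion of a \emph{good model} for $a$. First I would recall the construction: writing $u_N$ for the uniform measure on $V_N$, the sofic approximation yields the measure preserving action $p_\Sigma$ of $\Gamma$ on the ultraproduct space $(\mathbf{V},\mathbf{u}) = \prod_N (V_N,u_N)/\mathcal{U}$, given by $p_\Sigma(\gamma)([x_N]) = [\sigma_N(\gamma)(x_N)]$; the multiplicativity clause in the definition of a sofic approximation makes this a homomorphism $\mathcal{U}$-a.e., and each $\sigma_N(\gamma)$ preserving $u_N$ makes it measure preserving. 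The single identity driving the proof is that for internal sets $\mathbf{B}_i = [B_i^N]$ (ultraproducts of $B_i^N \subseteq V_N$),
\[
\mathbf{u}\big(p_\Sigma(\gamma)(\mathbf{B}_i) \cap \mathbf{B}_j\big) = \lim_{N\to\mathcal{U}} u_N\big(\sigma_N(\gamma)(B_i^N)\cap B_j^N\big).
\]
Thus a sequence of labelings $\phi_N\colon V_N \to \{0,\dots,k-1\}$, equivalently internal partitions $\mathbf{B}^{(N)} = \{B_i^N\}_{i<k}$ of $V_N$, assembles into a genuine partition $\mathbf{B} = \{\mathbf{B}_i\}$ of $\mathbf{V}$, and the empirical-statistics matrix of $\phi_N$ ultralimits exactly to $M_{n,k}^{\mathbf{B}}(p_\Sigma)$.

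For the direction that good models give weak containment: the sofic entropy of $a$ being $\neq -\infty$ means precisely that for every $n,k$, every $\epsilon>0$, and every matrix $r = M_{n,k}^{\bar A}(a)$ coming from a finite partition $\bar A$, there exist good models, i.e.\ for $\mathcal{U}$-many $N$ a labeling $\phi_N$ whose empirical-statistics matrix is within $\epsilon$ of $r$. Assembling such $\phi_N$ and applying the displayed identity, the resulting partition $\mathbf{B}$ of $\mathbf{V}$ satisfies $|M_{n,k}^{\mathbf{B}}(p_\Sigma) - r| \le \epsilon$ coordinatewise. Since $\bar A, n, k, \epsilon$ are arbitrary, every point of $C_{n,k}(a)$ lies in $C_{n,k}(p_\Sigma)$, which is exactly $a \preceq p_\Sigma$ by \cref{22}, (1).

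For the reverse direction, I would assume $a \preceq p_\Sigma$, fix $\bar A, n, k$ and $\epsilon>0$, and set $r = M_{n,k}^{\bar A}(a)$. Weak containment produces a partition $\{\mathbf{C}_i\}_{i<k}$ of $\mathbf{V}$ with $|M_{n,k}^{\{\mathbf{C}_i\}}(p_\Sigma) - r| < \epsilon/2$. Since internal sets are dense in the measure algebra of the ultraproduct space, I can approximate each $\mathbf{C}_i$ in $\mathbf{u}$-measure by an internal set $[B_i^N]$ and, after disjointifying, obtain internal partitions $\{B_i^N\}$ of $V_N$; by the displayed identity their statistics ultralimit to within $\epsilon/2$ of those of $\{\mathbf{C}_i\}$, hence within $\epsilon$ of $r$. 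Therefore for $\mathcal{U}$-many $N$ the associated labelings $\phi_N$ are $\epsilon$-good models for $(\bar A,n,k)$. As this holds at every precision, good models always exist, so the sofic entropy of $a$ is $\neq -\infty$. (Alternatively one could route the argument through \cref{2.6}, realizing $a \preceq p_\Sigma$ as a genuine factor of an ultrapower and reading off models from the factor map, but the direct statistics computation seems cleaner.)

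The hard part is twofold, and both halves concern the measure-theoretic passage between $\mathbf{V}$ and the finite spaces $V_N$. First, one must justify that an arbitrary measurable partition of the (non-separable) ultraproduct is approximable in measure by internal partitions; this is the Loeb-measure density statement, but it has to be carried out compatibly with the reduction to a separable standard model of $p_\Sigma$ (via \cite[Theorem A]{C}), so that weak containment, which only probes finite partitions, is unaffected. Second, and more delicate, is the bookkeeping identifying ``sofic entropy $=-\infty$'' with the ultrafilter statement ``the set of $N$ admitting good models lies in $\mathcal{U}$'': here one uses that $p_\Sigma$ is built from the same $\mathcal{U}$ and that the growth-rate definition of entropy is governed, at the level of the $-\infty$ dichotomy, purely by nonemptiness of the model sets at each precision level, so that the $\limsup_N$ and $\lim_{N\to\mathcal{U}}$ conventions agree on whether the entropy is finite.
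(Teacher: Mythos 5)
The survey itself contains no proof of this statement; it is quoted from Carderi \cite{C}, and your argument is essentially the proof given there: labelings of the $V_N$ assemble into internal partitions of the ultraproduct space, the statistics of internal sets are computed by the ultralimit identity you display, and conversely arbitrary partitions of the Loeb space are approximated in measure by internal ones (this density is established in \cite[Section 4]{CKT-D}, and your disjointification step is the standard one). Two small remarks on points you flag as delicate: the passage between the pairwise statistics $\mu(\gamma^a(A_i)\cap A_j)$ and the pattern statistics $\mu\bigl(\bigcap_{\gamma\in F}\gamma^a(A_{\tau(\gamma)})\bigr)$ appearing in the official definition of good models is the usual refinement trick (replace $\bar A$ by $\bigvee_{\gamma\in F}\gamma^a\bar A$), the same one showing the two formulations of weak containment agree; and no reduction to a separable model via \cite[Theorem A]{C} is needed, since weak containment is defined directly through finite partitions for actions on arbitrary probability spaces (see the remark following \cref{t1}), so the $C_{n,k}$ criterion of \cref{22}, (1) applies verbatim to $p_\Sigma$.

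The one genuine flaw is your closing claim that the $\limsup_N$ and $\lim_{N\to\mathcal{U}}$ conventions ``agree on whether the entropy is finite.'' They do not, and this is precisely where conventions must be matched rather than reconciled. What your main argument proves is: $a\preceq p_{\Sigma,\mathcal{U}}$ iff for every $(\bar A, F,\epsilon)$ the set of $N$ admitting $\epsilon$-good models lies in $\mathcal{U}$. Under the $\limsup$ convention, entropy $\neq-\infty$ only says that each such set is infinite, and for a fixed $\mathcal{U}$ these can differ. For example, interleave the sofic approximations of $SL_3(\bbZ)$ coming from its mod-$2^k$ and its mod-$3^k$ congruence quotients, and let $a$ be the profinite action attached to the $2$-power chain: perfect models exist along the even indices, so the $\limsup$-entropy is not $-\infty$, but $a$ admits no good models along the odd indices (a nontrivial representation of $SL_3(\bbZ/2\bbZ)$ sits inside $\kappa_0^a$, is isolated by property (T), and by strong approximation cannot appear in any mod-$3^k$ Koopman representation), so $a\not\preceq p_{\Sigma,\mathcal{U}}$ whenever $\mathcal{U}$ concentrates on the odd indices. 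Thus the theorem as you argue it is correct exactly when the sofic entropy is defined using the same ultrafilter limit $\lim_{N\to\mathcal{U}}$ that builds $p_\Sigma$ --- which is the convention in \cite{C}, and under it your proof is complete. What survives with the $\limsup$ convention is the forward implication (a $\mathcal{U}$-large set is infinite), together with the weaker converse that $\limsup$-entropy $\neq-\infty$ iff $a\preceq p_{\Sigma,\mathcal{U}'}$ for \emph{some} nonprincipal $\mathcal{U}'$: the model-existence sets form a directed family, so if all are infinite they generate a filter extending to such a $\mathcal{U}'$.
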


\subsection{Sofic actions} Elek and Lippner \cite{EL} have introduced the notion of a sofic measure preserving equivalence relation; see also \cite[Definition 10.1]{CKT-D} for an alternative definition due to Ozawa. One then defines a {\bf sofic action}\index{sofic action} to be one for which the associated equivalence relation $E_a$ is sofic. It is shown in \cite[Proposition 10.6]{CKT-D} that if $a_n\in A(\Gamma, X, \mu)$ are sofic actions and $a_n\to a\in {\rm FR}(\Gamma, X, \mu)$, then $a$ is sofic. Therefore if $a, b\in {\rm FR}(\Gamma, X, \mu), a\preceq b$ and $b$ is sofic, then $a$ is sofic.

These facts along with the ultraproduct construction of $p_\Sigma$ (see Section 11.3 above) and \cref{min} were used in \cite[Theorem 10.7]{CKT-D} to give a new proof of the result in \cite{EL} that, for any sofic group $\Gamma$, $s_\Gamma$ is a sofic action. 

It is an open problem whether for a sofic group $\Gamma$, every $a\in {\rm FR}(\Gamma, X, \mu)$ is sofic. It is shown in \cite[Section 10.3]{CKT-D} that it holds for $\Gamma$ with property MD and this also gives an alternative proof of the result of \cite{EL} that every treeable measure preserving equivalence relation is sofic.
 
\subsection{Rokhlin entropy} The recent paper \cite{Se} introduces a new concept of weak containment for joinings and uses it to study the Rokhlin entropy\index{Rokhlin entropy} of measure preserving actions.

\newpage
\section{Further extensions}\label{add1}
 
\subsection{Locally compact groups} Bowen and Tucker-Drob have studied weak containment and the spaces of weak equivalence classes and stable weak equivalence classes in the more general context of Polish locally compact groups.

\medskip
 \subsection{Stationary actions} In the paper \cite{BLT}, the authors consider weak containment and weak equivalence for stationary actions of countable groups in connection with the Furstenberg entropy.
 
 \medskip
 \subsection{Graphed groupoids} A notion of weak containment for graphed groupoids, which generalizes weak containment of actions, is introduced in \cite[Section 3.4]{CGdlS}. It relates also to the local-global convergence of graphs as in \cite{HLS}, see \cite[Remark 3.21]{CGdlS}. An analog of \cref{6.1} in this context is proved in \cite[Theorem 3.22]{CGdlS} and connections with factors, ultraproducts and soficity are studied in \cite[Sections 3.5 and 3.6]{CGdlS}.

 \newpage
 
 \section{Appendix A}\label{app} We give here the proof of \cref{AE1}.
 
  \begin{proof}
  Let $a_0, a_1, \dots\in A(\Gamma, X, \mu)$.  By a simple diagonal argument, we can assume, by going to a subsequence, that for each $n,k$, $C_{n,k}(a_i)$ converges in the Vietoris topology. We will then show that $a_0, a_1, \dots$ converges in the pseudometric $\delta$.
  
  Let $\mathcal{U}$ be a non-principal ultrafilter on $\bbN$ and let $b= \prod_n a_n/\mathcal{U}$ be the ultraproduct action, with associated measure $\nu$. For the action $b$, we define $M_{n,k}^{\bar{A}} (b)$ and $C_{n,k}(b)$ exactly as in \cref{22}, (1). Then it is easy to check that for each $n,k$,
\[
\lim_{i\to\infty}C_{n,k}(a_i) = C_{n,k} (b).
\]
Let now $M_{n,k}^{\bar{A}_{j,n,k}} (b), j\in \bbN, $ be a sequence dense in $C_{n,k} (b)$, for each $n,k$, and let $\mathcal{S}$ be a countably generated, non-atomic, $\sigma$-subalgebra of the measure algebra of $\nu$, invariant under the action $b$ and such that it contains all the pieces of the partitions $ \bar{A}_{j,n,k}$, for $ j,n,k\in \bbN$. Let $a\sqsubseteq b$ be the factor of $b$ corresponding to $\mathcal{S}$, which (up to isomorphism) we can assume that it belongs to $A(\Gamma, X, \mu)$. Since 
\[
C_{n,k} (a)= C_{n,k} (b),
\]
for all $n,k$, we have 
\[
\lim_{i\to\infty}C_{n,k}(a_i) = C_{n,k} (a),
\]
for all $n,k$ and therefore 
\[
\delta (a_i, a)\to 0,
\]
i.e.,  $a_0, a_1, \dots$ converges to $a$ in the pseudometric $\delta$.
\end{proof}

  \newpage
\section{Appendix B}\label{appA} We give here the proof of \cref{min}.
 
 \begin{proof} To clarify the exposition, we will prove this theorem for the shift action $s$ of $\Gamma$ on the product space $2^\Gamma$ with the product measure, where the two point space has the measure $\mu_0$ in which each point has measure $\frac{1}{2}$. We will write $\nu$ for $\mu_0^\Gamma$. The case of the shift on $[0,1]^\Gamma$ can be proved with minor modifications.

For an action $b\in A(\Gamma,Y,\rho)$, a Borel partition $\mathcal{P} = \{P_1,\ldots,P_k\}$ of $Y$, a finite set $F \subseteq \Gamma$ and a function $\tau:F \to \{1,\ldots,k\}$, we define $P^b_\tau = \bigcap_{\gamma \in F} \gamma^b (P_{\tau(\gamma)})$.

Let $a\in {\rm FR}(\Gamma, X, \mu)$. Let $F_0\subseteq \Gamma$ be finite, $\epsilon >0$ and $\{A_1, \dots , A_n \}$ be a Borel partition of $2^\Gamma$. We will find Borel sets $B_1, \dots , B_n$ in $X$ such that for $\gamma\in F_0, 1\leq m,m'\leq n$, we have 
\[
|\mu(\gamma^a (B_m) \cap B_{m'} ) - \nu (\gamma^s (A_m) \cap A_{m'})| < \epsilon.
\]

For $i \in \{0,1\}$, let $S_i = \{x \in 2^\Gamma: x(e_\Gamma) = i \}$ and $\mathcal{S} = \{S_0,S_1\}$. Note that $\gamma^s (S_i) = \{x\in 2^\Gamma\colon x(\gamma) =i\}$. Then we can find a finite set $F_1 \subseteq \Gamma$ and for each $m \in \{1,\ldots,n\}$ a family of (distinct) functions $(\tau^m_j)_{j=1}^{t_m}$ with $\tau^m_j: F_1 \to 2$ such that
\begin{equation} \label{eq6.1} \nu \left ( \left( \bigsqcup_{j=1}^{t_m} S^s_{\tau^m_j} \right) \triangle A_m \right) < \frac{\epsilon}{4}. \end{equation}

\begin{claim}\label{cla1} For any $\delta > 0$, there is a partition $\mathcal{Q} = \{Q_0,Q_1\}$ of $(X,\mu)$ such that for every $\theta: J \to 2$ with $J \subseteq F_0 F_1$, we have $|\nu(S^s_\theta) - \mu(Q^a_\theta)| < \delta$.  \end{claim}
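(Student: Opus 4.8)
The plan is to first strip the claim down to its probabilistic content. Since $\nu$ is the product measure and the elements of $J$ are distinct, the events $\gamma^s(S_{\theta(\gamma)}) = \{x : x(\gamma) = \theta(\gamma)\}$ constrain distinct, independent coordinates, so $\nu(S^s_\theta) = 2^{-|J|}$ regardless of $\theta$. Writing $W = F_0F_1$ and $c = \mathbf{1}_{Q_1}$, the claim therefore reduces to producing a two-set partition $\mathcal{Q} = \{Q_0,Q_1\}$ of $(X,\mu)$ whose window pattern distribution is approximately uniform, i.e. $|\mu(\bigcap_{\gamma\in J}\gamma^a Q_{\theta(\gamma)}) - 2^{-|J|}| < \delta$ for every $J\subseteq W$ and every $\theta\colon J\to 2$. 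In the language of \cref{22}, this says that the equivariant map $\Phi^{c,a}$ should push $\mu$ forward to a measure on $2^\Gamma$ agreeing with the Bernoulli measure $\nu$ on all cylinders supported in $W$; so the entire content is that a free action supports an approximately Bernoulli two-valued process over the finite window $W$.

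The starting observation I would use is that freeness makes the window injective: for $\mu$-a.e.\ $x$ the points $\{\gamma^a x : \gamma\in W\}$ are pairwise distinct, because $\gamma^a x = \delta^a x$ forces $(\delta^{-1}\gamma)^a x = x$ and hence $\gamma = \delta$. Consequently, if one could paint the points of $X$ with independent fair coin flips, the window pattern $(c(\gamma^a x))_{\gamma\in W}$ would be \emph{exactly} i.i.d.\ uniform and the claim would hold even with $\delta = 0$. The difficulty is that no Borel i.i.d.\ $\{0,1\}$-labelling of a non-atomic space exists, so approximate independence has to be built by finitary means, and this is where the real work lies.

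The construction I would carry out uses the Rokhlin lemma, exactly in the spirit in which it is invoked in the excerpt's proof of \cref{fre}: use the lemma to produce a Borel scaffold carrying a controlled, finite amount of genuine independence, and then assign the two colors by an equidistributed (pseudorandom) rule chosen so that each of the finitely many window patterns $\theta\in 2^W$ occurs with frequency within $\delta$ of $2^{-|W|}$; the small exceptional region coming from tower tops and boundaries is arranged to have measure below $\delta\,2^{-|W|}$ and absorbed into the error. I expect the main obstacle — and the genuine heart of the Abért--Weiss theorem — to be securing approximate \emph{joint} independence across all group directions in $W$ simultaneously, rather than merely along the powers of a single transformation, where a Rokhlin tower would suffice. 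For non-amenable $\Gamma$ this cannot be achieved by tiling with one fixed finite Følner shape, since any fixed finite set carries a non-negligible boundary; the equidistribution must instead be arranged by a more careful packing-and-refinement scheme, iterating the coloring over successively uncovered regions so that a.e.\ point's window becomes equidistributed. Freeness then re-enters at the final step to guarantee that no two window points are ever forced to share a color, so that the only residual obstruction to uniformity is the controllable boundary error.
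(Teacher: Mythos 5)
Your reduction is correct (the cylinder computation $\nu(S^s_\theta)=2^{-|J|}$ and the observation that freeness makes the window $\{\gamma^a x:\gamma\in W\}$ a.e.\ injective both match the paper), but after that point your proposal stops being a proof: you name the central difficulty yourself --- ``securing approximate joint independence across all group directions in $W$ simultaneously'' --- and then offer only an unspecified ``packing-and-refinement scheme'' to resolve it. That is precisely the step that cannot be carried out along the lines you sketch. Rokhlin's Lemma produces towers along a single automorphism (it is used that way in the paper's proof of \cref{fre}, where only one group element $\gamma$ matters), and for a general finite window $W$ in a non-amenable group there is no tiling or F{\o}lner-type scaffold to iterate over; the ``successively uncovered regions'' idea is exactly the kind of quasi-tiling machinery that fails outside the amenable setting. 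Moreover, even granting some pseudorandom labelling, your proposal never addresses how to pass from ``each individual window pattern would be uniform under i.i.d.\ colors'' to the statement actually needed, namely that the \emph{measure} $\mu(Q^a_\theta)$ of every pattern set is within $\delta$ of $2^{-|G|}$: this is a concentration statement about a spatial average, and it does not follow from pointwise injectivity of windows.

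The paper's proof supplies exactly the two ideas your plan is missing, and needs no tower structure at all. First, one only needs \emph{finitely many} independent bits, not an i.i.d.\ labelling of points: realize $X$ as a compact metric space, use freeness (plus Fubini, for pairs) to choose $\eta>0$ so that outside sets of measure $<\delta'=\delta^2/2^{|G|+5}$ the window points of a single $x$, and of a pair $(x,x')$, are pairwise $\eta$-separated; then partition $X$ into finitely many cells $Y_1,\dots,Y_m$ of diameter $<\eta/4$ and color the \emph{cells} by i.i.d.\ fair coins $\omega\in 2^m$. On the good set $D_\eta$ the window points of $x$ lie in distinct cells, so their colors are genuinely independent and $\mathbb{E}[\mu(Q^a_\theta)]\approx 2^{-|G|}$. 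Second --- and this is the concentration step you omit --- the pair condition ($E_\eta$) makes the second moment factorize, $\mathbb{E}[\mu(Q^a_\theta)^2]\approx 2^{-2|G|}$, so the variance is $O(\delta^2 2^{-|G|})$; Chebyshev's inequality then bounds the failure probability for each fixed $\theta$ by $2^{-|G|-1}$, and a union bound over the $2^{|G|}$ patterns shows a single coloring $\omega$ works for all $\theta$ simultaneously. This second-moment/union-bound argument is what converts the heuristic ``a random partition should work'' into an actual partition $\mathcal{Q}$, and it is the content your proposal would still have to supply.
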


Before establishing this claim, we will show how it completes the proof. For $J \subseteq F_1$, $\theta: J \to 2$ and $\gamma \in F_0$ define the shifted function $\gamma \cdot \theta: \gamma J \to 2$ by $\gamma \cdot \theta(\delta) = \theta(\gamma^{-1} \delta)$. Note that the domain of $\gamma \cdot \theta$ is contained in  $F_0 F_1$. Also note that for any $\theta$ as above, $\kappa: K \to 2$, $K\subseteq F_1$, and $\gamma \in F_0$, we have \[ \gamma^s (S^s_{\theta}) \cap S^s_{\kappa} = \begin{cases} S^s_{\gamma \cdot \theta \cup \kappa} &\mbox{ if }\gamma \cdot \theta \mbox{ and } \kappa \mbox{ are compatible},\\ \emptyset &\mbox{ if not,} \end{cases} \] and similarly \[ \gamma^a (Q^a_{\theta}) \cap Q^a_{\kappa} = \begin{cases} Q^a_{\gamma \cdot \theta \cup \kappa} &\mbox{ if }\gamma \cdot \theta \mbox{ and } \kappa \mbox{ are compatible},\\ \emptyset &\mbox{ if not.} \end{cases} \]

Hence by Claim \ref{cla1}, we have \begin{equation} \label{eq6.2} |\nu(\gamma^s (S^s_\theta) \cap S^s_\kappa) - \mu(\gamma^a (Q^a_{\theta}) \cap Q^a_{\kappa})| <  \delta \end{equation} for all $\theta,\kappa$ as above and $\gamma \in F_0$. For $m \in \{1,\ldots,n\}$, set $B_m = \bigsqcup_{j=1}^{t_m} Q^a_{\tau^m_j}$. Write $C_m = \bigsqcup_{j=1}^{t_m} S^s_{\tau^m_j}$. By (\ref{eq6.1}) and (\ref{eq6.2}) we have for any $m,m' \in \{1,\ldots,n\}$,

\[
 |\mu(\gamma^a (B_m) \cap B_{m'}) - \nu(\gamma^s (A_m) \cap A_{m'})| 
 \]
 \begin{multline*}
 \leq |\mu(\gamma^a (B_m) \cap B_{m'}) - \nu(\gamma^s (C_m) \cap C_{m'})| \\ + |\nu(\gamma^s (C_m) \cap C_{m'}) - \nu(\gamma^s (A_m) \cap A_{m'})|
 \end{multline*}
  \begin{multline*}
    \leq |\mu(\gamma^a (B_m) \cap B_{m'}) - \nu(\gamma^s (C_m) \cap C_{m'})| \\ + \nu((\gamma^s (C_m) \cap C_{m'}) \triangle (\gamma^s (A_m) \cap A_{m'}))
  \end{multline*}
  \begin{multline*}
  \leq |\mu(\gamma^a (B_m) \cap B_{m'}) - \nu(\gamma^s (C_m) \cap C_{m'})| \\+ \nu( C_m \triangle A_m) + \mu(C_{m'} \triangle A_{m'})
  \end{multline*}
\[
\leq \left \vert \nu \left( \bigsqcup_{j=1}^{t_m} \bigsqcup_{j'=1}^{t_{m'}}\gamma^s (S^s_{\tau^m_j}) \cap S^s_{\tau^{m'}_{j'}} \right) - \mu\left( \bigsqcup_{j=1}^{t_m}\bigsqcup_{i'=1}^{t_{m'}} \gamma^a (Q^a_{\tau^m_j}) \cap Q^a_{\tau^{m'}_{j'}} \right) \right \vert + \frac{\epsilon}{2}
\]
\[
\leq \left( \sum _{j= 1}^{t_m}\sum_{j'=1}^{t_{m'}}  \left \vert \nu \left(\gamma^s (S^s_{\tau^m_j}) \cap S^s_{\tau^{m'}_{j'}} \right) - \mu\left(\gamma^a (Q^a_{\tau^m_j}) \cap Q^a_{\tau^{m'}_{j'}} \right) \right \vert \right) + \frac{\epsilon}{2}
\]
 \[
 \leq t_m t_{m'} \delta + \frac{\epsilon}{2}.
 \] 

Therefore if we take  taking $\delta$ small enough in  \cref{cla1}, we are done.

\medskip
{\it Prof of \cref{cla1}.} Write $G = F_0F_1$ and assume without loss of generality that $G$ is closed under taking inverses. Note that it suffices to prove the claim for $\theta$ defined on all of $G$. Since distinct shifts of the sets $S_i$ are independent, we have $\nu(S^s_\theta) = 2^{-|G|}$, for any $\theta: G \to 2$. Thus we must find a partition $\mathcal{Q} = \{Q_0,Q_1\}$ such that \[ \left \vert 2^{-|G|} - \mu(Q^a_\theta) \right \vert < \delta, \] for each $\theta: G \to 2$. The idea is that a random $\mathcal{Q}$ should have this property.

Without loss of generality, we may assume that $X$ is a compact metric space with a compatible metric $d$. For $\eta > 0$, let 
\[
D_\eta = \{x \in X: \forall  \gamma_1, \gamma_2 \in G(\gamma_1 \neq \gamma_2 \implies d(\gamma_1^a(x), \gamma_2^a (x)) > \eta) \}
\] 
and \[ E_\eta = \{(x,x') \in D_\eta^2: \forall  \gamma_1, \gamma_2 \in G( d(\gamma_1^a (x), \gamma_2^a (x')) )> \eta)\}.
 \]

\begin{lem}  $\lim_{\eta \to 0} \mu(D_\eta) = 1$ and $\lim_{\eta \to 0} \mu^2(E_\eta) = 1$. \end{lem}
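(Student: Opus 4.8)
The plan is to prove both limits by a single monotone-convergence argument. I would first note that as $\eta \downarrow 0$ the sets $D_\eta$ and $E_\eta$ increase: if $\eta < \eta'$ then $\{x: d(\gamma_1^a(x),\gamma_2^a(x)) > \eta\} \supseteq \{x: d(\gamma_1^a(x),\gamma_2^a(x)) > \eta'\}$, whence $D_\eta \supseteq D_{\eta'}$ and likewise $E_\eta \supseteq E_{\eta'}$. Since $a$ acts by Borel automorphisms and $d$ is continuous, each map $x \mapsto d(\gamma_1^a(x), \gamma_2^a(x))$ is Borel, so every $D_\eta$ is Borel and every $E_\eta$ is Borel in $X^2$. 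By continuity of measure from below it then suffices to show that the unions $\bigcup_{\eta>0} D_\eta$ and $\bigcup_{\eta>0} E_\eta$ are conull in $(X,\mu)$ and $(X^2,\mu^2)$ respectively.

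For the first limit I would write $D_\eta = \bigcap_{\gamma_1 \neq \gamma_2 \in G}\{x: d(\gamma_1^a(x), \gamma_2^a(x)) > \eta\}$, a finite intersection. As $\eta \downarrow 0$ each of the finitely many sets increases to $\{x: \gamma_1^a(x) \neq \gamma_2^a(x)\}$, and since $\gamma_1^a(x) = \gamma_2^a(x) \iff (\gamma_2^{-1}\gamma_1)^a(x) = x$ with $\gamma_2^{-1}\gamma_1 \neq e_\Gamma$, freeness of $a$ makes this set $\mu$-conull. A finite intersection of conull sets is conull, so $\bigcup_{\eta>0} D_\eta$ is conull and hence $\mu(D_\eta) \to 1$.

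For the second limit I would write $E_\eta = (D_\eta \times D_\eta) \cap \bigcap_{\gamma_1, \gamma_2 \in G}\{(x,x'): d(\gamma_1^a(x), \gamma_2^a(x')) > \eta\}$, again a finite intersection of families increasing as $\eta \downarrow 0$. Using the identity $\bigcup_\eta (A_\eta \cap B_\eta) = (\bigcup_\eta A_\eta) \cap (\bigcup_\eta B_\eta)$ for monotone families, this gives $\bigcup_{\eta>0} E_\eta = \left(\bigcup_{\eta>0} D_\eta\right)^2 \cap \bigcap_{\gamma_1,\gamma_2 \in G}\{(x,x'): \gamma_1^a(x) \neq \gamma_2^a(x')\}$. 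The first factor is $\mu^2$-conull by the previous paragraph. For each pair $(\gamma_1, \gamma_2)$ the equation $\gamma_1^a(x) = \gamma_2^a(x')$ is equivalent to $x' = (\gamma_2^{-1}\gamma_1)^a(x)$, so the exceptional set is the graph of the Borel automorphism $(\gamma_2^{-1}\gamma_1)^a$, which by Fubini has $\mu^2$-measure $\int_X \mu(\{(\gamma_2^{-1}\gamma_1)^a(x)\})\, d\mu(x) = 0$. Intersecting finitely many conull sets once more yields a conull set, so $\mu^2(E_\eta) \to 1$.

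The routine verifications — Borel measurability, the set-theoretic identity for monotone unions, and continuity of measure — are straightforward, and the substantive content is light. The one step I expect to be the mild obstacle, and the point worth flagging, is that the two halves rest on different hypotheses: the limit for $D_\eta$ uses freeness of $a$, whereas the limit for $E_\eta$ uses instead the non-atomicity of $\mu$ (so that a singleton, and hence the graph of a Borel map, is $\mu^2$-null). Both are standing assumptions in the present setting, so the main task is simply to invoke each one in the right place.
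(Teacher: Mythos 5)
Your proof is correct and takes essentially the same route as the paper's: both reduce, via monotonicity, to showing that $\bigcup_{\eta>0} D_\eta$ and $\bigcup_{\eta>0} E_\eta$ are conull, using freeness of $a$ for the first union and Fubini together with null slices for the second. Your identification of the exceptional sets as graphs of the automorphisms $(\gamma_2^{-1}\gamma_1)^a$ is just a rephrasing of the paper's observation that for fixed $x$ the slice $\{x' : \exists \gamma_1,\gamma_2 \in G\,(\gamma_1^a(x) = \gamma_2^a(x'))\}$ is finite, and both versions implicitly invoke non-atomicity of $\mu$ at that point, as you correctly flag.
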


\begin{proof} Clearly if $\eta_1 < \eta_2$, then $D_{\eta_2} \subseteq D_{\eta_1}$ and $E_{\eta_2}\subseteq E_{\eta_1}$. We have \[ X \setminus \bigcup_{\eta > 0} D_\eta = \{ x \in X: \exists\gamma_1 \neq \gamma_2 \in G ( \gamma_1^a (x) = \gamma_2^a (x)) \}\]  and so, by the freeness of the action,  $\mu \left( X \setminus \bigcup_{ \eta > 0} D_\eta \right) = 0$. Now for any $\eta_0 > 0$, \[ D_{\eta_0}^2 \setminus \bigcup_{\eta > 0} E_\eta = \{(x,x') \in D_{\eta_0}^2: \exists\gamma_1,\gamma_2 \in G (\gamma_1^a (x) = \gamma_2^a (x')) \}.\]
For a fixed $x$, \[\{(x,x') \in D_{\eta_0}^2: \exists\gamma_1,\gamma_2 \in G ( \gamma_1^a (x) = \gamma_2^a (x'))\} \] is finite, so $\mu \left( D_{\eta_0}^2 \setminus \bigcup_{\eta > 0} E_\eta \right)=0$ by Fubini and taking the union over all $\eta_0$, we have that $\mu(X^2\setminus\bigcup_{\eta >0}E_\eta) =0$.  \end{proof}   

Let \begin{equation} \label{eq5.10} \delta' = \frac{\delta^2}{2^{|G|+5}}.\end{equation} Choose $\eta > 0$ so that $\min(\mu(D_\eta),\mu^2(E_\eta)) > 1 - \delta'$. Let $\{Y_1,\ldots,Y_m\}$ be a partition of $X$ into Borel pieces with diameter at most $\frac{\eta}{4}$. For $x \in X$, let $Y(x)$ be the unique $l \in \{1,\ldots,m\}$ such that $x \in Y_l$. Let $\mathbb{P}$ be the uniform (= product) probability measure on $2^m$ and for each $\omega \in 2^m$ define a partition $Z(\omega) = \{Z(\omega)_0,Z(\omega)_1\}$ by letting $x \in Z(\omega)_i$ if and only if $\omega(Y(x)) = i$. Thus we have a random variable $Z: (2^m,\mathbb{P}) \to \mathrm{MALG}_\mu^2 $ given by $\omega \mapsto Z(\omega)$. 

Fix now $\tau:G \to 2$. Then we have a corresponding real-valued random variable on $(2^m,\mathbb{P})$ given by $\omega \mapsto \mu(Z(\omega)^a_\tau)$. We now compute the expectation of this random variable. For $B \subseteq X$, let $\mathbf{1}_B$ be the characteristic function of $B$. Then
\begin{align} \mathbb{E}[\mu(Z^a_\tau)] &= \int_{2^m} \mu(Z(\omega)^a_\tau) \mathrm{d} \mathbb{P}(\omega) \nonumber \\  & = \int_{2^m} \int_X \mathbf{1}_{Z(\omega)^a_\tau}(x) \dee\mu(x) \dee\mathbb{P}(\omega) \nonumber \\ &= \int_X \int_{2^m} \mathbf{1}_{Z(\omega)^a_\tau}(x) \dee\mathbb{P}(\omega) \dee\mu(x) \nonumber \\ & = \int_{D_\eta} \int_{2^m} \mathbf{1}_{Z(\omega)^a_\tau}(x) \dee\mathbb{P}(\omega) \dee\mu(x) + \int_{X \setminus D_\eta} \int_{2^m} \mathbf{1}_{Z(\omega)^a_\tau}(x) \dee\mathbb{P}(\omega) \dee\mu(x). \nonumber \end{align}

Now if $x \in D_\eta$, then for all $\gamma_1 \neq \gamma_2 \in G$, we have $d(\gamma_1^a (x), \gamma_2^a (x)) > \eta$, so that $Y(\gamma_1^a( x)) \neq Y(\gamma_2^a (x))$ and hence the events $\omega(Y(\gamma_1^a (x))) = i$ and $\omega(Y(\gamma_2^a (x))) = j$ are independent. We have $x \in \gamma^a (Z(\omega)_{\tau(\gamma)})$ if and only if $\omega(Y((\gamma^{-1})^a (x))) = \tau(\gamma)$, so if $x \in D_\eta$ and $\gamma_1 \neq \gamma_2 \in G$ the events $\omega(Y((\gamma_1^{-1})^a (x))) = \tau(\gamma)$ and $\omega(Y((\gamma_2^{-1})^a (x))) = \tau(\gamma)$ are independent. So for $x \in D_\eta$,

\begin{align} \int_{2^m} \mathbf{1}_{Z(\omega)^a_\tau}(x) \dee \mathbb{P}(\omega) &= \mathbb{P} (\{\omega: x \in \gamma^a (Z(\omega)_{\tau(\gamma)}), \forall \gamma \in G \}) \nonumber \\& = \mathbb{P} \left (\bigcap_{\gamma \in G} \left\{ \omega: \omega(Y((\gamma^{-1})^a (x))) = \tau(\gamma) \right\} \right) \nonumber  \\ & = \prod_{\gamma \in G}\mathbb{P} \left ( \left\{ \omega: \omega(Y((\gamma^{-1})^a (x))) = \tau(\gamma) \right\} \right) \nonumber \\& = 2^{-|G|} .\label{eq4.6} \end{align}

Since $\mu(X \setminus D_\eta) < \delta'$, we have \begin{equation} \label{eq4.8} \left \vert \mathbb{E}[\mu(Z^a_\tau)] - 2^{-|G|} \right \vert < \delta'. \end{equation} We now compute the second moment of $\mu(Z^a_\tau)$, in order to estimate its variance:

\begin{align} \mathbb{E}\left[\mu(Z^a_\tau)^2 \right] & = \int_{2^m} \mu(Z^a_\tau(\omega))^2 \dee\mathbb{P} (\omega) \nonumber \\ & = \int_{2^m} \left( \int_X \mathbf{1}_{Z^a_\tau(\omega)}(x) \dee\mu(x) \right)^2 \dee \mathbb{P}(\omega) \nonumber \\ & = \int_{2^m} \int_{X^2} \mathbf{1}_{Z^a_\tau(\omega)}(x_1) \mathbf{1}_{Z^a_\tau(\omega)}(x_2) \dee\mu^2(x_1,x_2) \dee\mathbb{P}(\omega)\nonumber \\ & = \int_{X^2} \int_{2^m} \mathbf{1}_{Z^a_\tau(\omega)}(x_1) \mathbf{1}_{Z^a_{\tau}(\omega)}(x_2) \dee\mathbb{P} (\omega) \dee\mu^2(x_1,x_2) \nonumber \\ & =  \int_{E_\eta} \int_{2^m} \mathbf{1}_{Z^a_\tau(\omega)}(x_1) \mathbf{1}_{Z^a_{\tau}(\omega)}(x_2) \dee\mathbb{P} (\omega) \dee\mu^2(x_1,x_2) \nonumber \\ & \ \ \  \ + \int_{X^2 \setminus E_\eta} \int_{2^m} \mathbf{1}_{Z^a_\tau(\omega)}(x_1) \mathbf{1}_{Z^a_{\tau}(\omega)}(x_2) \dee\mathbb{P} (\omega) \dee\mu^2(x_1,x_2). \label{eq4.7} \end{align}

Now if $(x_1,x_2) \in E_\eta$, then for any $\gamma_1,\gamma_2 \in G$ we have $d(\gamma^a_1 (x_1),\gamma^a_2 (x_2)) > \eta$, so that $Y(\gamma^a_1 (x_1)) \neq Y(\gamma^a_2 (x_2))$ and thus for a fixed pair $(x_1,x_2)$ the events $\omega(Y(\gamma^{-1})^a (x_1)) = \tau(\gamma)$, for all $\gamma \in G$, and $\omega(Y(\gamma^{-1})^a (x_2)) = \tau(\gamma)$, for all $\gamma \in G$, are independent. Hence for a fixed $(x_1,x_2) \in E_\eta$, we have
\begin{multline*}
 \int_{2^m} \mathbf{1}_{Z^a_\tau(\omega)}(x_1) \mathbf{1}_{Z^a_\tau(\omega)}(x_2) \dee\mathbb{P}(\omega)\\
 = \mathbb{P}(\{ \omega: x_1 \in \gamma^a (Z(\omega)_{\tau(\gamma)}) \ {\rm and} \ x_2 \in \gamma^a (Z(\omega)_{\tau(\gamma)}), \forall\gamma \in G \})\\
  = \mathbb{P}(\{ \omega: \omega(Y((\gamma^{-1})^a (x_1)) = \tau(\gamma) \  {\rm and} \  \omega(Y((\gamma^{-1})^a(x_2)) = \tau(\gamma), \forall\gamma \in G \})\\
   =\mathbb{P} \left ( \left\{ \omega: \omega(Y((\gamma^{-1})^a (x_1))) = \tau(\gamma), \forall\gamma \in G \right\} \right) \\ \cdot
   \mathbb{P} \left ( \left\{ \omega: \omega(Y((\gamma^{-1})^a (x_2))) = \tau(\gamma), \forall \gamma \in G \right\} \right)
    = 2^{-2|G|},
  \end{multline*}
  by $(\ref{eq4.6})$ and the fact that $E_\eta \subseteq D_\eta^2$. Since $\mu^2(X \setminus E_\eta) < \delta'$, we see that
  \[
   \left(1 - \delta' \right) 2^{-2|G|} \leq (\ref{eq4.7}) \leq 2^{-2|G|} + \delta' 
   \]
    and hence \[ \left \vert \mathbb{E}[\mu(Z_\tau^a)^2] - 2^{-2|G|} \right \vert < \delta'. \] Therefore (assuming $\delta <1$), 
\[
 \mathrm{Var}(\mu(Z^a_\tau ))
 = \mathbb{E}[\mu(Z^a_\tau)^2] - \mathbb{E}[\mu(Z^a_\tau)]^2
 \]
 \[
  \leq \delta' + 2^{-2|G|} - \left(-  \delta' + 2^{-|G|} \right) ^2
  \]
  \[
   = \delta' - (\delta')^2 + 2^{-|G|+1} \delta'
   \leq 3 \delta' \leq \frac{\delta^2}{2^{|G|+3}},
   \]
where in the last step we recall the definition of $\delta'$ from (\ref{eq5.10}). Therefore Chebyshev's inequality for $\mu(Z^a_\tau)$ gives

\begin{align*}\mathbb{P} \left( \left\{ \omega: |\mu(Z^a_\tau(\omega)) - \mathbb{E}[\mu(Z^a_\tau)] | \geq \frac{\delta}{2} \right \} \right) &\leq \frac{\mathrm{Var}(\mu(Z^a_\tau))}{\left(\frac{\delta}{2}\right)^2} \\ & \leq \frac{1}{2^{|G|+1} }   \end{align*}

By (\ref{eq4.8}) we have \[ \left \vert \mathbb{E}[\mu(Z^a_\tau)] - 2^{-|G|} \right \vert < \frac{\delta}{2}, \] so that \[\mathbb{P} \left( \left\{ \omega: \left \vert \mu(Z^a_\tau(\omega)) - 2^{-|G|} \right \vert \geq \delta \right \} \right) \leq  \frac{1}{2^{|G| +1} }.\]

Since this is true for each $\tau \in 2^G$, we have 

 \[\mathbb{P} \left( \left\{ \omega:\left \vert \mu(Z^a_\tau(\omega)) - 2^{-|G|}\right \vert \geq \delta, \  \textrm{for some} \ \tau: G \to 2  \right \} \right ) \leq  \frac{1}{2}.\]

Thus any member of the nonempty complement of this set works as $\mathcal{Q}$.\end{proof}

  \newpage
\section{Appendix C}\label{appB}
We give here a proof of \cref{7.1}.
 
 \begin{proof} For each finite set $F\subseteq \Gamma$, let $B_F = \{H \leq \Gamma: F \subseteq H\}$, and $\b =\{B_F\colon F \textrm{ a finite subset of }\Gamma\}$. Then $\b$ is closed under finite intersections and generates the Borel subsets of $\mathrm{Sub}(\Gamma)$. Thus by the $\pi-\lambda$ Theorem (see, \cite[Theorem 10.1]{K2}), two probability Borel measures on $\mathrm{Sub}(\Gamma)$ are equal iff they agree on $\b$. It is thus enough to show the following, for $a\in A(\Gamma, X, \mu), b\in A(\Gamma, Y, \nu)$:
 \[ 
 a\preceq b\implies {\rm type}(a)(B_F)\geq {\rm type}(b)(B_F).
   \]
   So assume that  $a\preceq b$. Then by \cref{2.6}, $a\sqsubseteq b_\mathcal{U}$, where $\mathcal{U}$ is a non-principal ultrafilter on $\bbN$. 
   
   Recall that ${\rm Fix}_a(\gamma) = \{x\in X\colon \gamma^a (x) = x\}$ and similarly for $b, b_\mathcal{U}$. Then by the definition of the type function, ${\rm type}(a) (B_F) = \mu (\bigcap_{\gamma\in F} {\rm Fix}_a (\gamma))$ and similarly for $b, b_\mathcal{U}$. If $(Y_\mathcal{U}, \nu_\mathcal{U})$ is the ultrapower of $(Y,\nu)$ (on which $b_\mathcal{U}$ acts) and $f\colon  Y_\mathcal{U} \to X$ is a homomorphism of $b_\mathcal{U}$ to $a$, then 
   \[
   f^{-1}( \bigcap_{\gamma\in F} {\rm Fix}_a (\gamma))  \supseteq   \bigcap_{\gamma\in F} {\rm Fix}_{b_\mathcal{U}}
   (\gamma),
   \]  
   so $\mu(\bigcap_{\gamma\in F} {\rm Fix}_a (\gamma)) \geq \nu_{  \mathcal{U}}( \bigcap_{\gamma\in F} {\rm Fix}_{b_\mathcal{U}} (\gamma))$. Now it is easy to check that (in the notation of \cite[Section 3.1]{CKT-D}), ${\rm Fix}_{b_\mathcal{U}} (\gamma) = [({\rm Fix}_b (\gamma))]_{\mathcal{U}}$, therefore $ \nu_{  \mathcal{U}}( \bigcap_{\gamma\in F} {\rm Fix}_{b_\mathcal{U}} (\gamma)) =  \nu(\bigcap_{\gamma\in F} {\rm Fix}_b (\gamma))$, thus 
   
  \[  \mu(\bigcap_{\gamma\in F} {\rm Fix}_a (\gamma))   \geq \nu(\bigcap_{\gamma\in F} {\rm Fix}_b (\gamma))
  \]
   and the proof is complete. \end{proof}
 
 \newpage
\section{Appendix D}\label{appD} We prove here some of the facts stated in \cref{10.4}. It follows from \cref{hypact} (ix)  that every approximately amenable group is sofic. We give below a more direct proof.

\begin{proof} Let $\Gamma$ be approximately amenable. Recall that every Borel hyperfinite, measure preserving, equivalence relation $E$ can be written as $E=\bigcup_n E_n$, where each E$_n$ has finite classes of the same cardinality and $E_n\subseteq E_{n+1}$ for each $n$.

Fix now finite $F=\{\gamma_1, \dots , \gamma_n\}\subseteq \Gamma$ and $\epsilon >0$, in order to find some $m$ and a map $\varphi\colon F\to S_m$ (the symmetric group on $m$ elements) such that if $e_\Gamma\in F$, then $\varphi(e_\Gamma) = id$, for any $\gamma\in F\setminus\{e_\Gamma\}$, $\sigma(\{x\colon \varphi(\gamma)(x) = x\}) <\epsilon$, where $\sigma$ is the normalized counting measure, and if $\gamma, \delta, \gamma\delta\in F$, then $\sigma(\{x\colon \varphi(\gamma)\varphi(\delta)(x)\not=\varphi(\gamma\delta)(x)\})<\epsilon$.

Take $0<\delta<\frac{\epsilon}{8n}$ and note that there is hyperfinite $a\in A(\Gamma, X, \mu)$ such that $\mu ({\rm Fix}_{a}(\gamma))< \delta$, for every $\gamma\not= e_\Gamma \in F$. To see this, find for each such $\gamma$ a hyperfinite action $a_\gamma\in A(\Gamma, X, \mu)$ with $\mu ({\rm Fix}_{a_\gamma}(\gamma))< \delta$ and take $a$ to be the product of these $a_\gamma$.

Write then $E_a$ as the union of an increasing sequence $(E_n)$ as above, where all the classes of $E_n$ have size $m_n$. If $[E_n]$ is the full group of $E_n$ and $d_u$ is the uniform metric on ${\rm Aut}(X,\mu)$, given by $d_u(S,T) = \mu(\{x\colon S(x) \not= T(x)\})$, then there is large enough $n$, so that for each $\gamma\in F$, there is $T_\gamma\in [E_n]$ with $d_u(T_\gamma, \gamma^a)<\delta$. Put $R=E_n, m=m_n$.

Then for any Borel set $A\subseteq X$, we have
\[
\mu(A) = \int\frac{|A\cap[x]_R|}{m} \ {\rm d}\mu(x)
\]
(apply \cite[Sublemma 10.6]{KM} to $m$ distinct transversals of $R$). Therefore if $\gamma\in F\setminus\{e_\Gamma\}$, then 
\[
\int\frac{|{\rm Fix}_a(\gamma)\cap[x]_R|}{m} \ {\rm d}\mu(x) <\delta
\]
 and for $\gamma\in F$,
\[
\int\frac{|\{x\colon\gamma^a(x)\not= T_\gamma(x)\}\cap[x]_R|}{m} \ {\rm d}\mu(x) <\delta.
\]
Take now $\alpha= \frac{\epsilon}{4}$ and note that by Markov's inequality
\[
\gamma\in F\setminus\{e_\Gamma\} \implies \mu(\{x\colon \frac{|{\rm Fix}_a(\gamma)\cap[x]_R|}{m}\geq \alpha\}) <\frac{\delta}{\alpha}
\]
and
\[
\gamma\in F\implies \mu(\{x\colon \frac{|\{x\colon\gamma^a(x)\not= T_\gamma(x)\}\cap[x]_R|}{m}\geq \alpha\}) <\frac{\delta}{\alpha}.
\]
so, since $\frac{2n\delta}{\alpha} < 1$, there is some $x$ such that letting $ C= [x]_R$, we have 
\[
\gamma\in F\setminus\{e_\Gamma\} \implies \frac{|{\rm Fix}_a(\gamma)\cap C|}{m} <  \alpha
\]
and 
\[
\gamma\in F\implies \frac{|\{x\colon\gamma^a(x)\not= T_\gamma(x)\}\cap C|}{m} <\alpha.
\]
Identify now $C$ with $\{1,\dots , m\}$ and define $\varphi\colon F\to S_m$ by $\varphi (e_\Gamma) = id$, if $e_\Gamma\in F$, and $\varphi (\gamma) = T_\gamma|C$, if $\gamma\in F\setminus\{e_\Gamma\} $. Then 
\[
\gamma\in F\setminus\{e_\Gamma\} \implies \sigma(\{x\colon\gamma^a(x) = x\})< \alpha 
\]
and 
\[
\gamma\in F\implies \sigma(\{x\colon\gamma^a(x) \not= T_\gamma(x)\})< \alpha.
\]
Therefore
\[
\gamma\in F\setminus\{e_\Gamma\} \implies \sigma(\{x\colon \varphi(\gamma)(x) = x\})< 2\alpha <\epsilon
\]
and 
\[
\gamma, \delta, \gamma\delta \in F \implies \sigma(\{x\colon\varphi(\gamma)\varphi(\delta)(x) \not= \varphi(\gamma\delta)(x)\})< 3\alpha <\epsilon
\]
and the proof is complete.
\end{proof}

Next we prove \cref{hypact}.

\begin{proof}
The equivalence of (i) and (ii) follows from \cite[Proposition 4.2]{CKT-D}. That (ii) implies (iii) follows from the fact that there is $ a\in {\rm FR}(\Gamma, X, \mu)$ with $a \sqsubseteq\prod_n a_n / {\cal U}$, see \cite[page 345]{CKT-D}. Then, by \cite[Theorem 5.3]{CKT-D} and \cref{min}, $s_\Gamma\preceq a\preceq_{\cal U}(a_n)$, so $s_\Gamma\preceq_{\cal U} (a_n)$, so again by \cite[Theorem 5.3]{CKT-D}, $s_\Gamma\sqsubseteq \prod_n a_n / {\cal U}$. To see that (iii) implies (ii), note that if $\prod_n a_n / {\cal U}$ is not free and $s_\Gamma \sqsubseteq\prod_n a_n / {\cal U}$, then there is non-free $a\in A(\Gamma, X, \mu)$ with $s_\Gamma\sqsubseteq a \sqsubseteq\prod_n a_n / {\cal U}$, contradicting \cref{fre}. The equivalence of  (iii) and (iv) follows from \cite[Theorem 5.3]{CKT-D}.

Clearly (i) implies (v). We next show that (v) implies (iv). We have that the weak equivalence class of $\prod_n a_n$ is the limit of the weak equivalence classes of $\prod_{i =0}^n a_n$ (see the penultimate paragraph of \cref{po}). So by \cref{1050} there is a sequence $n_0 < n_1 < \cdots$ and $b_{n_i} \in {\rm HYP}(\Gamma, X, \mu)$ such that $b_{n_i}\to \prod_n a_n$. Since $s_\Gamma\preceq \prod_n a_n$, (iv) follows.

That (i) is equivalent to (vi), follows as in the third paragraph of the preceding proof in this Appendix, using finite products of actions. To see that (vi) implies (vii), fix $\gamma_1, \dots , \gamma_n\in \Gamma\setminus\{e_\Gamma\}$ and $\epsilon > 0$ and let $a$ be as in (vi) for $\frac{\epsilon}{n}$. Consider the ergodic decomposition of $a$, as in \cite[Theorem 3.3]{KM} (where we take $E= E_a$), whose notation we use below. Let $\pi_* \mu = \nu$. Then for $i = 1, \dots , n$ we have 
\[
\epsilon\cdot\nu(\{e\colon e ({\rm Fix}_{a|X_e}(\gamma_i)) \geq \epsilon\})\leq  \int e({\rm Fix}_{a|X_e}(\gamma_i)) \ {\rm d}\nu(e) = \mu( {\rm Fix}_a (\gamma_i)) <\frac{\epsilon}{n},
\]
so there is $e$ such that for every $i = 1, \dots , n$, we have $e ({\rm Fix}_{a|X_e}(\gamma_i)) < \epsilon$, so $a|X_e\in A(\Gamma, X_e, e)$ verifies (vii). That (vii) implies (vi) follows from the observation that if $a\in A(\Gamma, Y, \nu )$, with $Y$ finite, and $b=i_\Gamma \times a$, then ${\rm Fix}_b (\gamma) = {\rm Fix}_a (\gamma), \forall \gamma\in \Gamma$, and the action $b$ is on a non-atomic space. Clearly (i) implies (viii). To see that (viii) implies (vi), fix $\gamma_1, \dots , \gamma_n\in \Gamma\setminus\{e_\Gamma\}$ and $\epsilon >0$. Then there are hyperfinite actions $a_1, \dots , a_n$ such that $\gamma_i^{a_i}\not= id, i = 1, \dots , n$. Consider $b = a_1\times \cdots \times a_n$. Then $\gamma_i ^b \not= id, i=1, \dots , n$. For large enough $m$, if $a= b^m$, we have that $\mu ({\rm Fix}_a (\gamma_i))<\epsilon, i = 1, \dots , n$, and $a$ is hyperfinite.

To see that (ix) implies (viii), note that an embedding of $\Gamma$ into $[E_0]$ is an action $a\in A(\Gamma, X, \mu)$ with $E_a\subseteq E_0$ (therefore $a$ is hyperfinite) and $\gamma^a\not=id, \forall \gamma\in\Gamma\setminus \{ e_\Gamma\}$. We next show that (i) implies (ix). By \cite[Proposition 4.13]{K}, it is enough to find for each $\gamma\in\Gamma\setminus \{ e_\Gamma\}$ an action $a$ such that $E_a\subseteq E_0$ and $\gamma^a\not= id$. By (i), there is a hyperfinite action $a$ such that $\gamma^a\not=id$. So it is enough to find a measure preserving, ergodic, hyperfinite $E$ such that $E_a\subseteq E$. By \cite[Lemma 5.4]{K}, it is enough to show that $E_a\subseteq F$, for some measure preserving, aperiodic (i.e., having infinite classes), hyperfinite $F$. By separating the space $X$ into the two $E_a$-invariant Borel sets on which $E_a$ has finite, resp., infinite classes, it is enough to deal with the case when $E_a$ has finite classes. In this case, let $Y$ be a Borel set meeting each $E_a$-class in exactly one point. Let $\nu$ be the normalized restriction of $\mu$ to $Y$ and let $b\in A(\bbZ, Y, \nu)$ have infinite orbits. We can then take $F= E_a\vee E_b$ (the smallest equivalence relation containing $E_a, E_b$).

Finally we show that (vii) implies (x). Let $\Gamma \setminus\{e_\Gamma \}=\{ \gamma_1,\gamma_2, \dots\}$, and for each $n\geq 1$, let $a_n \in A(\Gamma, X_n, \mu_n)$ be an ergodic, hyperfinite action, where the space $(X_n, \mu_n)$ is either finite or non-atomic, with $\mu_n ({\rm Fix}_{a_n} (\gamma_i) )< \frac{1}{n}$ for all $i\leq n$. Let $\theta_n$ be the corresponding IRS. If $X_n$ is finite, clearly $\theta_n$ is co-amenable. If $(X_n, \mu_n)$ is non-atomic, then, by \cite{Ka} (using the Schreier graph associated to the action and a fixed finite subset of $\Gamma$), $\theta_n$ is co-amenable. Finally (by going to subsequences) it is enough to check that if $(\theta_n)$ converges, say to $\theta$, then $\theta$ is the Dirac measure at the identity of $\Gamma$. We have that for any finite $F\subseteq \Gamma$, $\theta_n(\{H\leq \Gamma\colon F\subseteq H \}) \to \theta(\{H\leq \Gamma\colon F\subseteq H \}) =\delta_{e_\Gamma}(\{H\leq \Gamma\colon F\subseteq H \})$, so (as in Appendix C, \cref{appB}) by the $\pi-\lambda$ Theorem $\theta=\delta_{e_\Gamma}$.
\end{proof}

\newpage
\section{Appendix E}\label{appE} We give here the proof of \cref{1022}.

\begin{proof} We start with the following:
\begin{lem}
If $a,b\in A(\Gamma, X, \mu)$ are tempered, so is $a\times b$.
\end{lem}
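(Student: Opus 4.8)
The plan is to reduce temperedness of $a\times b$ to a computation at the level of Koopman representations, exploiting the identity $\kappa^{a\times b}\cong\kappa^a\otimes\kappa^b$ already invoked in \cref{8}. Since $L^2(X,\mu)=\bbC 1\oplus L^2_0(X,\mu)$, we have $\kappa^a\cong 1_\Gamma\oplus\kappa_0^a$ and likewise for $b$, where $1_\Gamma$ denotes the trivial one-dimensional representation carried by the constants. Distributing the tensor product over these orthogonal decompositions gives
\[
\kappa^{a\times b}\cong\kappa^a\otimes\kappa^b\cong 1_\Gamma\oplus\kappa_0^a\oplus\kappa_0^b\oplus\big(\kappa_0^a\otimes\kappa_0^b\big),
\]
and hence, after discarding the trivial summand (the constants of the product space),
\[
\kappa_0^{a\times b}\cong\kappa_0^a\oplus\kappa_0^b\oplus\big(\kappa_0^a\otimes\kappa_0^b\big).
\]
So it suffices to show that each of the three summands on the right is weakly contained in $\lambda_\Gamma$, and then to invoke the fact that Fell weak containment respects countable direct sums: if $\pi_i\preceq\rho$ for each $i$, then $\bigoplus_i\pi_i\preceq\rho$ (a p.d.\ function of the sum is a limit of finite partial sums of p.d.\ functions of the $\pi_i$, each approximable by finite sums of p.d.\ functions of $\rho$).

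The first two summands are immediate: $\kappa_0^a\preceq\lambda_\Gamma$ and $\kappa_0^b\preceq\lambda_\Gamma$ are exactly the hypotheses that $a$ and $b$ are tempered. The crux is the tensor summand $\kappa_0^a\otimes\kappa_0^b$. Here I would use two standard facts about unitary representations (see \cite[Appendix F]{BdlHV}). First, Fell weak containment is preserved under tensoring with a fixed representation: from $\kappa_0^a\preceq\lambda_\Gamma$ we obtain $\kappa_0^a\otimes\kappa_0^b\preceq\lambda_\Gamma\otimes\kappa_0^b$. Second, the Fell absorption principle gives, for any unitary representation $\sigma$ on a Hilbert space $H_\sigma$, a unitary equivalence $\lambda_\Gamma\otimes\sigma\cong(\dim H_\sigma)\cdot\lambda_\Gamma$. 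Applying this with $\sigma=\kappa_0^b$ yields
\[
\kappa_0^a\otimes\kappa_0^b\preceq\lambda_\Gamma\otimes\kappa_0^b\cong(\dim H_{\kappa_0^b})\cdot\lambda_\Gamma\simeq\lambda_\Gamma,
\]
where the final weak equivalence is the elementary fact that $n\cdot\lambda_\Gamma\simeq\lambda_\Gamma$ for every $1\le n\le\infty$ (one direction is trivial, the other is again direct-sum stability). Combining the three cases and the direct-sum stability quoted above gives $\kappa_0^{a\times b}\preceq\lambda_\Gamma$, i.e.\ $a\times b$ is tempered.

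The only genuinely nontrivial ingredients are the two representation-theoretic facts used on the tensor summand, namely tensor-stability of weak containment and Fell absorption; everything else is bookkeeping with the decomposition of $L^2$ of a product space. I expect the main point worth stating carefully is the Fell absorption step, since it is what converts a tensor product with $\lambda_\Gamma$ back into a multiple of $\lambda_\Gamma$, and thereby keeps the tensor term inside the regular representation. All of these are classical and may be cited, so the lemma follows without further calculation.
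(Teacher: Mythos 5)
Your proposal is correct and takes essentially the same route as the paper's proof: both rest on the decomposition $\kappa_0^{a\times b}\cong\kappa_0^a\oplus\kappa_0^b\oplus(\kappa_0^a\otimes\kappa_0^b)$, monotonicity of weak containment under direct sums and tensor products, and the absorption of a tensor product with $\lambda_\Gamma$ into a multiple of $\lambda_\Gamma$. The only differences are presentational: the paper establishes absorption by hand in the special case $\lambda_\Gamma\otimes\lambda_\Gamma\cong\infty\cdot\lambda_\Gamma$ (via the free diagonal action of $\Gamma$ on $\Gamma\times\Gamma$) after bounding the tensor term by $\lambda_\Gamma\otimes\lambda_\Gamma$, and it runs an amenable/non-amenable case split to cancel the trivial summand via invariant vectors, whereas you cite general Fell absorption for $\lambda_\Gamma\otimes\kappa_0^b$ and cancel the constants of the product space directly, which is slightly cleaner.
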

\begin{proof}
If $\Gamma$ is amenable, this is clear, so we assume below that $\Gamma$ is not amenable.

We have $\kappa^{a\times b}\cong \kappa^a \otimes \kappa^b$. So if $1_\Gamma$ is the trivial 1-dimensional representation of $\Gamma$\index{$1_\Gamma$}, so that $\kappa^a \cong 1_\Gamma \oplus \kappa_0^a$ (and similarly for $b, a\times b$), we have
\[
1_\Gamma\oplus\kappa_0^{a\times b}\cong (1\oplus \kappa_0^a)\otimes(1_\Gamma\oplus\kappa_0^b)\cong 1_\Gamma\oplus\kappa_0^a\oplus\kappa_0^b\oplus (\kappa_0^a\otimes\kappa_0^b).
\]
Now since $a,b$ are tempered, $\kappa_0^a, \kappa_0^b$ have no invariant non-0 vectors. It follows then (by looking at the subspaces of invariant vectors in $1_\Gamma\oplus \kappa_0^{a\times b}$ and $1_\Gamma\oplus\kappa_0^a\oplus\kappa_0^b\oplus (\kappa_0^a\otimes\kappa_0^b)$) that 
\[
\kappa_0^{a\times b}\cong \kappa_0^a\oplus\kappa_0^b\oplus (\kappa_0^a\otimes\kappa_0^b)\preceq \lambda_\Gamma\oplus(\kappa_0^a\otimes\kappa_0^b)\preceq\lambda_\Gamma\oplus (\lambda_\Gamma\otimes\lambda_\Gamma).\]
If $\Gamma$ acts on a countable set $X$, denote by $\pi_{X, \Gamma}$ the representation on $\ell^2 (X)$ given by $\gamma\cdot f (x) = f(\gamma^{-1}\cdot x)$. Then if $\Gamma$ acts freely on $X$, $\pi_{X, \Gamma}\cong n\cdot \lambda_\Gamma$, where $n$ is the cardinality of the set of $\Gamma$-orbits on $X$. Now it is easy to check that $\lambda_\Gamma\otimes \lambda_\Gamma\cong\pi_{X, \Gamma}$, where $X= \Gamma\times \Gamma$ and $\Gamma$ acts on $X$ by $\gamma\cdot (\delta, \epsilon) = (\gamma\delta, \gamma\epsilon)$. This is a free action, so $\lambda_\Gamma\otimes \lambda_\Gamma$ is a direct sum of countably many copies of $\lambda_\Gamma$. Thus $\kappa_0^{a\times b}\preceq\lambda_\Gamma$, i.e., $a\times b$ is tempered.
\end{proof}

\begin{lem}
If $a_0, a_1, \dots \in A(\Gamma, X, \mu)$ are tempered, so is $\prod_n a_n$.

\end{lem}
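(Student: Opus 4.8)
The plan is to reduce the infinite product to the finite products already handled by the preceding lemma and then to pass to the limit at the level of the Koopman representations. First I would record, by induction on the two-factor lemma, that every finite product $\prod_{i\le m} a_i$ is tempered, so that $\kappa_0^{\prod_{i\le m}a_i}\preceq \lambda_\Gamma$ for all $m$: the base case is the hypothesis on $a_0$, and the inductive step writes $\prod_{i\le m}a_i=(\prod_{i\le m-1}a_i)\times a_m$ and applies the preceding lemma.

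Next I would exhibit $\kappa_0^{\prod_n a_n}$ as an inductive limit of these finite pieces. Writing $(Y,\nu)=\prod_n(X_n,\mu_n)$, the coordinate projection onto the first $m+1$ factors is $\Gamma$-equivariant and realizes $\prod_{i\le m}a_i$ as a factor of $\prod_n a_n$; pulling back square-integrable functions of zero integral therefore embeds $L_0^2(\prod_{i\le m}X_i)$ as a $\Gamma$-invariant subspace $H_m\subseteq L_0^2(Y,\nu)$, on which $\kappa_0^{\prod_n a_n}$ restricts to (a copy of) $\kappa_0^{\prod_{i\le m}a_i}$. The $H_m$ are increasing and their union is dense in $L_0^2(Y,\nu)$, since cylinder functions depending on finitely many coordinates are dense. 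The crux is then a general approximation fact, which I would prove directly: if a unitary representation $\pi$ of $\Gamma$ on $H$ has an increasing sequence $(H_m)$ of invariant subspaces with dense union and $\pi\rest{H_m}\preceq\rho$ for every $m$, then $\pi\preceq\rho$. Indeed, given $v\in H$ and $\epsilon>0$, choose $w\in H_m$ with $\|v-w\|$ small; since $\pi$ is unitary, $|\langle\pi(\gamma)v,v\rangle-\langle\pi(\gamma)w,w\rangle|\le \|v-w\|(\|v\|+\|w\|)$ uniformly in $\gamma$, so the positive-definite function of $v$ is uniformly approximated by that of $w$, which by $\pi\rest{H_m}\preceq\rho$ is in turn a pointwise limit of finite sums of positive-definite functions of $\rho$. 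Applying this with $\pi=\kappa_0^{\prod_n a_n}$, the $H_m$ above, and $\rho=\lambda_\Gamma$ gives $\kappa_0^{\prod_n a_n}\preceq\lambda_\Gamma$, i.e.\ $\prod_n a_n$ is tempered.

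The only real obstacle is making the identification in the second step precise, namely that the restriction of the Koopman representation of the infinite product to $H_m$ is the Koopman representation of the finite subproduct; this is a routine consequence of the $\Gamma$-equivariance of the coordinate projection together with the density of cylinder functions. An alternative, purely representation-theoretic route avoids even this identification: one decomposes $L_0^2(Y,\nu)\cong\bigoplus_{\emptyset\ne S\subseteq\bbN,\ |S|<\infty}\bigotimes_{n\in S}L_0^2(X_n,\mu_n)$, observes that each finite tensor factor $\bigotimes_{n\in S}\kappa_0^{a_n}$ is tempered by the same tensor-product argument used in the preceding lemma (via $\lambda_\Gamma\otimes\sigma\cong(\dim\sigma)\cdot\lambda_\Gamma$ and the preservation of weak containment under tensoring), and then uses that a countable direct sum of tempered representations is again tempered, since $\bigoplus_m\sigma_m\preceq\bigoplus_m\lambda_\Gamma=\infty\cdot\lambda_\Gamma\simeq\lambda_\Gamma$. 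Either way one concludes $\kappa_0^{\prod_n a_n}\preceq\lambda_\Gamma$.
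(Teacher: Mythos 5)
Your first argument is essentially the paper's own proof: the paper likewise sets $b=\prod_n a_n$, pulls back functions on each finite subproduct $b_N=\prod_{n<N}a_n$ along the coordinate projection to get an increasing sequence of $\Gamma$-invariant subspaces $H_N\subseteq L^2(X^\bbN,\mu^\bbN)$ with dense union on which $\kappa_0^b$ restricts to copies of $\kappa_0^{b_N}$, and then concludes $\kappa_0^b\preceq\lambda_\Gamma$ from $\kappa_0^{b_N}\preceq\lambda_\Gamma$; the approximation fact you prove explicitly (the uniform bound $|\langle\pi(\gamma)v,v\rangle-\langle\pi(\gamma)w,w\rangle|\leq\|v-w\|(\|v\|+\|w\|)$ plus approximation of the p.d.\ function of $w$ by finite sums from $\rho$) is precisely the step the paper dismisses with ``clearly,'' so your write-up is if anything more complete. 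Your alternative route at the end is genuinely different from the paper's: decomposing $L_0^2$ of the infinite product as $\bigoplus_{S}\bigotimes_{n\in S}L_0^2(X_n,\mu_n)$ over nonempty finite $S\subseteq\bbN$ extends the paper's two-factor lemma argument (Fell absorption $\lambda_\Gamma\otimes\sigma\cong(\dim\sigma)\cdot\lambda_\Gamma$ plus preservation of $\preceq$ under tensoring) directly to infinitely many factors, replacing the inductive-limit step by the fact that a countable direct sum of representations weakly contained in $\lambda_\Gamma$ is weakly contained in $\infty\cdot\lambda_\Gamma\simeq\lambda_\Gamma$; note this last equivalence is valid for $\preceq$ (which allows finite sums, and is the notion in the definition of tempered) though not for $\preceq_Z$, so the route is sound, and it has the small advantage of avoiding any identification of Koopman representations of factors with subrepresentations.
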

\begin{proof}
Let $b =\prod_n a_n$.  Consider $b_N = \prod_{n=0}^{N-1} a_n\in A(\Gamma, X^N, \mu^N)$, for $N\geq 1$. For $f\in L^2(X^N, \mu^N)$, let $\tilde{f}^N\in L(X^\bbN, \mu^\bbN)$ be given by $\tilde{f}^N (((x_n)_{n\in\bbN}) = f((x_n)_{n< N})$. Then the map $f\mapsto \tilde{f}^N$ is an isomorphism of $\kappa_0^{b_N}$ with a subrepresentation $\pi_N\leq \kappa_0^b$, say $\pi_N = \kappa_0^b|H_N$, for a $\Gamma$-invariant closed subspace $H_N$ of $L^2(X^\bbN, \mu^\bbN)$. Clearly $H_1\subseteq H_2\subseteq \dots$ and $\bigcup_{N=1}^\infty H_N$ is dense in $L^2(X^\bbN, \mu^\bbN)$. Since $\pi_N\preceq\lambda_\Gamma$ for each $N$, clearly $\kappa_0^b\preceq \lambda_\Gamma$, i.e., $b$ is tempered.
\end{proof}

Let now $\{\undertilde{a}_n\}$ be dense in $\undertilde{{\rm TEMP}}(\Gamma, X, \mu)$. Then, if $b= \prod_n a_n$, $\undertilde{b}\in \undertilde{{\rm TEMP}}(\Gamma, X, \mu)$ is the $\preceq$-maximum element of $\undertilde{{\rm TEMP}}(\Gamma, X, \mu)$ and since $\undertilde{{\rm TEMP}}(\Gamma, X, \mu)$ is downwards closed under $\preceq$, it follows that 
\[
\undertilde{{\rm TEMP}}(\Gamma, X, \mu) = \{\undertilde{a}\colon \undertilde{a}\preceq \undertilde{b}\}
\]
is closed. Also $\undertilde{\prod_n a_n} = \undertilde{a}^{temp}_{\infty, \Gamma}$. 
\end{proof}

\newpage
\section{Appendix F}\label{appF1}
We give here the proof of \cref{44}.
Recall from \cite[Definition 3.1]{LeM} that a transitive action of a group $\Gamma$ on a set $X$ is {\bf highly faithful} \index{highly faithful}if for any $\gamma_1, \dots , \gamma_n\in \Gamma\setminus\{e_\Gamma\}$, there is $x\in X$ such that $\gamma_i\cdot x\not= x, \forall i\leq n$.
Call a subgroup $H\leq \Gamma$ highly faithful if the action of $\Gamma$ on $\Gamma/H$ is highly faithful and call $a\in A(\Gamma, X, \mu)$ highly faithful if the action of $\Gamma$ on (almost) any orbit is highly faithful or equivalently (almost) all stabilizers are highly faithful.



\begin{lemma}
If $H\leq \Gamma$ is highly faithful, then $\lambda_\Gamma\preceq \lambda_{H, \Gamma}$.
\end{lemma}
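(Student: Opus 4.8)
The plan is to verify the definition of weak containment of representations directly, exploiting that the left-regular representation $\lambda_\Gamma$ on $\ell^2(\Gamma)$ is cyclic, generated by $\delta_{e_\Gamma}$, whose associated positive-definite function is $\phi(\gamma) = \langle \lambda_\Gamma(\gamma)\delta_{e_\Gamma}, \delta_{e_\Gamma}\rangle = \mathbf 1_{\{e_\Gamma\}}(\gamma)$. On the other side, for a coset $xH \in \Gamma/H$ the indicator $\delta_{xH} \in \ell^2(\Gamma/H)$ is a unit vector, and a short computation gives $\langle \lambda_{H,\Gamma}(\gamma)\delta_{xH}, \delta_{xH}\rangle = \langle \delta_{\gamma x H}, \delta_{xH}\rangle = \mathbf 1[\gamma x H = xH]$, equivalently the indicator of $\gamma \in xHx^{-1}$. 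I would first record these two identities, since they translate the whole problem into a combinatorial statement about which cosets are fixed by which group elements.

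The core of the argument is then a single application of high faithfulness. Fix a finite $F \subseteq \Gamma$ and let $\gamma_1,\dots,\gamma_n$ enumerate $F \setminus \{e_\Gamma\}$. Since $H$ is highly faithful, the action of $\Gamma$ on $\Gamma/H$ has a point $xH$ with $\gamma_i \cdot xH \neq xH$ for all $i$, i.e.\ $\gamma_i \notin xHx^{-1}$. For this $x$ the positive-definite function $\psi(\gamma) = \mathbf 1[\gamma \in xHx^{-1}]$ agrees with $\phi$ exactly on $F$: both equal $1$ at $e_\Gamma$ (every conjugate of $H$ contains $e_\Gamma$) and $0$ at each $\gamma_i$. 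Thus $\phi$ is not merely approximated but matched on every finite set by a single positive-definite function coming from $\lambda_{H,\Gamma}$.

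What remains is the reduction showing that matching the generating function $\phi$ suffices to conclude $\lambda_\Gamma \preceq \lambda_{H,\Gamma}$, since the definition formally asks to approximate $\langle \lambda_\Gamma(\gamma) v, v\rangle$ for \emph{arbitrary} $v \in \ell^2(\Gamma)$. This is where I expect the only genuine (though routine) work to lie. The clean route is to invoke the standard criterion for weak containment of representations (see \cite[Part II, F]{BdlHV}): it is enough to approximate, uniformly on finite sets by finite sums of positive-definite functions of $\lambda_{H,\Gamma}$, the positive-definite functions associated to a total family of vectors, and for the cyclic $\lambda_\Gamma$ the generating function $\phi$ suffices. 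Alternatively, and fully self-containedly, I would argue directly: assume without loss of generality $e_\Gamma \in F$; given $v$, $\epsilon > 0$, approximate $v$ in norm by a finitely supported $w = \sum_{s \in S} c_s \delta_s$ so that $\langle \lambda_\Gamma(\gamma)v,v\rangle$ is uniformly within $\epsilon$ of $\langle \lambda_\Gamma(\gamma)w,w\rangle$. Then apply high faithfulness to the finite set $E = \{t^{-1}\gamma s : s,t \in S,\ \gamma \in F\} \setminus \{e_\Gamma\}$ of non-identity elements to get a coset $yH$ with $\eta \notin yHy^{-1}$ for all $\eta \in E$.

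With this $y$ the map $s \mapsto syH$ is injective on $S$ (take $\gamma = e_\Gamma$ in $E$), so $\tilde w = \sum_{s \in S} c_s \delta_{syH} \in \ell^2(\Gamma/H)$ is a vector with $\|\tilde w\| = \|w\|$, and for $\gamma \in F$ one computes $\langle \lambda_{H,\Gamma}(\gamma)\tilde w, \tilde w\rangle = \sum_{s,t} c_s \bar c_t\,\mathbf 1[t^{-1}\gamma s \in yHy^{-1}] = \sum_{s,t} c_s \bar c_t\,\mathbf 1[t^{-1}\gamma s = e_\Gamma] = \langle \lambda_\Gamma(\gamma)w,w\rangle$, the middle equality holding precisely because $yHy^{-1}$ avoids every element of $E$. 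Combining the two estimates gives $|\langle \lambda_\Gamma(\gamma)v,v\rangle - \langle \lambda_{H,\Gamma}(\gamma)\tilde w, \tilde w\rangle| < \epsilon$ for all $\gamma \in F$, which is exactly $\lambda_\Gamma \preceq \lambda_{H,\Gamma}$. The main obstacle is therefore only bookkeeping: choosing the collision set $E$ large enough that the transported vector $\tilde w$ reproduces the matrix coefficients of $w$ on $F$ on the nose.
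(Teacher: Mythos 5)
Your proposal is correct, and its core mechanism is the same as the paper's: reduce to controlling matrix coefficients on a finite set of non-identity elements, then use high faithfulness to produce a coset $xH$ not fixed by any element of that set. Your route via the cyclic-vector criterion is essentially the paper's proof: the paper cites \cite[Proposition 18.1.4]{D} to reduce to finding, for finitely many $\gamma_1,\dots,\gamma_n \neq e_\Gamma$ and $\epsilon>0$, a unit vector $f\in\ell^2(\Gamma/H)$ with $|\langle\gamma_i\cdot f,f\rangle|\leq\epsilon$, and then builds a weighted unit vector supported on $\{x\}\cup\{\gamma_i^{-1}\cdot x\}$ achieving this. Your observation that $f=\delta_{xH}$ already works is in fact simpler and sharper than the paper's construction: it makes the offending coefficients exactly $0$ rather than at most $\epsilon$. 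Where you genuinely diverge is your second, self-contained argument: instead of quoting the criterion, you verify the definition of $\preceq$ directly for an arbitrary $v\in\ell^2(\Gamma)$ by truncating to a finitely supported $w=\sum_{s\in S}c_s\delta_s$ and transporting it by $s\mapsto syH$, where $yH$ comes from applying high faithfulness to the collision set $E=\{t^{-1}\gamma s\colon s,t\in S,\ \gamma\in F\}\setminus\{e_\Gamma\}$; this reproduces the matrix coefficients of $w$ on $F$ exactly, and your stipulation $e_\Gamma\in F$ correctly secures injectivity of $s\mapsto syH$, hence $\|\tilde w\|=\|w\|$. The trade-off: the paper's (and your first) argument is short but leans on an external representation-theoretic fact, while your second is longer but elementary and proves slightly more, namely that every positive-definite function of $\lambda_\Gamma$ coming from a finitely supported vector is matched on any finite set by a \emph{single} positive-definite function of $\lambda_{H,\Gamma}$, not merely approximated by finite sums.
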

\begin{proof}
It is enough to find for each $\gamma_1, \dots , \gamma_n\in \Gamma\setminus\{e_\Gamma\}, 1>\epsilon >0$ a unit vector $f \in \ell^2(\Gamma/H)$ such that, for the quasi-regular representation, we have $|\langle \gamma_i\cdot f, f\rangle|\leq \epsilon$, for each $1\leq i\leq n$ (see, e.g., \cite[Proposition 18.1.4]{D}). Put $X=\Gamma/H$ and also denote by $\gamma\cdot x$ the action of $\Gamma$ on $X$. Let $x$ be such that $\gamma_i\cdot x\not= x, \forall i\leq n,$ and put $A= \{x\}\cup \{\gamma_i^{-1}\cdot x \colon 1\leq i\leq n\}$ and $|A| = k+1,  k>0$. Then define $f\colon X\to \bbC$ by $f(y) = \sqrt{(1-\frac{\epsilon^2}{4k})}$, if $y=x$; $=\frac{\epsilon}{2k}$, if $y\in A\setminus \{x\}$; = 0, otherwise. Then $f$ is a positive unit vector and for any $1\leq i\leq n$, we have $\langle \gamma_i\cdot f, f\rangle \leq f(x)f(\gamma_i^{-1}\cdot x) +k(\frac{\epsilon}{2k}\cdot 1)\leq  1\cdot \frac{\epsilon}{2k} +\frac{\epsilon}{2}\leq\epsilon$.
\end{proof}
It is now shown in \cite[Corollary 1.13]{LeM} that for $\Gamma=\bbF_2$, there is a highly faithful, ergodic $a\in A(\Gamma, X, \mu)$ such that the action of $\Gamma$ on $\Gamma/H$, for (almost) all stabilizers $H$, is amenable, i.e., $1_\Gamma\preceq \lambda_{H,\Gamma}$ (see, e.g., \cite[Theorem 1.10]{KT}).

By \cite[Proposition 14]{DG}, $\lambda_{H,\Gamma}\preceq \kappa_0^a$, for (almost) all stabilizers $H$. Thus $\lambda_\Gamma\preceq \kappa_0^a$ and $1_\Gamma\preceq \kappa_0^a$. Moreover $\lambda_\Gamma\prec \kappa_0^a$, since otherwise $1_\Gamma\preceq \lambda_\Gamma$ contradicting the non-amenability of $\Gamma$.

Next we have that $s_\Gamma\not\preceq a$, since $a$ is not free. But also $a\not\preceq s_\Gamma$, since otherwise $a$ would be free, because the group $\Gamma$ is shift-minimal, see \cite[Theorem 5.1]{T-D}

\newpage

\section{Appendix G}\label{appF}
We give here the proof of \cref{irs1}.
\begin{proof}

We will use the following result:

\begin{prop}\label{17.1}
Let $H\leq \Gamma$. Then the following are equivalent:
\begin{enumerate}[\upshape (i)]
\item $s_{H, \Gamma}\preceq s_\Gamma$.
\item $H$ is amenable.
\end{enumerate}
\end{prop}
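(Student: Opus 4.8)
The plan is to prove the two implications separately, using co-induction for the implication $(ii)\Rightarrow(i)$ and the Koopman representation for $(i)\Rightarrow(ii)$.

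For $(ii)\Rightarrow(i)$ the key observation is that the generalized shift is itself a co-induced action. Writing $i_H$ for the trivial action of $H$ on $([0,1],\mathrm{Leb})$ and $s_H$ for the Bernoulli shift of $H$, I would first record the two structural isomorphisms
\[
s_{H,\Gamma}\cong {\rm CIND}_H^\Gamma(i_H), \qquad s_\Gamma\cong {\rm CIND}_H^\Gamma(s_H).
\]
The first holds because co-inducing a trivial action collapses the cocycle twist and leaves only the permutation of the $\Gamma/H$-coordinates, which is exactly $s_{H,\Gamma}$; the second is the standard fact that the co-induction of a Bernoulli shift is again a Bernoulli shift, seen through the set bijection $\Gamma\cong H\times(\Gamma/H)$ determined by a transversal. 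Now if $H$ is amenable then $i_H$ is the $\preceq$-minimum action of $H$, so in particular $i_H\preceq s_H$ (equivalently, $s_H$ is ergodic but not strongly ergodic, cf.\ \cref{3.5}). Applying the monotonicity of co-induction, \cref{321}, gives ${\rm CIND}_H^\Gamma(i_H)\preceq {\rm CIND}_H^\Gamma(s_H)$, that is, $s_{H,\Gamma}\preceq s_\Gamma$.

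For $(i)\Rightarrow(ii)$ I would pass to Koopman representations. By \cref{4.1}, $s_{H,\Gamma}\preceq s_\Gamma$ implies $\kappa_0^{s_{H,\Gamma}}\preceq_Z\kappa_0^{s_\Gamma}$, and hence $\kappa_0^{s_{H,\Gamma}}\preceq\kappa_0^{s_\Gamma}$. On the source side, the first chaos of $L_0^2([0,1]^{\Gamma/H})$, namely the mean-zero functions depending on a single coordinate, is a $\Gamma$-invariant subspace isomorphic to $\lambda_{H,\Gamma}\otimes L_0^2([0,1])$, so that $\lambda_{H,\Gamma}\leq\kappa_0^{s_{H,\Gamma}}$. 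On the target side $\kappa_0^{s_\Gamma}\cong\infty\cdot\lambda_\Gamma\simeq\lambda_\Gamma$. Combining these gives $\lambda_{H,\Gamma}\preceq\lambda_\Gamma$. It then remains to deduce amenability of $H$: restricting to $H$ and using $\lambda_\Gamma|H\cong\infty\cdot\lambda_H\simeq\lambda_H$ together with the $H$-fixed unit vector $\delta_{eH}\in\ell^2(\Gamma/H)$, which yields $1_H\leq\lambda_{H,\Gamma}|H$, one obtains $1_H\preceq\lambda_H$, so $H$ is amenable by Hulanicki's characterization of amenability.

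The step I expect to be the main obstacle is setting up the two co-induction identities and the identification of the first chaos rigorously; these are essentially bookkeeping, but they require fixing an explicit transversal and the associated cocycle and checking equivariance carefully. By contrast, the representation-theoretic equivalence $\lambda_{H,\Gamma}\preceq\lambda_\Gamma\iff H$ amenable is classical, and everything else is assembled from the monotonicity statements (\cref{321}, \cref{4.1}) already available in the paper.
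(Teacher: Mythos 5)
Your proof is correct, and it splits against the paper's proof in an interesting way. The direction (ii)$\Rightarrow$(i) is exactly the paper's argument: the same two identities ${\rm CIND}_H^\Gamma(i_H)\cong s_{H,\Gamma}$ and ${\rm CIND}_H^\Gamma(s_H)\cong s_\Gamma$, combined with $i_H\preceq s_H$ and monotonicity of co-induction (\cref{321}). For (i)$\Rightarrow$(ii) you and the paper start identically, passing to Koopman representations via \cref{4.1} and using $\kappa_0^{s_\Gamma}\simeq\lambda_\Gamma$, but then diverge. The paper views $s_{H,\Gamma}$ as an action by automorphisms of the compact abelian group $\bbT^{\Gamma/H}$, notes that this action is tempered, and invokes the machinery of \cite{K4} to conclude that the stabilizer of any non-zero element of the discrete dual $\bbZ^{<\Gamma/H}$ is amenable; applied to the characteristic function of the coset $H$, whose stabilizer is exactly $H$, this finishes the proof. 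You instead locate the quasi-regular representation directly inside the Koopman representation: the first-chaos subspace is indeed $\Gamma$-invariant and carries $\infty\cdot\lambda_{H,\Gamma}$ (there is no cocycle twist, since the shift moves a function of the coordinate $c$ to the same function of the coordinate $\gamma c$), so $\lambda_{H,\Gamma}\preceq\lambda_\Gamma$, and then amenability of $H$ follows classically from restriction of weak containment to $H$, the $H$-fixed vector $\delta_{eH}$, $\lambda_\Gamma|H\simeq\lambda_H$, and Hulanicki. Your route is more elementary and self-contained---it in effect reproves the classical fact that $\lambda_{H,\Gamma}\preceq\lambda_\Gamma$ forces $H$ amenable---whereas the paper's route is shorter modulo the cited results on tempered/modular actions, which give the stronger conclusion that \emph{all} stabilizers of non-zero dual elements are amenable. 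One small caveat: your parenthetical justification of $i_H\preceq s_H$ via failure of strong ergodicity and \cref{3.5} presupposes that $s_H$ is ergodic, which fails when $H$ is finite; in that case one instead has $i_H\sqsubseteq s_H$ outright (any symmetric function of the coordinates with continuous distribution gives a factor map onto $i_H$), so the claim holds in all cases and your argument is unaffected.
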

\begin{proof}
Assume first that $H$ is amenable. Then $i_H\preceq s_H$, so, by \cref{321}, ${\rm CIND}_H^\Gamma (i_H) \preceq {\rm CIND}_H^\Gamma (s_H)$. Now ${\rm CIND}_H^\Gamma (i_H)\cong s_{H, \Gamma}$ by \cite[Page 73]{K} and ${\rm CIND}_H^\Gamma (s_H) \cong s_\Gamma$ by \cite[Proposition A.2]{K1}. So (ii) implies (i).

Conversely assume that $s_{H, \Gamma}\preceq s_\Gamma$. View here $s_{H, \Gamma}$ as the shift action of $\Gamma$ on $\bbT^{\Gamma/H}$, which is an action of $\Gamma$ by automorphisms on the abelian compact Polish group $\bbT^{\Gamma/H}$. Now $\kappa_0^{s_{H, \Gamma}} \preceq \kappa_0^{s_\Gamma} \simeq \lambda_\Gamma$ (see \cref{4.1}, \cite[Appendix D, {\bf (E)}]{K} and \cite[Exercise E.4.5]{BdlHV}). Thus $s_{H, \Gamma}$ is tempered. Consider the shift action of $\Gamma$ on the dual group of $\bbT^{\Gamma/H}$, i.e., $\bbZ^{<\Gamma/H}$ (the group of all elements of $\bbZ^{\Gamma/H}$ of finite support).  Then by \cite[Theorem 4.6 and Section 5 (A)]{K4} it follows that the stabilizer of any non-zero element of this countable group is amenable. Since the stabilizer of the characteristic function of the element $\{H\}$ of $\Gamma/H$ is equal to $H$, it follows that $H$ is amenable.
\end{proof}
We first prove (i) and (ii) of \cref{irs1}. If $\theta$ is amenable, then, by \cref{17.1}, $\undertilde{s}_{\theta, \Gamma}\preceq\undertilde{s}_\Gamma$. If $\theta$ is not core-free, then $\undertilde{s}_{\theta, \Gamma}$ is not free, so $\undertilde{s}_{\theta, \Gamma}\prec\undertilde{s}_\Gamma$, while if it is core-free, $\undertilde{s}_{\theta, \Gamma}$ is free, so (by  \cref{min}) $\undertilde{s}_{\theta, \Gamma} =\undertilde{s}_\Gamma$. Similarly (iii) and (iv) follow from \cref{17.1} and the freeness properties of $\undertilde{s}_{\theta, \Gamma}$.
\end{proof}

We note that \cref{17.1} admits the following generalization:

\begin{prop}
Let $\Delta\leq H \leq \Gamma$. Then the following are equivalent:
\begin{enumerate}[\upshape (i)]
\item $s_{H, \Gamma}\preceq s_{\Delta, \Gamma}$.
\item $\Delta$ is co-amenable in $H$.
\end{enumerate}
\end{prop}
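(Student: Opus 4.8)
The plan is to follow the proof of \cref{17.1}, isolating the case $\Gamma=H$ as a base case and then bootstrapping to general $\Gamma$ by co-induction. The base case is the assertion that $i_H\preceq s_{\Delta,H}\iff\Delta$ is co-amenable in $H$ (note $s_{H,H}\cong i_H$). This is the generalized-shift form of the Kechris--Tsankov theory: co-amenability of $\Delta$ in $H$ is precisely amenability of the action $H\actson H/\Delta$, and by \cite{KT} the latter is equivalent to $s_{\Delta,H}$ admitting a non-trivial almost invariant sequence. For the implication $\Leftarrow$ I would build almost invariant sets of every proportion directly from a F\o lner sequence in $H/\Delta$ (majority-vote sets over the F\o lner sets), which produces almost invariant partitions of arbitrary type and hence $i_H\preceq s_{\Delta,H}$; for $\Rightarrow$ I would use the first part of \cref{3.5}, that $i_H\preceq s_{\Delta,H}$ forces non-trivial almost invariant sets, and then invoke \cite{KT} to conclude amenability of the index action.

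For (ii)$\Rightarrow$(i) I would argue purely by co-induction. Assuming $\Delta$ co-amenable in $H$, the base case gives $i_H\preceq s_{\Delta,H}$, and \cref{321} yields $\mathrm{CIND}_H^\Gamma(i_H)\preceq\mathrm{CIND}_H^\Gamma(s_{\Delta,H})$. It remains to identify the two sides: $\mathrm{CIND}_H^\Gamma(i_H)\cong s_{H,\Gamma}$ as in the proof of \cref{17.1}, while transitivity of co-induction together with $\mathrm{CIND}_\Delta^H(i_\Delta)\cong s_{\Delta,H}$ gives $\mathrm{CIND}_H^\Gamma(s_{\Delta,H})\cong\mathrm{CIND}_H^\Gamma\mathrm{CIND}_\Delta^H(i_\Delta)\cong\mathrm{CIND}_\Delta^\Gamma(i_\Delta)\cong s_{\Delta,\Gamma}$. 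Hence $s_{H,\Gamma}\preceq s_{\Delta,\Gamma}$.

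The difficult direction is (i)$\Rightarrow$(ii), where the argument of \cref{17.1} breaks down because $s_{\Delta,\Gamma}$ need not be tempered. Passing to Koopman representations via \cref{4.1} is provably too weak: from $\lambda_{H,\Gamma}\le\kappa_0^{s_{H,\Gamma}}$ and $\kappa_0^{s_{H,\Gamma}}\preceq_Z\kappa_0^{s_{\Delta,\Gamma}}$ one only extracts, after restricting to $H$, amenability of the \emph{whole} index action $H\actson\Gamma/\Delta$, which is strictly weaker than co-amenability of $\Delta$ in $H$. Indeed, if $H$ is conjugate in $\Gamma$ into $\Delta$ (for instance $H=\bbF_2$ and $\Delta=\langle a^2,b^2\rangle$ inside the ascending HNN extension $\langle a,b,t\mid tat^{-1}=a^2,\ tbt^{-1}=b^2\rangle$), then $H\actson\Gamma/\Delta$ has a fixed coset and is trivially amenable while $\Delta$ is not co-amenable in $H$; in such a case $\kappa_0^{s_{H,\Gamma}}\preceq\kappa_0^{s_{\Delta,\Gamma}}$ still holds, so the genuine obstruction to weak containment lives at the level of actions, in accordance with \cref{4.2}.

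Consequently I would prove (i)$\Rightarrow$(ii) at the action level, the natural route being \cref{2.6}: $s_{H,\Gamma}\preceq s_{\Delta,\Gamma}$ is equivalent to $s_{H,\Gamma}=\mathrm{CIND}_H^\Gamma(i_H)\sqsubseteq(s_{\Delta,\Gamma})_\mathcal{U}$, and, using the adjunction between co-induction and restriction, one would try to descend the resulting factor map to one witnessing $i_H\sqsubseteq(s_{\Delta,H})_\mathcal{U}$, i.e. $i_H\preceq s_{\Delta,H}$, whence co-amenability by the base case. The main obstacle is precisely that restricting a co-induced action to $H$ introduces the off-diagonal coordinates indexed by the non-identity double cosets in $H\backslash\Gamma/H$, and one must show that the descended map cannot rely on them. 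I expect this to be the only non-routine step, and to require the finer spectral and structural theory of generalized shifts in \cite[Section 5]{K4}, i.e. a relative form of the analysis used for \cref{17.1} that isolates the contribution of the identity coset.
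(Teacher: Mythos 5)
Your base case and your argument for (ii)$\Rightarrow$(i) are correct, and this half is essentially the paper's own route: the paper likewise reduces to $i_H\preceq s_{\Delta,H}$ and then co-induces, identifying ${\rm CIND}_H^\Gamma(i_H)\cong s_{H,\Gamma}$ and ${\rm CIND}_H^\Gamma(s_{\Delta,H})\cong s_{\Delta,\Gamma}$ (you obtain the latter more transparently from transitivity of co-induction). One genuine improvement on your side: the paper proves (ii)$\Rightarrow$(i) in two cases, disposing of $[H:\Delta]<\infty$ by a separate Gaussian fiber-summing factor map, precisely because for finite index $s_{\Delta,H}$ is not ergodic, so the route ``almost invariant sets plus \cref{3.5}'' to $i_H\preceq s_{\Delta,H}$ breaks down there; your F\o lner majority-vote construction of almost invariant partitions of prescribed type gives $i_H\preceq s_{\Delta,H}$ uniformly in the index and removes the case distinction.

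The direction (i)$\Rightarrow$(ii) is another matter, and the problem is not the one you anticipate: the example you offer merely as an obstruction to the Koopman-representation approach in fact refutes the implication itself. In $\Gamma=\langle a,b,t\mid tat^{-1}=a^2,\ tbt^{-1}=b^2\rangle$ with $H=\langle a,b\rangle$ and $\Delta=\langle a^2,b^2\rangle$, conjugation by $t$ carries $H$ \emph{onto} $\Delta$, i.e. $tHt^{-1}=\Delta$. Hence $g\Delta\mapsto gtH$ is a $\Gamma$-equivariant bijection of $\Gamma/\Delta$ with $\Gamma/H$, so $s_{\Delta,\Gamma}\cong s_{H,\Gamma}$ (the paper itself uses $s_{H,\Gamma}\cong s_{\gamma H\gamma^{-1},\Gamma}$ in \cref{9}), and (i) holds trivially. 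On the other hand, $\Delta$ is a finitely generated subgroup of infinite index in $H\cong\bbF_2$: its Schreier graph is a finite Stallings core with trees of branching degree three attached, hence has positive Cheeger constant, so there is no $H$-invariant mean on $H/\Delta$ and (ii) fails. Thus the proposition as stated is false; no proof of (i)$\Rightarrow$(ii) exists, and the ultrapower-descent step you flag as ``the only non-routine step'' is not non-routine but impossible.

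Your computation also locates the exact error in the paper's own proof of (i)$\Rightarrow$(ii). The paper asserts that, as an $H$-set, $\Gamma/\Delta$ splits into copies of $H/\Delta$ indexed by a transversal $T$ of the right cosets of $H$ in $\Gamma$, and concludes $s_{\Delta,\Gamma}|H\cong s_{\Delta,H}^{|T|}\cong s_{\Delta,H}$. This is false in general: the $H$-orbits on $\Gamma/\Delta$ are indexed by the double cosets $H\backslash\Gamma/\Delta$, and the orbit of $g\Delta$ is $H$-isomorphic to $H/(H\cap g\Delta g^{-1})$, which need not be a copy of $H/\Delta$ --- in your example the orbit of $t^{-1}\Delta$ is a single fixed point. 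What restriction to $H$, \cref{3.5} and the Kechris--Tsankov theorem \cite{KT} actually extract from (i) is exactly what you computed, namely amenability of the $H$-action on the full coset space $\Gamma/\Delta$, and this is strictly weaker than co-amenability of $\Delta$ in $H$. Only with (ii) replaced by that weaker condition (or under extra hypotheses excluding phenomena like $tHt^{-1}\leq\Delta$) can the equivalence be salvaged; as literally stated, only (ii)$\Rightarrow$(i) is a theorem.
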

\begin{proof}(i)$\implies$(ii): Note that the map $ p\in [0,1]^{\Gamma/H}\mapsto p(H)$ shows that $i_H\sqsubseteq s_{H,\Gamma}|H$. Since $s_{H, \Gamma}|H\preceq s_{\Delta, \Gamma}|H$, we have that $i_H\preceq s_{\Delta,\Gamma}|H$.

Let $T$ be a transversal for the right-cosets of $H$ in $\Gamma$. Then $\Gamma/\Delta = \bigsqcup_{t\in T} (H/\Delta)t$ and $(H/\Delta)t$ is $H$-invariant, so the action of $H$ on $\Gamma/\Delta$ is the direct sum of $N= |T|\in \{1.2, \dots, \aleph_0\}$ copies of the action of $H$ on $H/\Delta$. It follows that $s_{\Delta, \Gamma}|H \cong s_{\Delta, H}^N \cong s_{\Delta, H}$, therefore $i_H\preceq s_{\Delta, H}$. By \cite[Theorem 1.2]{KT} this implies that $\Delta$ is co-amenable in $H$.

(ii)$\implies$(i): First we note that if $[H:\Delta] <\infty$, then actually $s_{H, \Gamma}\sqsubseteq s_{\Delta, \Gamma}$. To see this view the action $s_{H, \Gamma}$ as the shift action of $\Gamma$ on the product space $\bbR^{\Gamma/H}$ with $
\bbR$ having the (normalized, centered) Gaussian measure $\gamma$. Similarly for $s_{\Delta, \Gamma}$. We now have the following lemma which immediately implies this fact:

\begin{lemma}
Let a group $\Gamma$ act on two countable sets $X,Y$. Let $n\in \bbN, n\geq 1$, and $\pi\colon X\to Y$ be $\Gamma$-equivariant and such that for each $y\in Y$, $|\pi^{-1} (y)| =n $. Denote by $s_{\Gamma, X}$ the shift action of $\Gamma$ on the product space $\bbR^X$, where $\bbR$ has the Gaussian measure $\gamma$. Similarly define $s_{\Gamma, Y}$. Then $s_{\Gamma, X} \sqsubseteq s_{\Gamma,Y}$.

\end{lemma}
\begin{proof} Define $\Phi\colon \bbR^X\to \bbR^Y$ by
\[
\Phi (p) (y) = \frac{1}{\sqrt{n}}\sum_{x\in \pi^{-1} (y)} p(x).
\]
Then $\Phi$ is $\Gamma$-equivariant, so its is enough to check that $\Phi_* \gamma^X = \gamma^Y$. This reduces to showing that if $\Theta_n\colon \bbR^n \to \bbR$ is defined by $\Theta_n (x_1, \dots , x_n) = \frac{1}{\sqrt{n}}\sum_{j =1}^n x_j$, then $\Theta_* \gamma^n = \gamma$. Now $\Theta_* \gamma^n$ is Gaussian with mean
\[
\mu = \frac{1}{\sqrt{n}} \sum_{j=1}^n E(x_j) = 0
\]
and variance 
\[
\sigma^2 = \sum_{j=1}^n (\frac{1}{\sqrt{n}})^2 \textrm{Var}(x_j) =1
\]
 (see \cite[Exercise 16.7]{JP}, so we are done.
 \end{proof}
 Thus we can assume that $[H:\Delta] =\infty$. Then $s_{\Delta, H}$ is ergodic (see \cite[Proposition 2.1]{KT}). Also, by \cite[Theorem 1.2]{KT}, $s_{\Delta, H}$ has (non-trivial) almost invariant sets, so by \cite[Proposition 10.6]{KT}, $i_H\preceq s_{\Delta, H}$. But $\textrm{CIND}_H^\Gamma (i_H) \cong s_{H, \Gamma}$ (see \cite[Page 73]{K}) and $\textrm{CIND}_H^\Gamma (s_{\Delta, H}) \cong s_{\Delta, \Gamma}$ (see \cite[Proposition A.2]{K1}), therefore $s_{H, \Gamma}\preceq s_{\Delta, \Gamma}$. \end{proof}

\newpage

\printindex

\noindent Department of Mathematics

\noindent University of Texas at Austin

\noindent Austin, TX 78712-1202

\noindent\textsf{pjburton@math.utexas.edu}

\bigskip

\noindent Department of Mathematics

\noindent California Institute of Technology

\noindent Pasadena, CA 91125

\noindent\textsf{kechris@caltech.edu }


\begin{thebibliography}{MMMM}

\bibitem[AP]{AP} A. Aaserud and S. Popa, Approximate equivalence of group actions, {\it Ergod. Th. \& Dynam. Sys.}, {\bf 38(4)} (2018), 1201--1237.

\bibitem[AE]{AE} M. Ab\'{e}rt and G. Elek, Dynamical properties of profinite actions, {\it Ergod. Th. \& Dynamic. Sys.}, {\bf 32} (2012), 1805--1835.


\bibitem[AE1]{AE1} M. Ab\'{e}rt and G. Elek, The space of actions, partition metric and combinatorial rigidity, {\it arXiv:1108.21471v1}.

\bibitem[AGV]{AGV}  M. Ab\'{e}rt, Y. Glasner and B. Vir\'{a}g, Kesten's theorem for invariant random subgroups, {\it Duke Math. J.},  {\bf 163(3)} (2014), 465--488.

\bibitem[AN]{AN} M. Ab\'{e}rt and N. Nikolov, Rank gradient, cost of groups and the rank versus Heegard genus problem, {\it J. Europ. Math. Soc.}, {\bf 14(5)} (2012), 1657--1677.

\bibitem[AW]{AW} M. Ab\'{e}rt and B. Weiss, Bernoulli actions are weakly contained in any free action, {\it Ergod. Th. \& Dynam. Sys.}, {\bf 33} (2013), 323--333.


\bibitem[AEG]{AEG} S. Adams, G.A. Elliott and T. Giordano, Amenable actions of groups, {\it Trans. Amer. Math. Soc.}, {\bf 344(2)} (1994), 803--822.




\bibitem[BDL]{BDL} U. Bader, B. Duchesne and J. L\'{e}cureux, Amenable invariant normal subgroups, {\it Israel J. Math.}, {\bf 213(1)} (2016),  399--422.








\bibitem[BdlHV]{BdlHV} B. Bekka, P. de la Harpe and A. Valette, {\it Kazhdan' Property {\rm (T)}}, Cambridge Univ. Press, 2008.


\bibitem[BYBHU]{BYBHU} I. Ben Yaacov, A. Berenstein, C.W. Henson, and A. Usvyatsov, Model theory for metric structures, in {\it Model theory with applications to algebra and analysis. Vol. 2}, London Math. Soc. Lecture Note Series, vol. 350, Cambridge Univ. Press, 2008, 315--427.

\bibitem[Be]{Be} A. Bernshteyn, Multiplication of weak equivalence classes may be discontinuous, {\it arXiv:1803.09307v1}.

\bibitem[Be1]{Be1} A. Bernshteyn, Ergodic theorems for the shift action and pointwise versions of the Ab\'{e}rt-Weiss theorem, {\it arXiv:1808.00596v1}.



\bibitem[BC]{BC} C. Bordenave and B. Collins, Eigenvalues of random lifts and polynomial of random permutations matrices, {\it arXiv:1801.00876v1}.



\bibitem[Bo1]{Bo1} L. Bowen, Periodicity and packings of the hyperbolic plane, {\it Geometriae Dedicata}, 
{\bf 102} (2003), 213--236.


\bibitem[Bo2]{Bo2} L. Bowen, Weak density of orbit equivalence classes of free group actions, {\it Groups Geom. Dyn.}, 
{\bf 9(3)} (2015), 811--830.

\bibitem[Bo3]{Bo3} L. Bowen, Measure conjugacy invariants for actions of countable sofic groups, {\it J. Amer. Math. Soc.}, 
{\bf 23(1)} (2010), 217--245.

\bibitem[BGK]{BGK} L. Bowen, R. Grigorchuk and R. Kravchenko, Invariant random subgroups of lamplighter groups, {\it Israel J. Math.}, 
{\bf 207(2)} (2015), 763--782.



\bibitem[BT-D]{BT-D} L. Bowen and R.D. Tucker-Drob, On a co-induction question of Kechris, {\it Israel. J. Math.}, 
{\bf 194} (2013), 209--224.

\bibitem[BT-D1]{BT-D1} L. Bowen and R.D. Tucker-Drob, The space of stable weak equivalence classes of measure-preserving actions, {\it  J. Funct. Anal.}, {\bf  274(11)} (2018), 3170--3196.





\bibitem[Bu]{Bu} P.J. Burton, Topology and convexity in the space of actions modulo weak equivalence, {\it Ergod. Th. \& Dynam. Sys.}, {\bf 38(7)} (2018), 2508--2536.

\bibitem[Bu1]{Bu1} P.J. Burton, A topological semigroup structure on the space of actions modulo weak equivalence,  {\it arXiv:1501.04373v1}.



\bibitem[BuK]{BuK} P.J. Burton and A.S. Kechris, Invariant random subgroups and action versus representation maximality, {\it Proc. Amer. Math. Soc.}, {\bf 145(9)} (2017), 3961--3971.


\bibitem[BLT]{BLT} P. Burton, M. Lupini and O. Tamuz, Weak equivalence of stationary actions and the entropy realization problem, {\it arXiv:1603.05013v2}.


\bibitem[C]{C} A. Carderi, Ultraproducts, weak equivalence and sofic entropy, {\it arXiv:1509.03189v1}.

\bibitem[CGdlS]{CGdlS} A. Carderi, D. Gaboriau and M. de la Salle, Non-standard limits of graphs and some orbit equivalence invariants, {\it arXiv:1812.00704v2}.


\bibitem[CK]{CK} C.T. Conley and A.S. Kechris, Measurable chromatic and independence numbers for ergodic graphs and group actions, {\it Groups Geom. Dyn.}, {\bf 7} (2013), 127--180.



\bibitem[CKT-D]{CKT-D} C.T. Conley, A.S. Kechris and R.D. Tucker-Drob, Ultraproducts of measure preserving actions and graph combinatorics, {\it Ergod. Th.
\& Dynam. Sys.}, {\bf 33} (2013), 334--374.

\bibitem[CW]{CW} A. Connes and B. Weiss, Property T and almost invariant sequences, {\it Israel J. Math.}, {\bf 37} (1980), 209--2010.

\bibitem[CP]{CP} D. Creutz and J. Peterson, Stabilizers of ergodic actions of lattices and commensurators, {\it  Trans. Amer. Math. Soc.}, {\bf 369(6)} (2017), 4119--4166. .

\bibitem[CL]{CL} E. Cs\'{o}ka and G. Lippner, Invariant random perfect matchings in Cayley graphs, {\it  Groups Geom. Dyn.}, {\bf 11(1)} (2017),  211--243.

\bibitem[D]{D} J. Dixmier, {\it C*-algebras}, North Holland, 1977.


\bibitem[DG]{DG} A. Dudko and R. Grigorchuk, On spectra of Koopman, groupoid and quasi-regular representations, {\it  J. Mod. Dyn.}, {\bf 11} (2017), 99--123. 







\bibitem[E]{E} G. Elek, Finite graphs and amenability, {\it J. Funct. Anal.}, {\bf 263} (2012), 2593--2614.



\bibitem[EK]{EK} G. Elek and K. Kr\'{o}licki, Invariant subsets of the space of subgroups, equational compactness and the weak equivalence of actions, {\it arXiv:1608.05332}.

\bibitem[EL]{EL} G. Elek and G. Lippner, Sofic equivalence relations, {\it J. Funct. Anal.}, {\bf 258} (2010), 1692--1708.






\bibitem[G]{G} D. Gaboriau, Co\^{u}t des relations d'equivalence et des groupes, {\it Invent. Math.}, {\bf 139} (2000), 41--98.




\bibitem[GL]{GL} D. Gaboriau and R. Lyons, A measurable-group-theoretic solution to von Neumann's problem, {\it Invent. Math.}, {\bf 177(3)} (2009), 533--540.

\bibitem[GS]{GS} D. Gaboriau and B. Seward, Cost, $l^2$-Betti numbers and the sofic entropy of some algebraic actions, {\it arXiv:1509.02482}.

\bibitem[GSu]{GSu} D. Gamarnik and M. Sudan, Limits of local algorithms and sparse random graphs, {\it Ann. Prob.}, {\bf 45(4)} (2017), 2353--2376.


\bibitem[Gl]{Gl} E. Glasner, {\it Ergodic Theory via Joinings},
Amer. Math. Soc., 2003.

\bibitem[GK]{GK} E. Glasner and J. King, A zero-one law for dynamical properties, {\it Contemp. Math.}, {\bf 215} (1998), 231--242.




\bibitem[GTW]{GTW} E. Glasner, J.-P. Thouvenot and B. Weiss, Every countable group has the weak Rohlin property, {\it Bull. London Math. Soc.}, {\bf 138(6)} (2006), 932--936.

\bibitem[GW]{GW} E. Glasner and B. Weiss, Kazhdan's property T and the geometry of the collection of invariant measures, {\it Geom. Funct. Anal.}, {\bf 7} (1997), 917--935.



\bibitem[Gr]{Gr} M. Gromov, Endomorphisms of symbolic algebraic varieties, {\it J. Eur. Math. Soc.}, {\bf 2} (1999) 109--197.


\bibitem[HLS]{HLS} H. Hatami, L. Lov\'{a}sz and B. Szegedy, Limits of locally-globally convergent graph sequences, {\it Geom. Funct. Anal.}, {\bf 24(1)} (2014), 269--296.




\bibitem[H]{H} B. Hayes, Weak equivalence to Bernoulli shifts for some algebraic actions, {\it arXiv:1709.05372.}


\bibitem[H1]{H1} B. Hayes, Max-min theorems for weak containment, square summable homoclinic points, and completely positive entropy, {\it arXiv:1902.06600.}


\bibitem[HK]{HK} G. Hjorth and A.S. Kechris, Rigidity theorems for
actions of product groups and countable Borel equivalence relations,
{\it Memoirs Amer. Math. Soc}., {\bf 177 (833)} (2005).

\bibitem[IT]{IT} T. Ibarluc\'{i}a and T. Tsankov, A model-theoretic approach to rigidity of strongly ergodic distal actions, {\it arXiv:1808.00341.}



\bibitem[IT-D]{IT-D} A. Ioana and R. Tucker-Drob, Weak containment rigidity for distal actions, {\it  Adv. Math.}, {\bf 302} (2016), 309--322.

\bibitem[JP]{JP} J. Jacod and P. Protter, {\it Probability Essentials}, Springer, 2004.


\bibitem[JS]{JS} V. Jones and K. Schmidt, Asymptotically invariant sequences and approximate finiteness, {\it Amer. J. Math.}, {\bf 109} (1987), 91--114.

\bibitem[Ka]{Ka} V.A. Kaimanovich, Amenability, hyperfiniteness, and isoperimetric inequalities, {\it C.R. Acad. Sci. Paris}, {\bf t.235, S\'{e}rie I} (1997), 999--1004.


\bibitem[K]{K} A.S. Kechris, {\it Global Aspects of Ergodic Group Actions},
Amer. Math. Soc., 2010.

\bibitem[K1]{K1} A.S. Kechris, Weak containment in the space of actions
of a free group, {\it Israel J. Math.}., {\bf 189} (2012), 461--507.

\bibitem[K2]{K2} A.S. Kechris, {\it Classical Descriptive Set Theory},
Springer, 1995.

\bibitem[K3]{K3} A.S. Kechris, The space of measure preserving equivalence relations and graphs, {\it preprint}, 2018. (posted in http://math.caltech.edu/$\sim$kechris/)

\bibitem[K4]{K4} A.S. Kechris, Unitary representations and modular actions, {\it J. Math. Sciences}, {\bf 140(3)} (2007), 398--425.

\bibitem[KMa]{KMa} A.S. Kechris and A.S. Marks, Descriptive Graph Combinatorics, {\it preprint}, 2018. (posted in http://math.caltech.edu/$\sim$kechris/)

\bibitem[KM]{KM} A.S. Kechris and B.D. Miller, {\it Topics in Orbit
Equivalence}, Springer, 2004.

\bibitem[KQ]{KQ} A.S. Kechris and V. Quorning, Co-induction and invariant random subgroups, {\it preprint}, 2019. (posted in http://math.caltech.edu/$\sim$kechris/)




\bibitem[KT]{KT} A.S. Kechris and T. Tsankov, Amenable actions and almost invariant sets, {\it Proc. Amer. Math. Soc.}, {\bf 136(2)} (2007), 687--697.


\bibitem[Ke1]{Ke1} D. Kerr, Sofic measure entropy via finite partitions, {\it Groups Geom. Dyn.,} {\bf 7(3)} (2013), 617--632.

\bibitem[Ke2]{Ke2} D. Kerr, Bernoulli actions of sofic groups have completely positive entropy, {\it Israel J. Math.,} {\bf 202(1)} (2014), 461--474.

\bibitem[KL]{KL} D. Kerr and H. Li, {\it Ergodic Theory}, Springer, 2016.





\bibitem[KP]{KP} D. Kerr and M. Pichot, Asymptotic abelianess, weak mixing, and property T, {\it J. Reine Angew. Math.} {\bf 623} (2008), 213--235.


\bibitem[LeM]{LeM}  F. Le Ma\^{i}tre, Highly faithful actions and dense free subgroups in full groups,  {\it Groups Geom. Dyn.}, {\bf 12(1)} (2018), 207--230.



\bibitem[LR]{LR} V. Losert and H. Rindler, Almost invariant sets, {\it Bull. London Math. Soc.}, {\bf 13} (1981), 145--148.





\bibitem[LS]{LS} A. Lubotzky and Y. Shalom, Finite representations in the unitary dual and Ramanujan groups, {\it Cont. Math.}, {\bf 347} (2004), 173--189.

\bibitem[LZ]{LZ} A. Lubotzky and Y. Zuk, On property $(\tau)$, {\it preprint}, 2003. (posted in http://www.ma.huji.ac.il/$\sim$alexlub/)

\bibitem[LN]{LN} R. Lyons and F. Nazarov, Perfect matchings as IID factors of non-amenable groups, {\it Europ. J. Combin.}, {\bf 32} (2011), 1115--1125.





\bibitem[PU]{PU} V. G. Pestov and V. V. Uspenskij, Representations of residually finite groups by isometries of the Urysohn space, {\it J. Ramanujan Math. Soc.}, {\bf 21} (2006), 189--203. 



\bibitem[Po]{Po} S. Popa, Some computations of 1-cohomology groups and construction of non-orbit equivalent actions, {\it J. Inst. Math. Jussieu}, {\bf 15(2)} (2007), 309--332.






\bibitem[S]{S} K. Schmidt, Amenability, Kazhdan's property T, strong ergodicity and invariant means for ergodic group actions, {\it Ergod. Th. \& Dynam. Sys.}, {\bf 1} (1981), 223--236.

\bibitem[Se]{Se} B. Seward, Weak containment and sofic entropy, {\it arXiv:\\1602.06680}.

\bibitem[T-D]{T-D} R.D. Tucker-Drob, Shift-minimal groups, fixed price 1, and the unique trace property, {\it arXiv:1211.6395v3.}

\bibitem[T-D1]{T-D1} R.D. Tucker-Drob, Weak equivalence and non-classifiability of measure preserving actions, {\it Ergod. Th. \& Dynam. Sys.}, {\bf 35} (2015), 293--336.

\bibitem[Z1]{Z1} R.J. Zimmer, On the von Neumann algebra of an ergodic group action, {\it Proc. Amer. Math. Soc.}, {\bf 41} (2015), 23-31.

\bibitem[Z2]{Z2} R.J. Zimmer, {\it Ergodic Theory and Semisimple Groups}, Birkh\"{a}user, 1984.






\end{thebibliography}
\end{document}